\theoremstyle{plain}
\newtheorem{theorem}{Theorem}[section]
\newtheorem{proposition}[theorem]{Proposition}
\newtheorem{corollary}[theorem]{Corollary}
\newtheorem{lemma}[theorem]{Lemma}
\newtheorem{assumption}[theorem]{Assumption}
\theoremstyle{definition}
\newtheorem{definition}[theorem]{Definition}
\theoremstyle{remark}
\newtheorem{remark}[theorem]{Remark}
\theoremstyle{remark}
\numberwithin{equation}{section}
\DeclareMathOperator{\dist}{dist}
\newcommand{\1}{\mathbf{1}}
\def \N {\mathbb{N}}
\def \R {\mathbb{R}}
\def \C {\mathbb{C}}
\def \Z {\mathbb{Z}}
\def \E {\mathbb{E}}
\def \P {\mathbb{P}}
\def \EE {\mathcal{E}}
\def \NN {\mathcal{N}}
\def \a {\mathfrak{a}}
\def \b {\mathfrak{b}}
\def \eps {\varepsilon}
\def \< {\langle}
\def \> {\rangle}
\def \^ {\widehat}
\def \dist {{\rm dist}}
\def \Prob {{\mathbb{P}}}
\def \im {{\rm Im}}
\def \re {{\rm Re}}
\def \supp {{\rm supp}}
\def \Comp {{\mathrm{Comp}}}
\def \Incomp {{\mathrm{Incomp}}}
\newcommand\Bb{{\mathbf b}}
\newcommand\Be{{\mathbf e}}
\newcommand\Bp{{\mathbf p}}
\newcommand\Bu{{\mathbf u}}
\newcommand\Bv{{\mathbf v}}
\newcommand\Bw{{\mathbf w}}
\newcommand\Bx{{\mathbf x}}
\newcommand\By{{\mathbf y}}
\newcommand\Bz{{\mathbf z}}
\newcommand\BX{{\mathbf X}}
\newcommand\BY{{\mathbf Y}}
\newcommand\BBu{\underline{\mathbf{u}}}
\begin{document}

\title[Eigenvectors and controllability]{Eigenvectors and controllability of non-Hermitian random matrices and directed graphs}

\author{Kyle Luh \and Sean O'Rourke}

\address{
  Center of Mathematical Sciences and Applications, Harvard University}
\email{kluh@cmsa.fas.harvard.edu}

\address{Department of Mathematics, University of Colorado at Boulder, Boulder, CO 80309   }
\email{sean.d.orourke@colorado.edu}

\thanks{K. Luh has been supported in part by the National Science Foundation under Award
No. 1702533}

\thanks{S. O'Rourke has been supported in
part by NSF grants ECCS-1610003 and DMS-1810500.}

\date{\today}

\begin{abstract}
We study the eigenvectors and eigenvalues of random matrices with iid entries.  Let $N$ be a random matrix with iid entries which have symmetric distribution.  For each unit eigenvector $\mathbf{v}$ of $N$ our main results provide a small ball probability bound for linear combinations of the coordinates of $\mathbf{v}$.  Our results generalize the works of Meehan and Nguyen \cite{MN} as well as Touri and the second author \cite{OT,OT3,OT2} for random symmetric matrices.  Along the way, we provide an optimal estimate of the probability that an iid matrix has simple spectrum, improving a recent result of Ge \cite{ge2017eigenvalue}.  Our techniques also allow us to establish analogous results for the adjacency matrix of a random directed graph, and as an application we establish controllability properties of network control systems on directed graphs.   
\end{abstract}

\maketitle

\section{Introduction}

 Let $\Bu \in \mathbb{C}^n$ be a random vector uniformly distributed on the unit sphere.  It follows that $\Bu$ has the same distribution as 
 \[ \frac{1}{\sqrt{ \sum_{i=1}^n |\xi_i|^2 } } \left( \xi_1, \ldots, \xi_n \right)^{\mathrm{T}}, \]
 where $\xi_1, \ldots, \xi_n$ are independent and identically distributed (iid) standard complex Gaussian random variables.  From this representation one can prove that $\1^\mathrm{T} \Bu$ converges in distribution to a standard complex Gaussian random variable, where $\1 \in \mathbb{C}^n$ is the all-ones vector.  We refer the reader to the survey \cite{OVW} for additional properties of $\Bu$.  
 
 Let $N$ be a random matrix of size $n \times n$ whose entries are iid random variables.  When the entries of $N$ are iid copies of a standard complex Gaussian random variable, $N$ is rotationally invariant, and the individual eigenvectors of $N$ have the same distribution as $\Bu$ above.  When the entries of $N$ are non-Gaussian, much less is known about the distribution of the eigenvectors.  In view of the universality phenomenon in random matrix theory, it is natural to conjecture that some of the properties that $\Bu$ possesses should also hold for the eigenvectors of $N$.  
 
In this note, we quantify some of these properties of the eigenvectors for iid random matrices.  The properties we focus on in this note are motivated by control theory, which we discuss in more detail in Section \ref{sec:ct} below.  

Eigenvectors of random matrices have been heavily studied in the last few years.  We refer the reader to \cite{MR3494576,MR3039372,MR3660521,MR2981427,MR3227063,MR3433288,MR3085669,Rlecturenotes,RVdeloc,CR,luh2018eigenvector,MR3034787,MR3256861,MR2525652,PhysRevLett.81.3367,MR3690289,BGZ,MR2839984,OVW,MR2558268,MR3449389,MR3183577,MR3164751,MR3129806,MR3039394,RVnogaps,RVnogaps,OT2,OT,OT3,MR2330979,LT,MR3851824,MR1003705,MR2846669,MR2782623,MR2930379,VuWangProjections,MR2537522,MR3025715,BNST,MR3755583,MC,MR3470349,BD,MR2782201,MR3606475,MR3955544,MN} and references therein for results concerning Hermitian and non-Hermitian random matrices. The results in \cite{MN, OT, OT2, OT3} are the most closely related to the present work.  
The following result is established by Meehan and Nguyen in \cite{MN}.  

\begin{theorem}[Follows from Theorem 1.5 in \cite{MN}] \label{thm:MN}
Let $\xi$ be a real-valued symmetric random variable with mean zero and unit variance so that
\[ \Prob( |\xi| \geq t) \leq K_1 \exp\left({-t^2/K_2} \right) \quad \text{ for all } t > 0 \]
for some constants $K_1, K_2 > 0$.  Let $W = (w_{ij})$ be an $n \times n$ real symmetric random matrix whose entries $w_{ij}$, $1 \leq i \leq j \leq n$ are iid copies of $\xi$.  Then there exist constants $C, \delta, \delta' > 0$ such that
\[ \Prob( \exists \text{ unit eigenvector } \Bv \text{ of } W \text{ such that } |\1^{\mathrm{T}} \Bv| \leq \eps ) \leq C \left(  n^{\delta} \eps + e^{-n^{\delta'}} \right) \]
for all $\eps > 0$, where $\1$ is the all-ones vector.    
\end{theorem}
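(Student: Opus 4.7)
The plan is to combine eigenvector delocalization, a leave-one-out decomposition, and a Littlewood--Offord type small-ball inequality.  Let $\mathbf{v}_1,\ldots,\mathbf{v}_n$ be orthonormal eigenvectors of $W$ with eigenvalues $\lambda_1,\ldots,\lambda_n$.  I would first union-bound over $k$ (losing a factor of $n$) and restrict to the high-probability event $\{\|W\| \le C\sqrt n\}$, which for subgaussian entries fails with probability at most $e^{-n^{c_0}}$.  Since any unit vector in $\mathbb{R}^n$ has some coordinate of magnitude at least $n^{-1/2}$, I pick such an index $i=i(k)$ for $\mathbf{v}_k$ and block-decompose $W$ along row/column $i$.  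Writing $\mathbf{v}_k = (v_i^{(k)}, \mathbf{w})^{\mathrm{T}}$, $X_i$ for the off-diagonal part of the $i$-th column, and $W^{(i)}$ for the corresponding $(n-1)\times(n-1)$ minor, the eigenvalue equation reads $(W^{(i)} - \lambda_k I)\mathbf{w} = -v_i^{(k)} X_i$, and hence
$$\mathbf{1}^{\mathrm{T}} \mathbf{v}_k \;=\; v_i^{(k)}\bigl(1 - \mathbf{y}(\lambda_k)^{\mathrm{T}} X_i\bigr), \qquad \mathbf{y}(\lambda) \;:=\; (W^{(i)} - \lambda I)^{-1}\mathbf{1}.$$
In particular, $|\mathbf{1}^{\mathrm{T}} \mathbf{v}_k| \le \eps$ forces $|1 - \mathbf{y}(\lambda_k)^{\mathrm{T}} X_i| \le \eps\sqrt n$.

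Next, to apply a small-ball bound to the linear form in $X_i$, I must decouple $\lambda_k$ from $X_i$.  I would take an $\eta$-net $\mathcal{N} \subset [-C\sqrt n, C\sqrt n]$ with $\eta = n^{-A}$ for a sufficiently large constant $A$: using the resolvent identity together with a polynomial lower bound on the distance from $\lambda_k$ to the spectrum of $W^{(i)}$, replacing $\lambda_k$ by its nearest $\lambda \in \mathcal{N}$ perturbs $\mathbf{y}(\lambda_k)^{\mathrm{T}} X_i$ by at most a polynomially small amount.  For each fixed $\lambda \in \mathcal{N}$ the vector $\mathbf{y}(\lambda)$ is $W^{(i)}$-measurable and therefore independent of $X_i$, so conditionally on $W^{(i)}$ a Rudelson--Vershynin/Erd\H{o}s--Littlewood--Offord type inequality yields
$$\Prob\bigl(\,|1 - \mathbf{y}(\lambda)^{\mathrm{T}} X_i| \le 2\eps\sqrt n \bigm| W^{(i)}\bigr) \;\lesssim\; \frac{\eps \sqrt n}{\|\mathbf{y}(\lambda)\|_2} + e^{-n^{\delta''}}$$
provided the unit vector $\mathbf{y}(\lambda)/\|\mathbf{y}(\lambda)\|_2$ has large least common denominator in the Rudelson--Vershynin sense.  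Summing over the $O(n^{A+1/2})$ values of $\lambda \in \mathcal{N}$, the $n$ choices of $i$, and the $n$ eigenvectors indexed by $k$ then produces the polynomial prefactor $n^\delta$ in the claim.

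The main obstacle will be controlling the arithmetic structure of $\mathbf{y}(\lambda)$ uniformly over $\lambda \in \mathcal{N}$, especially when $\lambda$ lies close to some eigenvalue $\mu_j$ of $W^{(i)}$: in that regime the spectral expansion of the resolvent shows that $\mathbf{y}(\lambda)$ is essentially parallel to the single eigenvector $\mathbf{u}_j$, so the required anti-concentration reduces to showing that $\mathbf{u}_j$ itself has large least common denominator.  The natural approach is an induction on $n$: the very statement we are trying to prove, applied to $W^{(i)}$ against generic test vectors in place of $\mathbf{1}$, supplies the required unstructuredness of the eigenvectors of $W^{(i)}$.  The symmetry assumption on $\xi$ enters exactly as in the Rudelson--Vershynin framework, permitting the Rademacher symmetrization of $X_i$ that underlies the small-ball step; careful bookkeeping of the exceptional probabilities along this induction then produces the $e^{-n^{\delta'}}$ term in the statement.
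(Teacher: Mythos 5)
You should first note that the paper does not prove this statement at all: it is quoted verbatim from Meehan--Nguyen \cite{MN}, and the paper's own contribution is the non-Hermitian analogue (Theorem \ref{thm:maineigvec}), which is proved by a quite different route than yours: a structure theorem asserting that all (approximate) eigenvectors have exponentially large LCD, obtained by inverse Littlewood--Offord estimates and covering arguments (Theorems \ref{thm:nullvectors} and \ref{thm:vecstructure}), combined with the sign-conjugation trick $N\mapsto(\eps_i\eps_jN_{ij})$, which uses the symmetry of $\xi$ to turn $\1^{\mathrm{T}}\Bv$ into a Rademacher sum with coefficients $v_i$ (Proposition \ref{prop:reduce}, following \cite{OT2}). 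Your leave-one-out/resolvent decomposition is instead in the spirit of the Nguyen--Tao--Vu gap arguments, and the identity $\1^{\mathrm{T}}\Bv_k=v_i^{(k)}\bigl(1-\mathbf{y}(\lambda_k)^{\mathrm{T}}X_i\bigr)$ is correct; the problem is that the two steps you rely on after that are not available in the form you assume.

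Concretely, there are two genuine gaps. First, the decoupling of $\lambda_k$ from $X_i$ via an $n^{-A}$-net requires $\dist(\lambda_k,\operatorname{spec}W^{(i)})\geq n^{-B}$, but by Cauchy interlacing this distance is at most the local eigenvalue gap of $W$, which is typically of order $n^{-1/2}$ and is smaller than any fixed polynomial threshold with probability that is only polynomially (not exponentially) small; since the theorem's whole strength is the additive error $e^{-n^{\delta'}}$ (it must hold for exponentially small $\eps$), you cannot simply discard this resonance event, and in the near-resonance regime the resolvent perturbation bound $\eta\,\dist^{-2}\,\|\1\|\,\|X_i\|$ blows up. Second, and more fundamentally, the small-ball step needs an LCD lower bound for $\mathbf{y}(\lambda)/\|\mathbf{y}(\lambda)\|$ uniformly over the net, which near a resonance reduces to an LCD lower bound for the eigenvector $\mathbf{u}_j$ of the minor. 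Your proposed induction cannot supply this: the statement being proved only gives anti-concentration of $\Bb^{\mathrm{T}}\mathbf{u}_j$ for a \emph{fixed} test vector $\Bb$ with polynomially small failure probability, and no union bound over test directions (there are exponentially many candidate structured directions) upgrades that to ``$D(\mathbf{u}_j)\geq e^{cn}$''. That structural statement is exactly what requires a separate inverse Littlewood--Offord/covering argument over approximate eigenvectors, which is the technical heart both of \cite{MN} and of Sections \ref{sec:approximatenull}--\ref{sec:eigstruc} here, and it is absent from your sketch. Finally, a smaller point: the symmetry hypothesis on $\xi$ is not what licenses the Rudelson--Vershynin small-ball inequality (no symmetry is needed there); in the known proofs it enters through the conjugation by random signs, which your argument never uses.
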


Similar results are also established in \cite{OT, OT2, OT3}, and the results in \cite{MN} greatly generalize the results in \cite{OT2}.  In fact, the results in \cite{MN} are more general than what is stated in Theorem \ref{thm:MN} and apply to a large class of vectors (not just the all-ones vector).  

Intuitively, Theorem \ref{thm:MN} provides a non-asymptotic bound which shows that the eigenvectors have a similar behavior as the uniform vector $\Bu$ introduced above.  The goal of this work is to establish a version of Theorem \ref{thm:MN} for non-Hermitian random matrices.  Indeed, all the results in \cite{MN, OT, OT2, OT3} only apply to Hermitian random matrices.  When the random matrix is no longer Hermitian, the eigenvectors need not be orthogonal and new difficulties arise.  In this note, we develop upon the techniques introduced by Ge \cite{ge2017eigenvalue} in order to overcome these difficulties.  

\subsection{Notation}

Before stating our main results, we introduce some notation.  For a matrix $M$, we let $\|M\|$ denote the operator norm.  $M^{\mathrm{T}}$ is the transpose and $M^\ast$ is the conjugate transpose of $M$.  We write $M - z$ to denote $M - zI$, where $I$ is the identity matrix.  $J$ will denote the all-ones matrix.  For any square matrix, we will use the term eigenvector to denote a \emph{unit} eigenvector unless stated otherwise.  

We use bold letters to denote complex and real vectors.  For a vector $\Bv$, $\|\Bv\|$ is the Euclidean norm.  For two vectors $\Bv = (v_i)_{i=1}^n \in \mathbb{C}^n$ and $\Bu = (u_i)_{i=1}^n \in \mathbb{C}^n$, we let $\Bv \odot \Bu$ denote the Hadamard product of $\Bv$ and $\Bu$ defined as the vector $\Bv \odot \Bu = ( v_i u_i)_{i=1}^n$.  $\1$ denotes the all-ones vector.  

We use asymptotic notation under the assumption that $n \to \infty$.  In particular, the notations $X_n = O(Y_n)$, $Y_n = \Omega(X_n)$, $X_n \ll Y_n$, or $Y_n \gg X_n$ denote the bound $|X_n| \leq C |Y_n|$ for some constant $C > 0$ independent of $n$ and all $n > C$.  If the constant $C$ depends on a parameter (e.g., $C = C_k$), we indicate this with subscripts (e.g., $X_n = O_k(Y_n)$).  The notation $X_n = o(Y_n)$ denotes the bound $|X_n| \leq c_n Y_n$ for some sequence $c_n$ that converges to zero as $n$ tends to infinity.  

In our proofs, we often use $C, C', c, c'$, etc. to represent universal positive constants that can change from line to line. $[n]$ denotes the discrete interval $\{1, \ldots, n\}$ and $B(z, s)$ denotes a ball of radius $s$ centered at $z$.

\subsection{Eigenvector results}
In our main results below we focus on non-Hermitian random matrices with iid entries.  

\begin{definition}[iid random matrix]
Let $\xi$ be a real-valued random variable.  We say the $n \times n$ matrix $N$ is an \emph{iid random matrix} with \emph{atom variable} (or \emph{atom distribution}) $\xi$ if the entries of $N$ are iid copies of $\xi$.  
\end{definition}

We will often assume that the atom variable $\xi$ has mean zero.  In addition, we will sometimes need to assume that $\xi$ is a symmetric random variable, i.e., that $\xi$ has the same distribution as $-\xi$.  In the most general case, we will only need the following assumption.
\begin{assumption} \label{assump:main}
Assume $\xi$ is a real-valued random variable.  In addition, assume there exists constants $q \in (0,1)$ and $T > 0$ so that
\begin{align}
	\sup_{u \in \mathbb{R}} \Prob ( |\xi - u| < 1) \leq 1-q, \label{assump:1}\\
	 \Prob( 1 \leq |\xi- \xi'| \leq T) \geq q/2,  \label{assump:2}
\end{align}
and
\begin{equation} \label{assump:3}
	\Prob(|\xi| > T) \leq q/2, 
\end{equation} 
where $\xi'$ is an independent copy of $\xi$.  
\end{assumption}
\begin{remark}
Assumption \eqref{assump:1} guarantees that $\xi$ is non-degenerate.  All three conditions \eqref{assump:1}, \eqref{assump:2}, and \eqref{assump:3} hold (for some $T$ and $q$) when $\xi$ has finite variance of at least $1$.  Many of our results will have constants that implicitly depend on $q$ and $T$.  We will suppress this dependence in the notation and statements of the theorems.
\end{remark}

Our first main result is the analogue of Theorem \ref{thm:MN} for iid random matrices.  

\begin{theorem} \label{thm:maineigvec}
Let $N$ be an $n \times n$ iid random matrix with real-valued symmetric atom variable $\xi$ which satisfies Assumption \ref{assump:main}, and let $K > 1$ be a constant.    Then there exist constants $C, c > 0$ (depending only on the constant $K$ and the atom variable $\xi$) such that 
\[ \Prob ( \exists \text{ unit eigenvector } \Bu \text{ of } N \text{ such that } |\1^\mathrm{T} \Bu| \leq t ) \leq C n t + \Prob( \|N \| > K \sqrt{n} ) \]
for any $t \geq e^{- c n}$.  Here $\1$ denotes the all-ones vector.  
\end{theorem}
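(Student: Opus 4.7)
The plan is to combine the Cramer/adjugate representation of eigenvectors with an incompressibility reduction and a union bound over coordinate indices, which will account for the factor of $n$. The starting point is that if $\Bu$ is a unit eigenvector of $N$ associated to a simple eigenvalue $\lambda$, then writing $\Bu = \Bu^{(i)}/\|\Bu^{(i)}\|$ with $\Bu^{(i)}_j := (-1)^{i+j}\det((N-\lambda I)_{\hat i,\hat j})$ gives
\[ \1^{\mathrm T}\Bu \;=\; \frac{P^{(i)}(\lambda)}{\|\Bu^{(i)}\|}, \qquad u_i \;=\; \frac{\det(N^{(i)}-\lambda I)}{\|\Bu^{(i)}\|}, \]
where $N^{(i)}$ is the $(n-1)\times(n-1)$ principal minor and $P^{(i)}(\lambda)=\det(\tilde A^{(i)}(\lambda))$ is the determinant of the matrix obtained from $N-\lambda I$ by replacing its $i$-th row with $\1^{\mathrm T}$. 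With $\Ba$ the $i$-th column of $N$ minus entry $N_{ii}$, $\Bb$ the $i$-th row of $N$ minus entry $N_{ii}$, and $\gamma=N_{ii}$, the block-matrix formula translates this (for $\lambda\notin\mathrm{spec}(N^{(i)})$) into
\[ \frac{\1^{\mathrm T}\Bu}{u_i} \;=\; 1-\1^{\mathrm T}(N^{(i)}-\lambda I)^{-1}\Ba, \qquad \gamma-\lambda \;=\; \Bb^{\mathrm T}(N^{(i)}-\lambda I)^{-1}\Ba.\]

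I will first establish, by a standard net over compressible unit vectors applied to $N-\lambda I$ uniformly in $\lambda\in B(0,K\sqrt n)$, that on $\{\|N\|\le K\sqrt n\}$ one has, with probability at least $1-Ce^{-cn}$, every unit eigenvector of $N$ incompressible, and in particular $\max_i|u_i|\ge c_0/\sqrt n$ for an absolute $c_0>0$. Since $t\ge e^{-cn}$, this failure event is absorbed by the final $Cnt$ bound. Thereafter, the existence of an eigenvector with $|\1^{\mathrm T}\Bu|\le t$ forces some coordinate $i$ with $|u_i|\ge c_0/\sqrt n$ and, by the identity above,
\[ \bigl|1-\1^{\mathrm T}(N^{(i)}-\lambda I)^{-1}\Ba\bigr| \;\le\; s := t\sqrt n/c_0 \]
for some eigenvalue $\lambda$ of $N$. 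A union bound over $i\in[n]$ then reduces the problem to bounding this event, for each fixed $i$, by $Ct$.

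For fixed $i$, condition on the minor $N^{(i)}$ and on the column $\Ba$. The function $G(\lambda):=1-\1^{\mathrm T}(N^{(i)}-\lambda I)^{-1}\Ba=P^{(i)}(\lambda)/\det(N^{(i)}-\lambda I)$ becomes a deterministic rational function of $\lambda$ with at most $n-1$ zeros in $\C$. The remaining randomness lies in the pair $(\Bb,\gamma)$, which selects the eigenvalue $\lambda$ through the equation $\gamma-\lambda=\Bb^{\mathrm T}(N^{(i)}-\lambda I)^{-1}\Ba$. The plan is to run a Littlewood--Offord/Hal\'asz-type small-ball inequality controlling simultaneously the probability that this eigenvalue-selecting equation holds at some $\lambda$ for which $|G(\lambda)|\le s$, using Assumption \ref{assump:main} and the symmetry of $\xi$. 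The symmetry enters via a sign-flip/resampling trick in the spirit of Ge \cite{ge2017eigenvalue}: reflecting the signs of a suitable block of entries of $N$ (an operation that preserves the distribution of $N$ by symmetry) decouples the dependence of $\lambda$ on entries that also appear in $G$, reducing the matter to a one-dimensional small-ball estimate of the form $\Prob(|\text{linear combination of iid copies of }\xi|\le s)\le Cs+e^{-cn}$.

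The main obstacle is precisely this simultaneous dependence: the entries of row and column $i$ both govern the eigenvalue $\lambda$ and shape the rational function $G(\lambda)$ whose sublevel set $\{|G|\le s\}$ we must keep $\lambda$ out of. In the Hermitian setting (Meehan--Nguyen \cite{MN}, Touri--O'Rourke \cite{OT,OT2,OT3}), this is resolved by Cauchy interlacing and the real-valuedness of the spectrum, which essentially yields a single real degree of freedom to integrate over. In the non-Hermitian iid case the spectrum ranges in a complex disk, $G$ is a genuinely complex rational function with up to $n-1$ distinct "forbidden" components in $\C$, and no such interlacing is available. Extending Ge's symmetry-based decoupling to this complex setting, and verifying that no random eigenvalue of $N$ falls into any of these components, is the technical core of the proof.
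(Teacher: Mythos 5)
There is a genuine gap, and it sits exactly where you concede the ``technical core'' lies. Your plan reduces the problem to a one-dimensional small-ball estimate of the form $\P\left(\bigl|\textstyle\sum_j a_j\xi_j\bigr|\le s\right)\le Cs+e^{-cn}$, but such a bound is only true when the coefficient vector is arithmetically unstructured (has exponentially large LCD); for a generic or structured coefficient vector no bound better than $O(1/\sqrt n)$, or even $O(1)$, holds. Nothing in your outline establishes that the relevant vectors --- the eigenvector $\Bu$ itself, or $(N^{(i)}-\lambda I)^{-\mathrm T}\1$ in your adjugate formulation --- are unstructured, and this is precisely what consumes the bulk of the paper: the compressible-vector exclusion, the exclusion of vectors with compressible real part, the genuinely complex versus essentially real dichotomy governed by the real-imaginary correlation $d(\Bz)$, and the net constructions over the doubled-dimension complex sphere, culminating in Theorem \ref{thm:nullvectors} and then, via the multi-scale approximation argument, in the L\'evy-concentration bound $\rho(\Bb\odot\Bv,t)\le Ct$ of Theorem \ref{thm:smallballscaledeigvec}. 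Without this input your Littlewood--Offord step cannot produce the factor $Ct$, and you explicitly leave unresolved the second half of the difficulty as well (decoupling the row/column entries that simultaneously determine $\lambda$ and the rational function $G$, with no interlacing available in the complex spectrum), so the proposal is a programme rather than a proof.

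For comparison, the paper's route avoids the minor/adjugate representation entirely. It first guarantees simple spectrum (Theorem \ref{thm:eiggaptail}), then uses the symmetry of $\xi$ globally rather than blockwise: with iid Rademacher signs $\eps_1,\dots,\eps_n$ independent of $N$, the matrix $(\eps_i\eps_jN_{ij})$ has the same distribution as $N$, the same eigenvalues, and eigenvectors $\Bv_k\odot\boldsymbol{\xi}$, so $\1^{\mathrm T}\Bv_k$ may be replaced in distribution by $\sum_i v_{k,i}\eps_i$, a sum over fresh independent signs conditioned on $N$. Its small-ball probability is exactly $\rho(\1\odot\Bv_k,t)$, which the structure theorem bounds by $Ct$ uniformly over all eigenvectors outside an event of probability $e^{-cn}$; a union bound over the $n$ eigenvectors gives $Cnt$, and $t\ge e^{-cn}$ absorbs the exceptional events (this is Proposition \ref{prop:reduce}). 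If you want to salvage your approach, the honest accounting is that you would still need the full LCD machinery of Sections \ref{sec:approximatenull}--\ref{sec:eigstruc} (or an equivalent) to certify anti-concentration, at which point the global sign-symmetrization is both simpler and stronger than the conditional minor argument you sketch.
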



A bound on the operator norm can be controlled by additional moment assumptions on $\xi$.  For instance, when $\xi$ has finite fourth moment there exists $K > 1$ so that 
\begin{equation} \label{eq:4momnorm}
	\Prob( \|N \| > K \sqrt{n} ) = o(1), 
\end{equation} 
and when $\xi$ satisfies a sub-Gaussian assumption 
\[ \Prob( \|N \| > K \sqrt{n} ) \leq C e^{-cn}, \]
where the constants and rate of convergence in these bounds depend on the fourth moment or sub-Gaussian moment of $\xi$ (see \cite{MR2963170} and \cite{MR950344}).  

More generally, we have the following theorem.
\begin{theorem} \label{thm:maineigvecscaled}
	Let $N$ be an $n \times n$ iid random matrix with real-valued symmetric atom variable $\xi$ which satisfies Assumption \ref{assump:main}, and let $B, K > 1$ be constants.  Then there exist constants $C, c, \nu, \nu' > 0$ (depending only on the constants $K, B$ and the atom variable $\xi$) such that the following holds.  Let $m \leq \nu \sqrt{n}$ and $\Bb \in \C^n$ be a vector such that $B^{-1} \leq |b_i| \leq B$ for all but $m$ coordinates of $\Bb$.    
	Then
	\[ \Prob ( \exists \text{ unit eigenvector } \Bu \text{ of } N \text{ such that } |\Bb^\mathrm{T} \Bu| \leq t ) \leq Cn t + \Prob( \|N \| > K \sqrt{n} ) \]
	for any $t \geq e^{- \nu' n/m}$.  
\end{theorem}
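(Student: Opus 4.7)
The plan is to adapt the proof of Theorem \ref{thm:maineigvec}, running the same small-ball/net argument with $\Bb$ in place of $\1$ and tracking how the quantitative estimates degrade when $\Bb$ has $m$ coordinates outside $[B^{-1},B]$. The argument breaks into three pieces: (i) a reduction to a smallest-singular-value problem via an $\eta$-net on the spectrum; (ii) an invertibility estimate for a suitably augmented $n\times n$ matrix; and (iii) an LCD-based small-ball bound driven by the structure of $\Bb$.

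For (i), on the high-probability event $\|N\|\leq K\sqrt{n}$ all eigenvalues of $N$ lie in $B(0,K\sqrt{n})\subset\C$. Cover this disk with an $\eta$-net $\mathcal{N}$ at a scale $\eta$ to be chosen as a small polynomial in $t$. If $\Bu$ is a unit eigenvector of $N$ with eigenvalue $\lambda$ and $|\Bb^\mathrm{T}\Bu|\leq t$, then for the closest $z\in\mathcal{N}$ one has $\|(N-zI)\Bu\|\leq\eta$ and $|\Bb^\mathrm{T}\Bu|\leq t$. A union bound over $\mathcal{N}$ (which has size polynomial in $n$ and $1/\eta$) reduces matters to estimating, for each fixed $z\in\C$, the probability that some unit vector $\Bu\in\C^n$ satisfies $\|(N-zI)\Bu\|\leq\eta$ and $|\Bb^\mathrm{T}\Bu|\leq t$.

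For (ii), fix $z$ and, for each $i\in[n]$, let $\tilde M^{(z,i)}$ denote the $n\times n$ matrix obtained from $N-zI$ by replacing its $i$-th row with $\Bb^\mathrm{T}$. The existence of $\Bu$ as above implies $\|\tilde M^{(z,i)}\Bu\|^2\leq t^2+\eta^2$, so the probability in (i) is controlled by $\sum_{i=1}^n\Prob(s_{\min}(\tilde M^{(z,i)})\leq C(t+\eta))$. Apply the Rudelson--Vershynin framework to each $\tilde M^{(z,i)}$: split the unit sphere in $\C^n$ into compressible and incompressible vectors. Compressible vectors are dispatched by the standard invertibility estimate applied to the $n-1$ random rows of $\tilde M^{(z,i)}$ (those coming from $N-zI$ with $i$-th row removed). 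For incompressible vectors, use the distance-to-hyperplane bound $s_{\min}(\tilde M^{(z,i)})\geq n^{-1/2}\min_{j\neq i}\dist(R_j,H_j)$, where $R_j$ is the $j$-th row and $H_j$ is the span of the other rows.

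For (iii), fix $j\neq i$ and condition on all rows of $\tilde M^{(z,i)}$ other than $R_j$. Then $\dist(R_j,H_j)=|\langle R_j,\Bv_j\rangle|$ for a unit normal $\Bv_j$ to $H_j$. Since $\Bb^\mathrm{T}$ is one of the rows spanning $H_j$, we have $\langle \Bb,\Bv_j\rangle=0$, and $\Bv_j$ is also orthogonal to the other rows of $N-zI$. The symmetry of $\xi$ is then used in the standard symmetrization step (randomizing signs of the entries of $R_j$) to convert a large essential LCD of $\Bv_j$ into a small-ball bound on $\langle R_j,\Bv_j\rangle$. The hypothesis that $B^{-1}\leq|b_i|\leq B$ on $n-m$ coordinates forces $\Bv_j$ to inherit an incompressible structure on those coordinates, producing an LCD bound exponential in the effective dimension $n-m$. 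The $m$ unconstrained coordinates of $\Bb$ shrink this effective dimension and, after the standard net argument over incompressible $\Bv_j$ and tensorization across the $n-1$ rows, are the source of the exponent $n/m$ in the final restriction $t\geq e^{-\nu'n/m}$. Optimizing $\eta\sim t$ and combining with the bound $|\mathcal{N}|=O(n/\eta^2)$ yields the target $Cnt$.

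The main obstacle is the quantitative dependence on $m$ in step (iii): one must show that the $m$ bad coordinates of $\Bb$ cause only a controlled loss in the LCD estimate for $\Bv_j$, yielding an LCD of order at least $e^{\Omega(n/m)}$ rather than the $e^{\Omega(n)}$ obtainable when $m=0$. This is where the condition $m\leq\nu\sqrt{n}$ enters, and where the techniques of Ge \cite{ge2017eigenvalue}, which use pairs of rows/columns to substitute for the missing structure present in the Hermitian setting, play an essential role.
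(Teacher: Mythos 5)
Your proposal has a genuine gap, and it is the central mechanism of the proof that is missing. The quantitative accounting in your steps (i)--(ii) cannot produce the bound $Cnt$. Your $\eta$-net of the spectrum has cardinality of order $n/\eta^2$, and the best per-point estimate your least-singular-value machinery can deliver for the augmented matrix $\tilde M^{(z,i)}$ is of order $\sqrt{n}\,(t+\eta)$ plus an exponentially small term (this is the standard linear-in-$\eps$ invertibility bound coming from the distance/LCD argument). Multiplying, you get $O\bigl(n^{3/2}(t+\eta)/\eta^2\bigr)$, which with your choice $\eta\sim t$ is $O(n^{3/2}/t)$ and diverges as $t\to 0$. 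A union bound over a spectral net each of whose points carries probability $\Theta(t)$ can never close to $O(nt)$; the net is only affordable when the per-point failure probability is exponentially small, which is why the paper uses nets of the spectrum exclusively to prove \emph{structural} statements about all eigenvectors simultaneously (Theorems \ref{thm:nullvectors}, \ref{thm:eigstructure}, \ref{thm:smallballscaledeigvec}), never to prove the final linear-in-$t$ small-ball bound.

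The missing idea is the symmetrization in Proposition \ref{prop:reduce}, which is the only place the symmetry of $\xi$ is actually used (your invocation of symmetry inside the Rudelson--Vershynin distance estimate is not needed there and does not play that role). After conditioning on the simple-spectrum event (Theorem \ref{thm:eiggaptail} -- which your argument never uses but which is essential to speak of exactly $n$ eigenvectors), the eigenvectors are deterministic functions of $N$, so no independent randomness remains to anti-concentrate $\Bb^{\mathrm T}\Bv$. The paper injects fresh randomness by conjugating $N$ by a random diagonal sign matrix: $N'=(\eps_i\eps_j N_{ij})$ has the same law as $N$ because $\xi$ is symmetric, and its eigenvectors are $\Bv_i\odot\boldsymbol{\xi}$. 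Hence $\Bb^{\mathrm T}\Bv_i$ has the same law as $\boldsymbol{\xi}\cdot(\Bb\odot\Bv_i)$, which conditionally on $N$ is a sum of independent signed terms whose small-ball probability is exactly $\rho(\Bb\odot\Bv_i,t)\le Ct$ by Theorem \ref{thm:smallballscaledeigvec}; a union bound over the $n$ eigenvectors then gives $Cnt$. Relatedly, you attach the delocalization hypothesis on $\Bb$ to the wrong object: it is not needed to make your normal vector $\Bv_j$ incompressible (that comes from the random rows alone, so your argument would ``prove'' the theorem for arbitrary $\Bb$, which should be a warning sign). It is needed so that the map $\Bu\mapsto\Bb\odot\Bu$ transfers the unstructuredness of $\Bv$ to $\Bb\odot\Bv$; the $m$ uncontrolled coordinates are covered by a trivial net of cost $(CD)^{2m}$, and beating that cost forces $D\le e^{\nu n/m}$, which is the true source of the restriction $t\ge e^{-\nu' n/m}$.
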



\subsection{Eigenvalue Gaps}
Tail bounds between gaps of eigenvalues of random matrices were originally studied in \cite{arous2013extreme} in the GUE case and in \cite{NTV, TVsimple} for a large class of Hermitian random matrices.  In his thesis \cite{ge2017eigenvalue}, Ge proves a similar result for iid matrices.  
Let $\lambda_1(N), \dots, \lambda_n(N)$ be the eigenvalues of a matrix $N$.  Let $\Delta \equiv  \Delta(N) := \min_{i \neq j} |\lambda_i(N) - \lambda_j(N) |$.  Ge obtained the following theorem.
\begin{theorem}[Theorem 3.1.1, \cite{ge2017eigenvalue}] \label{thm:eiggaps}
	Let $N$ be an $n \times n$ iid random matrix whose atom variable satisfies Assumption \ref{assump:main} and has mean zero.  
	For every $C > 0$ and $\delta \geq s \geq n^{-C}$
	$$
	\P(\Delta(N) < s \sqrt{n}) = O\left(\delta n^{2 + o(1)} + \frac{s^2 n^{4 + o(1)}}{\delta^2} \right) + \P(\|N\| \geq K \sqrt{n})
	$$ 
	where the implied constant depends only on the parameters in Assumption \ref{assump:3} and $C$.  
\end{theorem}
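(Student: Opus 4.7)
I would follow the strategy of Nguyen, Tao and Vu \cite{NTV}, adapted to the non-Hermitian setting as in Ge's thesis \cite{ge2017eigenvalue}. The key ingredients are a net argument over the spectral disk combined with a joint small-ball estimate on the two smallest singular values of $N - zI$.

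On the event $\|N\| \leq K\sqrt{n}$, all eigenvalues of $N$ lie in $D := B(0, K\sqrt{n})$. I would choose a $\delta$-net $\mathcal N \subset D$ with $|\mathcal N| = O(n/\delta^2)$. If $\Delta(N) < s\sqrt{n}$, there are two eigenvalues $\lambda_i, \lambda_j$ with $|\lambda_i - \lambda_j| < s\sqrt{n}$; picking $z \in \mathcal N$ near their midpoint, the $2$-dimensional $N$-invariant subspace spanned by the right eigenvectors, together with the min--max characterization of singular values (and control of the angle between the two eigenvectors via the delocalization estimates established earlier in the paper), yields
\[
\sigma_n(N - zI),\ \sigma_{n-1}(N - zI) \leq C(s\sqrt{n} + \delta).
\]
A union bound over $\mathcal N$ reduces the theorem to a pointwise estimate on the joint small-ball probability of $(\sigma_n, \sigma_{n-1})$ for the shifted iid matrix $N - zI$.

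For the pointwise step I would follow the Rudelson--Vershynin framework as implemented in \cite{ge2017eigenvalue}. A pair of approximately orthonormal near-null unit vectors $\Bv_1, \Bv_2$ for $N - zI$ is classified according to whether each is compressible or incompressible. In the compressible case, a standard $\eps$-net on the low-dimensional set combined with the anti-concentration granted by Assumption \ref{assump:main} gives an exponentially small contribution in $n$. In the incompressible case, one applies small-ball / LCD estimates to the inner products of two independent rows of $N - zI$ against $\Bv_1, \Bv_2$, conditioning on the remaining $n - 2$ rows; having \emph{two} near-null vectors rather than one introduces a second anti-concentration factor, which is responsible for the quadratic behavior in $s$. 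Summing over the $O(n/\delta^2)$ net points and adding $\Prob(\|N\| \geq K\sqrt{n})$ yields the stated bound; the two terms $\delta n^{2+o(1)}$ and $s^2 n^{4+o(1)}/\delta^2$ reflect the tradeoff between the coarse net scale $\delta$ and the fine gap scale $s\sqrt{n}$, and the condition $s \geq n^{-C}$ ensures that the exponentially small contributions from the compressible case are absorbed.

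The main obstacle is the two-vector incompressible small-ball bound. The single-vector analogue underlies the classical Rudelson--Vershynin estimate on $\sigma_n(N - zI)$, but the two-vector version requires a joint LCD-type estimate for a pair of approximately orthonormal near-null vectors. One must rule out degenerate configurations (nearly parallel vectors, or pairs with jointly compressible structure) while preserving the independence used to apply anti-concentration on two separate rows. Managing the conditional distribution of the remaining rows so that the two row anti-concentrations multiply properly is the delicate technical core of the argument.
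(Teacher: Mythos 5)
Your high-level skeleton (a net over the spectral disk, reduction to the two smallest singular values of $N - zI$, compressible/incompressible dichotomy, and two independent row anti-concentrations producing the quadratic behavior in $s$) is the right one and matches the route taken in \cite{ge2017eigenvalue} and in Appendix \ref{appendix:tailtwoeigs} of this paper. However, there are two genuine gaps.

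First, your reduction from ``two eigenvalues in a ball'' to ``two small singular values'' fails for non-normal matrices. If $\lambda_i,\lambda_j \in B(z, r)$ and $W$ is the corresponding invariant subspace, then in an orthonormal basis $\{\Bv,\Bw\}$ of $W$ the restriction of $M = N - zI$ is upper triangular with diagonal entries $\lambda_i - z,\ \lambda_j - z$ and an off-diagonal entry $\alpha$ that can be as large as $2\|M\|$ (consider $\bigl(\begin{smallmatrix}0 & R\\ 0 & 0\end{smallmatrix}\bigr)$, whose singular values are $R$ and $0$). So the claimed bound $\sigma_{n-1}(N-zI) \le C(s\sqrt{n}+\delta)$ is false when the two eigenvectors are nearly parallel, and delocalization of individual eigenvectors gives no control on the angle between two eigenvectors of a non-Hermitian matrix. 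The correct fix, which is the content of Lemma \ref{lem:eigstosings}, is to keep $\alpha$ as a parameter: either $|\alpha|\le r$, giving $s_n, s_{n-1} \le 2r$, or $|\alpha| > r$, giving the tradeoff $s_n(M) \le r^2/|\alpha|$ and $s_{n-1}(M)\le |\alpha|$; one then takes a dyadic union over the $O(\log(n/s))$ scales of $|\alpha|$, and the joint small-ball bound applied with $t = r^2/|\alpha|$, $t' = |\alpha|$ is uniform in $\alpha$ because it depends only on the product $tt'$.

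Second, a single $\delta$-net of the whole disk cannot produce the asymmetric bound $\delta n^{2+o(1)} + s^2 n^{4+o(1)}/\delta^2$, and you have misattributed the origin of the $1/\delta^2$: it is not a net cardinality, but the anti-concentration constant for complex shifts. For a shift $\lambda$ with $\Im(\lambda)\sqrt n$ comparable to $\delta\sqrt n$, the near-null vectors $\Bz=\Bx+i\By$ have real--imaginary correlation $d(\Bz)\gtrsim \Im(\lambda)$ (Lemma \ref{lem:lowercorrelation}), and the quadratic small-ball bound reads $\rho(\Bz,t)\le \frac{C}{d(\Bz)}(t+\cdots)^2$; for real shifts only the linear bound $\rho(\Bz,t)\le C(t+\cdots)$ is available. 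One must therefore split the disk into a strip of width $\approx\delta$ around the real axis, covered by a net of the real line of cardinality $\sim \sqrt n/\delta$ and handled with the linear (real-shift) estimate --- this produces the $\delta n^{2+o(1)}$ term --- and the complement, covered by a net at the fine scale $s$ of cardinality $\sim n/s^2$ and handled with the quadratic estimate carrying the $1/\delta^2$ --- this produces $s^2 n^{4+o(1)}/\delta^2$. Without this two-region decomposition your union bound either degrades to something independent of $s$ (net scale $\delta$ dominates) or applies the quadratic estimate at net points near the real axis where it is not valid.
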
 

One immediate consequence is that with high probability, the random matrix has simple spectrum.
\begin{corollary}[\cite{ge2017eigenvalue}]
	Let $N$ be an $n \times n$ iid random matrix whose atom variable has mean zero, unit variance, and finite fourth moment.  
	Then
	$$
	\P(\Delta(N) = 0) = o(1).  
	$$
\end{corollary}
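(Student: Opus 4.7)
The plan is to deduce the corollary directly from Theorem \ref{thm:eiggaps} by picking the parameters $s$ and $\delta$ small enough that every error term is $o(1)$, and by controlling the operator norm term using the finite fourth moment hypothesis.

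First, I would observe the trivial monotonicity $\{\Delta(N) = 0\} \subseteq \{\Delta(N) < s\sqrt{n}\}$ for any $s > 0$, so it suffices to find one admissible pair $(s,\delta)$ satisfying $\delta \geq s \geq n^{-C}$ for some fixed $C$ such that the right-hand side of Theorem \ref{thm:eiggaps} tends to zero. The bound there has three pieces: $\delta\, n^{2+o(1)}$, $s^2 n^{4+o(1)}/\delta^2$, and $\P(\|N\| \geq K\sqrt{n})$.

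Second, since $\xi$ is assumed to have finite fourth moment, the cited estimate \eqref{eq:4momnorm} supplies a constant $K > 1$ with $\P(\|N\| > K\sqrt{n}) = o(1)$. So this third term is harmless. To kill the first two terms, I would take $\delta := n^{-3}$ and $s := n^{-6}$. Then $\delta \geq s \geq n^{-C}$ with $C = 6$, and
\[
\delta\, n^{2+o(1)} = n^{-1+o(1)}, \qquad \frac{s^2 n^{4+o(1)}}{\delta^2} = n^{-2+o(1)},
\]
both of which are $o(1)$. Thus $\P(\Delta(N) < n^{-11/2}) = o(1)$, and in particular $\P(\Delta(N) = 0) = o(1)$.

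There is essentially no serious obstacle once Theorem \ref{thm:eiggaps} is in hand; the only things to verify are (i) that the hypotheses of the corollary imply Assumption \ref{assump:main} (mean zero and unit variance give non-degeneracy plus the integrability needed for \eqref{assump:2}--\eqref{assump:3}) so that Theorem \ref{thm:eiggaps} applies, and (ii) that the finite fourth moment yields the desired operator-norm tail bound. Both are standard. The proof is therefore quite short: it is a two-line interpolation of parameters in the gap estimate.
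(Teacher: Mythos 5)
Your proposal is correct and follows exactly the route the paper intends: the corollary is stated as an immediate consequence of Theorem \ref{thm:eiggaps} (citing \cite{ge2017eigenvalue}), obtained by choosing $s,\delta$ polynomially small and using \eqref{eq:4momnorm} for the norm term, which is precisely your parameter interpolation. Your specific choice $\delta=n^{-3}$, $s=n^{-6}$ satisfies the constraints and makes all three error terms $o(1)$, so nothing is missing.
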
 

Building on the techniques in \cite{ge2017eigenvalue}, we greatly extend the range of the tail bound for the eigenvalue gaps and also improve the probability bound for simple spectrum.
\begin{theorem} \label{thm:eiggaptail}
	Let $N$ be an $n \times n$ iid random matrix whose atom variable satisfies Assumption \ref{assump:main}.  
	Then there exist constants $C, c > 0$ such that for $s \geq 0$,
		$$
	\P(\Delta(N) \leq s \text{ and } \|N \| \leq K \sqrt{n} ) \leq C s^{2/3} n^5 + Ce^{-c n} + \P(\|N \| \geq K \sqrt{n} ). 
	$$
\end{theorem}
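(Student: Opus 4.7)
The plan is to follow the leave-one-out approach of Tao--Vu and Ge adapted to the non-Hermitian setting, feeding it with the eigenvector anticoncentration bounds of Theorem~\ref{thm:maineigvec}.  The decisive feature of that theorem is that its polynomial small-ball bound $Cnt$ is valid all the way down to $t\ge e^{-cn}$; this is exactly what allows Theorem~\ref{thm:eiggaptail} to remain meaningful as $s\to 0$ and to produce the uniform $Ce^{-cn}$ term, i.e.\ simple spectrum with exponentially small failure probability.

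First I would write $N$ in block form with $(n-1)\times(n-1)$ principal minor $M$, last column $\Bx$, last row $\By^{\mathrm{T}}$, and corner $a_{nn}$.  Denoting the eigenvalues of $M$ by $\mu_i$ and unit right/left eigenvectors by $\Bv_i,\Bw_i$, a Schur-complement expansion gives
\[
\det(N-zI)=\prod_i(\mu_i-z)\cdot\Big[a_{nn}-z-\sum_i\frac{(\By^{\mathrm{T}}\Bv_i)(\Bw_i^{\mathrm{T}}\Bx)}{(\Bw_i^{\mathrm{T}}\Bv_i)(\mu_i-z)}\Big].
\]
If $N$ has eigenvalues $\lambda,\lambda'$ with $|\lambda-\lambda'|\le s$, a Rouch\'{e}/residue count on a small disk around the nearest $\mu_i$ forces the corresponding residue to be small on a scale controlled by $s$ and the local spacing $\delta$ of the $\mu_j$'s (the two sub-cases being that $\lambda,\lambda'$ cluster near a single $\mu_i$, or that they lie near a pair $\mu_i,\mu_j$ with $|\mu_i-\mu_j|\lesssim\delta$).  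A dyadic decomposition in $\delta$ reduces everything to controlling
\[
\P\big(|\By^{\mathrm{T}}\Bv_i|\le t\big)\quad\text{and}\quad\P\big(|\Bw_i^{\mathrm{T}}\Bx|\le t\big)
\]
for $t$ depending on $s$ and $\delta$.  I would condition on $M$ so $\Bv_i,\Bw_i$ are deterministic and apply a Littlewood--Offord/Esseen bound to the iid vectors $\Bx,\By$; the strength of that bound is guaranteed by the spread of $\Bv_i,\Bw_i$, which is supplied by Theorem~\ref{thm:maineigvec} applied to $M$ and $M^{\mathrm{T}}$ (both again iid matrices with symmetric atoms).  Balancing the two contributions — roughly $n^{a}\delta$ from a union bound over the $n$ eigenvalues of $M$ and $n^{b}s^{2}/\delta^{2}$ from requiring two eigenvalues of $N$ to cluster at scale $s$ inside a disk of radius $\delta$ — and optimizing $\delta$ produces the $s^{2/3}$ dependence, with all polynomial factors absorbed into $n^{5}$; whenever the prescribed $t$ would fall below $e^{-cn}$, we simply replace it by $e^{-cn}$ and absorb the loss into $Ce^{-cn}$.

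The main obstacle, in my view, is the factor $(\Bw_i^{\mathrm{T}}\Bv_i)^{-1}$ appearing in the residue.  When $M$ itself has near-degenerate eigenvalues, $\Bv_i$ and $\Bw_i$ can be almost orthogonal, making $|\Bw_i^{\mathrm{T}}\Bv_i|$ tiny and blowing up the condition number of the eigenbasis of $M$, which invalidates the reduction.  Dealing with this forces a self-bootstrap: one must simultaneously control the gaps of $M$ — so that its spectral decomposition is quantitatively sound — and deduce the desired gap estimate for $N$.  Concretely, I would carry out the argument on a ``good event'' for $M$ defined through a suitable net in $\C$, on which the resolvent of $M$ is controlled uniformly; the cost of that event is what ultimately shows up in the $n^{5}$ polynomial factor and in the $Ce^{-cn}$ tail of the conclusion.
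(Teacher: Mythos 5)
Your proposal hinges on the spectral (residue) expansion of $\det(N-zI)$ in terms of the minor $M$, and you yourself identify the factor $(\Bw_i^{\mathrm{T}}\Bv_i)^{-1}$ as the main obstacle --- but the fix you sketch does not close the gap. For non-normal matrices, controlling the eigenvalue \emph{gaps} of $M$ does not control the left--right eigenvector overlaps: the eigenvalue condition numbers $1/|\Bw_i^{\mathrm{T}}\Bv_i|$ of an iid matrix have heavy polynomial tails and can be large even when the spectrum of $M$ is well separated, so a ``self-bootstrap'' on gaps of $M$, or a resolvent bound on a net as you describe, does not yield the quantitative control of the residues that your Rouch\'e/residue count requires. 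Bounding these overlaps is a substantial problem in its own right and is nowhere supplied by the results of this paper; as written, the reduction to small-ball estimates for $|\By^{\mathrm{T}}\Bv_i|$ and $|\Bw_i^{\mathrm{T}}\Bx|$ therefore does not go through. There is also a circularity in your choice of input: Theorem \ref{thm:maineigvec} is deduced in this paper from Proposition \ref{prop:reduce}, whose proof invokes Theorem \ref{thm:eiggaptail} (to get simple spectrum), so it cannot be used to prove Theorem \ref{thm:eiggaptail}; the non-circular inputs would be Theorem \ref{thm:vecstructure} or Theorem \ref{thm:nullvectors}, which are proved independently in Sections \ref{sec:approximatenull}--\ref{sec:eigstruc}. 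Finally, note that the anticoncentration available for complex shifts degrades near the real axis (it carries a factor $1/\delta$ with $\delta=\im(\lambda)$), and your scheme has no mechanism for handling approximate eigenvalues of $M$ that are real or nearly real.

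The paper's proof avoids eigenvector overlaps entirely. Two eigenvalues of $N$ in a ball $B(z\sqrt{n},s)$ are converted, via a two-dimensional invariant-subspace/Jordan argument with \emph{orthonormalized} vectors, into the statement that the two smallest singular values of $M=N-z\sqrt{n}$ are small (Lemma \ref{lem:eigstosings}, after a dyadic decomposition in the auxiliary parameter $\alpha$). The joint event $s_n(M)\le t/\sqrt n,\ s_{n-1}(M)\le t'/\sqrt n$ is then reduced to a distance problem, $\dist(X_k,H_k)<t$ and $\dist(X_j,H_{jk})<t'$, for two independent rows against spans of the remaining rows; anticoncentration for these distances comes from Theorem \ref{thm:nullvectors} combined with Theorem \ref{thm:smallballcomplex} (complex shifts, giving $t^2t'^2/\delta^2$) and from the real-shift bound (Proposition \ref{prop:twosingvaulesreal}, giving $tt'$). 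The final step is a net over the shift $z$ in the disk of radius $K\sqrt n$, splitting off a strip of width $\delta$ around the real axis (covered by real shifts at scale $\delta$) from the genuinely complex region (covered at scale $s$), and optimizing $\delta=c\,s^{2/3}n^{5/6}$ produces the $s^{2/3}n^5$ bound. Your balancing heuristic $n^a\delta$ versus $n^b s^2/\delta^2$ does mirror this last optimization, but the core of the argument --- the passage through the two smallest singular values and the row-distance problem, rather than through the spectral decomposition of a minor --- is what makes the proof work without any control of eigenvector condition numbers.
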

While the right-hand side appears non-optimal, we can deduce an immediate corollary.
\begin{corollary}
	If in addition to the assumptions of Theorem \ref{thm:eiggaptail} we assume the entries of $N$ are subgaussian with mean zero, then there exist constants $C,c > 0$ such that
	$$
	\P(\Delta(N) = 0 ) \leq Ce^{-c n}.  
	$$ 
\end{corollary}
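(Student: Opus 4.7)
The plan is to derive the corollary directly from Theorem \ref{thm:eiggaptail} by sending $s$ to zero at an appropriate exponential rate, while separately controlling the operator norm using the sub-Gaussian assumption. The identity $\{\Delta(N)=0\} \subseteq \{\Delta(N) \leq s\}$ (for any $s>0$) plus the trivial decomposition
\[
\P(\Delta(N)=0) \leq \P\!\left(\Delta(N) \leq s,\ \|N\| \leq K\sqrt{n}\right) + \P(\|N\| > K\sqrt{n})
\]
is the starting point. Thus I never actually need to handle the $s=0$ case directly; I just need to choose $s$ small enough that the $s^{2/3} n^5$ term from Theorem \ref{thm:eiggaptail} is swamped by the $e^{-cn}$ term.

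First, I would invoke the sub-Gaussian tail bound recalled just after Theorem \ref{thm:maineigvec} (citing \cite{MR950344,MR2963170}): since the entries of $N$ are mean-zero sub-Gaussian, there exist constants $K > 1$ and $C_1, c_1 > 0$ such that $\P(\|N\| > K\sqrt{n}) \leq C_1 e^{-c_1 n}$. This takes care of the second term in the display above and also the third term on the right-hand side of Theorem \ref{thm:eiggaptail}.

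Next, apply Theorem \ref{thm:eiggaptail} with this $K$ to get constants $C_2, c_2 > 0$ such that for every $s \geq 0$,
\[
\P\!\left(\Delta(N) \leq s,\ \|N\| \leq K\sqrt{n}\right) \leq C_2 s^{2/3} n^5 + C_2 e^{-c_2 n} + \P(\|N\| \geq K\sqrt{n}).
\]
Now choose $s := e^{-\alpha n}$ with $\alpha > 0$ large enough that $C_2 s^{2/3} n^5 = C_2 e^{-2\alpha n/3} n^5 \leq e^{-c_3 n}$ for some $c_3 > 0$ and all $n$ sufficiently large (any $\alpha > 3c_2/2$ works after absorbing the polynomial factor $n^5$ into the exponential). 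Combining the three estimates yields
\[
\P(\Delta(N) = 0) \leq e^{-c_3 n} + C_2 e^{-c_2 n} + 2 C_1 e^{-c_1 n},
\]
which is bounded by $C e^{-c n}$ for appropriate $C,c > 0$ and all $n$ large, and then by adjusting $C$ for all $n \geq 1$.

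There is no real obstacle here; the whole point is that Theorem \ref{thm:eiggaptail} gives a quantitative tail whose exponent $2/3$ on $s$, together with the polynomial factor $n^5$, is more than compensated by any exponentially small choice of $s$. The sub-Gaussian hypothesis enters only through the operator norm bound, which upgrades the $o(1)$-type estimate \eqref{eq:4momnorm} to an exponential one and thereby makes every surviving term exponentially small.
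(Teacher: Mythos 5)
Your proposal is correct and matches the paper's (implicit) argument: the paper treats this corollary as an immediate consequence of Theorem \ref{thm:eiggaptail} combined with the sub-Gaussian operator norm bound $\P(\|N\| > K\sqrt{n}) \leq Ce^{-cn}$, exactly as you do. The only simplification available is that since Theorem \ref{thm:eiggaptail} is stated for all $s \geq 0$, one may take $s = 0$ directly rather than sending $s = e^{-\alpha n} \to 0$, but this is cosmetic.
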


This corollary is of independent interest and clearly optimal up to the constants for subgaussian entries, while Ge's result only guarantees a polynomially small probability.  The simple spectrum probability bound is also an important technical tool for the results of the next section.

For a directed graph $G = ([n], E)$ with vertex set $[n]$ and edge set $E$, we let the adjacency matrix $A$ be defined by 
$$
A_{ij} = \begin{cases} 1, & \text{if } (i,j) \in E, \\
0, & \text{otherwise}.
\end{cases}
$$ 
We define the directed Erd\H{o}s--R\'enyi random graph to be the random digraph on vertex set $[n]$ such that each edge $(i,j)$ appears independently with probability $p$, for a constant $p \in (0, 1)$.  The adjacency matrix $A$ is random but does not fall under the purview of Theorem \ref{thm:eiggaptail} as $\|A\| = \Omega(n)$ with high probability (so $\P(\| A \| > K \sqrt{n} ) = 1 -o(1)$).  In addition, our results apply to both the model where loops are allowed (so that $(i,i)$ is an edge with probability $p$) as well as the case where loops are not allowed (so that the adjacency matrix has zeros along the diagonal with probability one).   For the adjacency matrix $A$ for either model, we are able to prove the following weaker conclusion.

\begin{theorem} \label{thm:adjeiggaptail}
	There exist constants $C, c > 0$, depending only on $p$, such that
	$$
	\P(\Delta(A) = 0 ) = o(1). 
	$$
\end{theorem}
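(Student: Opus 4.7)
My plan is to decompose $A = pJ + N$ where $N := A - pJ$ has iid centered entries in $\{-p,\,1-p\}$. Since $|N_{ij}| \le 1$, the atom of $N$ is bounded (hence sub-Gaussian), and after rescaling by $\sqrt{p(1-p)}$ it satisfies Assumption \ref{assump:main}. I intend to transfer spectral information from $N$ (where the paper's techniques apply) to $A$ via the rank-one structure of $pJ = p\1\1^{\mathrm{T}}$.

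The first input is simple spectrum of $N$: applying the sub-Gaussian corollary of Theorem \ref{thm:eiggaptail}, $\Prob(\Delta(N) \le n^{-C}) = o(1)$ for any fixed $C > 0$, so with high probability $N$ has simple eigenvalues $\lambda_1,\dots,\lambda_n$ with right/left eigenvectors $\Br_i,\Bl_i$ and spectral projectors $P_i = \Br_i\Bl_i^{\mathrm T}/(\Bl_i^{\mathrm T}\Br_i)$. The second input is $\1$-delocalization of these eigenvectors, namely $|\1^{\mathrm T}\Br_i|,\,|\1^{\mathrm T}\Bl_i| \ge e^{-cn}$ uniformly in $i$. Theorem \ref{thm:maineigvec} is stated only for symmetric atoms, but its small-ball/$\varepsilon$-net machinery adapts to the bounded non-symmetric atom of $N$ (this is one of the directed-graph analogues the paper advertises). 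Granted these two inputs, the residues $\alpha_i := \1^{\mathrm T} P_i \1$ of
\[
g(z) := \1^{\mathrm T}(N - zI)^{-1}\1 \;=\; \sum_{i=1}^{n} \frac{\alpha_i}{\lambda_i - z}
\]
are all nonzero, and the Schur identity
\[
\det(A - zI) \;=\; \det(N - zI)\bigl(1 + p\,g(z)\bigr)
\]
gives $\det(A - \lambda_i I) = p\,\alpha_i \prod_{j \ne i}(\lambda_j - \lambda_i) \ne 0$. Hence the eigenvalues of $A$ are precisely the $n$ roots of the degree-$n$ polynomial
\[
h(z) \;:=\; \prod_{j}(\lambda_j - z) \;+\; p \sum_{i} \alpha_i \prod_{j\ne i}(\lambda_j - z),
\]
and the event $\{\Delta(A) = 0\}$ is exactly the event $\{\operatorname{disc}(h) = 0\}$.

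The hard part is showing $\Prob(\operatorname{disc}(h) = 0) = o(1)$. A double root $z_0$ of $h$ amounts to the over-determined system $g(z_0) = -1/p$ and $g'(z_0) = 0$, and consequently $\operatorname{disc}(h)$ is a polynomial in the entries of $N$ that is not identically zero: the directed cycle furnishes an explicit adjacency matrix $A$ with simple spectrum, giving a lattice witness. Because the atom of $N$ is discrete, generic non-vanishing is not enough; following the Tao--Vu / Ge \cite{ge2017eigenvalue} conditioning scheme, I would expose all rows of $N$ but one, view $\operatorname{disc}(h)$ as a polynomial in the remaining row (whose degree in that row is bounded polynomially in $n$), and apply a Littlewood--Offord-type small-ball estimate row by row. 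The quantitative inputs from the first two steps---eigenvalue separation $\Delta(N) \ge n^{-C}$ and eigenvector delocalization $|\1^{\mathrm T}\Br_i|,|\1^{\mathrm T}\Bl_i| \ge e^{-cn}$---are exactly what certifies that this polynomial is non-degenerate in the free row, so that the small-ball bound is non-trivial. Establishing this non-degeneracy is the main technical bottleneck of the argument.
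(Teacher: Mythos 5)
Your plan is genuinely different from the paper's argument, and as it stands it has two unresolved gaps that are not merely technical. The paper never touches the characteristic polynomial: it separates the single outlier eigenvalue via Theorem \ref{thm:outlier}, and for the bulk it reduces a near-collision of two eigenvalues of $A$ at a shift $z$ to bounds on the two smallest singular values of $A - z$ (Lemma \ref{lem:amatrixtwoeigs}), hence to row-distance problems $\dist(X_k, H_k)$, which are linear small-ball events controlled by the no-structure theorem for approximate null vectors of the adjacency model (Theorem \ref{thm:adjvec}, obtained from the centered argument by taking an extra net over the one-dimensional range of $J$), followed by a union bound over a net of shifts. Your route through $\det(A-z)=\det(N-z)\bigl(1+p\,g(z)\bigr)$ and $\operatorname{disc}(h)$ is an honest alternative idea, but the two inputs you need are exactly the ones you cannot currently supply.

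First, the delocalization claim $|\1^{\mathrm T}\Br_i|,|\1^{\mathrm T}\Bl_i|\ge e^{-cn}$ for all eigenvectors of $N=A-pJ$ is not a routine adaptation of Theorem \ref{thm:maineigvec}: the only place the paper converts eigenvector structure into a lower bound against a \emph{deterministic} test vector is Proposition \ref{prop:reduce}, and that step hinges on the symmetry of the atom (the sign-randomization $N_{ij}\mapsto \eps_i\eps_j N_{ij}$ preserving the law). The atom of $N$ takes values $-p$ and $1-p$ and is symmetric only when $p=1/2$; this is precisely why Corollary \ref{cor:allonesgraph} is stated only for $p=1/2$, while for general $p$ the paper only obtains bounds of the form $\rho(\Bv,0)\le e^{-cn}$, i.e.\ anticoncentration against an \emph{independent random} vector, which does not give $\1^{\mathrm T}\Bv\ne 0$. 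Second, the step you yourself flag as the bottleneck --- conditioning on all rows but one and applying a Littlewood--Offord bound to $\operatorname{disc}(h)$ as a polynomial in the exposed row --- is not how the Tao--Vu/Ge scheme works (those arguments reduce to the \emph{linear} event that a row lies close to a fixed subspace, never to discriminant anticoncentration), and as formulated it faces a real obstruction: the discriminant has degree growing polynomially in $n$ in the entries of a single row, a regime where known polynomial small-ball estimates for Bernoulli variables are vacuous, and no argument is given connecting your certificates ($\Delta(N)\ge n^{-C}$, eigenvector delocalization) to lower bounds on the relevant coefficients. Until both of these are repaired, the proposal does not establish $\P(\Delta(A)=0)=o(1)$; the paper's reduction to two small singular values plus the structure theorem for null vectors of $A-z$ is the mechanism that replaces them.
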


\subsection{Connection to control theory} \label{sec:ct}

Our main results are related to a large collection of works on controllability of network control systems \cite{MR3353397,MR3314353,MR3000441,Liu:2011aa,MR2972713,MR3526189,MR3152278,OT, OT3,OT2,MR2480130,1428782}.  
Unlike many of these previous works, in this note we take a stochastic approach.  In this section we provide a brief overview of linear control theory and its connection to our main results above.  For additional details concerning control of linear systems, the reader is advised to see \cite{MR3000441,Kls} and references within.  

We consider a discrete-time linear state-space system formed from an $n \times n$ matrix $A$ (called the state transition matrix) and a vector $\Bb \in \mathbb{R}^n$ (the given input vector).  The system's state at time $k$ is a vector $\Bx(k)$ which evolves according to the constraint: 
\[ \Bx(k+1) = A \Bx(k) + u(k) \Bb, \]
where each $u(k)$ is a scalar.  The sequence $(u(k))_{k \geq 0}$ is the control of the system.  

Roughly speaking, the system is controllable if we can find the control values $u(\cdot)$ based on arbitrary state values $\Bx(\cdot)$.  Following \cite{MR3000441, MN} we observe that since
\begin{align*}
	\Bx(1) &= A \Bx(0) + u(0) \Bb \\
	\Bx(2) &= A \Bx(1) + u(1) \Bb = A^2 \Bx(0) + u(0) A \Bb + u(1) \Bb \\
	&\vdots \\
	\Bx(n) &= A^n \Bx(0) + u(0) A^{n-1} \Bb + u(1) A^{n-2} \Bb + \cdots + u(n-1) \Bb,
\end{align*}
it follows that
\begin{equation} \label{eq:ct}
	\Bx(n) - A^n \Bx(0) = \begin{pmatrix} A^{n-1} \Bb & A^{n-2} \Bb & \cdots & A\Bb & \Bb \end{pmatrix} \begin{pmatrix} u(0) \\ u(1) \\ \vdots \\ u(n-1) \end{pmatrix}. 
\end{equation} 
Thus, we can find the control values $u(\cdot)$ based on the state values $\Bx(\cdot)$ if and only if the matrix on the right-hand side of \eqref{eq:ct} has full rank.  
This leads immediately to the following definition (known as Kalman's rank condition) for controllability.
\begin{definition} \label{def:kalman}
Let $A$ be an $n \times n$ matrix, and let $\Bb$ be a vector in $\mathbb{R}^n$.  We say the pair $(A,\Bb)$ is \emph{controllable} if the $n \times n$ matrix
\begin{equation} \label{eq:matrix}
	\begin{pmatrix} \Bb & A\Bb & \cdots & A^{n-1} \Bb \end{pmatrix} 
\end{equation} 
has full rank (that is, rank $n$).  Here the matrix in \eqref{eq:matrix} is the matrix with columns $\Bb$, $A\Bb$, \ldots, $A^{n-1} \Bb$.  We say $(A,\Bb)$ is \emph{uncontrollable} if it is not controllable.  
\end{definition}  

Given the state transition matrix $A$, two important problems are:
\begin{enumerate}
\item (Minimal controllability) What is the sparsest nonzero binary vector $\Bb \in \{0,1\}^n$ such that $(A,\Bb)$ is controllable?
\item (Uniform controllability) If $\1$ is the all-ones vector, is $(A,\1)$ controllable? 
\end{enumerate}

Our main results above allow us to study versions of these problems when $A$ is a random matrix.  Loosely speaking, our results show that ``most'' systems are controllable, which confirms a similar phenomenon that was observed previously for systems with Hermitian transition matrices \cite{MN, OT, OT2, OT3}.  In addition, we also consider the case when the vector $\Bb$ is random.  

As corollaries to our main results above, we obtain the following.  

\begin{corollary} \label{cor:allones}
Let $\xi$ be a real-valued symmetric random variable with mean zero, unit variance, and finite fourth moment. Let $N$ be the $n \times n$ iid random matrix with atom variable $\xi$.  Then $(N, \1)$ is controllable with probability $1 - o(1)$, where $\1$ is the all-ones vector.  
\end{corollary}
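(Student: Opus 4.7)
The plan is to reduce controllability of $(N, \mathbf{1})$ to a statement about eigenvectors via the Popov--Belevitch--Hautus (PBH) rank test, and then invoke Theorem \ref{thm:maineigvec} applied to $N^\mathrm{T}$.

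First I would recall the well-known equivalence between Kalman's rank condition (Definition \ref{def:kalman}) and the PBH test: $(N, \mathbf{1})$ is controllable if and only if the $n \times (n+1)$ matrix $[N - \lambda I \mid \mathbf{1}]$ has rank $n$ for every $\lambda \in \mathbb{C}$. A short check shows this fails precisely when there exists a nonzero $\mathbf{v} \in \mathbb{C}^n$ with $\mathbf{v}^{\ast}(N-\lambda I)=0$ and $\mathbf{v}^{\ast}\mathbf{1}=0$. The first equation is equivalent to $N^\mathrm{T} \bar{\mathbf{v}} = \bar\lambda \bar{\mathbf{v}}$, i.e.\ $\bar{\mathbf{v}}$ is a right eigenvector of $N^\mathrm{T}$, and the second condition says $\mathbf{1}^\mathrm{T} \bar{\mathbf{v}} = 0$. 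After normalizing, uncontrollability of $(N,\mathbf{1})$ is therefore equivalent to the existence of a unit eigenvector $\mathbf{u}$ of $N^\mathrm{T}$ with $\mathbf{1}^\mathrm{T}\mathbf{u} = 0$.

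Next I would exploit the fact that $N^\mathrm{T}$ has the same distribution as $N$ (its entries are also iid copies of $\xi$), so that Theorem \ref{thm:maineigvec} applies to $N^\mathrm{T}$. Since $\xi$ is symmetric with mean zero and variance one, the remark following Assumption \ref{assump:main} gives that this assumption holds. Thus, choosing $t = e^{-cn}$ with $c$ as in Theorem \ref{thm:maineigvec}, there exist constants $C,c>0$ and some $K>1$ for which
\[
\Prob\bigl(\exists\text{ unit eigenvector } \mathbf{u}\text{ of }N^\mathrm{T} : |\mathbf{1}^\mathrm{T}\mathbf{u}| \leq e^{-cn}\bigr) \leq C n\, e^{-cn} + \Prob(\|N^\mathrm{T}\| > K\sqrt{n}).
\]
The first term is $o(1)$, and since $\|N^\mathrm{T}\|=\|N\|$, the finite fourth moment hypothesis combined with the bound \eqref{eq:4momnorm} (taking $K$ large enough) yields $\Prob(\|N\|>K\sqrt{n}) = o(1)$. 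Combining Step~1 with this bound gives that the probability of uncontrollability is $o(1)$, which is the desired conclusion.

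There is essentially no obstacle here beyond correctly setting up the PBH reduction; the corollary is a direct synthesis of the controllability criterion, the distributional invariance $N^\mathrm{T} \stackrel{d}{=} N$, the main eigenvector estimate Theorem \ref{thm:maineigvec}, and the standard operator norm bound under a fourth moment assumption. The only minor care needed is ensuring that eigenvectors of $N^\mathrm{T}$ and left eigenvectors of $N$ correspond in the right conjugate sense, but since $\mathbf{1}$ is real, $|\mathbf{1}^\mathrm{T}\mathbf{u}| = |\mathbf{u}^{\ast}\mathbf{1}|$ and no issue arises.
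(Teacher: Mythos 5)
Your proof is correct and follows essentially the same route as the paper: the PBH eigenvector test to reduce uncontrollability of $(N,\1)$ to the existence of a (left) eigenvector orthogonal to $\1$, the distributional identity $N \stackrel{d}{=} N^{\mathrm{T}}$ to pass to right eigenvectors, the small-ball eigenvector bound (Theorem \ref{thm:maineigvec}, which the paper invokes in the equivalent form of Corollary \ref{cor:reduce}/Proposition \ref{prop:reduce}) with $t$ exponentially small, and the fourth-moment operator norm bound \eqref{eq:4momnorm}. Your extra care about the conjugate-transpose convention for left eigenvectors is a fine, harmless refinement of the same argument.
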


\begin{corollary} \label{cor:random}
	Let $\xi$ be a real-valued random variable with mean zero, unit variance, and finite fourth moment. Let $N$ be the $n \times n$ iid random matrix with atom variable $\xi$.  Let $\psi$ be a real-valued random variable that satisfies Assumption \ref{assump:main}, and assume $\Bb \in \mathbb{R}^n$ is a random vector with entries that are iid copies of $\psi$.  Then, with probability $1-o(1)$, $(N, \Bb)$ is controllable.  
\end{corollary}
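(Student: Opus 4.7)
The plan is to reduce controllability to an eigenvector orthogonality condition via the Popov--Belevitch--Hautus (PBH) test: $(N,\Bb)$ is controllable if and only if no left eigenvector of $N$ is orthogonal to $\Bb$. As a first step, Theorem~\ref{thm:eiggaptail} applied with $s=0$, combined with the operator norm bound~\eqref{eq:4momnorm} (valid since $\xi$ has finite fourth moment), gives that $N$ has simple spectrum with probability $1-o(1)$. On this event $N$ admits exactly $n$ linearly independent left eigenvectors $\Bv_1,\ldots,\Bv_n$ in $\C^n$, and the event that $(N,\Bb)$ is uncontrollable reduces to $\bigcup_{i=1}^n \{\Bv_i^{\mathrm{T}}\Bb = 0\}$.

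Using the independence of $N$ and $\Bb$, I would condition on $N$ (on the simple-spectrum event) and control each probability $\P(\Bv_i^{\mathrm{T}}\Bb = 0 \mid N)$ via an Erd\H{o}s--Littlewood--Offord / Hal\'asz-type small-ball inequality applied to the linear form $\Bv_i^{\mathrm{T}}\Bb = \sum_j v_{i,j} b_j$. Assumption~\ref{assump:main} on $\psi$ supplies the required non-degeneracy: \eqref{assump:2} says that the symmetrized variable $\psi-\psi'$ places mass at least $q/2$ on $\{t : 1 \le |t| \le T\}$, which is the standard input to Esseen/Rogozin-type inequalities.

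The main obstacle is that a Rogozin-level bound alone yields only $\P(\Bv_i^{\mathrm{T}}\Bb = 0 \mid N) = O(1/\sqrt{n})$, which is insufficient for a direct union bound over $i=1,\ldots,n$. To close this gap one must invoke delocalization of the left eigenvectors: with probability $1-o(1)$ every $\Bv_i$ should have enough arithmetic structure (large least common denominator, or incompressibility in the Rudelson--Vershynin sense) that the small-ball bound upgrades to $e^{-cn}$, or at worst $n^{-C}$ for any fixed constant $C$. This is the same sort of structural input that underlies the proofs of Theorems~\ref{thm:maineigvec}--\ref{thm:maineigvecscaled}; the adaptation to the non-symmetric $\xi$ setting here is enabled by the fact that it is the iid vector $\Bb$ (rather than the matrix $N$) that carries the anti-concentration, with the symmetry of $\psi-\psi'$ playing the role that the symmetry of $\xi$ plays in Theorem~\ref{thm:maineigvecscaled}. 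Summing the $n$ small-ball bounds and adding the $o(1)$ failure probability for simple spectrum yields $\P((N,\Bb)\text{ uncontrollable}) = o(1)$, which is the desired conclusion.
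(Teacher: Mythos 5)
Your proposal is correct and follows essentially the same route as the paper: the PBH test plus simple spectrum via Theorem~\ref{thm:eiggaptail} and \eqref{eq:4momnorm}, and then, conditioning on $N$, an anti-concentration bound for $\Bb^{\mathrm{T}}\Bv_i$ in which the needed upgrade beyond the Rogozin-level $O(1/\sqrt{n})$ bound is exactly the paper's eigenvector structure result (Corollary~\ref{cor:rhozero}), giving $\rho(\Bv_i,0)\le e^{-cn}$ for all eigenvectors with high probability and making the union bound over $i\in[n]$ work. Your observation that the symmetry of $\xi$ is not needed here because the anti-concentration is carried by the random vector $\Bb$ (through Assumption~\ref{assump:main} on $\psi$) rather than by the random-sign trick is precisely how the paper proceeds in Lemma~\ref{lemma:randomvec}.
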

	We note that Corollary \ref{cor:random} does not require symmetric random variables.
\begin{remark}
	If instead of a bounded fourth moment, we assume the entries of $N$ are subgaussian in Corollaries \ref{cor:allones} or \ref{cor:random}, we can improve the probability bound to $1 - Ce^{-c n}$ for some constants $C,c > 0$.  
\end{remark}

\begin{corollary} \label{cor:basis}
	Let $\xi$ be a real-valued random variable with mean zero, unit variance, and finite fourth moment. Let $N$ be the $n \times n$ iid random matrix with atom variable $\xi$.  Then
	\[ \inf_{1 \leq i \leq n} \P ( (N, e_i) \text{ is controllable}) = 1 - o(1). \] 
\end{corollary}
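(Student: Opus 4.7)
The plan is to reduce Corollary~\ref{cor:basis} to Corollary~\ref{cor:random} through a one-step block decomposition of $N$. Since the entries of $N$ are iid, conjugation by any permutation matrix $P$ preserves the joint distribution of $N$, and $(N,e_i)$ is controllable if and only if $(PNP^{-1}, Pe_i)$ is controllable (the corresponding controllability matrices of Definition~\ref{def:kalman} differ only by the invertible left factor $P$). Choosing $P$ to be the transposition swapping indices $1$ and $i$ then gives $\P((N,e_i)\text{ controllable}) = \P((N,e_1)\text{ controllable})$ for every $i$, so it suffices to prove the claim when $i=1$.

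Invoke the Popov-Belevitch-Hautus test: $(N,e_1)$ is uncontrollable if and only if there is a nonzero left eigenvector $\Bv$ of $N$ with $v_1=0$. Writing $N$ in $1+(n-1)$ block form as
\[
N = \begin{pmatrix} n_{11} & \Br^{\mathrm{T}} \\ \Bc & N' \end{pmatrix},
\]
where $N'$ is the $(n-1)\times(n-1)$ lower-right submatrix, $\Bc = (N_{21},\ldots,N_{n1})^{\mathrm{T}}$ and $\Br = (N_{12},\ldots,N_{1n})^{\mathrm{T}}$, and expanding the equation $\Bv^{\mathrm{T}} N = \lambda \Bv^{\mathrm{T}}$ for $\Bv = (0,\Bv'^{\mathrm{T}})^{\mathrm{T}}$, one sees that such a $\Bv$ exists if and only if there is a nonzero $\Bv' \in \C^{n-1}$ that is a left eigenvector of $N'$ and satisfies $\Bv'^{\mathrm{T}} \Bc = 0$. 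By PBH applied to $N'$ this is precisely the statement that $(N',\Bc)$ is uncontrollable. Consequently the events $\{(N,e_1)\text{ uncontrollable}\}$ and $\{(N',\Bc)\text{ uncontrollable}\}$ coincide.

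Crucially, the entries of $\Bc$ and of $N'$ come from disjoint positions in $N$, so $\Bc$ and $N'$ are independent. Moreover $N'$ is an $(n-1)\times(n-1)$ iid random matrix with atom $\xi$ (of mean zero, unit variance, and finite fourth moment), and $\Bc$ is an independent random vector in $\R^{n-1}$ whose entries are iid copies of $\xi$. Because $\xi$ has unit variance it satisfies Assumption~\ref{assump:main}, by the remark immediately following that assumption. Corollary~\ref{cor:random}, applied to the pair $(N',\Bc)$ of dimension $n-1$, therefore gives
\[
\P\bigl((N',\Bc)\text{ uncontrollable}\bigr) = o(1),
\]
and combining with the event equivalence above and the permutation reduction from the first paragraph yields $\inf_i \P((N,e_i)\text{ controllable}) = 1-o(1)$.

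The main obstacle here is conceptual rather than computational: finding the right reduction. Once one recognizes that PBH combined with the block decomposition converts the vanishing of a fixed coordinate of some eigenvector of $N$ into the failure of cyclicity of a random iid input vector for a smaller iid matrix, the result is immediate from Corollary~\ref{cor:random}, and all of the delicate eigenvector small-ball machinery (in particular Theorem~\ref{thm:maineigvecscaled} and the simple-spectrum estimate derived from Theorem~\ref{thm:eiggaptail}) is reused without further adaptation.
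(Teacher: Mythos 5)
Your proposal is correct and follows essentially the same route as the paper: reduce to $i=1$ by symmetry, block-decompose $N$, and use the PBH test to convert the vanishing first coordinate of an eigenvector into the orthogonality of an eigenvector of the independent $(n-1)\times(n-1)$ iid submatrix $N'$ to an independent iid vector. The only cosmetic difference is that you package the final step as an application of Corollary~\ref{cor:random} to the pair $(N',\Bc)$ (via one more invocation of PBH), whereas the paper invokes the underlying eigenvector statement, Lemma~\ref{lemma:randomvec}, directly on $N'$ and the first-row vector $\BX$; these are the same argument.
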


Corollary \ref{cor:allones} and \ref{cor:basis} answer the uniform controllability and minimal controllability questions from above for non-Hermitian random matrices.  

We have corresponding corollaries for the adjacency matrix of directed random graphs.

\begin{corollary} \label{cor:allonesgraph}
	Let $A$ be the $n \times n$ adjacency matrix of an Erd\H{o}s--R\'enyi directed graph with constant edge probability $p = 1/2$.  Then $(A, \1)$ is controllable with probability $1-o(1)$, where $\1$ is the all-ones vector.  
\end{corollary}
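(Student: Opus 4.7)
The plan is a centering reduction to Corollary \ref{cor:allones}. In the loops-allowed model, each entry of $A$ is Bernoulli$(1/2)$, so we may write $A = \tfrac{1}{2} J + N$, where $J = \1 \1^T$ and $N$ is an iid random matrix with symmetric atom distribution uniform on $\{-1/2, 1/2\}$. (In the no-loops model, $A = \tfrac{1}{2}(J - I) + \tilde N$ with $\tilde N$ having iid $\pm 1/2$ off-diagonal entries and deterministic zero diagonal; the argument proceeds analogously.) The shift by $J$ is chosen so that $\1$ lies in a convenient invariant subspace, which allows us to trade the non-centered matrix $A$ for the iid, symmetric matrix $N$ without changing the controllability question.

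The first key step is to show that controllability of $(A, \1)$ is equivalent to controllability of $(N, \1)$. Equivalently, the Krylov subspaces $V(A, \1) := \Span(\1, A\1, \ldots, A^{n-1}\1)$ and $V(N, \1)$ coincide. The reason is that any subspace $W$ containing $\1$ satisfies $J \Bv = (\1^T \Bv) \1 \in W$ for every $\Bv \in W$, so $W$ is automatically $J$-invariant. Since $A - N = \tfrac{1}{2} J$, the subspace $W$ is $A$-invariant if and only if it is $N$-invariant. Because $V(A, \1)$ and $V(N, \1)$ are the smallest $A$-invariant, respectively $N$-invariant, subspaces containing $\1$, they must agree, and the controllability matrices of $(A, \1)$ and $(N, \1)$ therefore have equal rank.

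The second step is to apply Corollary \ref{cor:allones} to $2N$, whose entries are iid, symmetric, mean zero, unit variance (uniform on $\{-1, +1\}$), and bounded, hence of finite fourth moment. The corollary gives that $(2N, \1)$ is controllable with probability $1 - o(1)$. Since controllability is scale invariant --- the columns of the controllability matrix of $(cM, \1)$ and of $(M, \1)$ span the same subspace for any nonzero scalar $c$ --- it follows that $(N, \1)$, and hence $(A, \1)$, is controllable with probability $1 - o(1)$.

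The main obstacle is the no-loops case: $\tilde N$ has a deterministic zero diagonal and therefore lies outside the iid setting of Corollary \ref{cor:allones}. This can be addressed by extending Theorem \ref{thm:maineigvec} to tolerate a bounded diagonal modification, since its underlying small-ball and Littlewood--Offord arguments depend on the iid structure only through the off-diagonal entries of each row and are insensitive to replacing a single coordinate by a deterministic value. An alternative route, which avoids the explicit reduction, is to invoke the simple-spectrum bound of Theorem \ref{thm:adjeiggaptail} together with an eigenvector small-ball estimate for $A$ (the direct adjacency analog of Theorem \ref{thm:maineigvec}) and the Popov--Belevitch--Hautus criterion: on the event of simple spectrum, $(A,\1)$ is controllable iff $\1$ is non-orthogonal to every left eigenvector of $A$, which in turn follows by the eigenvector estimate applied to $A^T$ (identically distributed to $A$) and a union bound over the $n$ eigenvectors with threshold $t = e^{-cn}$.
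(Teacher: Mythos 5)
Your proposal is correct and follows the same overall route as the paper: center by $\tfrac12 J$, reduce to an iid symmetric (Rademacher, after rescaling) matrix, and invoke the iid controllability result, handling the zero-diagonal issue in the no-loops model exactly as the paper does via its remark that Proposition \ref{prop:reduce} tolerates a deterministic zero diagonal. The one place you genuinely diverge is the justification of the transfer from $A$ to the centered matrix: the paper argues through the PBH test, observing that a (left) eigenvector of $A$ orthogonal to $\1$ is precisely an eigenvector of $B = A - \tfrac12 J$ orthogonal to $\1$, whereas you argue directly on Kalman's rank condition by noting that any subspace containing $\1$ is automatically $J$-invariant, so the Krylov subspaces of $(A,\1)$ and $(N,\1)$ coincide. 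Your invariant-subspace argument is more elementary (it never leaves the rank condition and sidesteps the left-versus-right eigenvector bookkeeping that the PBH route requires), while the paper's PBH formulation is the one that plugs directly into its eigenvector small-ball machinery and is reused verbatim in the other controllability corollaries; both are complete proofs of the deterministic reduction.
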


\begin{corollary} \label{cor:randomA}
		Let $A$ be the $n \times n$ adjacency matrix of an Erd\H{o}s--R\'enyi directed graph with constant edge probability $p \in (0,1)$.  Let $\psi$ be a real-valued random variable that satisfies Assumption \ref{assump:main}, and assume $\Bb \in \mathbb{R}^n$ is a random vector with entries that are iid copies of $\psi$.   Then, with probability $1-o(1)$, $(A, \Bb)$ is controllable.  
\end{corollary}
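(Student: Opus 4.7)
The plan is to reduce controllability to the Popov--Belevitch--Hautus (PBH) criterion: on the event that $A$ has simple spectrum, $(A, \Bb)$ is controllable if and only if no left eigenvector of $A$ is orthogonal to $\Bb$. First, Theorem \ref{thm:adjeiggaptail} supplies the simple spectrum event with probability $1 - o(1)$, so the task reduces to bounding the probability that $\Bv^{\mathrm T}\Bb = 0$ for some left eigenvector $\Bv$ of $A$.

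Next I would exploit the independence of $A$ and $\Bb$, conditioning on $\Bb$. By Assumption \ref{assump:main} and a Chernoff bound, with probability $1 - e^{-cn}$ the random vector $\Bb$ has at least $c'n$ coordinates of magnitude in $[1, T]$. On this event I would apply an adjacency-matrix analog of Theorem \ref{thm:maineigvecscaled}, which the paper's machinery is designed to produce, to obtain
\[ \P_A\bigl(\exists \text{ left eigenvector } \Bv \text{ of } A : |\Bv^{\mathrm T}\Bb| \leq t\bigr) \leq C n t + o(1) \qquad \text{for } t \geq e^{-cn}. \]
Choosing $t = e^{-cn/2}$ makes the right-hand side $o(1)$, and since $\{\Bv^{\mathrm T}\Bb = 0\} \subset \{|\Bv^{\mathrm T}\Bb| \leq t\}$ for every $t > 0$, the PBH obstruction is ruled out with probability $1 - o(1)$. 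Combining with the simple spectrum event completes the proof.

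The hard part will be verifying the hypotheses of this eigenvector bound for our $\Bb$. A typical realization of $\Bb$ under Assumption \ref{assump:main} has $\Omega(n)$ coordinates outside any fixed annulus $[B^{-1}, B]$, far more than the $O(\sqrt n)$ permitted in Theorem \ref{thm:maineigvecscaled} as stated. To handle this I would condition on the ``bad'' coordinates of $\Bb$ (their positions and values) and use the conditional iid structure of the remaining coordinates $(b_i)_{i \in S}$, which all lie in $[1, T]$. The key technical step is then to establish that the restrictions $(\Bv_i)_{i \in S}$ of left eigenvectors of $A$ to the random good set $S$ retain enough arithmetic regularity---for example, an exponentially large essential least common denominator---to power a small-ball estimate at scale $e^{-cn}$. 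This is exactly the kind of delicate analysis behind Theorems \ref{thm:maineigvec} and \ref{thm:maineigvecscaled}, now adapted to adjacency matrices of directed Erd\H{o}s--R\'enyi graphs and to random sampling of coordinates.
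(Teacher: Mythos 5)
Your reduction via the PBH test and Theorem \ref{thm:adjeiggaptail} is fine, but the core of your argument conditions on $\Bb$ and runs the randomness through $A$, and this direction does not work here. The ``adjacency-matrix analog of Theorem \ref{thm:maineigvecscaled}'' you invoke is not something the paper's machinery produces for general $p\in(0,1)$: the bridge from the small-ball estimate $\rho(\Bb\odot\Bv,t)\lesssim t$ to a lower bound on $|\Bb^{\mathrm T}\Bv|$ is Proposition \ref{prop:reduce}, whose proof conjugates the matrix by random signs ($N'=(\eps_i\eps_j N_{ij})$) and therefore requires the atom variable to be symmetric. Bernoulli$(p)$ entries are not symmetric (after centering) unless $p=1/2$, which is exactly why Corollary \ref{cor:allonesgraph} is restricted to $p=1/2$ while Corollary \ref{cor:randomA} is not. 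A second gap: the structural results for $A$ (Theorem \ref{thm:adjvec} and any scaled analog) only cover eigenvectors whose eigenvalues lie in a disk of radius $O(\sqrt n)$; the Perron outlier at $pn+o(\sqrt n)$ is not covered and needs a separate argument. Finally, as you yourself note, a typical $\Bb$ violates the $(B,m)$-delocalization hypothesis with $m=O(\sqrt n)$, and your proposed repair --- that restrictions of eigenvectors to a random linear-sized ``good'' set retain exponentially large LCD --- is a substantial unproven claim (the paper only establishes stability of the LCD under deleting a single coordinate).

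The paper's proof conditions the other way and is much simpler: fix $A$ (hence the eigenvectors $\Bv_i$, on the simple-spectrum event) and use the randomness of $\Bb$. Since $\Bb$ has iid entries satisfying Assumption \ref{assump:main} and is independent of $A$, one has $\P(\Bb^{\mathrm T}\Bv_i=0\mid A)\le\rho(\Bv_i,0)$ by definition of the L\'evy concentration function, and Theorem \ref{thm:adjvec} makes this exponentially small simultaneously for all eigenvectors with eigenvalues in the disk; a union bound over the $n$ eigenvectors, together with a separate treatment of the outlier eigenvector via Perron--Frobenius and strong connectivity (all entries strictly positive, then the L\'evy--Kolmogorov--Rogozin inequality gives $\P(\Bb^{\mathrm T}\Bv=0)\le C/\sqrt n$), finishes the proof. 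No symmetrization and no delocalization condition on $\Bb$ is needed.
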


\begin{corollary} \label{cor:basisgraph}
		Let $A$ be the $n \times n$ adjacency matrix of an Erd\H{o}s--R\'enyi directed graph with constant edge probability $p \in (0,1)$.  Then
		\[ \inf_{1 \leq i \leq n} \P( (A, e_i) \text{ is controllable}) = 1- o(1). \]
\end{corollary}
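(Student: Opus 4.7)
The plan is to combine the classical Popov--Belevitch--Hautus (PBH) eigenvector criterion for controllability with Theorem \ref{thm:adjeiggaptail} to reduce Corollary \ref{cor:basisgraph} to a small-ball statement about left eigenvectors of a principal submatrix of $A$. By Theorem \ref{thm:adjeiggaptail}, the event that $A$ has simple spectrum has probability $1-o(1)$. On this event, the PBH test asserts that $(A,e_i)$ is controllable if and only if no left eigenvector $\Bw$ of $A$ has $w_i = 0$, because $\rank([A - \lambda I, e_i]) = n$ for every eigenvalue $\lambda$ of $A$ precisely when $e_i$ is not orthogonal to the one-dimensional left nullspace of $A - \lambda I$. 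Thus it suffices to establish, for each fixed $i \in [n]$,
\[
\P\bigl(\exists \text{ left eigenvector } \Bw \text{ of } A \text{ such that } w_i = 0\bigr) = o(1).
\]

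The next step exposes the $i$-th column as an independent random vector. Let $A^{(i)}$ denote the $(n-1)\times(n-1)$ principal submatrix of $A$ obtained by deleting row and column $i$, and let $\Bc \in \{0,1\}^{n-1}$ denote the $i$-th column of $A$ with its $i$-th entry removed, so that $\Bc$ has iid $\mathrm{Bernoulli}(p)$ coordinates and is independent of $A^{(i)}$. If $\Bw$ is a left eigenvector of $A$ with eigenvalue $\lambda$ and $w_i=0$, then the restriction $\Bw' := (w_k)_{k \ne i}$ is a nonzero left eigenvector of $A^{(i)}$ for the same $\lambda$; moreover, the $i$-th scalar equation $\sum_{k} w_k A_{ki} = \lambda w_i$ collapses, using $w_i=0$, to $\Bw'^T \Bc = 0$. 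Hence
\[
\P\bigl(\exists \Bw : w_i = 0\bigr) \;\leq\; \P\bigl(\exists \text{ left eigenvector } \Bw' \text{ of } A^{(i)} \text{ such that } \Bw'^T \Bc = 0\bigr).
\]

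To bound the right-hand side, I would condition on $A^{(i)}$ -- thereby fixing the (at most $n-1$) left eigenvectors $\Bw'$ -- and exploit the independence of $\Bc$. Writing $\Bc = p\1 + \Bc_0$ with $\Bc_0$ having iid centered Bernoulli entries, the event $\Bw'^T \Bc = 0$ becomes $\Bw'^T \Bc_0 = -p\,\Bw'^T \1$, which is a small-ball event at an $A^{(i)}$-measurable target. The aim is to show, uniformly over all left eigenvectors $\Bw'$ of $A^{(i)}$, that this probability is $o(1/n)$, so that a union bound followed by integration over $A^{(i)}$ delivers the desired $o(1)$ bound. This is the directed Erd\H{o}s--R\'enyi analogue of Theorem \ref{thm:maineigvec}, in the same spirit used for Corollaries \ref{cor:allonesgraph} and \ref{cor:randomA}.

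The main obstacle is that the $\mathrm{Bernoulli}(p)$ atom is neither symmetric nor centered, so Theorem \ref{thm:maineigvec} does not apply to $A^{(i)}$ off the shelf. I would write $A^{(i)} = \widetilde{N} + p(J - I)$ with $\widetilde{N}$ an iid matrix of centered (and, after a trivial rescaling to normalize $|\xi - \xi'|$, Assumption~\ref{assump:main}-compliant) Bernoulli entries, treat $p(J - I)$ as a controlled rank-one shift, and run the Ge-style inverse Littlewood--Offord machinery underlying Theorems \ref{thm:maineigvec} and \ref{thm:maineigvecscaled} to show that every left eigenvector of $A^{(i)}$ has exponentially large least common denominator with overwhelming probability. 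This yields a small-ball bound of order $e^{-cn}$ at any fixed target, comfortably absorbing the union bound over the eigenvectors of $A^{(i)}$ and closing the argument.
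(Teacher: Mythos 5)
Your skeleton is essentially the paper's: the PBH test, deleting row and column $i$ to expose an independent Bernoulli column $\Bc$, observing that a left eigenvector $\Bw$ of $A$ with $w_i=0$ restricts to a left eigenvector $\Bw'$ of $A^{(i)}$ with $\Bw'^{\mathrm{T}}\Bc=0$, and then conditioning on $A^{(i)}$ and applying a small-ball bound — this is exactly how Corollary \ref{cor:basis} is proved and how Corollary \ref{cor:basisgraph} runs it via Lemma \ref{lemma:randomvecA}. The genuine gap is in your final step. You claim that, after writing $A^{(i)}=\widetilde N+p(J-I)$ and running the covering machinery, \emph{every} left eigenvector of $A^{(i)}$ has exponentially large LCD with overwhelming probability. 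That statement is false: the low-rank-shift covering argument of Section \ref{sec:directedrandomgraphs} only treats spectral parameters of modulus at most $K\sqrt n$ (the paper is explicit that the complex shifts in the net must have norm $O(\sqrt n)$, and Theorem \ref{thm:adjvec} is accordingly stated only for eigenvalues with $|\lambda|\le 2\sqrt n$), whereas $A^{(i)}$ — itself the adjacency matrix of a directed Erd\H{o}s--R\'enyi graph — has one outlier eigenvalue of size $pn+o(\sqrt n)$ by Theorem \ref{thm:outlier}. Its eigenvector is the Perron vector, which is close to the normalized all-ones vector and has LCD of order $\sqrt n$, not $e^{cn}$; so your uniform "exponentially large LCD, hence $e^{-cn}$ small ball" bound breaks for this one eigenvector, and your proposal offers no alternative treatment of it.

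The paper closes exactly this case separately: Theorem \ref{thm:outlier} confines all remaining eigenvalues to the disk where Lemma \ref{lemma:randomvecA} (built on Theorem \ref{thm:adjvec} and Theorem \ref{thm:adjeiggaptail}) applies, and for the outlier it combines strong connectivity (Theorem \ref{thm:stronglyconnected}) with Perron--Frobenius (Theorem \ref{thm:perronfrobenius}) to get that the top eigenvector of $A$ is entrywise strictly positive with probability $1-o(1)$, so $\Be_i^{\mathrm{T}}\Bv\neq 0$ deterministically on that event and no anti-concentration is needed. If you prefer to stay in your submatrix formulation, the same idea patches your argument: with probability $1-o(1)$ the graph on $[n]\setminus\{i\}$ is strongly connected, the left Perron eigenvector $\Bw'$ of $A^{(i)}$ is entrywise positive, and since $\Bc\ge 0$ the event $\Bw'^{\mathrm{T}}\Bc=0$ forces $\Bc=0$, which has probability $(1-p)^{n-1}$; alternatively one can use a Rogozin-type bound as in Lemma \ref{lem:rogozin}. (Minor remarks, not gaps: simple spectrum is not needed for the PBH reduction itself — in the paper it enters only inside the conditioning/sign-symmetrization of the lemmas, and for your union bound you need it for $A^{(i)}$ rather than for $A$; and your rescaling remark is fine, since Bernoulli entries fail \eqref{assump:1} only at scale one.)
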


Unlike Corollary \ref{cor:allonesgraph}, Corollaries \ref{cor:randomA} and \ref{cor:basisgraph} do not demand that $p = 1/2$.  
\subsection{Overview and outline}

In Section \ref{sec:approximatenull}, we isolate the key structural result which guarantees that any vector near the kernel of an iid random matrix (shifted by a complex number) is unstructured.  The investigation of the structure of vectors as they relate to their anti-concentration has a long history in random matrix theory beginning with the infamous singularity problem for discrete random matrices \cite{komlos1967determinant, kahn1995probability, tao2006random, costello2006random, bourgain2010singularity, nguyen2012inverse, ferber2019counting, ferber2019singularity, tikhomirov2020singularity}.  Strong bounds on the least singular value in both the symmetric and non-symmetric setting used similar tools \cite{edelman1988eigenvalues, spielman2004smoothed, rudelson2008LO, rudelson2009smallest,vershynin2014symmetric, tao2009inverse, tao2010smooth, luh2018complex, rebrova2018coverings, jain2019approximate, ge2017eigenvalue}. The quantitative estimates in Section \ref{sec:approximatenull} build on this rich history of anti-concentration in random matrix theory.  In particular, our quantitative estimates improve on those in \cite{ge2017eigenvalue}.  The proof uses a delicate covering argument to exclude structured vectors.  The primary obstacle that appears in the non-Hermitian setting is that the eigenvectors can now reside in the \emph{complex} unit sphere which has doubled the dimension of the space that must be covered.  The key geometric insight that resolves this issue is expounded on in Section \ref{subsec:nets}.  In Section \ref{sec:eigstruc}, we use an approximation argument to extend the structural result to eigenvectors of a non-Hermitian matrix.  We utilize a multi-scale argument to extend our structural result to small-ball probability bounds on all scales.     

The arguments in Sections \ref{sec:approximatenull} and \ref{sec:eigstruc} do  not immediately apply to the adjacency matrix of a random directed graph because the operator norm of the adjacency matrix is $\Omega(n)$ with high probability.  In Section \ref{sec:directedrandomgraphs}, we describe the method to generalize the structural result to directed graphs.  The key observation is that the matrix of expectations is low-rank so the covering arguments from the previous sections can be extended as the size of nets do not incur many new dimensions.  We then utilize previous results on the spectrum of rank-1 perturbations of random matrices which state that the eigenvalues of perturbed matrix are all contained in the centered disk with radius determined by the spectral norm of the unpertrubed matrix, except for one outlier.  To understand the structure of the eigenvector corresponding to the outlier, we use the Perron--Frobenius theorem for non-negative matrices.  In Section \ref{sec:scaledvectors}, we show that for a fixed vector $\Bb$, even $\Bb \odot \Bu$ has no structure, where $\odot$ denotes the Hadamard product and $\Bu$ is an eigenvector.  

Finally, in Section \ref{sec:controllability}, we complete the proofs of our main results and deduce the control theory corollaries from our eigenvector structure results.  To relate the structure of eigenvectors to the controllability of the matrix requires the introduction of auxiliary random signs in the matrix that preserve the distribution of the matrix and only alter the signs of the entries in the eigenvectors.  The first condition will require symmetric entries in the random matrix for some of the control theory results.    

In Appendix \ref{appendix:tailtwoeigs} we include the proof of Theorem \ref{thm:eiggaptail} and in Appendix \ref{appendix:adjtailtwoeigs} we complete the proof of Theorem \ref{thm:adjeiggaptail}. They are similar to previous arguments in this article and in \cite{ge2017eigenvalue}.   

\subsection*{Acknowledgements}
We thank Hoi H. Nguyen for pointing out reference \cite{ge2017eigenvalue}.  The second author thanks Behrouz Touri for introducing him to the problem and answering numerous questions.

\section{Arithmetic Structure  of Approximate Null Vectors} \label{sec:approximatenull}

In this section, we study the arithmetic structure of approximate eigenvectors.  We let $\EE_K$ denote the event that $\|N\| \leq K \sqrt{n}$.  The goal of this section is to prove the following result.
\begin{theorem} \label{thm:nullvectors}
	Let $N$ denote the $n \times n$ matrix with entries that are iid copies of a random variable $\xi$ that satisfies Assumption \ref{assump:main}.
	There exist constants $c_{\ref{thm:nullvectors}}, c_{\ref{thm:nullvectors}}', c_{\ref{thm:nullvectors}}'', c_{\star}, \mu > 0$ such that the following holds.
	We let $M$ denote the matrix $N - \lambda \sqrt{n} I$ where $\lambda$ is a fixed complex number with $|\lambda| \leq K$ and $\delta = \im(\lambda) \geq e^{-c_{\star} n}$.  
	If $$c_{\ref{thm:nullvectors}}' \sqrt{n}/\delta \leq \widetilde{D} \leq e^{c_{\ref{thm:nullvectors}}'' n}$$
	then with probability at least $1 - e^{-c_{\ref{thm:nullvectors}} n}$, on the event $\EE_K$, any complex vector, $\Bz \in S_{\C}^{n-1}$, such that $\|M \Bz\| \leq K \mu n/ \widetilde{D}$ has $d(\Bz) \geq c_{\ref{lem:lowercorrelation} }\delta$ and $D(\Bz) \geq \widetilde{D}$.
\end{theorem}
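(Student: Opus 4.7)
The plan is to follow the Rudelson--Vershynin inverse Littlewood--Offord framework as adapted to the non-Hermitian iid setting in \cite{ge2017eigenvalue}, refining it to deliver the exponential-in-$n$ exceptional probability stated. I would first split $S_\C^{n-1}$ into a compressible part (vectors close to sparse or to a low-dimensional subspace) and its incompressible complement. The compressible part is handled by a coarse $\eps$-net of subexponential cardinality combined with a crude small-ball bound on $\|M\Bz\|$ obtained from a single row of $M$, so the whole difficulty is concentrated in the incompressible regime.

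For an incompressible $\Bz$ with $D(\Bz)\le\widetilde D$ and $d(\Bz)\ge c\delta$, the key anti-concentration input should be of the form
\begin{equation*}
\sup_{a\in\C}\Prob\bigl(|\langle R,\Bz\rangle-a|\le t\bigr)\le C\bigl(t+1/D(\Bz)\bigr)
\end{equation*}
for each row $R$ of $M$ (whose entries are independent up to the single diagonal shift by $-\lambda\sqrt n$) and all $t\ge 1/D(\Bz)$. Tensorizing over the $n$ independent rows yields
\begin{equation*}
\Prob\bigl(\|M\Bz\|\le t\sqrt n\bigr)\le\bigl(C(t+1/D(\Bz))\bigr)^n,
\end{equation*}
so taking $t\sim\mu\sqrt n/\widetilde D$ gives $(C\mu)^n$ for a fixed structured $\Bz$. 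The proof is then concluded by union bounding this fixed-$\Bz$ estimate over a net for the set of structured $\Bz\in S_\C^{n-1}$, i.e.\ those with $D(\Bz)<\widetilde D$ or $d(\Bz)<c\delta$.

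The main obstacle, precisely the point flagged in the outline, is that $S_\C^{n-1}$ has real dimension $2n-1$, so a naive $\eta$-net has cardinality $(1/\eta)^{2n}$, swamping the $(C\mu)^n$ small-ball factor. The geometric resolution is the $U(1)$-invariance of $\|M\Bz\|$ under $\Bz\mapsto e^{i\theta}\Bz$: every $\Bz\in S_\C^{n-1}$ has a phase representative with $\langle\re\Bz,\im\Bz\rangle=0$, so one may parametrize $\Bz=(\cos\alpha)\Bu+i(\sin\alpha)\Bv$ with orthogonal $\Bu,\Bv\in S_\R^{n-1}$. In this parametrization $d(\Bz)$ should control $|\sin\alpha|$ while $D(\Bz)$ should be read as a joint LCD of the real pair $(\Bu,\Bv)$. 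Once the phase gauge is fixed, the level set $\{D(\Bz)\in[D,2D)\}$ admits a net of cardinality $D^n e^{o(n)}$ rather than $D^{2n}$, which is exactly what the tensorization absorbs.

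The roles of the hypotheses are then transparent. The conclusion $d(\Bz)\ge c_{\ref{lem:lowercorrelation}}\delta$ comes from the fact that the diagonal shift $-i\delta\sqrt n\, I$ forces any approximate null vector of $M$ to have nontrivial two-dimensional complex content: separating real and imaginary parts in $M\Bz\approx 0$ shows $|\sin\alpha|\ll\delta$ is incompatible with $\|M\Bz\|\le K\mu n/\widetilde D$ once $\widetilde D\ge c'\sqrt n/\delta$, which is exactly the stated lower bound. The upper bound $\widetilde D\le e^{c''n}$ keeps $\widetilde D^n\cdot(C\mu)^n\le e^{-cn}$ so that the union bound closes. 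I expect the hardest step to be the clean formulation of the complex LCD $D(\cdot)$ and companion $d(\cdot)$ so that the phase reduction simultaneously (i) produces nets of size $\widetilde D^n$ rather than $\widetilde D^{2n}$ and (ii) supports the sharp row-by-row anti-concentration bound displayed above; this is the geometric insight promised in Section~\ref{subsec:nets}, and everything else is a careful bookkeeping of the resulting net/union-bound balance.
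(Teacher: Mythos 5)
Your overall architecture (compressible/incompressible split, level sets of the LCD, tensorized row-wise anti-concentration versus a net, and the derivation of $d(\Bz)\ge c\delta$ from the imaginary shift $i\delta\sqrt n$) matches the paper, but the central mechanism you propose for beating the doubled dimension of $S_\C^{n-1}$ does not work. Fixing the $U(1)$ phase gauge (e.g.\ so that $\langle\re\Bz,\im\Bz\rangle=0$, or, as the paper does, so that $D(\Bz)=D(\re\Bz)$) removes only \emph{one} real dimension: the gauge-fixed set still has real dimension $2n-2$, so a net at scale $\mu\sqrt n/D$ still has cardinality of order $(D/\sqrt n)^{2n}$, not $D^n e^{o(n)}$ as you claim. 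Combined with your single-row linear bound $C(t+1/D(\Bz))$, the union bound then produces a factor of order $(CD/\mu\sqrt n)^{2n}\cdot(C'\sqrt n/D)^n\sim(C''D/\sqrt n)^n$, which blows up for $D\gg\sqrt n$. So as written the incompressible case fails.

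The paper's actual resolution is a dichotomy in the real--imaginary correlation $d(\Bz)$, and it uses a \emph{quadratic} small-ball inequality that your proposal never invokes. For genuinely complex vectors ($d(\Bz)\ge d_0$), Theorem \ref{thm:smallballcomplex} applied with $m=2$ gives the per-row bound $\frac{C}{d(\Bz)}\bigl(t+\frac 1D\bigr)^2$; after tensorization the extra factor $\bigl((t+1/D)/d\bigr)^n$ exactly absorbs the genuinely $2n$-dimensional net, whose size Proposition \ref{prop:complexnet} controls as $\sim D^{2n}d^{n-1}$ because $\By$ is confined to a cylinder of radius $O(d/\alpha)$ around the lattice direction $\Bp$ approximating $D(\Bz)\Bx$. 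For essentially real vectors ($d(\Bz)\le d_0$), that cylinder has radius comparable to the net scale, so the net is effectively $n$-dimensional and only then does the linear bound suffice (Proposition \ref{prop:realnet} and Theorem \ref{thm:levelreal}). Your proposal applies the linear bound with an $n$-dimensional net uniformly, which is justified in neither regime; to repair it you would need to introduce the correlation parameter $d(\Bz)$ into both the small-ball estimate and the net count and run the two cases separately, which is precisely the content of Sections 2.4--2.6 of the paper. A secondary, fixable imprecision: the phase-fixing you want is the one making $D(\Bz)=D(\re\Bz)$ (so the lattice approximation applies to the real part), not the orthogonality gauge, and the exclusion of vectors with compressible \emph{real part} (Lemma \ref{lem:realcomp}, with its own dyadic decomposition in $\|\re\Bz\|$) is a separate step your outline omits.
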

	$d(\Bz)$ and $D(\Bz)$ denote the real-imaginary correlation and the LCD respectively and are defined formally in Definitions \ref{def:complexvector} and \ref{def:LCD} below.
  Some aspects of the proofs below are inspired by arguments from \cite{RVnogaps, ge2017eigenvalue}, but we have introduced several modifications and novelties to handle our current setting.

\begin{definition}
	For two constants $a, b \in (0,1)$, we say a vector $\Bx \in S^{n-1}_{\C}$ is \emph{compressible} if there is a $an$-sparse vector $\Bx'$ such that $\|\Bx - \Bx'\| \leq b$.  We denote the set of compressible vectors as $\Comp_{\C}(a,b)$. Let $\Incomp_{\C}(a,b)$ denote the \emph{incompressible} vectors, which are those on the unit sphere that are not compressible.  The same definitions apply to real vectors, in which case, we use $\Comp_{\R}$ and $\Incomp_{\R}$.
\end{definition}

The following is a well-known result that follows from tensorizing a crude estimate for fixed vectors and taking a union bound.
\begin{lemma} \label{lem:comp}
	There exist constants $a, b, c_{\ref{lem:comp}} \in  (0, 1)$ and $K > 2$ such that 
	$$
	\P\left(\inf_{\Bz \in \Comp(a,b)} \|M \Bz\| \leq c_{\ref{lem:comp}} \sqrt{n} \text{ and } \EE_K \right) \leq e^{-c_{\ref{lem:comp}} n}.
	$$ 
\end{lemma}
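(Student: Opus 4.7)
The plan is a standard Rudelson--Vershynin style net argument, adapted to complex vectors and to the matrix $M = N - \lambda \sqrt n I$, carried out in three steps: an individual small-ball estimate, a net over the compressible set, and a union bound controlled by the event $\EE_K$.

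First, I would prove a fixed-vector small-ball estimate: for any deterministic $\Bx \in S^{n-1}_\C$,
\[
\Prob\bigl(\|M\Bx\| \le c_0 \sqrt n\bigr) \le e^{-c_1 n}
\]
for universal constants $c_0, c_1 > 0$. The coordinates $(M\Bx)_i = \sum_j (N_{ij}-\lambda\sqrt n\,\delta_{ij}) x_j$ are independent across $i$. Since $\Bx \in S^{n-1}_\C$, one of its real or imaginary parts has Euclidean norm $\ge 1/\sqrt 2$; combined with Assumption \ref{assump:main} (specifically the non-degeneracy \eqref{assump:1} together with \eqref{assump:2}--\eqref{assump:3}), a standard Esseen-type bound yields a uniform small-ball inequality $\sup_{w \in \C}\Prob(|(M\Bx)_i - w| \le t) \le C(t + n^{-1/2})$ for $t$ above a fixed constant, and tensorizing over the $n$ independent rows produces the exponential bound displayed above. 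This step is essentially known and is the complex analogue of the arguments in \cite{RVnogaps, ge2017eigenvalue}.

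Second, I would build a net for $\Comp_\C(a,b)$. A compressible unit vector is within Euclidean distance $b$ of some $an$-sparse unit complex vector. The collection of $an$-sparse unit vectors in $\C^n$ is the union over supports $S \subset [n]$, $|S| = an$, of unit spheres in $\C^S$, each of real dimension $2an-1$. A standard volumetric bound produces an $\epsilon$-net $\NN$ of these sparse spheres of cardinality
\[
|\NN| \le \binom{n}{an}\Bigl(\tfrac{3}{\epsilon}\Bigr)^{2an} \le \Bigl(\tfrac{Ce}{a\epsilon^2}\Bigr)^{an}.
\]
Every $\Bz \in \Comp_\C(a,b)$ then lies within distance $b + \epsilon$ of some $\By \in \NN$.

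Third, I combine the pieces. On the event $\EE_K$ we have $\|M\| \le K\sqrt n$, so for any $\Bz \in \Comp_\C(a,b)$ approximated by $\By \in \NN$,
\[
\|M\By\| \le \|M\Bz\| + K\sqrt n\,(b+\epsilon).
\]
Thus if the infimum over compressibles is $\le c_{\ref{lem:comp}}\sqrt n$ on $\EE_K$, some net point $\By$ satisfies $\|M\By\| \le (c_{\ref{lem:comp}} + K(b+\epsilon))\sqrt n$. Choosing $a, b, \epsilon$ small enough and $c_{\ref{lem:comp}} < c_0$ so that $c_{\ref{lem:comp}} + K(b+\epsilon) \le c_0$, a union bound over $\NN$ gives
\[
\Prob\!\left(\inf_{\Bz \in \Comp_\C(a,b)}\|M\Bz\| \le c_{\ref{lem:comp}}\sqrt n \text{ and } \EE_K\right) \le |\NN|\, e^{-c_1 n} \le e^{-c_{\ref{lem:comp}} n},
\]
after choosing $a$ so small that the net entropy $C a \log(1/\epsilon)$ is dominated by $c_1$.

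The main obstacle, and the only place this proof differs genuinely from the real-symmetric version in \cite{OT} or the real iid version in \cite{ge2017eigenvalue}, is the doubled real dimension of the complex sphere: the net exponent is $2an$ rather than $an$, which forces $a$ to be chosen correspondingly smaller and demands care that the individual small-ball bound for complex $\Bx$ still produces an exponent $c_1$ beating $2a\log(C/a\epsilon^2)$. The shift by $-\lambda\sqrt n I$ is harmless because $(M\Bx)_i$ is just a translate of $(N\Bx)_i$ by a deterministic complex scalar, and the small-ball bound from Assumption \ref{assump:main} is translation-invariant.
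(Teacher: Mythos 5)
Your proposal is correct and is exactly the argument the paper has in mind: the paper gives no proof of Lemma \ref{lem:comp}, stating only that it ``follows from tensorizing a crude estimate for fixed vectors and taking a union bound,'' which is precisely your three-step scheme (fixed-vector small-ball estimate, volumetric net over the complex sparse spheres with the doubled-dimension exponent $2an$, union bound absorbed by $\EE_K$ and a small choice of $a,b,\epsilon$). One small caveat: the per-row bound $\sup_{w}\Prob(|(M\Bx)_i-w|\le t)\le C(t+n^{-1/2})$ is not available under Assumption \ref{assump:main} alone at small constant scales (e.g.\ Rademacher entries and a net point close to a standard basis vector give concentration $1/2$ at every scale), but your argument only needs the crude estimate $\sup_w\Prob(|(M\Bx)_i-w|\le c_0)\le 1-q'$ for some fixed constants $c_0,q'>0$, which does follow from \eqref{assump:1}--\eqref{assump:3} (via a single-coordinate bound when some $|x_j|\ge c_0$ and a Kolmogorov--Rogozin bound otherwise) and suffices for the tensorization step.
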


We fix the constants $a, b, c_{\ref{lem:comp}}$ for the remainder of the argument.  The next lemma from \cite{ge2017eigenvalue} demonstrates that an approximate null-vector cannot have mass exclusively confined to the real or imaginary parts.

\begin{lemma} \label{lem:lowerboundrandi}
	Let $\Bz \in S^{n-1}_{\C}$ be incompressible and $\Bz = \Bx + i \By$ with $\Bx, \By \in \R^n$. There exists a constant $c_{\ref{lem:lowerboundrandi}}$ such that on the event $\EE_K$, if $\|M \Bz\| \leq c_{\ref{lem:lowerboundrandi}} \delta \sqrt{n}$ then $\|\Bx \| \geq c_{\ref{lem:lowerboundrandi}} \delta$ and $\|\By\| \geq c_{\ref{lem:lowerboundrandi}} \delta$.  
\end{lemma}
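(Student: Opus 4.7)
The plan is to derive both bounds directly from the operator norm control $\|N\|\le K\sqrt{n}$ on the event $\EE_K$, by separating $M\Bz$ into its real and imaginary parts. Writing $\lambda = \alpha + i\delta$ with $\alpha = \re(\lambda)$, and using that $\Bx, \By$ are real, a short computation gives
\begin{align*}
  \re(M\Bz) &= (N - \alpha\sqrt{n}\,I)\Bx + \delta\sqrt{n}\,\By,\\
  \im(M\Bz) &= (N - \alpha\sqrt{n}\,I)\By - \delta\sqrt{n}\,\Bx.
\end{align*}
Since $\|M\Bz\|^2$ is the sum of the squared norms of these two real vectors, the assumption $\|M\Bz\|\le c_{\ref{lem:lowerboundrandi}}\delta\sqrt{n}$ forces each of them to have norm at most $c_{\ref{lem:lowerboundrandi}}\delta\sqrt{n}$ individually.

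Next, I would apply the triangle inequality to the imaginary-part equation, isolating the $\delta\sqrt{n}\,\Bx$ term:
\[
  \delta\sqrt{n}\,\|\Bx\| \le \|(N-\alpha\sqrt{n}\,I)\By\| + c_{\ref{lem:lowerboundrandi}}\delta\sqrt{n}.
\]
On $\EE_K$ we have $\|N-\alpha\sqrt{n}\,I\|\le \|N\| + |\alpha|\sqrt{n} \le 2K\sqrt{n}$, using $|\alpha|\le|\lambda|\le K$. Dividing through by $\delta\sqrt{n}$ yields
\[
  \|\Bx\| \le \frac{2K}{\delta}\,\|\By\| + c_{\ref{lem:lowerboundrandi}}.
\]
This is the key inequality: it trades a lower bound on $\|\Bx\|$ against an upper bound on $\|\By\|$.

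Now I would close the argument by contradiction. Suppose $\|\By\| < c_{\ref{lem:lowerboundrandi}}\delta$. Then the inequality above gives $\|\Bx\| \le (2K+1)c_{\ref{lem:lowerboundrandi}}$. But $\|\Bz\|=1$ forces $\|\Bx\|^2 = 1 - \|\By\|^2 \ge 1 - c_{\ref{lem:lowerboundrandi}}^2\delta^2$, so
\[
  1 - c_{\ref{lem:lowerboundrandi}}^2\delta^2 \le (2K+1)^2 c_{\ref{lem:lowerboundrandi}}^2,
\]
which is violated as soon as $c_{\ref{lem:lowerboundrandi}}$ is chosen smaller than $1/\sqrt{(2K+1)^2 + K^2}$ (using $\delta\le|\lambda|\le K$). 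Choosing $c_{\ref{lem:lowerboundrandi}}$ below this threshold gives the contradiction and yields $\|\By\|\ge c_{\ref{lem:lowerboundrandi}}\delta$. The bound $\|\Bx\|\ge c_{\ref{lem:lowerboundrandi}}\delta$ is obtained by the completely symmetric argument applied to the real-part equation, isolating $\delta\sqrt{n}\,\By$ instead.

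The argument is essentially linear-algebraic once the decomposition is made — the only input from the ambient setting is the deterministic operator norm bound on $\EE_K$. In particular, incompressibility of $\Bz$ is not actually used at this stage; it appears in the hypothesis because the lemma will be invoked in contexts where $\Bz$ is an incompressible approximate null vector. There is no genuine obstacle: the only care needed is in choosing $c_{\ref{lem:lowerboundrandi}}$ small enough, in terms of $K$, so that the two competing inequalities are incompatible, and verifying that the bound $|\alpha|\le K$ (not $K\sqrt{n}$) keeps the shifted operator norm at scale $\sqrt{n}$.
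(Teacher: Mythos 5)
Your proof is correct and follows essentially the same route as the paper: split $M\Bz$ into real and imaginary parts, use the operator norm bound on $\EE_K$ to trade $\|\Bx\|$ against $\|\By\|$, and close with the unit-norm constraint $\|\Bx\|^2+\|\By\|^2=1$. The only differences are cosmetic — you phrase the final step as a contradiction and keep the real shift $\alpha$ explicit (the paper absorbs it into $N$, whence its factor $2K\sqrt{n}$), and your remark that incompressibility is not actually needed here is consistent with the paper's own argument.
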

\begin{proof}
	Let $M = N + i \delta \sqrt{n} I$ where $N  \in \R^{n \times n}$.  By examining the real part of $\|M \Bz\| \leq c \delta \sqrt{n}$, we must also have that $\|N \Bx - \delta \sqrt{n} \By\| \leq c \delta \sqrt{n}$.  This implies that 
	$$
	\|\By\| \leq \frac{\|N \Bx\| + c \delta \sqrt{n}}{\delta \sqrt{n}} \leq \frac{2 K \sqrt{n} \|\Bx\| + c \delta \sqrt{n} }{\delta \sqrt{n}}.
	$$  
	Therefore, as $\Bz$ is a unit vector,
	\begin{align*}
	1 &= \|\Bx\|^2 + \|\By\|^2  \\
	&\leq \|\Bx\|^2 + \left(\frac{2 K \sqrt{n} \|\Bx\| + c \delta \sqrt{n} }{\delta \sqrt{n}} \right)^2 \\
	&\leq \|\Bx\|^2 + \frac{ 8 K^2 }{\delta^2 } \|\Bx\|^2 + 2 c^2
	\end{align*}
	From the above, we can conclude that
	$$
	\|\Bx\|^2 \geq \frac{1  - 2c^2}{1 + \frac{8 K^2}{\delta^2}} \geq c' \delta 
	$$
	for a small enough $c$ and $c'$, depending on $K$.  Finally, we can set $c_{\ref{lem:lowerboundrandi}}$ to be the smaller of $c$ and $c'$.
\end{proof}

\begin{remark}
	Note that $\|M \Bz\| = \|M e^{i \theta} \Bz\|$ so the above lemma applies to any rotation of $\Bz$.  
\end{remark}

\subsection{Excluding Vectors with Real Compressible Part}
\begin{lemma} \label{lem:realcomp}
	Let $ \alpha \in [c_{\ref{lem:lowerboundrandi}} \delta, 1/2]$.  There exist constants $\a, \b, c_{\ref{lem:realcomp}}$ such that for 
	$$
	S_{\alpha} := \left \{\Bz = \Bx+ i \By \in \Incomp_{\C}(a,b): \alpha < \|\Bx\| \leq 2 \alpha, \frac{\Bx}{\|\Bx\|_2} \in \Comp_{\R}(\a, \b) \right \}
	$$
	we have
	$$
	\P\left(\inf_{z \in S_{\alpha}} \|M \Bz\| \leq \b \delta  \sqrt{n} \text{ and } \mathcal{E}_K \right) \leq e^{-c_{\ref{lem:realcomp}} n}.
	$$
\end{lemma}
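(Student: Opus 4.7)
The plan is an $\eps$-net argument exploiting the low-dimensional structure of the admissible real parts. Writing $M = N + i\delta\sqrt{n}\,I$ (as in the proof of Lemma~\ref{lem:lowerboundrandi}) and $\Bz = \Bx + i\By$, the hypothesis $\|M\Bz\|\leq\b\delta\sqrt{n}$ separates into
\begin{align*}
\|N\Bx - \delta\sqrt{n}\,\By\| \leq \b\delta\sqrt{n}
\quad\text{and}\quad
\|N\By + \delta\sqrt{n}\,\Bx\| \leq \b\delta\sqrt{n}.
\end{align*}
The first equation pins $\By$ to within $O(\b)$ of $N\Bx/(\delta\sqrt{n})$; substituting into the second and using $\|N\|\leq K\sqrt{n}$ on $\EE_K$, I get the single-vector reduction
$$
\|(N^2 + \delta^2 n I)\,\Bx\| \leq C K \b\,\delta\, n.
$$
This eliminates the imaginary part from the analysis and reduces the problem to controlling the approximate kernel of $N^2+\delta^2 n I$ against compressible real vectors.

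Next I would cover the admissible set $\{\Bx \in \R^n : \alpha <\|\Bx\|\leq 2\alpha,\ \Bx/\|\Bx\|\in \Comp_\R(\a,\b)\}$. By compressibility, every admissible $\Bx$ is within $2\alpha\b$ of a vector supported on at most $\a n$ coordinates with norm in $(\alpha, 2\alpha]$. Taking a net of precision $\eps_x \asymp \b\delta/K$, chosen so that $\|N\|\,\eps_x \leq \b\delta\sqrt{n}$ absorbs the triangle-inequality slack, a standard volumetric estimate yields a net $\NN$ with
$$
|\NN| \leq \binom{n}{\a n}\left(\frac{CK}{\b\delta}\right)^{\a n + 1}.
$$
The $\By$ part does \emph{not} get a separate net of its own (which would be far too costly in this $n$-dimensional direction); instead, we use that the real-part equation pins $\By$ to $N\Bx/(\delta\sqrt{n})$ up to $O(\b)$.

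For each fixed $\Bx_0 \in \NN$, I would bound the probability that some $\Bx$ close to $\Bx_0$ and some $\By$ simultaneously satisfy both constraints via the joint $2$D small-ball probability of the independent rows $R_i$ of $N$. Concretely, for each $i$ the conditions force $(R_i\cdot\Bx_0 - \delta\sqrt{n}\,y_i,\ R_i\cdot\By + \delta\sqrt{n}\,(x_0)_i)$ to land in a ball of radius $O(\b\delta)$, which is a constraint on the projection of $R_i$ onto the real $2$-plane $\Span(\Bx_0,\By)$. An Esseen-type $2$D anti-concentration bound, tensorized across the $n$ independent rows, gives a per-point probability bound strong enough to beat $|\NN|$ after union bound, provided the sparsity constant $\a$ is chosen small enough relative to the constants $q,T$ from Assumption \ref{assump:main} and to $c_\star$.

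The main obstacle is that the ``effective'' $\By_0 \approx N\Bx_0/(\delta\sqrt{n})$ is itself a function of $N$, so the 2D small-ball bound at each row does not directly apply to a deterministic $\By_0$. I would resolve this by working with the one-vector reduction $\|(N^2+\delta^2 n I)\,\Bx_0\|\leq CK\b\delta n$ when possible, and for the 2D row-by-row argument, by decoupling $R_i$ from $\By_0$ using the block structure of $N$ (the row $R_i$ of $N$ is independent of the remaining rows, through which $\By_0$ is controlled up to a negligible perturbation). A second delicate point is the precision balance: since $\delta$ can be as small as $e^{-c_\star n}$, the net entropy is $O(\a n \log(1/(\b\delta))) = O(\a c_\star n^2)$, and the per-point small-ball gain of order $e^{-\Omega(n)}$ alone cannot absorb it; one needs the 2D anti-concentration to also give a $\delta^2$ factor per row, so that the combined gain is $e^{-\Omega(n^2)}$. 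Tuning $\a$ and $\b$ sufficiently small to make this balance work is the final step, and gives the claimed probability bound $e^{-c_{\ref{lem:realcomp}}\,n}$.
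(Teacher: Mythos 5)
There is a genuine gap, and it is exactly at the point you flag as ``delicate.'' Your net has precision $\eps_x\asymp \b\delta/K$, hence entropy of order $\a n\log(1/\delta)$, which is $\Omega(n^2)$ when $\delta$ is as small as $e^{-c_\star n}$; you propose to beat this with a per-row anti-concentration factor of order $\delta^2$. No such factor is available at this stage. The net points are (nearly) sparse, hence highly structured real vectors, and their LCD can be as small as $O(\sqrt n)$; the two-dimensional small-ball bound (Theorem \ref{thm:smallballcomplex}) then gives at best $\bigl(t+1/D\bigr)^2/d(\Bz)\gtrsim 1/(n\delta)$, which is useless (in fact larger than $1$) for exponentially small $\delta$. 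Worse, for discrete atom variables allowed by Assumption \ref{assump:main} (e.g.\ Rademacher), the row small-ball probability at scale $\b\delta$ for a fixed sparse vector is bounded below by a constant, so shrinking the scale buys nothing. Your fallback decoupling is also not sound as stated: the $i$-th coordinate of $\By_0=N\Bx_0/(\delta\sqrt n)$ is $R_i\cdot\Bx_0/(\delta\sqrt n)$, so $\By_0$ is \emph{not} a function of the rows other than $R_i$, and the row-by-row independence you invoke fails; the reduction to $\|(N^2+\delta^2 nI)\Bx\|$ runs into the same problem, since anti-concentration for the quadratic object $N^2$ is not covered by the tools at hand.

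The paper's proof avoids scale-$\delta$ nets entirely by splitting on the size of $\alpha$ relative to $\delta$. When $\alpha\geq C\delta$, the real-part inequality alone gives $\|N\Bx\|\leq(1+\b)\delta\sqrt n$, while a net of $T_\alpha$ at \emph{relative} precision $\b$ (entropy only $O(\a n\log(1/\b))$) plus the standard tensorization bound forces $\|N\Bx\|\gtrsim\alpha\sqrt n\gg\delta\sqrt n$, a contradiction; no imaginary part and no $\delta$-dependent entropy appear. When $\alpha\leq C\delta$, the approximation $\By\approx N\Bx/(\delta\sqrt n)$ has error $O(K\b\alpha/\delta+\b)=O(\b)$, again a constant, and the correct decoupling is by \emph{columns}: after restricting $\Bx$ to a support set $I$ of size $\a n$, the random net point $\Bx'+iN\Bx'/(\delta\sqrt n)$ depends only on the columns in $I$, while incompressibility of $\Bz$ forces $\|\By\|\geq b/4$ on $I^c$, so the imaginary-part inequality retains fresh randomness from the remaining columns and tensorization at the constant scale $c\sqrt n$ applies. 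If you want to salvage your write-up, replace the scale-$\delta$ net and the hoped-for $\delta^2$ gain by this case split and the column-conditioning decoupling.
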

\begin{proof} 
	{\bf Case I: }  We assume that $\alpha \geq C \delta$ where $C$ is a large constant to be determined.  Recall that we let $M = N + \lambda \sqrt{n} I$.  Again, we examine the real part of the inequality $\|M \Bz\| \leq \b \delta \sqrt{n}$ which implies that $\|N \Bx - \delta \sqrt{n} \By\| \leq \b \delta \sqrt{n}$.  Let 
	$$
	T_{\alpha} := \left \{ \Bx \in \R^n: \alpha < \|\Bx\| \leq 2 \alpha, \frac{\Bx}{\|\Bx\|} \in \Comp_{\R}(\mathfrak{a}, \mathfrak{b}) \right\}.
	$$
	To complete the proof in this case, it suffices to show that 
	$$
	\P\left(\inf_{\Bx \in T_{\alpha}} \|N \Bx\| \leq  (1 +\b) \delta \sqrt{n} \text{ and } \EE_K \right) \leq e^{-c n}.
	$$
	The intuition is that as $\Bx$ is close to sparse, $\|N \Bx\|$ should be on the order of $\|\Bx\| \sqrt{n}$.  Thus, choosing $\|\Bx\| \geq C \delta$ for large enough $C$ should violate the event $\|N \Bx\| \leq (1 + \b) \delta \sqrt{n}$ with high probability.  
	Let $\mathcal{N}'$ be a $\mathfrak{b}$-net of $\Comp_{\R}(\mathfrak{a}, \mathfrak{b})$.  By the standard volumetric argument, we can construct $\mathcal{N}'$ so that $|\mathcal{N}'| \leq \binom{n}{\mathfrak{a} n}(3/\mathfrak{b})^{\mathfrak{a} n}$.  Now let $\mathcal{N}''$ be an $\alpha \mathfrak{b}$-net of the interval $[\alpha, 2 \alpha]$.  Clearly, we can have $|\mathcal{N}''| \leq \frac{2}{\mathfrak{b}}$.  Finally, let
	$$
	\mathcal{N} := \{a' \Bx': a' \in \mathcal{N}'' \text{ and } \Bx' \in \mathcal{N}' \}.
	$$
	We have $|\mathcal{N}| \leq \binom{n}{\mathfrak{a} n}(3/\mathfrak{b})^{\mathfrak{a} n} \frac{2}{\mathfrak{b}}$.  Furthermore, for $\Bx \in T_{\alpha}$, there exists $a'$ and $\Bx'$ such that $|a' - \|x\|| \leq \alpha \mathfrak{b}$ and $\left\|\Bx' - \frac{\Bx}{\|\Bx\|} \right\| \leq \mathfrak{b}$ so
	\begin{align*}
	\|\Bx - a' \Bx'\| &\leq \Big\|\Bx - \|\Bx\| \Bx'\Big\| + \Big\|\|\Bx\| \Bx' - a' \Bx'\Big\| \\
	&\leq \alpha \mathfrak{b} + \alpha \frak{b}.
	\end{align*}
	Therefore, $\mathcal{N}$ is a $2 \alpha \mathfrak{b}$-net of $T_{\alpha}$.  
	
	By the standard tensorization argument (c.f.	\cite[Lemma 3.2]{rudelson2009smallest}), we have that for $\Bx \in S_{\R}^{n-1}$, there exists a small constant $c > 0$ such that
	$$
	\P(\|N \Bx\| \leq c \sqrt{n}) \leq e^{-c n}.
	$$
	Therefore, by a simple union bound,
	$$
	\P\left(\inf_{\Bx \in \mathcal{N}} \|N \Bx\| \leq 2 c \alpha \sqrt{n}\right) \leq \binom{n}{\mathfrak{a} n}\left( \frac{3}{\mathfrak{b}} \right)^{\mathfrak{a} n} \frac{2}{\mathfrak{b}} e^{-c n} \leq e^{-c' n}
	$$
	for a small constant $c'>0$ after choosing $\a$ small enough.  For any $\Bx \in T_{\alpha}$, there exists $\Bx' \in \NN$ such that $\|\Bx - \Bx'\| \leq 2 \alpha \b$.   On the event that $\inf_{\Bx \in \NN} \|N \Bx\| \geq 2 c \alpha   \sqrt{n}$,  
	\begin{align*}
	\|N \Bx\| &\geq \|N \Bx'\| - \|N\| \|\Bx - \Bx'\| \\
	&\geq 2 c \alpha   \sqrt{n} - K \sqrt{n} 2 \alpha \b \\
	&\geq 2 ( c- K \b ) C \delta \sqrt{n}.
	\end{align*}
	Choosing $\b$ small enough so that $c - K \b > 0$ and then choosing $C$ large enough, we have that this implies that
	$$
	\|N \Bx\| > (1 + \b) \delta \sqrt{n}. 
	$$
	Therefore,
	$$
	\P(\inf_{\Bx \in T_{\alpha}} \|N \Bx\| \leq  \b \delta \sqrt{n}) \leq e^{-c' n}.
	$$
	
	{\bf Case II: }
	We utilize the real and imaginary parts of the inequality $\|M \Bz\| \leq \b \delta \sqrt{n}$ with $\Bz = \Bx + i \By$. We must have
	$$
	\|N \Bx - \delta \sqrt{n} \By\| \leq \b \delta \sqrt{n}
	$$
	and
	$$
	\|N \By + \delta \sqrt{n} \Bx\| \leq \b \delta \sqrt{n}.
	$$
	Let us define for an index set $I \subset [n]$ with $|I| = \a n$, 
	$$
	T_{\alpha, I} := \left \{ \Bx \in \R^n: \alpha < \|\Bx\| \leq 2 \alpha, \frac{\Bx}{\|\Bx\|} \in \Comp_{\R}(\mathfrak{a}, \mathfrak{b}), \supp(x) \subset I \right\}.
	$$
	For concreteness, let us assume for now that $I = \{1, \dots, \a n\}$.
	Similar to Case I, we can find a $2 \b \alpha$-net, $\mathcal{N}$, of $T_{\alpha, I}$ such that $|\NN| \leq \left( \frac{3}{\mathfrak{b}} \right)^{\mathfrak{a} n} \frac{2}{\mathfrak{b}}$.  Conditioning on the first $\mathfrak{a} n$ columns of $M$, we have the deterministic inequality
	$$
	\left\|\frac{N \Bx}{\delta \sqrt{n}} - \By\right\| \leq \b.
	$$  
	We construct a random net, depending on the first $\mathfrak{a} n$ columns of $M$, for the imaginary part of the vectors.  We use $ \frac{N \Bx}{\delta \sqrt{n}}$ to approximate the imaginary part of the complex vectors in $S_{\alpha}$.  Note that since $\Bx$ is only supported on the first $\a n$ coordinates, $\frac{N \Bx}{\delta \sqrt{n}}$ depends on only the first $\a n$ columns of $M$. Define 
	$$
	\NN' := \left\{\Bx + \frac{N \Bx}{\delta \sqrt{n}} i: \Bx \in \NN \right\}.
	$$
	Therefore, on the event that $\|M \Bz\| \leq c_{\ref{lem:lowerboundrandi}} \delta \sqrt{n}$ with $\Bz = \Bx+i\By$, for $\Bx' \in \NN$ such that $\|\Bx - \Bx'\| \leq 2 \b \alpha$, we define $\By'= \frac{N \Bx'}{\delta \sqrt{n}}$ so that
	\begin{align*}
	\left\|\By' - \By \right\| &\leq \left \|\frac{N \Bx'}{\delta \sqrt{n}} - \frac{N \Bx}{\delta \sqrt{n}} \right\| + \left\| \frac{N \Bx}{\delta \sqrt{n}} -  \By\right\|  \\
	&\leq \frac{2 K \mathfrak{b} \alpha}{\delta} + \b  \\
	&\leq C' \mathfrak{b}
	\end{align*}
	for some large constant $C'$
	where in the last line we have used the assumption that $\alpha \leq C \delta$.  
	Since $\Bz$ is incompressible and $\Bx$ is compressible, we must have that $\|\By\| \geq \frac{b}{2}$ after reducing $\mathfrak{b}$ if necessary.  We write 
	$$
	\By' = \left( \begin{array}{c}
	\By_1 \\
	\By_2 \end{array} \right)
	$$  
	where $\By_1$ is the vector formed by the first $\mathfrak{a} n$ coordinates and $\By_2$ are the remaining coordinates.  Since $\Bz$ is incompressible and $\left\|\By' - \By \right\| \leq C \mathfrak{b}$, we can choose $\mathfrak{b}$ small enough such that $\|\By_2\| \geq b/4$.  By the standard tensorization argument,
	$$
	\P \left( \|N \By' +  \delta \sqrt{n} \Bx' \| \leq c \sqrt{n} \right) \leq e^{-c n}
	$$
	where the probability is taken over the randomness of the last $n - \a n$ columns of $M$ and the lower bound on $\|\By_2\|$.  Thus, by a union bound,
	$$
		\P \left( \inf_{\Bz' \in \mathcal{N}'} \|N \By' + \delta \sqrt{n} \Bx'\| \geq  c  \sqrt{n} \right) \leq \left( \frac{3}{\mathfrak{b}} \right)^{\mathfrak{a} n} \frac{2}{\mathfrak{b}} e^{-c n} \leq e^{-c' n}.
	$$

	On the event that $\inf_{\Bz' \in \mathcal{N}'} \|N \By' + \delta \sqrt{n} \Bx'\| \geq  c  \sqrt{n}$, for any $\Bz = \Bx + i \By \in T_{\alpha, I}$,
	\begin{align*}
	\|N \By + \delta \sqrt{n} \Bx\| &\geq \|N \By' + \delta \sqrt{n} \Bx'\| -  \|N (\By' - \By) \| - \|\delta \sqrt{n} (\Bx' - \Bx)\| \\
	&\geq c  \sqrt{n} - K C' \sqrt{n} \mathfrak{b} - 2 \delta \sqrt{n} \b \alpha \\
	&\geq c'' \sqrt{n}
	\end{align*}
	after reducing $\b$ if necessary.  Finally, taking a union bound over the $\binom{n}{\a n}$ possible $I$ and then choosing $\a$ small enough shows that
	$$
	\P\left( \inf_{\Bz \in S_{\alpha}} \|M \Bz \| \leq \b \delta \sqrt{n} \text{ and } \EE_K \right) \leq e^{-c_{\ref{lem:realcomp}}n}
	$$
	for a small enough $c_{\ref{lem:realcomp}}$.
\end{proof}

\subsection{LCD and Structure Theorem}
We import several definitions to quantify the structure, or absence thereof, of a vector, a matrix and a complex vector.  
\begin{definition}  \label{def:LCD}
	The following notions were developed in a series of papers by Rudelson and Vershynin \cite{rudelson2008LO, rudelson2009smallest, vershynin2014symmetric, RVnogaps}.
\begin{itemize}
	\item For a vector $\Bv \in \R^n$, we define the least common denominator (LCD) of $\Bv$ to be
	$$
	D(\Bv) = D(\Bv; L, \rho) := \inf \left\{ \theta \in \R^+: \dist(\theta \Bv, \Z^n) <  \rho L \sqrt{ \log_+ \frac{\|\theta \Bv\|}{L}} \right \} 
	$$
	\item For a matrix $U \in \R^{m \times n}$, we define the LCD of $U$ to be
	$$
	D(U) = D(U; L, \rho) := \inf \left \{ \|\boldsymbol{\theta}\|: \boldsymbol{\theta} \in \R^m, \dist(U^{\mathrm{T}} \boldsymbol{\theta}, \Z^n)<  \rho L \sqrt{ \log_+ \frac{\|U^{\mathrm{T}} \boldsymbol{\theta}\|}{L}} \right \}
	$$
	\item For a complex vector $\Bz = \Bx + i \By \in \C^n$ with $\Bx, \By \in \R^n$, we define the LCD of $\Bz$ to be the LCD of the matrix 
	$$
	\left(\begin{array}{c}
	\Bx^{\mathrm{T}} \\
	\By^{\mathrm{T}}
	\end{array} \right).
	$$
\end{itemize}
$\rho$ is a parameter that is not normally included in the definition, but we will need this extra flexibility in the appendix when we handle directed adjacency matrices, which does not have iid entries.  For any fixed $\rho$, the only effect is to slightly alter the constants in the following theorems.  For the remainder of the paper we set $\rho =1$ for convenience and only utilize this general $\rho$ in Section \ref{sec:directedrandomgraphs}.
\end{definition}

Our first lemma shows that the LCD of a complex vector is invariant under rotations by a complex phase.
\begin{lemma} \label{lem:phaseLCD}
	For $\Bz \in S^{n-1}_{\C}$, $D(\Bz) = D(e^{i \phi} \Bz)$ for any $\phi \in \R$.  
\end{lemma}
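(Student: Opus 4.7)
The plan is a direct computation showing that multiplication by $e^{i\phi}$ on $\Bz$ corresponds to a planar rotation of the matrix $U = \begin{pmatrix} \Bx^{\mathrm{T}} \\ \By^{\mathrm{T}} \end{pmatrix}$ from the left, and that such a rotation leaves the LCD invariant because the infimum in its definition is taken over Euclidean norms of vectors in $\R^2$.

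First I would expand $e^{i\phi}\Bz = (\cos\phi\,\Bx - \sin\phi\,\By) + i(\sin\phi\,\Bx + \cos\phi\,\By)$, so that the matrix associated to $e^{i\phi}\Bz$ is
\[ U_\phi = \begin{pmatrix} \cos\phi & -\sin\phi \\ \sin\phi & \phantom{-}\cos\phi \end{pmatrix} U = R_\phi U, \]
where $R_\phi \in SO(2)$. Consequently $U_\phi^{\mathrm{T}} = U^{\mathrm{T}} R_\phi^{\mathrm{T}}$, and for any $\boldsymbol{\theta} \in \R^2$ one has $U_\phi^{\mathrm{T}}\boldsymbol{\theta} = U^{\mathrm{T}}(R_\phi^{\mathrm{T}}\boldsymbol{\theta})$.

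Next I would use that $R_\phi$ is an isometry of $\R^2$: the map $\boldsymbol{\theta} \mapsto R_\phi^{\mathrm{T}}\boldsymbol{\theta}$ is a bijection of $\R^2$ onto itself preserving $\|\boldsymbol{\theta}\|$. Therefore the set of pairs
\[ \Bigl\{ \bigl(\|\boldsymbol{\theta}\|,\; U_\phi^{\mathrm{T}}\boldsymbol{\theta}\bigr) : \boldsymbol{\theta}\in\R^2 \Bigr\} = \Bigl\{ \bigl(\|\boldsymbol{\theta}'\|,\; U^{\mathrm{T}}\boldsymbol{\theta}'\bigr) : \boldsymbol{\theta}'\in\R^2 \Bigr\} \]
after the change of variable $\boldsymbol{\theta}' = R_\phi^{\mathrm{T}}\boldsymbol{\theta}$. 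In particular the condition $\dist(U_\phi^{\mathrm{T}}\boldsymbol{\theta}, \Z^n) < \rho L \sqrt{\log_+ \|U_\phi^{\mathrm{T}}\boldsymbol{\theta}\|/L}$ on $\boldsymbol{\theta}$ is identical to the condition $\dist(U^{\mathrm{T}}\boldsymbol{\theta}', \Z^n) < \rho L \sqrt{\log_+ \|U^{\mathrm{T}}\boldsymbol{\theta}'\|/L}$ on $\boldsymbol{\theta}'$, and the infimized quantity $\|\boldsymbol{\theta}\|$ equals $\|\boldsymbol{\theta}'\|$. Thus the two infima are equal, yielding $D(U_\phi) = D(U)$, which is exactly $D(e^{i\phi}\Bz) = D(\Bz)$.

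There is essentially no obstacle here, only the bookkeeping of matching the matrix definition of $D$ with the complex vector definition; the proof is a one‑line observation once the rotation structure is made explicit.
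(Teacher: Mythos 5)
Your proof is correct and is essentially the same as the paper's: both arguments observe that the matrix associated to $e^{i\phi}\Bz$ is a planar rotation of $U$, and then use the change of variables $\boldsymbol{\theta} \mapsto R_\phi^{\mathrm{T}}\boldsymbol{\theta}$, which is a norm-preserving bijection of $\R^2$, to conclude the two infima coincide. Your write-up is in fact slightly more explicit than the paper's in identifying $U_\phi^{\mathrm{T}} = U^{\mathrm{T}}R_\phi^{\mathrm{T}}$, but there is no substantive difference.
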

\begin{proof}
	Let 
	\begin{equation}\label{eq:rotation}
	R(\phi) = \left(\begin{array}{cc} 
	\cos(\phi) & \sin(\phi) \\
	-\sin(\phi) & \cos(\phi)
	\end{array} \right).
	\end{equation}
	Note that 
	\begin{align*}
	D(z) &= \inf \left \{ \|\boldsymbol{\theta}\|: \boldsymbol{\theta} \in \R^2, \dist(U^{\mathrm{T}} \boldsymbol{\theta}, \Z^n)<  L \sqrt{ \log_+ \frac{\|U^{\mathrm{T}} \boldsymbol{\theta}\|}{L}} \right \} \\
	&= \inf \left \{ \|R(\phi)\boldsymbol{\theta}\|: \boldsymbol{\theta} \in \R^2, \dist(U^{\mathrm{T}} R(\phi) \boldsymbol{\theta}, \Z^n)<  L \sqrt{ \log_+ \frac{\|U^{\mathrm{T}} R(\phi)\boldsymbol{\theta}\|}{L}} \right \} \\
	&= D(e^{i \phi} \Bz).
	\end{align*}
\end{proof}

The next lemma shows that one can always rotate a complex vector so that the LCD of $\Bz$ is exhibited by the real component of the rotated vector.  
\begin{lemma}\label{lem:LCDrealpart}
	For $\Bz  = \Bx + i \By$, there exists $\phi \in [0, 2\pi]$ such that for $e^{i \phi} \Bz = \Bx' + i \By'$,
	$$D(\Bz) = D(\Bx').$$  
\end{lemma}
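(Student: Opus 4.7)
The plan is to prove the two-sided equality by treating the bounds separately: the inequality $D(\Bx') \ge D(\Bz)$ holds for every $\phi$, while the reverse inequality $D(\Bx') \le D(\Bz)$ will be achieved by a specific choice of $\phi$ that ``aligns'' a near-optimizer $\boldsymbol{\theta}^*$ for the matrix LCD $D(U)$ with the first coordinate axis of $\R^2$.

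For the easy direction, fix any $\phi$ and let $U_\phi$ denote the $2 \times n$ matrix with rows $(\Bx')^T$ and $(\By')^T$. Since $U_\phi^T(\theta,0)^T = \theta \Bx'$, the scalar LCD $D(\Bx')$ is exactly the infimum in the matrix LCD $D(U_\phi)$ restricted to vectors of the form $(\theta,0)^T$; restricting an infimum to a subset only enlarges it. Combined with Lemma~\ref{lem:phaseLCD}, this yields $D(\Bx') \ge D(U_\phi) = D(\Bz)$.

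For the reverse inequality, let $\boldsymbol{\theta}^* = (\theta_1^*, \theta_2^*) \in \R^2$ be a near-minimizer with $\|\boldsymbol{\theta}^*\| \le D(\Bz) + \eps$ in the matrix LCD definition for $U$, and write $\boldsymbol{\theta}^* = \|\boldsymbol{\theta}^*\|(\cos\alpha,\sin\alpha)$. Choose $\phi$ so that $\cos\phi = \cos\alpha$ and $\sin\phi = -\sin\alpha$. Using the identity $\Bx' = \cos\phi\,\Bx - \sin\phi\,\By$ coming from $e^{i\phi}(\Bx + i\By) = \Bx' + i\By'$, a direct computation yields
\[
\|\boldsymbol{\theta}^*\| \cdot \Bx' = \theta_1^* \Bx + \theta_2^* \By = U^T \boldsymbol{\theta}^*.
\]
Therefore the distance and norm conditions which $\boldsymbol{\theta}^*$ verifies in the definition of $D(U)$ are witnessed by the single scalar $\theta = \|\boldsymbol{\theta}^*\|$ in the definition of $D(\Bx')$, so $D(\Bx') \le \|\boldsymbol{\theta}^*\| \le D(\Bz) + \eps$.

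The one genuine subtlety---which I expect to take the most care---is that the strict-inequality feasible set in the LCD definition is open, so the infimum is not automatically attained and one cannot simply set $\eps = 0$ in the construction above. I would close this gap by choosing a sequence $\boldsymbol{\theta}^*_k$ with $\|\boldsymbol{\theta}^*_k\| \to D(\Bz)$, extracting a convergent subsequence of the corresponding angles $\phi_k \in [0,2\pi]$ via compactness, and invoking lower semicontinuity of $D(\cdot)$ in its vector argument (which follows from continuity of the distance-to-$\Z^n$ and norm functions together with the openness of the feasible set) to pass to the limit $\phi_k \to \phi^*$ and conclude $D(\Bx'_{\phi^*}) = D(\Bz)$.
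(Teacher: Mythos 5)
Your main construction is exactly the paper's: the paper takes any admissible $\boldsymbol{\theta}$ in the definition of $D(U)$, rotates it onto the first coordinate axis via $R(\phi)$ so that $U^{\mathrm{T}}\boldsymbol{\theta} = \|\boldsymbol{\theta}\|\,\Bx'$, observes that this converts the matrix-LCD condition into the scalar-LCD condition for $\Bx'$ at the scale $\|\boldsymbol{\theta}\|$, and then ``takes the infimum.'' Your two-sided write-up (restricting the infimum for $D(\Bx')\ge D(\Bz)$, rotating a near-minimizer for the reverse) is a cleaner rendering of the same idea, and both halves of that part are correct.

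Where you and the paper diverge is the attainment issue, and there your patch does not work as stated. Openness of the strict-inequality feasible region gives \emph{upper} semicontinuity of the LCD in its vector argument: if $\theta$ is strictly feasible for $\Bv$, it remains feasible for every nearby $\Bv'$, hence $\limsup_{\Bv'\to\Bv}D(\Bv')\le D(\Bv)$. What your limiting step needs is the opposite bound $D(\Bx'_{\phi^*})\le\liminf_k D(\Bx'_{\phi_k})$, i.e.\ \emph{lower} semicontinuity, and that is precisely the direction that can fail for an infimum over an open feasible set: the scalars $\|\boldsymbol{\theta}^*_k\|$ witnessing feasibility for $\Bx'_{\phi_k}$ can converge to a point where the distance inequality holds only with equality, so the limit need not be feasible for $\Bx'_{\phi^*}$ and $D(\Bx'_{\phi^*})$ could in principle jump above $D(\Bz)$. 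So the compactness argument does not close the gap you correctly identified. To be fair, the paper's own proof simply says ``taking the infimum proves the result'' and never addresses attainment either; and in every downstream use (the reduction to vectors with $D(\Bz)=D(\Bx)$ in the proof of Theorem~\ref{thm:nullvectors}) the approximate statement your construction actually delivers --- for every $\eps>0$ there is $\phi$ with $D(\Bx')\le D(\Bz)+\eps$ --- is all that is needed. If you want the equality as literally stated, the honest route is to establish that the infimum defining $D(U)$ is attained (or to restate the lemma with an $\eps$), not to appeal to semicontinuity.
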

\begin{proof}
	Recall from Definition \ref{def:LCD},
	\begin{align*}
	D(\Bz) &= \inf \left \{ \|\boldsymbol{\theta}\|: \boldsymbol{\theta} \in \R^2, \dist(U^{\mathrm{T}} \boldsymbol{\theta}, \Z^n)< L \sqrt{ \log_+ \frac{\|U^{\mathrm{T}} \boldsymbol{\theta}\|}{L}} \right \}
	\end{align*}
	where $U = \left(\begin{array}{c}
	\Bx^{\mathrm{T}} \\
	\By^{\mathrm{T}}
	\end{array} \right).$
	Let $\theta \in \R^2$ such that $\dist(U^{\mathrm{T}} \boldsymbol{\theta}, \Z^n)<  L \sqrt{ \log_+ \frac{\|U^{\mathrm{T}} \boldsymbol{\theta}\|}{L}}$.  For any such $\boldsymbol{\theta}$, there exists a $\phi$ such that $R(\phi) \boldsymbol{\theta} = \left(\begin{array}{c}
	1 \\
	0
	\end{array} \right)$ where $R(\phi)$ is defined in (\ref{eq:rotation}).  This implies that $\dist(\|\boldsymbol{\theta}\| \re(e^{i \phi} \Bz)) < L \sqrt{ \log_+ \frac{\|\boldsymbol{\theta}\| \re(e^{i \phi} \Bz)}{L}}$ which corresponds to the defining relation for the LCD of the real part of $e^{i \phi} \Bz$.  By Lemma \ref{lem:phaseLCD} and its proof, taking the infimum proves the result.
\end{proof}

The crucial relationship between structure and small-ball probability is quantified in the next theorem.  
\begin{theorem}[\cite{RVnogaps}] \label{thm:smallballcomplex}
	Consider a random vector $\boldsymbol{\xi} = (\xi_1, \dots, \xi_n)$ where $\xi_i$ are i.i.d. copies of a real random variable $\xi$ that satisfy Assumption \ref{assump:main}.  Let $U \in \R^{m \times n}$ be fixed.  Then for every $L \geq \sqrt{\frac{8 m}{ q}}$ (where $q$ is the parameter from Assumption \ref{assump:main}) and $t \geq 0$, we have
	$$
	\sup_{\Bx \in \R^m} \P(\|U \boldsymbol{\xi} - \Bx\|_2 \leq t \sqrt{m}) \leq \frac{(C_{\ref{thm:smallballcomplex}}L/\sqrt{m})^m}{\det(UU^{\mathrm{T}})^{1/2}} \left(t + \frac{\sqrt{m}}{D(U)} \right)^m
	$$
\end{theorem}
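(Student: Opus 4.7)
The plan is to follow the Rudelson–Vershynin Fourier/LCD strategy, which has three main ingredients: an Esseen-type reduction to a characteristic-function integral, a pointwise bound on the characteristic function via symmetrization, and a change-of-variables step that brings the matrix LCD $D(U)$ into play.

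First, I would invoke a multidimensional Esseen inequality to reduce the small-ball probability to an integral over a ball in $\R^m$:
\[
\sup_{\Bx \in \R^m} \P\bigl(\|U\boldsymbol{\xi}-\Bx\|_2 \leq t\sqrt{m}\bigr) \;\lesssim\; \Bigl(\tfrac{C}{\sqrt{m}}\Bigr)^{\!m} \int_{B(0,\,c\sqrt{m}/t)} \bigl|\phi_{U\boldsymbol{\xi}}(\boldsymbol{\theta})\bigr|\, d\boldsymbol{\theta},
\]
where $\phi_{U\boldsymbol{\xi}}(\boldsymbol\theta)=\E\exp(i\langle\boldsymbol\theta,U\boldsymbol\xi\rangle)=\prod_{k=1}^n\phi_\xi\bigl((U^{\mathrm T}\boldsymbol\theta)_k\bigr)$. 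To absorb the parameter $t$ cleanly, I would first rescale so that the integration radius is of order $\sqrt{m}/t$ and the Jacobian produces a factor $t^m$.

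Second, I would bound each $|\phi_\xi(s)|^2$ using symmetrization. Writing $\tilde\xi=\xi-\xi'$ one has $|\phi_\xi(s)|^2=\E\cos(s\tilde\xi)\leq \exp(-\E(1-\cos(s\tilde\xi)))$, and the elementary estimate $1-\cos(2\pi u)\geq 8\|u\|_{\R/\Z}^2$ yields
\[
|\phi_\xi(s)|^2 \leq \exp\!\Bigl(-8\,\E\,\bigl\|\tfrac{s\tilde\xi}{2\pi}\bigr\|_{\R/\Z}^2\Bigr).
\]
Assumption~\ref{assump:main} (items \eqref{assump:2} and \eqref{assump:3}) ensures $\tilde\xi\in[1,T]$ with probability at least $q/2$, and after restricting attention to $\boldsymbol\theta$ in a box small enough that $|s|=|(U^{\mathrm T}\boldsymbol\theta)_k|\leq \pi/T$, one obtains $\E\|s\tilde\xi/(2\pi)\|_{\R/\Z}^2 \gtrsim q\,s^2$. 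Multiplying across $k=1,\dots,n$,
\[
|\phi_{U\boldsymbol\xi}(\boldsymbol\theta)| \leq \exp\!\Bigl(-\tfrac{cq}{L^2}\,\dist(L\,U^{\mathrm T}\boldsymbol\theta,\, 2\pi\Z^n)^2\Bigr),
\]
with the rescaling by $L$ chosen so that the hypothesis $L\geq \sqrt{8m/q}$ makes the exponent $\geq m$ times a gaussian-type density in the appropriate coordinates.

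Third, I would perform the change of variables $\boldsymbol\theta\mapsto \boldsymbol\phi=U^{\mathrm T}\boldsymbol\theta$ into the $m$-dimensional subspace $W=\range(U^{\mathrm T})\subset\R^n$. This produces the Jacobian factor $\det(UU^{\mathrm T})^{-1/2}$ and converts the bound into
\[
\int_{W\cap U^{\mathrm T}B}\exp\!\bigl(-c\,\dist(\boldsymbol\phi,\Z^n)^2\bigr)\,d\sigma(\boldsymbol\phi).
\]
The integral splits into contributions from $\boldsymbol\phi$ near the origin and from $\boldsymbol\phi$ near nonzero lattice points. Near the origin the integrand is gaussian of scale $L/\sqrt{m}$, contributing $(CL/\sqrt{m})^m$. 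Near a nonzero lattice point $\Bp\in\Z^n$ the nearest preimage $\boldsymbol\theta_0\in\R^m$ with $U^{\mathrm T}\boldsymbol\theta_0\approx\Bp$ satisfies $\|\boldsymbol\theta_0\|\geq D(U)$ by the very definition of the LCD, so a translated gaussian centered at $\boldsymbol\theta_0$ contributes only $(\sqrt{m}/D(U))^m$ after comparison with the gaussian at the origin.

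Combining the three steps and tracking the $t$ dependence gives the claimed bound
\[
\frac{(CL/\sqrt{m})^m}{\det(UU^{\mathrm T})^{1/2}}\Bigl(t+\tfrac{\sqrt{m}}{D(U)}\Bigr)^{\!m}.
\]
The main obstacle is the third step: carefully controlling the integral of $\exp(-c\dist(\boldsymbol\phi,\Z^n)^2)$ on the affine subspace $W$. One must rule out contributions from lattice points $\Bp\in\Z^n$ such that $\dist(\Bp,W)$ is small but the lift $\boldsymbol\theta_0$ has $\|\boldsymbol\theta_0\|<D(U)$, which is exactly what the logarithmic slack $\rho L\sqrt{\log_+(\|U^{\mathrm T}\boldsymbol\theta\|/L)}$ in Definition~\ref{def:LCD} is calibrated to handle.
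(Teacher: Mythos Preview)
The paper does not give its own proof of this theorem: it is quoted verbatim from \cite{RVnogaps} and used as a black box. Your sketch is precisely the Rudelson--Vershynin argument from that reference (Esseen inequality, symmetrization to bound the characteristic function by $\exp(-c\,\dist(U^{\mathrm T}\boldsymbol\theta,\Z^n)^2)$, change of variables producing $\det(UU^{\mathrm T})^{-1/2}$, and separation of the lattice contribution using the definition of $D(U)$), so you are reproducing the original proof rather than offering an alternative.
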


We fix a constant $L \geq \sqrt{16/q}$ for the remainder of the proof since we will only apply the above theorem for $m \leq 2$. 
A simple argument shows that if we restrict our attention to incompressible vectors, the smallest value the LCD can take is on the order of $\sqrt{n}$.  
\begin{lemma} [\cite{rudelson2008LO, vershynin2014symmetric}] \label{lem:LCDincomp}
	Let $\Bx \in S^{n-1}_{\R}$ in $\Incomp_{\R}(\mathfrak{a}, \mathfrak{b})$, then there exists a constant $c_{\ref{lem:LCDincomp}} > 0$, such that
	$$
	D(\Bx) \geq c_{\ref{lem:LCDincomp}} \sqrt{n}.
	$$
\end{lemma}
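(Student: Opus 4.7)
The strategy is a standard application of the Rudelson--Vershynin spread lemma: convert the incompressibility of $\Bx$ into a uniform lower bound on $\dist(\theta \Bx, \Z^n)$ that beats the upper bound $L\sqrt{\log_+(\theta/L)}$ in the defining inequality of $D(\Bx)$, for all $\theta$ up to order $\sqrt{n}$.

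First I would recall the spreadness property of incompressible vectors: for $\Bx \in \Incomp_{\R}(\mathfrak{a},\mathfrak{b})$ there is a subset $\sigma \subset [n]$ with $|\sigma| \geq c_0 n$ on which $c_1/\sqrt{n} \leq |x_i| \leq C_1/\sqrt{n}$, where $c_0, c_1, C_1 > 0$ depend only on $\mathfrak{a}, \mathfrak{b}$. The lower bound on coordinates is obtained by noting that $|\{i : |x_i| \geq \mathfrak{b}/\sqrt{n}\}| \geq \mathfrak{a} n$ (otherwise $\Bx$ would be within $\mathfrak{b}$ in $\ell_2$ of its $\mathfrak{a} n$-sparse truncation, contradicting incompressibility); the upper bound follows from Markov's inequality applied to $\|\Bx\|_2^2 = 1$.

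Next, suppose by contradiction that some $\theta \in (0, c_{\ref{lem:LCDincomp}} \sqrt{n})$ realizes the LCD inequality $\dist(\theta \Bx, \Z^n) < L\sqrt{\log_+(\theta/L)}$. Take $c_{\ref{lem:LCDincomp}} \leq 1/(2C_1)$, so that every $i \in \sigma$ satisfies $|\theta x_i| \leq 1/2$ and therefore $\dist(\theta x_i, \Z) = |\theta x_i| \geq \theta c_1/\sqrt{n}$. Summing over $\sigma$ yields
\[
\dist(\theta \Bx, \Z^n)^2 \;\geq\; \sum_{i \in \sigma} \theta^2 x_i^2 \;\geq\; c_0 c_1^2 \,\theta^2,
\]
which combined with the LCD hypothesis gives
\[
c_0 c_1^2 \theta^2 \;<\; L^2 \log_+(\theta/L).
\]
For $\theta \leq L$ the right-hand side vanishes while the left-hand side is positive, an immediate contradiction. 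For $\theta > L$, setting $u = \theta/L$ reduces the inequality to $c_0 c_1^2 u^2 < \log u$, and elementary analysis shows this fails as soon as $c_0 c_1^2$ is taken larger than the absolute threshold $1/(2e)$, in which case no $\theta > L$ in the target range can satisfy the LCD condition, yielding $D(\Bx) \geq c_{\ref{lem:LCDincomp}} \sqrt{n}$.

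The main obstacle, which I would address last, is ensuring the boosted spread constant $c_0 c_1^2$ is large enough: naively one gets $c_0 c_1^2 \sim \mathfrak{a}\mathfrak{b}^2$, which may fall below $1/(2e)$. The standard remedy is a dyadic refinement of $\sigma$: partition the coordinates with $|x_i| \geq \mathfrak{b}/\sqrt{n}$ into dyadic magnitude bands and select the band contributing the most to $\|\Bx\|_2^2$. This gives $\sum_{i \in \sigma} x_i^2 \geq c(\mathfrak{a},\mathfrak{b})$ with the ratio between the upper and lower magnitude bounds in $\sigma$ controlled by a factor of $2$, which translates directly into the required lower bound on $c_0 c_1^2$. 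Absorbing the small logarithmic loss from this refinement into the constant $c_{\ref{lem:LCDincomp}}$ closes the argument.
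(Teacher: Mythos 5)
Your opening two paragraphs follow the standard Rudelson--Vershynin route and are sound: incompressibility yields a set $\sigma$ with $|\sigma|\geq c_0 n$ and $c_1/\sqrt{n}\leq |x_i|\leq C_1/\sqrt{n}$ on $\sigma$, and for $\theta\leq \sqrt{n}/(2C_1)$ this gives $\dist(\theta\Bx,\Z^n)\geq \kappa\theta$ with $\kappa^2=c_0c_1^2$, reducing the LCD condition to $\kappa^2u^2<\log u$ with $u=\theta/L$, which is vacuous for all $u$ exactly when $\kappa^2\geq 1/(2e)$. The genuine gap is in your final paragraph, where you need $\kappa^2\geq 1/(2e)$ and try to manufacture it by dyadic pigeonholing. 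That step fails: partitioning $[\mathfrak{b}/\sqrt{n},1]$ into dyadic bands produces $\Theta(\log n)$ bands, so the best band is only guaranteed a $1/O(\log n)$ fraction of the mass (consider $x_i^2$ spread evenly over all bands); your claim that it yields $\sum_{i\in\sigma}x_i^2\geq c(\mathfrak{a},\mathfrak{b})$ is false, and even the unrefined value $\kappa^2\asymp \mathfrak{a}\mathfrak{b}^2$ is a quantity determined by the incompressibility parameters, not one you are free to push above the absolute threshold $1/(2e)$. Nor can the deficit be ``absorbed into $c_{\ref{lem:LCDincomp}}$'': when $\kappa^2<1/(2e)$ the inequality $\kappa^2u^2<\log u$ holds on a window $u\in(u_1,u_2)$ with $u_2=O_{\kappa}(1)$, i.e.\ for $\theta$ of \emph{constant} order $\asymp L/\kappa$, and no constant choice of $c_{\ref{lem:LCDincomp}}$ excludes such $\theta$ from $(0,c_{\ref{lem:LCDincomp}}\sqrt{n})$ once $n$ is large.

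What closes the argument in \cite{rudelson2008LO, vershynin2014symmetric} is the small parameter multiplying the threshold function --- the $\gamma$ of the essential LCD there, which is precisely the $\rho$ carried in Definition \ref{def:LCD} of this paper. With the condition $\dist(\theta\Bx,\Z^n)<\rho L\sqrt{\log_+(\theta/L)}$ one needs $(\kappa/\rho)^2u^2<\log u$, which is impossible for every $u$ as soon as $\rho\leq \kappa\sqrt{2e}$; so the lemma is proved by taking $\rho$ small relative to $\mathfrak{a},\mathfrak{b}$, not by enlarging $\kappa$. (Incidentally, you do not need the two-sided spread set for the distance bound: incompressibility directly gives a set of $n-\mathfrak{a}n$ coordinates of total mass $>\mathfrak{b}^2$, each of size at most $1/\sqrt{\mathfrak{a}n}$, whence $\dist(\theta\Bx,\Z^n)>\mathfrak{b}\theta$ for $\theta\leq\sqrt{\mathfrak{a}n}/2$ --- but this still only gives $\kappa=\mathfrak{b}$.) The alternative standard route, used for the regularized LCD in \cite{vershynin2014symmetric}, is the clean bound $D(\Bx)\geq 1/(2\|\Bx\|_\infty)$, valid for every unit vector because there $\kappa=1>1/\sqrt{2e}$; it gives $c\sqrt{n}$ only after passing to the normalized spread part, whose $\ell_\infty$ norm is $O(1/\sqrt{n})$. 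Either way, some device beyond the spread set with $\rho=1$ is required, and your proposal supplies neither.
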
	

\subsection{Small ball Probabilities depending on real-imaginary correlations}
We adapt the notions of LCD to handle complex vectors.  This section follows previous developments in this direction \cite{RVnogaps, luh2018complex, ge2017eigenvalue}.
\begin{definition} \label{def:complexvector}
	For $\Bz = \Bx + i \By \in S^{n-1}_{\C}$, we let $\tilde{\Bz} = \left( \begin{array}{c} \Bx \\
	\By \end{array} \right)$ and
	$$
	V = V(\Bz) := \left(\begin{array}{c} \Bx^{\mathrm{T}} \\
	\By^{\mathrm{T}}  \end{array} \right) \in \R^{2 \times n}.	$$
	We define the real-imaginary correlation of $z$ to be
	$$
	d(\Bz) = \det(V V^{\mathrm{T}})^{1/2} = \sqrt{ \|\Bx\|_2^2 \|\By\|_2^2 - (\Bx \cdot \By)^2}.
	$$
\end{definition}

\begin{lemma} \label{lem:lowercorrelation}
	If $\Bz \in \Incomp_{\C}(a,b)$ with $\|M \Bz\| \leq c_{\ref{lem:lowerboundrandi}} \delta \sqrt{n}$ then there exists a constant $c_{\ref{lem:lowercorrelation}} > 0$ such that 
	$$
	d(\Bz) \geq c_{\ref{lem:lowercorrelation}} \delta.
	$$
\end{lemma}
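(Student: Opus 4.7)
The plan is to exploit the phase-rotation invariance of the hypothesis together with the remark following Lemma \ref{lem:lowerboundrandi}, and identify $d(\Bz)$ with the complex-phase-minimum of the norm of the imaginary part.

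First I would observe that for any $\phi \in \R$, multiplying a sparse approximator of $\Bz$ by $e^{i\phi}$ shows $e^{i\phi}\Bz \in \Incomp_{\C}(a,b)$, and trivially $\|M e^{i\phi}\Bz\| = \|M\Bz\| \le c_{\ref{lem:lowerboundrandi}}\delta\sqrt{n}$. So, writing $e^{i\phi}\Bz = \Bx_\phi + i\By_\phi$ with $\Bx_\phi, \By_\phi \in \R^n$, Lemma \ref{lem:lowerboundrandi} (together with its succeeding remark) would give
\[ \|\By_\phi\| \ge c_{\ref{lem:lowerboundrandi}}\,\delta \qquad \text{for every } \phi \in \R. \]
Since $\By_\phi = \sin\phi\,\Bx + \cos\phi\,\By$, a direct computation using double-angle identities together with $\|\Bx\|^2 + \|\By\|^2 = 1$ yields
\[ \|\By_\phi\|^2 = \tfrac{1}{2} + \tfrac{\|\By\|^2 - \|\Bx\|^2}{2}\cos(2\phi) + (\Bx\cdot\By)\sin(2\phi), \]
whose minimum over $\phi$ equals $\tfrac{1}{2} - \sqrt{\tfrac{1}{4} - d(\Bz)^2}$, after using the identity $\tfrac{(v - u)^2}{4} + w^2 = \tfrac{1}{4} - (uv - w^2)$ with $u = \|\Bx\|^2$, $v = \|\By\|^2$, $w = \Bx\cdot\By$ (and $u+v=1$).

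Combining these two observations, the uniform lower bound on $\|\By_\phi\|$ forces $\tfrac{1}{2} - \sqrt{\tfrac{1}{4} - d(\Bz)^2} \ge c_{\ref{lem:lowerboundrandi}}^2 \delta^2$, which upon rearrangement and squaring gives $d(\Bz)^2 \ge c_{\ref{lem:lowerboundrandi}}^2 \delta^2 - c_{\ref{lem:lowerboundrandi}}^4\delta^4$. After shrinking $c_{\ref{lem:lowerboundrandi}}$ if needed so that $c_{\ref{lem:lowerboundrandi}}^2 K^2 \le 1/2$ (which is free, since $\delta \le K$), I would conclude $d(\Bz) \ge (c_{\ref{lem:lowerboundrandi}}/\sqrt{2})\,\delta$, proving the lemma with $c_{\ref{lem:lowercorrelation}} := c_{\ref{lem:lowerboundrandi}}/\sqrt{2}$.

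The one substantive step, which I consider the main point, is recognizing the identity $\min_\phi \|\By_\phi\|^2 = \tfrac{1}{2} - \sqrt{\tfrac{1}{4} - d(\Bz)^2}$: this reflects the fact that $d(\Bz)$ is an intrinsic, phase-invariant feature of the complex vector $\Bz$, so it can be accessed geometrically by choosing the phase that makes the imaginary part as small as possible. Once that identity is in hand, the lemma reduces to a single invocation of Lemma \ref{lem:lowerboundrandi} applied simultaneously to all phase rotations of $\Bz$, plus elementary algebra.
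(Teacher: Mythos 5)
Your proposal is correct and follows essentially the same route as the paper: both arguments rest on the identity $\min_\phi \|\mathrm{Im}(e^{i\phi}\Bz)\|^2 = \tfrac12 - \sqrt{\tfrac14 - d(\Bz)^2}$ (the paper states it for the real part and derives it via the singular values of $(\Bx\ \By)$, i.e.\ the roots of $\lambda^2-\lambda+d(\Bz)^2=0$, while you derive it by a double-angle computation), combined with the phase invariance of $\|M\Bz\|$ and Lemma \ref{lem:lowerboundrandi} to force $\tfrac12 - \sqrt{\tfrac14 - d(\Bz)^2} \ge c_{\ref{lem:lowerboundrandi}}^2\delta^2$. The concluding algebra $d(\Bz)^2 \ge c_{\ref{lem:lowerboundrandi}}^2\delta^2 - c_{\ref{lem:lowerboundrandi}}^4\delta^4 \ge c^2\delta^2$ is identical to the paper's.
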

\begin{proof}
	We first prove the claim that
	$$
	\min_{\theta \in \R} \| \Re(e^{i \theta} \Bz)\|^2 = \frac{1}{2} - \frac{\sqrt{1 - 4 d(\Bz)^2}}{2}.
	$$
	Since
	$$
	\Re(e^{i \theta} \Bz)= \cos(\theta) \Bx - \sin(\theta) \By,
	$$
	the extremal values of $\Re(e^{i\theta} \Bz)$ are the singular values of the matrix $(\Bx \, \By)$.  These can be calculated from the eigenvalues of 
	$$
	\left( \begin{array}{c} \Bx^{\mathrm{T}} \\ \By^{\mathrm{T}} \end{array} \right)(\Bx \, \, \By) = 
	\left(\begin{array}{cc} \|\Bx\|^2 & x\cdot \By \\
	x \cdot \By & \|\By\|^2 \end{array} \right)
	$$  
	which are the solutions of 
	$$
	\lambda^2 - \lambda + d(\Bz)^2 = 0.  
	$$
	Solving the quadratic equation and choosing the larger root yields the claim.  
	
	By Lemma \ref{lem:lowerboundrandi}, the real part of any vector that satisfies the requirements of the lemma has norm bounded below by $c_{\ref{lem:lowerboundrandi}} \delta$, so we must have that
	$$
	\frac{1}{2} - \frac{\sqrt{1 - 4 d(\Bz)^2}}{2} \geq c_{\ref{lem:lowerboundrandi}}^2 \delta^2.
	$$
	Simplifying this inequality gives
	$$
	d(\Bz)^2 \geq (c_{\ref{lem:lowerboundrandi}} \delta)^2 - (c_{\ref{lem:lowerboundrandi}} \delta)^4 \geq c^2 \delta^2.
	$$
	for a small enough constant $c$.  
\end{proof}

In the remainder of this section, we use a covering argument to exclude vectors with small LCD.  For real matrices, this type of argument appeared in \cite{rudelson2008LO}.  However, the main difficulty in the current setting, is that we must consider complex spheres, which have dimension $2(n-1)$ when embedded into the real Euclidean space.  On the other hand, we are left with the same amount of randomness as in the real case.  To handle this difficulty, we divide the remaining vectors into two classes, \emph{genuinely complex} and \emph{essentially real}.  For genuinely complex vectors, the small-ball probabilities are greatly improved as the real and imaginary components are uncorrelated.  This is enough to compensate for the added dimensionality.  Essentially real vectors have highly correlated real and imaginary parts and so can be thought of as residing in a lower-dimensional space.  For this class of vectors, a variant of the original covering argument from \cite{rudelson2008LO} suffices.  This two-class approach is due to \cite{RVnogaps} and has been expanded upon in \cite{ge2017eigenvalue}.        
\begin{definition} \label{def:complexreal}
	Fix a scale $\alpha \in [c_{\ref{lem:lowerboundrandi}} \delta, 1]$ for $\|\Bx\|$.  Take $D \in [c_{\ref{lem:LCDincomp}} \sqrt{n}/ \alpha, D_0]$ for the LCD where
	\begin{equation} \label{eq:D0}
	D_0 = e^{c_{\star} n} \leq L e^{\mu^2 n/ L^2}
	\end{equation}
	where we lower the value of $c_{\star} > 0$ if necessary
	 and let $d_0 =  \max\left( \frac{L}{D} \sqrt{\log_+ \frac{D \alpha}{L}}, \frac{\sqrt{n} \alpha}{D} \right)$.
	\begin{itemize}
		\item (Genuinely Complex $\Bz$) For $d_0 \leq d \leq 1$, define
		\begin{align} \label{eq:complex}
		S_{D, d, \alpha} := \Big\{\Bz = \Bx+ i\By &\in \Incomp(a, b): \alpha \leq \|\Bx\| \leq 2 \alpha, D(\Bz)  = D(\Bx), \\
		& \frac{\Bx}{\|\Bx\|} \in \Incomp_{\R}(\mathfrak{a}, \mathfrak{b}), D \leq D(\Bz) \leq 2 D, d \leq d(\Bz) \leq 2 d\Big\}. \nonumber
		\end{align}
		
		\item (Essentially real $\Bz$)
		Define
		\begin{align} \label{eq:real}
		S_{D, d_0, \alpha} := \Big\{\Bz = \Bx+ i\By &\in \Incomp(a, b): \alpha \leq \|\Bx\| \leq 2 \alpha, D(\Bz)  = D(\Bx), \\
		& \frac{\Bx}{\|\Bx\|} \in \Incomp_{\R}(\mathfrak{a}, \mathfrak{b}), D \leq D(\Bz) \leq 2 D, d(\Bz) \leq d_0 \Big\}. \nonumber
		\end{align}
	\end{itemize}
\end{definition}

The next proposition establishes a strong small-ball probability for genuinely complex vectors.
\begin{proposition} \label{prop:tensorcomplex}
	For $\Bz \in S_{D, d, \alpha}$,  $L \geq \sqrt{16/p}$ and $t \geq 0$, 
	$$
	\P(\| M \Bz\| \leq t \sqrt{n}) \leq \left( \frac{C_{\ref{prop:tensorcomplex}} L^2}{d} \left(t + \frac{1}{D} \right)^2 \right)^{n}
	$$
\end{proposition}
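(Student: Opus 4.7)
The plan is to reduce bounding $\|M\Bz\|_2^2 = \sum_{k=1}^n |(M\Bz)_k|^2$ to a product of independent per-row small-ball estimates: apply Theorem~\ref{thm:smallballcomplex} with $m=2$ to each coordinate, then tensorize across rows. The key observation is that the rows of $N$ are independent, so the coordinates $(M\Bz)_k$ are independent random variables in $\C \cong \R^2$.

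First, I would unfold each $(M\Bz)_k$ as an $\R^2$-valued random vector. With $V = V(\Bz)$ as in Definition~\ref{def:complexvector} and $\eta_k \in \R^n$ denoting the $k$-th row of $N$ (iid entries distributed as $\xi$),
$$\begin{pmatrix}\re((M\Bz)_k)\\ \im((M\Bz)_k)\end{pmatrix} = V\eta_k - w_k,$$
where $w_k \in \R^2$ is a deterministic shift depending only on $\lambda$ and $z_k$. Applying Theorem~\ref{thm:smallballcomplex} to the matrix $V$ with $m=2$---which is permitted since the hypothesis $L \geq \sqrt{16/q}$ matches the required $L \geq \sqrt{8m/q}$---and using the identities $\det(VV^{\mathrm{T}})^{1/2} = d(\Bz)$ and $D(V) = D(\Bz)$ built into Definitions~\ref{def:complexvector} and~\ref{def:LCD} yields
$$\sup_{x \in \R^2}\P\bigl(\|V\eta_k - x\|_2 \leq \tau\bigr) \leq \frac{CL^2}{d(\Bz)}\Bigl(\tau + \frac{1}{D(\Bz)}\Bigr)^2$$
for every $\tau \geq 0$, after absorbing dimensional constants into $C$. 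Since $\Bz \in S_{D,d,\alpha}$ satisfies $d(\Bz) \geq d$ and $D(\Bz) \geq D$, this gives the uniform per-row bound
$$\sup_{y \in \C}\P\bigl(|(M\Bz)_k - y| \leq \tau\bigr) \leq \frac{CL^2}{d}\Bigl(\tau + \frac{1}{D}\Bigr)^2.$$

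Second, I would tensorize. Since $(M\Bz)_1,\dots,(M\Bz)_n$ are independent random vectors in $\R^2$ sharing the above quadratic small-ball profile, a standard tensorization lemma for independent $m$-dimensional vectors (the $\R^2$ analogue of Lemma~2.2 of~\cite{rudelson2009smallest}, as employed in~\cite{RVnogaps}) converts this into
$$\P\bigl(\|M\Bz\|_2 \leq t\sqrt{n}\bigr) \leq \biggl(\frac{C_{\ref{prop:tensorcomplex}} L^2}{d}\Bigl(t + \frac{1}{D}\Bigr)^2\biggr)^n,$$
which is the stated estimate.

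No genuine obstacle arises in this proof: essentially all of the nontrivial content is packaged in Theorem~\ref{thm:smallballcomplex}, and the remaining work is bookkeeping---verifying that $d(\Bz)$ and $D(\Bz)$ are precisely the quantities $\det(VV^{\mathrm{T}})^{1/2}$ and $D(V)$ appearing in that theorem, and invoking the standard tensorization. The quadratic factor $(t + 1/D)^2$ in the conclusion is exactly the payoff of $m=2$, reflecting the genuinely two-dimensional concentration of $(M\Bz)_k$; this improved rate is what the ``genuinely complex'' hypothesis $d(\Bz) \geq d > 0$ buys us over the essentially real case, where the effective dimension drops back to one and a separate (and harder) argument is required.
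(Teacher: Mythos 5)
Your proof is correct and follows essentially the same route as the paper: view each coordinate of $M\Bz$ as $V(\Bz)\eta_k$ minus a deterministic $\R^2$-shift, apply Theorem~\ref{thm:smallballcomplex} with $m=2$ (using $\det(VV^{\mathrm{T}})^{1/2}=d(\Bz)\geq d$ and $D(V)=D(\Bz)\geq D$ on $S_{D,d,\alpha}$), and then tensorize over the independent rows. The only differences from the paper's write-up are cosmetic bookkeeping (the paper's $\sqrt{2}$ rescaling of $t$), so nothing further is needed.
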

\begin{proof}
	Let $M_j$ denote the $j$-th row of $M$. We have that
	$$
	|M_j \Bz| = |N_j \Bx + i N_j \By + \lambda z_j| = \left \| V N_j^{\mathrm{T}}  - \left( \begin{array}{c} \re(\lambda z_j) \\ \im(\lambda z_j)\end{array}\right)\right \|
	$$
	where we recall from Definition \ref{def:complexvector} that $V = V(\Bz) = \left( \begin{array}{c} \Bx^{\mathrm{T}} \\
	\By^{\mathrm{T}} \end{array} \right)$ and $N_j$ is the $j$-th column of $N$, where by assumption each entry is i.i.d.  
	Specializing Theorem \ref{thm:smallballcomplex} to our setting we arrive at 
	\begin{align*}
	\P(|M_j \Bz| \leq t \sqrt{2}) &\leq \frac{(C_{\ref{thm:smallballcomplex}}L/\sqrt{2})^2}{\det(V V^{\mathrm{T}})^{1/2}} \left(t + \frac{\sqrt{2}}{D} \right)^2 \\
	&\leq \frac{C}{d(\Bz)} \left(t + \frac{\sqrt{2}}{D} \right)^2
	\end{align*}
	where in the last line we utilized the observation that $\det(V V^{\mathrm{T}}) = \|\Bx\|^2 \|\By\|^2 - (\Bx \cdot \By)^2 = d(\Bz)^2$. A quick change of variables from $\sqrt{2} t$ to $t$ puts the single coordinate bound into the desired form.  To extend this bound to the entire vector, we use a standard tensorization argument, which completes the proof. 
\end{proof}

The following proposition is proved analogously and is again a simple consequence of tensorization and our definition of $d_0$.
\begin{proposition} \label{prop:tensorreal}
	For $\Bz \in S_{D, d_0, \alpha}$, $L \geq \sqrt{16/p}$ and $t \geq 0$,
	$$
	\P(\|N \Bx - \delta \sqrt{ n} \By\| \leq t \sqrt{n}) \leq \left( \frac{C_{\ref{prop:tensorreal}}}{\alpha} \left( t  + \frac{1}{D} \right) \right)^{n}.
	$$
\end{proposition}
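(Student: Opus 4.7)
The plan is to prove Proposition~\ref{prop:tensorreal} by the same two-step scheme as Proposition~\ref{prop:tensorcomplex}: a per-coordinate small-ball estimate from Theorem~\ref{thm:smallballcomplex}, followed by a standard tensorization. The essential structural difference is that for essentially real vectors $\Bz \in S_{D, d_0, \alpha}$, the two-dimensional approach of Proposition~\ref{prop:tensorcomplex} degenerates because $\det(V(\Bz) V(\Bz)^{\mathrm{T}})^{1/2} = d(\Bz)$ can be as small as $d_0$. To avoid this, I would drop the imaginary part of $M\Bz$ entirely and work with the one-dimensional real quantity $N\Bx - \delta\sqrt{n}\By$, letting $\|\Bx\|$ play the role that $d(\Bz)$ played in the complex case.

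In more detail, the $j$-th coordinate of $N\Bx - \delta\sqrt{n}\By$ is $N_j \cdot \Bx - \delta\sqrt{n}y_j$, where $N_j$ denotes the $j$-th row of $N$ and $\delta\sqrt{n}y_j$ is a deterministic shift since $\Bz$ is fixed. I would apply Theorem~\ref{thm:smallballcomplex} with $m=1$ and $U = \Bx^{\mathrm{T}}$, noting that $\det(UU^{\mathrm{T}})^{1/2} = \|\Bx\| \geq \alpha$ and $D(U) = D(\Bx) = D(\Bz) \geq D$ by the definition of $S_{D, d_0, \alpha}$. This yields
$$
\sup_{c \in \R} \P\bigl(|N_j \cdot \Bx - c| \leq t\bigr) \leq \frac{CL}{\alpha}\left(t + \frac{1}{D}\right),
$$
uniformly in $j$.

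Since the rows of $N$ are independent, the random variables $\xi_j := N_j \cdot \Bx - \delta\sqrt{n}y_j$ are independent, and the standard tensorization argument invoked at the end of the proof of Proposition~\ref{prop:tensorcomplex} then delivers
$$
\P\Bigl(\textstyle\sum_{j=1}^n |\xi_j|^2 \leq t^2 n\Bigr) \leq \left(\frac{C'L}{\alpha}\left(t + \frac{1}{D}\right)\right)^n.
$$
Absorbing $L$ into $C_{\ref{prop:tensorreal}}$ produces the claimed bound.

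There is no genuinely hard step; as the paper indicates, the proposition is a \emph{simple consequence} of the same machinery. The main conceptual point is recognizing that in the essentially real regime one must abandon the two-dimensional complex-LCD estimate and revert to the one-dimensional bound governed by $\|\Bx\|$ and $D(\Bx)$. The price paid is that the per-coordinate bound contains only a single factor of $1/\alpha$ and a single factor of $(t + 1/D)$ rather than the squared versions in Proposition~\ref{prop:tensorcomplex}; this loss is tolerable because essentially real vectors, by virtue of $d(\Bz)\leq d_0$, are close to an essentially lower-dimensional set and hence admit smaller $\varepsilon$-nets in the subsequent covering argument.
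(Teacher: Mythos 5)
Your proof is correct and takes essentially the same route the paper intends: the paper merely remarks that Proposition \ref{prop:tensorreal} is proved analogously to Proposition \ref{prop:tensorcomplex} by tensorization, and your argument --- the $m=1$ case of Theorem \ref{thm:smallballcomplex} applied to $U=\Bx^{\mathrm{T}}$, using $\det(UU^{\mathrm{T}})^{1/2}=\|\Bx\|\geq\alpha$ and $D(\Bx)=D(\Bz)\geq D$ on $S_{D,d_0,\alpha}$, followed by the standard tensorization over the independent coordinates of $N\Bx-\delta\sqrt{n}\By$ --- is exactly that argument.
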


\subsection{Nets} \label{subsec:nets}  In this section, we construct discrete nets of various level sets partitioned by real-complex correlation and LCD.  
\subsubsection{Genuinely Complex Case}

\begin{proposition} \label{prop:complexnet}
	Recall the definition of $S_{D, d, \alpha}$ from (\ref{eq:complex}).  For any fixed constant $\mu >0$, there exists a $\frac{\mu \sqrt{n}}{D}$-net of $S_{D, d, \alpha}$ with cardinality bounded by
	$$
	\frac{C_{\ref{prop:complexnet}}^{2n} D^{2n+1} d^{n-1}}{\mu^{n+1} n^{n + 1/2}}
	$$
	where $C_{\ref{prop:complexnet}}$ is an absolute constant.
\end{proposition}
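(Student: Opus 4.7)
The plan is to build the net in two stages: first discretize the real part $\Bx$ using the LCD structure assumed in the definition of $S_{D,d,\alpha}$, then for each discretized $\Bx'$, cover the imaginary part $\By$ by exploiting the fact that the correlation bound $d(\Bz)\le 2d$ forces $\By$ to lie in a thin tube around the line $\R\Bx$. The assumption $D(\Bz)=D(\Bx)$ is what allows us to bypass direct combinatorial work on the $2\times n$ matrix $V(\Bz)$ and reduce everything to the one-dimensional LCD of the real part.

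For the net on $\Bx$: by the very definition of the LCD in Definition \ref{def:LCD}, there is $\Bp\in\Z^n$ with $\|D(\Bx)\Bx-\Bp\|\le L\sqrt{\log_{+}\|D(\Bx)\Bx\|/L}$. Using $\|\Bx\|\le 2\alpha$ and $D(\Bx)\le 2D$ gives $\|D(\Bx)\Bx\|\le 4\alpha D$, and, since $\Bx/\|\Bx\|$ is real-incompressible, Lemma \ref{lem:LCDincomp} (together with the scaling $D(\Bx)\|\Bx\|=D(\Bx/\|\Bx\|)$) yields $\alpha D\gtrsim\sqrt{n}$, so the logarithmic error is negligible compared to $\alpha D$ in the regime $D\le e^{c_\star n}$. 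Rounding $D(\Bx)\Bx$ to the nearest integer point therefore gives $\|\Bp\|\lesssim \alpha D$, and the usual volumetric estimate bounds the number of such integer points by $(C\alpha D/\sqrt{n})^n$. Combining with a one-dimensional grid for the scalar $D(\Bx)\in[D,2D]$ of mesh $\mu\sqrt{n}/\alpha$, which contributes a factor of $\alpha D/(\mu\sqrt{n})$ and ensures that $\Bp/D'$ approximates $\Bx$ to within $\mu\sqrt{n}/D$, this produces a net $\mathcal{N}_{\Bx}$ of the projections $\{\Bx:\Bz\in S_{D,d,\alpha}\}$ with mesh $\mu\sqrt{n}/D$.

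For the net on $\By$ given $\Bx'\in\mathcal{N}_{\Bx}$: from the identity $d(\Bz)^2=\|\Bx\|^2\|\By\|^2-(\Bx\cdot\By)^2$ derived in the proof of Lemma \ref{lem:lowercorrelation}, the orthogonal decomposition $\By=c\Bx+\Bw$ with $\Bw\perp\Bx$ gives $\|\Bw\|=d(\Bz)/\|\Bx\|\le 2d/\alpha$ and $|c|\le\|\By\|/\|\Bx\|\le 1/\alpha$. We net $c\in[-1/\alpha,1/\alpha]$ with mesh proportional to $\mu\sqrt{n}/(D\|\Bx\|)$, producing $O(D/(\mu\sqrt{n}))$ values, and we net the $(n-1)$-dimensional ball $B(0,2d/\alpha)\cap\{\Bx'\}^\perp$ with mesh $\mu\sqrt{n}/D$, producing at most $(CdD/(\alpha\mu\sqrt{n}))^{n-1}$ points by a second volumetric estimate. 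Replacing $\Bx$ by $\Bx'$ when fixing the orthogonal hyperplane perturbs $\By$ by only $O(\mu\sqrt{n}/D)$ times $|c|$, which is absorbed into the constants after slightly inflating the tube radius.

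Multiplying the three cardinalities, the powers of $\alpha$ telescope to $\alpha^{n+1}\cdot\alpha^{-(n-1)}=\alpha^{2}\le 1$, and the powers of $D$, $d$, $\mu$, and $n$ combine to give exactly the claimed bound $C^{2n}D^{2n+1}d^{n-1}/(\mu^{n+1}n^{n+1/2})$. The main technical obstacle is verifying that all the approximation errors (from the integer rounding to $\Bp$, the grid for $D(\Bx)$, the substitution of $\Bx$ by $\Bx'$ in the orthogonal decomposition, and the tube radius for $\Bw$) together stay within the target mesh $\mu\sqrt{n}/D$ on $\Bz=\Bx+i\By$; this is precisely where the lower bound $\alpha D\gtrsim\sqrt{n}$ coming from the incompressibility of $\Bx/\|\Bx\|$ is essential, as it absorbs the logarithmic factors appearing in the definition of the LCD.
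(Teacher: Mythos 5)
Your construction is essentially the same as the paper's: lattice points supplied by the LCD for the real part, a one-dimensional grid of scalings, and a thin cylinder for the imaginary part whose radius is governed by the correlation $d$. Your derivation of the tube radius is actually cleaner than the paper's: writing $\By=c\Bx+\Bw$ with $\Bw\perp\Bx$ and reading off $\|\Bw\|=d(\Bz)/\|\Bx\|\le 2d/\alpha$ from the identity $d(\Bz)^2=\|\Bx\|^2\|\By\|^2-(\Bx\cdot\By)^2$ replaces the paper's detour through the auxiliary matrix $W$, Weyl's inequality, and the parallelepiped volume identity. The final bookkeeping ($D^{2n+1}$, $d^{n-1}$, $\mu^{-(n+1)}$, $n^{-(n+1/2)}$, with the $\alpha$'s telescoping to $\alpha^2\le 1$) is correct.

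The one step that does not work as written is the transfer of the tube axis from $\Bx$ to the net point $\Bx'$. You bound the perturbation by $|c|\cdot O(\mu\sqrt n/D)$, i.e.\ $O(\mu\sqrt n/(D\alpha))$ since $|c|\le 1/\alpha$, and claim it is absorbed by slightly inflating the tube radius $2d/\alpha$. That absorption requires $d\gtrsim\mu\sqrt n/D$, which is \emph{not} guaranteed: the definition of the genuinely complex regime only gives $d\ge d_0=\max\bigl(\tfrac{L}{D}\sqrt{\log_+\tfrac{D\alpha}{L}},\,\tfrac{\sqrt n\,\alpha}{D}\bigr)$, and both quantities can be far smaller than $\mu\sqrt n/D$ when $\alpha$ is small (recall $\alpha$ can be as small as $c\delta$ with $\delta\ge e^{-c_\star n}$). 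Inflating the radius to $\mu\sqrt n/(D\alpha)$ would cost an extra factor $(\mu/\alpha)^{n-1}$ in the cylinder net and destroy the telescoping of the $\alpha$'s. The fix is to observe that your net point $\Bx'=\Bp/D'$ points in exactly the direction of the lattice point $\Bp$: the error $\|\Bx-\Bx'\|$ splits into a purely radial part of size up to $\mu\sqrt n/D$ (from discretizing the scale $D(\Bx)$), which does not move the axis $\R\Bx'=\R\Bp$ at all, and an angular part controlled by the LCD accuracy, $\sin\angle(\Bx,\Bp)\le L\sqrt{\log_+(D\alpha/L)}/(D(\Bx)\|\Bx\|)\le d/\alpha$, using precisely the genuinely complex condition $d\ge \tfrac{L}{D}\sqrt{\log_+\tfrac{D\alpha}{L}}$. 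Then the displacement of the parallel component $c\Bx$ (of length at most $1$) off the axis $\R\Bp$ is $O(d/\alpha)$, commensurate with the tube radius, and your count goes through. This is exactly the role the same condition plays in the paper's proof, where it absorbs the term $L\sqrt{\log_+(D(\Bz)\|\Bx\|/L)}$ into $Dd$ in the bound on $s_2(W)$.
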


\begin{proof}
	By the definition of LCD, there exists a $\Bp \in \Z^n$ such that
	$$
	\|D(\Bz) - \Bp\| <  L \sqrt{ \log_+ \frac{\|D(\Bz) x\|}{L}}.
	$$
	Therefore,
	\begin{align*}
	\|\Bp\| &\leq \|D(\Bz) \Bx\| +  L \sqrt{ \log_+ \frac{\|D(\Bz)  \Bx\|}{L}} \\
	&\leq D \alpha +  \left(\frac{L}{\|D(\Bz)  \Bx\|} \sqrt{ \log_+ \frac{\|D(\Bz)  \Bx\|}{L}}  \right) \|D(\Bz)  \Bx\| \\
	&\leq C D \alpha
	\end{align*}
	for some univversal constant $C > 0$ where in the last line we have used that the function $\frac{1}{x} \sqrt{ \log_+ x}$ is bounded.
	Using the triangle inequality in the other direction gives
	$$
	\|\Bp\| \geq c D \alpha
	$$
	for a small universal constant $c > 0$.
	By definition, 
	$$
	d(\Bz) = s_1(V) s_2(V).
	$$
	Since $\Bz$ is a unit vector, at least one of $\Bx$ or $\By$ has norm greater than $1/\sqrt{2}$.  Since,
	$$
	\left \|V^{\mathrm{T}} \frac{\Bx}{\|\Bx\|}\right\| \geq \|\Bx\| \text{ and } \left\|V^{\mathrm{T}} \frac{\By}{\|\By\|}\right\| \geq \|\By\|,
	$$
	we must have that $s_1(V) \geq 1/\sqrt{2}$ which implies that 
	$$
	s_2(V) \leq 2 \sqrt{2} d.	
	$$
	Define 
	$$
	W = \left(\begin{array}{cc} \Bp^{\mathrm{T}} \\
	D(\Bz) \By^{\mathrm{T}} \end{array}\right).
	$$
	By definition,
	$$
	\|D(\Bz) V - W\| = \left \| \left(\begin{array}{c} D(\Bz) \Bx^{\mathrm{T}} - \Bp\\
	0 \end{array}\right) \right\| \leq  L \sqrt{ \log_{+} \frac{D(\Bz) \|\Bx\|}{L}} .
	$$
	From Weyl's inequality we can deduce that
	$$
	s_2(W) \leq s_2(D(\Bz) V) +  L \sqrt{ \log_{+} \frac{D(\Bz) \|\Bx\|}{L}} \leq 4 \sqrt{2} D d + L \sqrt{ \log_{+} \frac{D(\Bz) \|\Bx\|}{L}} .
	$$
	We write $\det(WW^{\mathrm{T}})^{1/2}$ in two ways via the product of singular values and the volume of a parallel piped.  In particular, 
	$$
	\|\Bp\| \cdot \|P_{\Bp^{\perp}} D(\Bz) \By\| = s_1(W) s_2(W)
	$$
	where $P_{\Bp^\perp}$ is the operator that projects onto the subspace orthogonal to $\Bp$.
	Since $s_1(W) \leq \|\Bp\| + \|D(\Bz) \By)\|$, 
	$$
	\|P_{\Bp^\perp} D(\Bz) \By\| \leq \left( 1 + \frac{\|D(\Bz) \By\|}{\|\Bp\|} \right) s_2(W).
	$$
	Recalling that $\|\Bp\| \geq c D \alpha$ and $\|D(\Bz) \By\| \leq 2 D$, we find that
	$$
	\|P_{\Bp^{\perp}} D(\Bz) \By\|_2 \leq \left(1  + \frac{4}{c \alpha}\right) \left(4 \sqrt{2} D d +  L \sqrt{ \log_{+} \frac{D(\Bz) \|\Bx\|}{L}}  \right) \leq C' \left( \frac{D d}{\alpha} + \frac{ L}{\alpha} \sqrt{ \log_{+} \frac{D(\Bz) \|\Bx\|}{L}}  \right)
	$$
	for another universal constant $C' > 0$.
	As we are in the genuinely complex case, 
	$$
	d \geq \frac{L}{D} \sqrt{ \log_{+} \frac{D(\Bz) \|\Bx\|}{L}},  
	$$
	so
	\begin{equation} \label{eq:pperp}
	\|P_{\Bp^{\perp}} D(\Bz) \By\| \leq C'' \frac{D d}{\alpha}.
	\end{equation}
	We now have the estimates to construct a $\frac{\mu \sqrt{n}}{D}$-net of $S_{D, d, \alpha}$.  For any $\Bx + i \By \in S_{D, d, \alpha}$ there exists $\Bp \in \Z^n \cap B(0, C D \alpha)$ such that
	$$
	\left \| \Bx - \frac{\Bp}{D(\Bz)} \right \| < \frac{L}{D(\Bz)} \sqrt{ \log_{+} \frac{D(\Bz) \|\Bx\|}{L}} \leq \frac{\mu \sqrt{n}}{D}
	$$
	where the last inequality follows from $D_0 \leq L e^{\mu^2 n/ L^2}$ from (\ref{eq:D0}).  We work with at most  $CD/\mu \sqrt{n}$ discrete multiples of $\Bp$ that approximate $\Bp/D(\Bz)$ up to an accuracy of $\mu \sqrt{n}/D$.  Therefore, to bound the number of discrete multiples we have to consider, we multiply the number of lattice points in $B(0, C D \alpha)$ by the number of discrete multiples to get a bound of 
	\begin{equation} \label{eq:pmultiples}
	\frac{CD}{\mu \sqrt{n}} \left( \frac{CD \alpha}{\sqrt{n}} \right)^n.
	\end{equation}
	For each discrete scaling of a lattice point $\Bp$, we have by (\ref{eq:pperp}) that
	$$
	\|P_{\Bp^{\perp}} \By\|_2 \leq\frac{C'' d}{\alpha}
	$$
	so $\By$ must lie in a cylinder of radius $C''d/\alpha$ in the direction of $\Bp$.  This crucial observation severely restricts the space of potential $\By$.
	 Using the standard volume argument gives a $\mu \sqrt{n}/D$-net of this cylinder with size bounded by 
	$$
	\frac{CD}{\mu \sqrt{n}} \left(\frac{C'' D d}{\mu \sqrt{n} \alpha} \right)^{n-1}.
	$$
	Combining these bounds yields the result since $d/\alpha \geq \sqrt{n}/D$ by the assumption that $d_0 \geq \sqrt{n} \alpha/ D$.
\end{proof}

\subsubsection{Essentially Real Case}
 
\begin{proposition} \label{prop:realnet}
	For any constant $\mu > 0$, there exists a set $\mathcal{N}$ with cardinality bounded by 
	$$
	\frac{C_{\ref{prop:realnet}}^{2n+1} D^{n+2} \alpha^n}{\mu^2 \sqrt{n}^{n+2}}
	$$
	such that for every $\Bz = \Bx + i \By \in S_{D, d_0, \alpha}$ there is $\Bu + i \Bv \in \mathcal{N}$ such that $\|\Bx- \Bu\| \leq  \frac{\mu \sqrt{n}}{D}$ and $\|\By - \Bv\| \leq \frac{\mu \sqrt{n}}{D \alpha}$. 
\end{proposition}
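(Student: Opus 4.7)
The plan is to mirror the LCD covering strategy from Proposition \ref{prop:complexnet} on the real part $\Bx$, and to use the essentially real constraint $d(\Bz)\leq d_0$ to parameterize the imaginary part $\By$ by a single scalar rather than by a genuine $(n-1)$-dimensional cylinder. The asymmetric accuracy $\mu\sqrt{n}/D$ on $\Bx$ versus the looser $\mu\sqrt{n}/(D\alpha)$ on $\By$ is calibrated to the essentially real small-ball bound of Proposition \ref{prop:tensorreal}: there $\By$ enters only through $\delta\sqrt{n}\By$, and $\delta/\alpha$ is controlled by Lemma \ref{lem:lowerboundrandi}.

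For $\Bx$ the argument is essentially verbatim from the beginning of the proof of Proposition \ref{prop:complexnet}. Since $D(\Bz)=D(\Bx)\in[D,2D]$ and $\|\Bx\|\leq 2\alpha$, the LCD definition produces $\Bp\in\Z^n$ with $\|D(\Bz)\Bx-\Bp\|<L\sqrt{\log_+(D(\Bz)\|\Bx\|/L)}$ and $\|\Bp\|\leq CD\alpha$. A volumetric count caps the number of admissible lattice points by $(C'D\alpha/\sqrt{n})^n$, and the bound $D\leq D_0\leq Le^{\mu^2n/L^2}$ from \eqref{eq:D0} ensures $L\sqrt{\log_+(D\alpha/L)}/D\leq\mu\sqrt{n}/D$. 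Discretizing the scale $D(\Bz)\in[D,2D]$ so that $\Bp/D(\Bz)$ remains within $\mu\sqrt{n}/D$ of $\Bx$ contributes a further factor of $O(D/(\mu\sqrt{n}))$.

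Turning to $\By$, I would decompose $\By=s\,\Bx/\|\Bx\|+\Bw$ with $\Bw\perp\Bx$. The identity $d(\Bz)^2=\|\Bx\|^2\|\By\|^2-(\Bx\cdot\By)^2$ gives $\|\Bw\|=d(\Bz)/\|\Bx\|\leq d_0/\alpha$, and the definition $d_0=\max(L\sqrt{\log_+(D\alpha/L)}/D,\sqrt{n}\alpha/D)$ combined with $D\leq D_0$ forces $d_0/\alpha$ to sit at essentially the same scale as the target accuracy $\mu\sqrt{n}/(D\alpha)$, so $\Bw$ can be swept into a constant-per-coordinate correction absorbed into the $C_{\ref{prop:realnet}}^{2n+1}$ prefactor. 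It then remains only to discretize $s\in[-1,1]$ at spacing $\mu\sqrt{n}/(D\alpha)$, giving $O(D\alpha/(\mu\sqrt{n}))$ additional choices. Multiplying these contributions and using $\alpha\leq 1$ to absorb one extra power of $\alpha$ produces
\[
\left(\frac{C'D\alpha}{\sqrt{n}}\right)^n\cdot\frac{CD}{\mu\sqrt{n}}\cdot\frac{CD\alpha}{\mu\sqrt{n}}\;\leq\;\frac{C_{\ref{prop:realnet}}^{2n+1}D^{n+2}\alpha^n}{\mu^2\sqrt{n}^{n+2}}.
\]

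The main obstacle is the treatment of $\Bw$. Unlike the genuinely complex case, we no longer have a Weyl-inequality confinement of $\By$ to a thin cylinder around $\Bp$, so the absorption of the perpendicular slack into the net tolerance is not automatic. It rests on careful tuning of the constants in the definition of $d_0$ so that $Dd_0/(\mu\sqrt{n})=O(1)$ uniformly over the admissible ranges of $\alpha\in[c_{\ref{lem:lowerboundrandi}}\delta,1]$ and $D\in[c_{\ref{lem:LCDincomp}}\sqrt{n}/\alpha,D_0]$; without this, an extra $(n-1)$-dimensional factor would appear and the stated bound would fail.
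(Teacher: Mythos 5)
Your proposal is correct and follows essentially the same route as the paper's proof: the real part is covered exactly as in Proposition \ref{prop:complexnet} via the lattice approximation coming from the LCD, and the essentially-real condition $d(\Bz)\leq d_0$ is used to confine the imaginary part to a thin cylinder along the (approximate) direction of $\Bx$, so that only its axial coordinate requires a genuine discretization. The paper gets the perpendicular confinement from the projection bound \eqref{eq:pperp} applied to the lattice point $\Bp$, splitting into the two branches of the max defining $d_0$, whereas you read it off from the exact identity $\|P_{\Bx^{\perp}}\By\|=d(\Bz)/\|\Bx\|$ --- a cosmetic difference --- and your closing remark that everything hinges on $Dd_0/(\mu\sqrt{n})=O(1)$ is exactly the point of the paper's case analysis.
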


\begin{proof}
	We begin with the case where $d(\Bz) <  \frac{ L}{D} \sqrt{ \log_{+} \frac{D \alpha}{L}}$.  We can recycle many of the estimates from the genuinely complex case.  However, the estimate for the projection of $\By$ onto the subspace orthogonal to $\Bp$ changes.  Now we have
	$$
	\|P_{\Bp^\perp} \By\|_2 \leq \frac{ L}{D \alpha} \sqrt{ \log_+ \frac{D \alpha}{L}} \leq \frac{\mu \sqrt{n}}{D \alpha}
	$$
	where the last inequality follows from choosing $c_{\star}$ small enough in the definition of $D_0$ in (\ref{eq:D0}).  Again, using the discrete multiples of lattice points to approximate $\Bx$ with cardinality bounded by (\ref{eq:pmultiples}).  For each discrete multiple of a lattice point, we can match it with a net of size 
	$$
	\frac{C D \alpha}{\mu \sqrt{n}} \left(\frac{C'' D d}{\mu \sqrt{n} \alpha} \right)^{n-1} \leq 	\frac{C D \alpha}{\mu \sqrt{n}}  C^{n-1}
	$$
	where the inequality follows from our bound on $d_0$.
	Therefore, the total net size is bounded by
	$$
	\frac{C D}{\mu \sqrt{n}} \left(\frac{C D \alpha}{\sqrt{n}} \right)^n \frac{C D \alpha}{\mu \sqrt{n}} C^{n-1}.
	$$
	Finally, we address the case where $d(\Bz) \geq  \frac{L}{D} \sqrt{ \log_+ \frac{D \alpha}{L}}$ and $d(\Bz) <  \frac{\sqrt{n} \alpha}{D}$.  In this, case we use the bound
	$$
	\frac{CD}{\mu \sqrt{n}} C^{n-1}
	$$
	to control the number of $\By$ contained in the cylinder and proceed as in the genuinely complex case to obtain a net of size less than
	$$
	\frac{C D}{\mu \sqrt{n}} \left(\frac{C D \alpha}{\sqrt{n}} \right)^n \frac{C D }{\mu \sqrt{n}} C^{n-1}.
	$$  
\end{proof}

\subsection{Completing the Proof of the Structure Theorem}
\subsubsection{Genuinely Complex Case}
\begin{theorem} \label{thm:levelcomplex}
	$$
	\P\left(\inf_{\Bz \in S_{D, d, \alpha}} \|M \Bz\| \leq \frac{K \mu n}{D} \text{ and }  \EE_K \right) \leq e^{-c_{\ref{thm:levelcomplex}} n}.
	$$
\end{theorem}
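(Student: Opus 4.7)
The plan is a standard net-plus-union-bound strategy, combining the covering estimate of Proposition \ref{prop:complexnet} with the small-ball bound of Proposition \ref{prop:tensorcomplex}, plus a deterministic approximation argument on $\EE_K$. I will carry it out in four steps.

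First, invoke Proposition \ref{prop:complexnet} to produce a $(\mu\sqrt{n}/D)$-net $\mathcal{N}$ of $S_{D,d,\alpha}$ with $|\mathcal{N}| \leq C^{2n} D^{2n+1} d^{n-1} \mu^{-n-1} n^{-n-1/2}$. Second, for each fixed $\Bz' \in \mathcal{N}$ apply Proposition \ref{prop:tensorcomplex} with $t = 3K\mu\sqrt{n}/D$; absorbing the additive $1/D$ into the $t$-term (using that $\mu\sqrt{n} \geq 1$, which holds for $n$ large) yields
\[
\Prob\!\left(\|M\Bz'\| \leq 3K\mu n/D\right) \;\leq\; \left(\frac{C'L^2K^2\mu^2 n}{dD^2}\right)^{\!n}.
\]
Third, take a union bound over $\mathcal{N}$: the net and single-vector bounds combine, after cancellation of the $D^{2n}$, $d^n$, $\mu^n$, and $n^n$ factors, to
\[
\Prob\!\left(\exists\, \Bz' \in \mathcal{N}:\ \|M\Bz'\| \leq 3K\mu n/D\right) \;\leq\; \frac{A^n\, D\, \mu^{n-1}}{d\sqrt{n}},
\]
where $A = A(C,C',L,K)$ is a universal constant.

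The fourth step is the approximation. On $\EE_K$, we have $\|M\| \leq \|N\| + |\lambda|\sqrt{n} \leq 2K\sqrt{n}$. Given any $\Bz \in S_{D,d,\alpha}$ with $\|M\Bz\| \leq K\mu n/D$, select $\Bz' \in \mathcal{N}$ with $\|\Bz-\Bz'\| \leq \mu\sqrt{n}/D$; then
\[
\|M\Bz'\| \leq \|M\Bz\| + \|M\|\|\Bz-\Bz'\| \leq \frac{K\mu n}{D} + 2K\sqrt{n}\cdot\frac{\mu\sqrt{n}}{D} = \frac{3K\mu n}{D},
\]
so the event in Theorem \ref{thm:levelcomplex} is contained in the event controlled in the previous step (intersected with $\EE_K$).

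The main obstacle—and the calibration that determines the parameters $\mu$ and $c_{\star}$ in Theorem \ref{thm:nullvectors}—is verifying that $A^n D\mu^{n-1}/(d\sqrt{n}) \leq e^{-c_{\ref{thm:levelcomplex}} n}$. Here I will use the three constraints at our disposal: (i) $\mu$ is a free small parameter, (ii) $D \leq D_0 = e^{c_{\star} n}$, and (iii) $d \geq d_0 \geq \sqrt{n}\alpha/D$, together with $\alpha \geq c_{\ref{lem:lowerboundrandi}}\delta \geq c_{\ref{lem:lowerboundrandi}} e^{-c_{\star} n}$ from Lemma \ref{lem:lowerboundrandi} and the hypothesis on $\delta$. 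These imply $D/d \leq D^2/(\sqrt{n}\alpha) \leq e^{3c_{\star} n}/(c\sqrt{n})$, so the union bound becomes at most $(A\mu)^n e^{3c_{\star} n}/(c\mu n)$. Choosing $\mu$ small enough that $A\mu < 1/2$ and then choosing $c_{\star}$ small enough that $3c_{\star} < \log 2 - c_{\ref{thm:levelcomplex}}$ for a suitable $c_{\ref{thm:levelcomplex}} > 0$ yields the desired exponential decay and fixes the constants that propagate back into Theorem \ref{thm:nullvectors}.
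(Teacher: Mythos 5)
Your proposal is correct and follows essentially the same route as the paper: a $(\mu\sqrt{n}/D)$-net from Proposition \ref{prop:complexnet}, the deterministic approximation on $\EE_K$, the tensorized small-ball bound of Proposition \ref{prop:tensorcomplex}, and a union bound calibrated by taking $\mu$ (and $c_\star$) small, using $D \leq D_0$ and $d \geq d_0 \geq \sqrt{n}\alpha/D$ to absorb the leftover $D/(d\sqrt{n})$ factor. The only differences are immaterial constants (e.g.\ $3K\mu n/D$ versus the paper's $2K\mu n/D$) and your more explicit spelling-out of the final calibration, which the paper compresses into ``follows from the bound on $D_0$.''
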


\begin{proof}
	As shown in Proposition \ref{prop:complexnet}, there exists a $\mu \sqrt{n}/D$-net, $\mathcal{N}$ of $S_{D, d, \alpha}$ with cardinality at most
	$$
	\frac{C_{\ref{prop:complexnet}}^{2n} D^{2n+1} d^{n-1} \alpha}{\mu^{n+1} \sqrt{n}^{2n+1}}.
	$$
	Suppose that $\Bz' \in \mathcal{N}$ and $\|\Bz- \Bz'\| \leq \mu \sqrt{n}/D$.  Then the event $\|M \Bz\| \leq  \frac{K \mu n}{D}$ implies that
	$$
	\|M \Bz'\| \leq \|M\| \|\Bz - \Bz'\| + \frac{K \mu n}{D} \leq \frac{2 K \mu n}{D}.
	$$
	Taking $t = 2 K \mu \sqrt{n} /D$, we have by Proposition \ref{prop:tensorcomplex}, 
	\begin{align*}
	\P(\inf_{\Bz \in S_{D, d, \alpha}} \|M \Bz\| \leq K \mu n/D \text{ and } \EE_K) &\leq  \P(\inf_{\Bz' \in \mathcal{N}} \|M \Bz'\| \leq t \sqrt{n}) \\
	&\leq \frac{C_{\ref{prop:complexnet}}^{2n} D^{2n+1} d^{n-1}}{\mu^{n+1} n^{n + 1/2}} \frac{2^n C_{\ref{prop:tensorcomplex}}^{n} L^{2n} (2 K \mu \sqrt{n})^{2 n}}{d^{n} D^{2 n}}  \\
	&\leq C^n \mu^{n-1} \\
	&\leq e^{-cn}
	\end{align*}
	where the second to last inequality follows from the bound on $D_0$ in (\ref{eq:D0}) and the last line follows from choosing $\mu$ small enough.
	
\end{proof}

\subsubsection{Essentially Real Case}
\begin{theorem} \label{thm:levelreal}
	$$
	\P(\inf_{\Bz \in S_{D, d_0, \alpha}} \|M \Bz\| \leq K \mu n/D \text{ and } \|M\| \leq K \sqrt{n}) \leq e^{- c_{\ref{thm:levelreal}}n }.
	$$
\end{theorem}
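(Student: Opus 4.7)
The plan is to follow the template of Theorem \ref{thm:levelcomplex}, now using the essentially-real net from Proposition \ref{prop:realnet} in place of the genuinely-complex net, and the one-dimensional bound of Proposition \ref{prop:tensorreal} in place of the two-dimensional bound of Proposition \ref{prop:tensorcomplex}. Writing $\tilde N = N - \re(\lambda)\sqrt n\,I$ so that $M = \tilde N - i\delta\sqrt n\,I$ and $\|\tilde N\|\le 2K\sqrt n$ on $\EE_K$, the real component of $M\Bz$ for $\Bz = \Bx+i\By$ is (up to sign) exactly the linear combination $\tilde N\Bx \pm \delta\sqrt n\,\By$ controlled by Proposition \ref{prop:tensorreal}. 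The main obstacle is that Proposition \ref{prop:realnet} approximates $\Bx$ to the fine scale $\mu\sqrt n/D$ but $\By$ only to the much coarser scale $\mu\sqrt n/(D\alpha)$; since $\alpha$ can be as small as $c_{\ref{lem:lowerboundrandi}}\delta\sim e^{-c_\star n}$, a naive bound on $\|M(\Bz-\Bz')\|$ would pick up a factor of $\alpha^{-n}$ and destroy the argument.

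The decisive observation is that one should not try to approximate $\|M(\Bz-\Bz')\|$ at all --- only the real component is needed, and there the error $\|\Bv-\By\|$ is multiplied by the comparatively small $\delta\sqrt n$. The dangerous $\alpha^{-1}$ from the coarser $\By$-net is then exactly cancelled by the hypothesis $\alpha\ge c_{\ref{lem:lowerboundrandi}}\delta$:
\[ \delta\sqrt n\cdot\frac{\mu\sqrt n}{D\alpha}\;\le\;\frac{\mu n}{c_{\ref{lem:lowerboundrandi}}\,D}. \]
Combined with $\|\tilde N(\Bu-\Bx)\|\le 2K\mu n/D$, this yields that any $\Bz=\Bx+i\By\in S_{D,d_0,\alpha}$ with $\|M\Bz\|\le K\mu n/D$ and any approximant $\Bz'=\Bu+i\Bv\in\mathcal N$ supplied by Proposition \ref{prop:realnet} satisfy
\[ \|\tilde N\Bu \pm \delta\sqrt n\,\Bv\|\;\le\;C_1(K)\,\frac{\mu n}{D}. \]

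It then remains to apply Proposition \ref{prop:tensorreal} to each fixed $\Bz'\in\mathcal N$ with $t=C_1(K)\mu\sqrt n/D$ (which dominates $1/D$) and union bound using $|\mathcal N|\le C^{2n+1}D^{n+2}\alpha^n/(\mu^2 n^{(n+2)/2})$:
\[ \frac{C^{2n+1}D^{n+2}\alpha^n}{\mu^2\, n^{(n+2)/2}}\cdot\left(\frac{C'\mu\sqrt n}{D\alpha}\right)^{\!n}\;=\;\frac{C_4^{\,n}\,\mu^{n-2}\,D^2}{n}, \]
where the $\alpha^n$ from the net exactly cancels the $\alpha^{-n}$ from the small-ball bound --- the quantitative shadow of the geometric cancellation above. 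Using $D\le D_0 = Le^{\mu^2 n/L^2}$ from \eqref{eq:D0} and choosing $\mu$ sufficiently small (depending on $K$) makes the right-hand side at most $e^{-c_{\ref{thm:levelreal}} n}$, which is the desired bound.
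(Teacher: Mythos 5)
Your proposal is correct and follows essentially the same route as the paper's proof: reduce to the real component, use the asymmetric net of Proposition \ref{prop:realnet}, cancel the coarse $\mu\sqrt{n}/(D\alpha)$ approximation of $\By$ against the factor $\delta\sqrt{n}$ via $\alpha \geq c_{\ref{lem:lowerboundrandi}}\delta$, and then union bound with Proposition \ref{prop:tensorreal}, where the $\alpha^n$ from the net cancels the $\alpha^{-n}$ from the small-ball estimate before invoking $D \leq D_0$ and taking $\mu$ small. The only differences are cosmetic (making the real shift $\tilde N$ explicit and tracking the constant $C_1(K)$), so nothing further is needed.
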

\begin{proof} 

	Suppose that we are in the event that for some $\Bz = \Bx+ i\By \in S_{D, d_0, \alpha}$, 
	$$
	\|M \Bz\| \leq \mu n/D.  
	$$
	The real part of the inequality gives
	$$
	\|N \Bx- \delta \sqrt{n} \By\| \leq \mu n/D.  
	$$
	By Proposition \ref{prop:realnet}, there exists a net $\mathcal{N}$ with cardinality bounded by
	$$
	\frac{C_{\ref{prop:realnet}}^{2n+1} D^{n+2} \alpha^n}{\mu^2 \sqrt{n}^{n+2}}
	$$
	such that there is a $\Bu + i\Bv \in \mathcal{N}$ with $\|\Bx-\Bu\| \leq  \frac{\mu \sqrt{n}}{D}$ and $\|\By- \Bv\| \leq \frac{\mu \sqrt{n}}{D \alpha}$.  
	We therefore have
	\begin{align*}
	\|N \Bu - \delta \sqrt{n} \Bv\| &\leq \|N \Bu - N \Bx\| + \|N \Bx - \delta \sqrt{n} \By\| + \|\delta \sqrt{n} \By - \delta \sqrt{n} \Bv\|  \\
	&\leq K \sqrt{n} \frac{ \mu \sqrt{n}}{D} + K \mu n/D + \delta \sqrt{n} \frac{\mu \sqrt{n}}{D \alpha} \\
	&\leq C \mu n/D.
	\end{align*}
	In the last line, we used the fact that $\alpha \geq c \delta$.  We therefore have that for $t = K \mu \sqrt{n}/D$ in Proposition \ref{prop:tensorreal}, 
	\begin{align*}
		\P(\inf_{\Bz \in S_{D,d_0, \alpha}} \|M \Bz\| \leq K \mu n/D) &\leq \P(\inf_{\Bu + i\Bv \in \mathcal{N}} \|N \Bu - \delta \sqrt{n} \Bv\| \leq \frac{\nu \sqrt{n} \sqrt{n}}{D}) \\
		&\leq \frac{C_{\ref{prop:realnet}}^{2n+1} D^{n+2} \alpha^n}{\mu^2 \sqrt{n}^{n+2}} \left( \frac{C_{\ref{prop:tensorreal}}}{\alpha} \left( \frac{2 K \mu \sqrt{n}}{D}\right) \right)^{n} \\
		&\leq \frac{C^n D^2}{n} \mu^{n-2} \\
		&\leq e^{-c n}
	\end{align*}
	where in the last line we used the bound $D \leq D_0$ and chose $\mu$ small enough. 
\end{proof}

\subsubsection{Combining all the elements}
In this section we aggregate all the previous results to deduce that near-null vectors must have large LCD.

\begin{proof}[Proof of Theorem \ref{thm:nullvectors}]
	By Lemma \ref{lem:comp} and the observation that $ K \mu n/ \widetilde{D} \leq c_{\ref{lem:comp}} \sqrt{n}$, the event 
	$$
	\inf_{\Bz \in \Comp(a,b)} \|M \Bz\|_2 \leq K \mu n/ \widetilde{D}  \text{ and } \EE_K 
	$$
	occurs with probability at most $e^{-c_{\ref{lem:comp}}n}$.  Next, we exclude those vectors with compressible real part.  Note that $K \mu n/ \widetilde{D} \leq c_{\ref{lem:lowerboundrandi}} \delta \sqrt{n}$ by our lower bound on $\widetilde{D}$ and decreasing $\mu$ if necessary.  Thus by Lemma \ref{lem:lowerboundrandi}, on the event that $ \|M \Bz\| \leq K \mu n/ \widetilde{D}$, we need only consider complex unit vectors $\Bz = \Bx + i\By \in S_{\C}^{n-1}$  with $\|\Bx\| \geq c_{\ref{lem:lowerboundrandi}} \delta$.  Let 
	$$
	T = \{\Bz=\Bx+i\By \in \Incomp(a,b): c_{\ref{lem:lowerboundrandi}} \delta \leq \|\Bx\| \leq 1, \Bx/\|\Bx\| \in \Comp_{\R}(\a, \b) \}.
	$$
	Choosing dyadic points $\alpha_k = 2^{k} c_{\ref{lem:lowerboundrandi}} \delta$  in $[c_{\ref{lem:lowerboundrandi}} \delta, 1]$ for $k \in \N$, we can take a union bound to conclude that
	\begin{align*}
	\P\left(\inf_{\Bz \in T} \|M \Bz\| \leq K \mu n/ \widetilde{D}  \text{ and } \EE_K \right) &\leq \sum_{k} \P\left(\inf_{\Bz \in T_{\alpha_k}} \|M \Bz\| \leq K \mu n/ \widetilde{D} \text{ and } \EE_K \right) \\
	&\leq \sum_k e^{-c_{\ref{lem:realcomp}} n} \\
	&\leq e^{- cn}
	\end{align*}
	where in the second inequality we invoked Lemma \ref{lem:realcomp} and in the last line we noted that the number of non-zero summands is bounded by $n$ from the lower bound on $\delta$.  We direct our attention to vectors with incompressible real part.  By Lemmas \ref{lem:phaseLCD} and \ref{lem:LCDrealpart}, it suffices to consider vectors whose LCD's are attained by their real component, or in other words $\Bz = \Bx+i\By$ such that $D(\Bz) = D(\Bx)$.  Now, we gradually exclude level sets by LCD, norm of the real component, and real-imaginary correlation. By Lemma \ref{lem:lowerboundrandi}, Lemma \ref{lem:LCDincomp} and Lemma \ref{lem:lowercorrelation}, we need only consider vectors $\Bz = \Bx+i\By$ such that $\|\Bx\| \geq c_{\ref{lem:lowerboundrandi}} \delta$, $D(\Bz) \geq c_{\ref{lem:LCDincomp}} \sqrt{n}$ and $d(\Bz) \geq c_{\ref{lem:lowercorrelation}} \delta$. We define
	$$
	D_h = 2^h c_{\ref{lem:LCDincomp}} \sqrt{n} \text{ and } d_j = 2^j d_0.
	$$
	Then we denote
	$$
	S_{complex} = \bigcup_{h, j, k \in \N : D_h \leq \widetilde{D}/2} \left\{z \in S_{\C}^{n-1}: S_{D_h, d_j, \alpha_k} \right\} 
	$$ 
	and 
	$$
	S_{real} = \bigcup_{h, k \in \N: D_h \leq \widetilde{D}/2 } \left\{z \in S_{\C}^{n-1}: S_{D_h, d_0, \alpha_k} \right\}. 
	$$
	Then by Theorems \ref{thm:levelcomplex} and \ref{thm:levelreal},
	\begin{align*}
	\P \Bigg( \inf_{\Bz \in S_{complex} \cup S_{real}} &\|M \Bz\| \leq K \mu n/ \widetilde{D} \text{ and } \EE_K \Bigg) \\
	&\leq \sum_{h, j, k \in \N : D_h \leq \widetilde{D}/2} \P \left(\inf_{\Bz \in S_{D_h, d_j, \alpha_k}} \|M \Bz\| \leq K \mu n/ D_h \right) \\
	&\qquad  + \sum_{h, k \in \N : D_h \leq \widetilde{D}/2} \P \left(\inf_{\Bz \in S_{D_h, d_0, \alpha_k}} \|M \Bz\| \leq K \mu n/ D_h \right) \\
	&\leq \sum_{h, j, k \in \N : D_h \leq \widetilde{D}/2} e^{-c_{\ref{thm:levelcomplex}} n} + \sum_{h, k \in \N : D_h \leq \widetilde{D}/2} e^{-c_{\ref{thm:levelreal}} n} \\
	&\leq e^{- cn}
	\end{align*} 
	for a small enough constant $c < 0$.  Combining all the error terms completes the proof.  
\end{proof}

\begin{definition}[L\'evy concentration]
	Let $\boldsymbol{\xi}$ be a random vector whose entries are iid copies of a random variable that satisfies Assumption \ref{assump:main}.  
	For a complex vector $\Bz$, we define the L\'evy concentration of $\Bz$ to be
	$$
	\rho(\Bz, t) = \sup_{r \in \C} \P(|\boldsymbol{\xi} \cdot \Bz - r| \leq t), 
	$$
	where $\boldsymbol{\xi} \cdot \Bz = \boldsymbol{\xi}^{\mathrm{T}} \Bz$ is the dot product of $\boldsymbol{\xi}$ and $\Bz$.  
\end{definition}

Finally, we quote a well-known reslult for our random matrix shifted by a \emph{real} value.
\begin{theorem} [\cite{rudelson2008LO, luh2018eigenvector}]\label{thm:realshift}
	Let $\lambda \in \R$ with $|\lambda | \leq K \sqrt{n}$.  
	With probability at least $1 - e^{- c_{\ref{thm:realshift}} n}$, any vector $\Bz \in S_{\C}^{n-1}$ with $\|M \Bz\| \ll \sqrt{n}/D$ is such that $c_{\ref{thm:realshift}} \sqrt{n} \leq D(\Bz) \leq e^{c_{\ref{thm:realshift} } n}$ so that for $t \geq 0$,
	$$
	\rho(\Bz, t) \leq C_{\ref{thm:realshift}}  \left(t + \frac{1}{D} \right).
	$$
\end{theorem}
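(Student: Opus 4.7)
The plan is to reduce Theorem \ref{thm:realshift} to the standard real-valued structure theorems for iid matrices, exploiting the fact that $M = N - \lambda\sqrt{n}\, I$ has real entries when $\lambda \in \R$. The key simplification from the complex-shift setting of Theorem \ref{thm:nullvectors} is that $\|M\Bz\|^2 = \|M\Bx\|^2 + \|M\By\|^2$ for $\Bz = \Bx + i\By$, so the real and imaginary parts decouple and are each individually approximate null vectors of the real matrix $M$. Consequently, the elaborate \emph{genuinely complex} versus \emph{essentially real} dichotomy of Definition \ref{def:complexreal} is not needed, and the real-imaginary correlation $d(\Bz)$ plays no role.

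First, I would apply Lemma \ref{lem:comp} to restrict attention to incompressible $\Bz$, which is valid since the hypothesis $\|M\Bz\| \ll \sqrt{n}/D$ implies $\|M\Bz\| \leq c_{\ref{lem:comp}}\sqrt{n}$ once $D \geq 1$. Next, Lemmas \ref{lem:phaseLCD} and \ref{lem:LCDrealpart}, combined with the observations that $\|M(e^{i\phi}\Bz)\| = \|M\Bz\|$ (since $M$ is real) and $\rho(e^{i\phi}\Bz,t) = \rho(\Bz,t)$ (by the change of variable $r \mapsto e^{i\phi}r$ in the supremum), permit us to rotate the phase of $\Bz$ so that $D(\Bz) = D(\Bx)$ where $\Bx = \re(\Bz)$ after rotation.

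The central step transfers the LCD bound from the real vector $\Bx$ to the complex vector $\Bz$. Either $\|\Bx\|$ is bounded below by a universal constant, in which case $\hat{\Bx} = \Bx/\|\Bx\|$ is a real unit vector with $\|M\hat{\Bx}\| \ll \sqrt{n}/D$ and the classical real-valued structure theorem of Rudelson and Vershynin \cite{rudelson2008LO} gives $c\sqrt{n} \leq D(\hat{\Bx}) \leq e^{cn}$ with probability at least $1 - e^{-cn}$; the scaling identity $D(s\Bv) = D(\Bv)/s$ then yields the same bounds for $D(\Bx) = D(\Bz)$ after adjusting constants. Or $\|\Bx\|$ is small, in which case $D(\Bx) \geq L/\|\Bx\|$ follows automatically from the definition of LCD (since a vector of small norm cannot approximate any non-zero integer point when $\theta \|\Bx\| \leq L$), again producing the required lower bound $D(\Bz) \geq c\sqrt{n}$. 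The intermediate regime, where $\|\Bx\|$ is neither trivially small nor of constant order, is handled by a level-set argument analogous to Lemma \ref{lem:realcomp}, carried out precisely as in \cite{luh2018eigenvector}.

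For the L\'evy concentration bound, the inequality $|\boldsymbol{\xi}\cdot\Bz - r| \geq |\boldsymbol{\xi}\cdot\Bx - \re(r)|$ yields $\rho(\Bz,t) \leq \sup_{r_1 \in \R}\P(|\boldsymbol{\xi}\cdot\Bx - r_1| \leq t)$, reducing to a one-dimensional small-ball problem. Applying Theorem \ref{thm:smallballcomplex} with $m = 1$ to the row $U = \Bx^{\mathrm{T}}$ gives a bound of the form $C L\|\Bx\|^{-1}(t + 1/D(\Bx))$, which simplifies to the claimed $C(t + 1/D(\Bz))$ using $\|\Bx\| \geq c$ and $D(\Bx) = D(\Bz)$. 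The main obstacle throughout is the rotation-and-normalization step: ensuring that the phase achieving $D(\Bz) = D(\Bx)$ also produces a real part with bounded-below norm, so that the real structure machinery applies cleanly. This is precisely the technical adaptation worked out in \cite{luh2018eigenvector}, which the proof would invoke.
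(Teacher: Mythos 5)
You should first note that the paper never proves Theorem \ref{thm:realshift}: it is imported from \cite{rudelson2008LO,luh2018eigenvector}, with a remark that the exact form ``can easily be deduced from the proofs'' there. Measured against that intended deduction, your route is the right one, and your structural observations are correct: for real $\lambda$ the matrix $M$ is real, so $\|M\Bz\|^2=\|M\Bx\|^2+\|M\By\|^2$ and the real and imaginary parts are separately approximate null vectors; the genuinely complex/essentially real dichotomy and $d(\Bz)$ are indeed not needed; phase invariance of $\|M\cdot\|$, of $\rho(\cdot,t)$ and of $D(\cdot)$ (Lemma \ref{lem:phaseLCD}), the scaling identity $D(s\Bv)=D(\Bv)/s$, the trivial bound $D(\Bx)\geq L/\|\Bx\|$ at small norm, and the projection inequality $|\boldsymbol{\xi}\cdot\Bz-r|\geq|\boldsymbol{\xi}\cdot\Bx-\re(r)|$ followed by Theorem \ref{thm:smallballcomplex} with $m=1$ are all sound.

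The one step that does not close as written is the final computation, and you flag it yourself: you use simultaneously $D(\Bx)=D(\Bz)$ (the phase of Lemma \ref{lem:LCDrealpart}) and $\|\Bx\|\geq c$, but the LCD-attaining phase may produce a real part of tiny norm, and then the prefactor $\|\Bx\|^{-1}$ (equivalently the $t/\|\Bx\|$ term) in the $m=1$ small-ball bound ruins the estimate. The standard repair is to decouple the two conclusions. For the L\'evy bound, ignore $D(\Bz)$ altogether: since $\|\Bx\|^2+\|\By\|^2=1$, one of $\re\Bz$, $\im\Bz$ has norm at least $1/\sqrt2$ (multiplying $\Bz$ by $i$ costs nothing for $\rho$); its normalization is itself a real unit approximate null vector of $M$ at scale $\ll\sqrt n/D$, so the real structure theorem of \cite{rudelson2008LO,luh2018eigenvector} (in the form valid for real shifts up to $K\sqrt n$ and, crucially, giving LCD $\gtrsim D$ at scale $\sqrt n/D$, not merely $\gtrsim\sqrt n$ --- note the paper's own statement is loose on this point) yields the LCD lower bound for \emph{that} component, and $m=1$ of Theorem \ref{thm:smallballcomplex} gives $\rho(\Bz,t)\leq C(t+1/D)$ with no reference to the LCD-attaining phase. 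The assertions about $D(\Bz)$ itself are then obtained separately from Lemma \ref{lem:LCDrealpart}: for the rotated real part $\Bx'$ either $\|\Bx'\|$ is so small that $D(\Bx')\geq L/\|\Bx'\|$ suffices, or one normalizes and applies the structure theorem at scale $D\|\Bx'\|$ together with $D(s\Bv)=D(\Bv)/s$; this handles your ``intermediate regime'' (alternatively, a level-set argument in $\|\Bx'\|$ analogous to Lemma \ref{lem:realcomp}, as you propose, also works). With that rearrangement your sketch is a complete deduction at essentially the level of detail the paper itself delegates to the cited works.
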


\begin{remark}
	The exact form of the above theorem does not appear in the literature but can easily be deduced from the proofs in \cite{rudelson2008LO, luh2018eigenvector}.
\end{remark}

\section{Structure of Eigenvectors} \label{sec:eigstruc}

We now have the tools to prove the following eigenvector structure theorem.
\begin{theorem}\label{thm:vecstructure}
	For an $n \times n$ random matrix $N$ with iid entries that satisfy Assummption \ref{assump:main}, there exist constants $c_{\ref{thm:vecstructure}}, C_{\ref{thm:vecstructure}} > 0$, such that with probability at least $1 - e^{-c_{\ref{thm:vecstructure}} n}$, for all eigenvectors $\Bv$ of $N$, we have  
	$$
	\rho(\Bv, t) \leq C_{\ref{thm:vecstructure}} t + e^{-c_{\ref{thm:vecstructure}} n}
	$$
	for $t \geq 0$.
\end{theorem}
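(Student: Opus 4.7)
\textbf{Proof plan for Theorem \ref{thm:vecstructure}.} The plan is to show that on a good event of probability at least $1 - e^{-cn}$ (intersected with the norm event $\EE_K$), every eigenvector $\Bv$ of $N$ satisfies $D(\Bv) \geq e^{cn}$, and then to deduce the L\'evy concentration bound by passing to a one-dimensional real marginal. The workhorse is Theorem \ref{thm:nullvectors}, which already yields exactly such an LCD lower bound for approximate null vectors of $N - \lambda\sqrt{n}\,I$ at any \emph{fixed} shift $\lambda$ whose imaginary part is at least $e^{-c_{\star} n}$. Since the eigenvalues are random, I would discretize over all possible $\lambda$ by a very fine deterministic net and union-bound. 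Any genuine eigenvector $\Bv$ with eigenvalue $\lambda_{*} = \mu\sqrt{n}$ whose nearest net point is $\mu_0$ satisfies $\|(N-\mu_0\sqrt{n}\,I)\Bv\| = \sqrt{n}\,|\mu-\mu_0|$, which can be made as small as needed and so qualifies $\Bv$ as an approximate null vector of $N - \mu_0\sqrt{n} I$ that inherits the LCD lower bound from Theorem \ref{thm:nullvectors}.

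Concretely, condition on $\EE_K$, so that every eigenvalue has the form $\mu\sqrt{n}$ with $|\mu|\leq K$, and split by the size of $|\im \mu|$. For the ``genuinely complex'' regime $|\im\mu|\geq e^{-c_{\star}n}$, cover the admissible $\mu$'s by an $\epsilon$-net $\mathcal{N}_\mu$ of cardinality $O(e^{2 c_1 n})$ with $\epsilon = e^{-c_1 n}$, where $c_1$ is a small constant with $2c_1 < c_{\ref{thm:nullvectors}}$ and $c_1 > c_2$ for some $c_2 \in [c_{\star}, c_{\ref{thm:nullvectors}}'']$. At each $\mu_0 \in \mathcal{N}_\mu$, apply Theorem \ref{thm:nullvectors} with $\widetilde{D} = e^{c_2 n}$, then union-bound: with probability at least $1 - e^{-c'n}$ on $\EE_K$, one has simultaneously for every $\mu_0 \in \mathcal{N}_\mu$ that any $\Bz \in S^{n-1}_{\C}$ with $\|(N-\mu_0\sqrt{n}\,I)\Bz\|\leq K\mu n\, e^{-c_2 n}$ obeys $D(\Bz)\geq e^{c_2 n}$. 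Passing to a true eigenvector $\Bv$ with nearest $\mu_0$ gives $\|(N-\mu_0\sqrt{n}\,I)\Bv\| = \sqrt{n}\,|\mu-\mu_0| \leq \sqrt{n}\,e^{-c_1 n} \leq K\mu n\, e^{-c_2 n}$ since $c_1 > c_2$, and therefore $D(\Bv)\geq e^{c_2 n}$. The ``near-real'' regime $|\im\mu| < e^{-c_{\star}n}$ is treated analogously using a one-dimensional $\epsilon$-net of $[-K,K]$ and the real-shift result Theorem \ref{thm:realshift}: the perturbation is now $\|(N-r_0\sqrt{n}\,I)\Bv\|\leq (\epsilon + e^{-c_{\star} n})\sqrt{n}$, still admissible at the exponential scale, and yields $D(\Bv)\geq e^{cn}$ on an event of the desired probability.

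To convert $D(\Bv) \geq e^{cn}$ into the L\'evy concentration bound, decompose $\Bv = \Bx + i\By$ with $\Bx,\By \in \R^n$ and $\|\Bx\|^2 + \|\By\|^2 = 1$, so that without loss of generality $\|\Bx\|\geq 1/\sqrt{2}$. Testing the LCD of the $2\times n$ matrix $V = (\Bx^{\mathrm{T}};\By^{\mathrm{T}})$ against vectors of the form $\theta = (\theta_1, 0)$ shows $D(\Bx) \geq D(V) = D(\Bv) \geq e^{cn}$. Because the event $|\boldsymbol{\xi}^{\mathrm{T}}\Bv - r| \leq t$ forces $|\boldsymbol{\xi}^{\mathrm{T}}\Bx - \re r|\leq t$, Theorem \ref{thm:smallballcomplex} applied to $\Bx$ with $m = 1$ gives
\[
\rho(\Bv, t) \leq \sup_{r' \in \R}\P\bigl(|\boldsymbol{\xi}^{\mathrm{T}}\Bx - r'|\leq t\bigr) \leq \frac{C L}{\|\Bx\|}\Bigl(t + \tfrac{1}{D(\Bx)}\Bigr) \leq C' t + e^{-c'n},
\]
as desired. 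The principal obstacle is the delicate juggling of the exponents $c_{\star}, c_1, c_2, c_{\ref{thm:nullvectors}}, c_{\ref{thm:realshift}}$: the net must be fine enough that its approximation error $\sqrt{n}\,\epsilon$ triggers Theorem \ref{thm:nullvectors} at a usefully large $\widetilde{D}$, yet coarse enough that its cardinality is absorbed by the $e^{-c_{\ref{thm:nullvectors}} n}$ failure probability; a secondary complication is that eigenvalues with imaginary part below $e^{-c_{\star} n}$ elude Theorem \ref{thm:nullvectors} and must be routed through the real-shift Theorem \ref{thm:realshift}.
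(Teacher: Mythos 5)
Your proposal is correct, and its first half — discretize the admissible eigenvalue locations, treat the strip $|\im\mu|<e^{-c_\star n}$ via Theorem \ref{thm:realshift} and the rest via Theorem \ref{thm:nullvectors}, and union-bound over the net — is exactly the discretization the paper uses in Theorem \ref{thm:eigstructure}. Where you genuinely diverge is the endgame. The paper runs this net at \emph{every} scale $\beta=\sqrt{n}/D$, feeds the resulting correlation bound $d(\Bv)\gtrsim\beta$ into the two-dimensional small-ball inequality (Theorem \ref{thm:smallballcomplex} with $m=2$), obtains $\rho(\Bv,t)\leq C(t+\tfrac{D}{\sqrt n}t^2+\tfrac1D)$, and must then prove simultaneity over all $D$ (Corollary \ref{cor:alld}) and optimize $D=\sqrt n/t$ to kill the quadratic term. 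You instead run the net only at the extreme scale, extract the single conclusion $D(\Bv)\geq e^{cn}$, and then pass to the dominant real (or imaginary) marginal: since the matrix LCD is an infimum over all of $\R^2$ while $D(\Bx)$ is the infimum over the axis $\theta=(\theta_1,0)$, indeed $D(\Bx)\geq D(\Bv)$, and the event $|\boldsymbol{\xi}^{\mathrm T}\Bv-r|\leq t$ forces $|\boldsymbol{\xi}^{\mathrm T}\Bx-\re r|\leq t$, so the $m=1$ case of Theorem \ref{thm:smallballcomplex} gives $Ct+e^{-cn}$ directly, with no multi-scale argument and no real--imaginary correlation in the denominator. Both routes are sound; yours is shorter for this particular statement, while the paper's scale-dependent intermediate results (Theorem \ref{thm:eigstructure}, Corollary \ref{cor:alld}) are reused later for the scaled-eigenvector and controllability arguments, which is presumably why the authors set things up that way. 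Two cosmetic points to tidy up in a write-up: keep the net inside the region where $\im\mu_0\geq e^{-c_\star n}$ so that Theorem \ref{thm:nullvectors} actually applies at each net point (and note $\widetilde D=e^{c_2n}$ must also clear the lower threshold $c_{\ref{thm:nullvectors}}'\sqrt n/\delta$, so take $c_2>c_\star$ strictly), and avoid using $\mu$ both for the eigenvalue $\lambda_*/\sqrt n$ and for the constant $\mu$ appearing in the threshold $K\mu n/\widetilde D$ of Theorem \ref{thm:nullvectors}.
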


	We begin with a technical preliminary result. 
\begin{theorem} \label{thm:eigstructure} For any $c_{\ref{thm:realshift}} \sqrt{n} \leq D \leq e^{c_\star n}$, 
	\begin{align*}
	\P\Bigg(\exists &\text{ an eigenvector } \Bv \text{ of } N \text{ and } t \geq 0 \\
	&\text{ such that } \rho(\Bv, t) \geq C_{\ref{thm:eigstructure}}\left(t + \frac{D}{\sqrt{n}} t^2 + \frac{1}{D} \right) \text{ and } \EE_K \Bigg) \leq e^{-c_{\ref{thm:eigstructure}} n}
	\end{align*}
\end{theorem}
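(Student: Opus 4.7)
The plan is to carry out a covering argument on the set of candidate eigenvalues. Since $\|N\| \leq K\sqrt n$ on $\EE_K$, every eigenvalue $\lambda_0$ of $N$ lies in $B(0, K\sqrt n) \subset \C$; for any nearby shift $\zeta$ we have $\|(N - \zeta I)\Bv\| = |\lambda_0 - \zeta|$, so the LCD and real--imaginary correlation bounds of Theorem \ref{thm:nullvectors} and Theorem \ref{thm:realshift} can be applied to the eigenvector $\Bv$ via an $\varepsilon$-net of shifts. Since $D(\Bv)$, $d(\Bv)$, and $\rho(\Bv, t)$ are invariant under complex conjugation of $\Bv$, we may assume throughout that $\im \lambda_0 \geq 0$.

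First I would build two nets. Introduce dyadic scales $b_k = 2^{-k} K\sqrt n$ for $k = 0, 1, \ldots$, terminating once $b_k < c\sqrt n/D$, which needs only $O(\log D)$ scales. For each such $k$, set $\widetilde D_k := \max\bigl(D,\; 2 c_{\ref{thm:nullvectors}}' \, n/b_k\bigr)$ and cover the half-annulus $\{\zeta \in B(0, K\sqrt n) : b_k/2 \leq \im \zeta \leq b_k\}$ by a net $\mathcal N_k$ at scale $\eta_k := K \nu n/\widetilde D_k$, where $\nu$ is the constant from Theorem \ref{thm:nullvectors}. A volume count gives $|\mathcal N_k| \lesssim \sqrt n\, b_k/\eta_k^2$, and summing over $k$ yields a total cardinality $|\bigcup_k \mathcal N_k| = O(D + D^2/n)$. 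Also build a real net $\mathcal N_\R \subset [-K\sqrt n, K\sqrt n]$ at scale $c\sqrt n/D$, of size $O(D)$. For each $\zeta \in \mathcal N_k$ apply Theorem \ref{thm:nullvectors} to $N - \zeta I$ with parameter $\widetilde D_k$ (the hypothesis $c_{\ref{thm:nullvectors}}' n/\im \zeta \leq \widetilde D_k$ holding by construction), and for each $a \in \mathcal N_\R$ apply Theorem \ref{thm:realshift} with parameter $D$; each event fails with probability $e^{-c n}$, and since $D \leq e^{c_\star n}$ with $c_\star$ small, the union bound remains $\leq e^{-c' n}$.

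On the resulting good event, let $\Bv$ be an eigenvector with eigenvalue $\lambda_0$. If $\im\lambda_0 < c\sqrt n/D$, pick $a \in \mathcal N_\R$ nearest $\re \lambda_0$; then $\|(N - aI) \Bv\| \leq c\sqrt n/D + \im\lambda_0 \leq 2c\sqrt n/D$, and Theorem \ref{thm:realshift} yields $\rho(\Bv, t) \leq C(t + 1/D)$. Otherwise, take the dyadic scale with $b_k/2 \leq \im\lambda_0 \leq b_k$ and the closest $\zeta \in \mathcal N_k$; the residual $|\lambda_0 - \zeta| \leq \eta_k = K\nu n/\widetilde D_k$ triggers Theorem \ref{thm:nullvectors}, giving $D(\Bv) \geq \widetilde D_k \geq D$ and $d(\Bv) \geq c_{\ref{lem:lowercorrelation}} \im\lambda_0/\sqrt n$. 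When $\im\lambda_0 \geq n/D$ we additionally have $d(\Bv) \geq c\sqrt n/D$, so Theorem \ref{thm:smallballcomplex} with $m = 2$ applied to the matrix $V = V(\Bv)$ of Definition \ref{def:complexvector} gives
\[
\rho(\Bv, t) \leq \frac{C L^2}{d(\Bv)}\left(t + \frac{1}{D}\right)^2 \leq \frac{C D}{\sqrt n} t^2 + C t + \frac{C}{D}.
\]
In the middle regime $c\sqrt n/D \leq \im\lambda_0 < n/D$, the observation that $D(\Bv) \geq D$ implies $D(\re\Bv) \geq D$ and $D(\im\Bv) \geq D$ (specialize $\theta = (\tau, 0)$ or $(0, \tau)$ in the definition of the matrix LCD) lets us apply Theorem \ref{thm:smallballcomplex} with $m = 1$ to whichever of $\re\Bv, \im\Bv$ has norm at least $1/\sqrt 2$, giving $\rho(\Bv, t) \leq C(t + 1/D)$. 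In every subcase the resulting bound is dominated by $C(t + Dt^2/\sqrt n + 1/D)$.

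The main obstacle is the multi-scale bookkeeping in the complex net: different dyadic scales of $\im\lambda_0$ require different $\widetilde D_k$ values in Theorem \ref{thm:nullvectors} (to satisfy $c_{\ref{thm:nullvectors}}' \sqrt n/\delta \leq \widetilde D_k$) and correspondingly different net scales, so one must verify that the aggregate net size stays polynomial in $D$ for the exponential union bound to succeed. A subtler technicality is the middle regime $\im\lambda_0 \in (\sqrt n/D, n/D)$, where the natural two-dimensional small-ball bound from Theorem \ref{thm:smallballcomplex} would produce a $t^2$ coefficient of order $D$ rather than the target $D/\sqrt n$; the remedy is to downgrade to a one-dimensional small-ball bound on $\re\Bv$ or $\im\Bv$, exploiting the elementary but essential fact that a lower bound on the matrix LCD $D(V(\Bv))$ transfers to the vector LCDs $D(\re\Bv)$ and $D(\im\Bv)$.
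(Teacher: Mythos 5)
Your argument is correct and shares the paper's skeleton---discretize the possible eigenvalue locations, treat shifts near the real axis via Theorem \ref{thm:realshift}, and treat genuinely complex shifts via Theorem \ref{thm:nullvectors} followed by the small-ball bound of Theorem \ref{thm:smallballcomplex}---but it is organized differently and, in one respect, more carefully. The paper uses a single net at scale $\beta=\sqrt n/D$ and splits into only two regimes, asserting that off a strip of width $\beta$ one has $d(\Bv)\ge c\beta$; since Theorem \ref{thm:nullvectors} is stated for the normalized shift $\lambda$ with $|\lambda|\le K$, its conclusion in eigenvalue-scale units is only $d(\Bv)\gtrsim \im(\lambda_0)/\sqrt n$, so an imaginary part of order $\beta$ yields $d(\Bv)\gtrsim 1/D$, and the two-dimensional small-ball bound then gives a $t^2$ coefficient of order $D$ rather than $D/\sqrt n$; widening the strip to $n/D$ instead breaks the real-shift step, whose hypothesis requires $\|(N-a)\Bv\|\ll\sqrt n/D$. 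You isolate exactly this middle range $\im\lambda_0\in(\sqrt n/D,\,n/D)$ and close it with an ingredient not in the paper's proof: the matrix LCD bound $D(V(\Bv))\ge D$ from Theorem \ref{thm:nullvectors} transfers to $D(\re\Bv)$ and $D(\im\Bv)$ by restricting $\boldsymbol{\theta}$ in Definition \ref{def:LCD} to the coordinate axes, and the one-dimensional case of Theorem \ref{thm:smallballcomplex}, applied to whichever component has norm at least $1/\sqrt2$, gives $\rho(\Bv,t)\le C(t+1/D)$ there; this transfer and the reduction $\rho(\Bv,t)\le\sup_u\P(|\boldsymbol{\xi}\cdot\re\Bv-u|\le t)$ are both valid. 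Your dyadic, scale-dependent choice of $\widetilde D_k$ (versus the paper's single $\beta$-net) costs only a union bound polynomial in $D$ and $n$, harmless since $D\le e^{c_\star n}$ with $c_\star$ small, and the conjugation reduction to $\im\lambda_0\ge 0$ is legitimate because $D(\Bv)$, $d(\Bv)$, $\rho(\Bv,t)$ are conjugation-invariant. In short, your route buys a complete treatment of the intermediate imaginary parts that the paper's two-regime write-up glosses over, at the price of heavier bookkeeping; the tools and the final bound are the same.
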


\begin{proof}
	To extend our previous results to eigenvectors, it is natural to discretize the complex ball of radius $K \sqrt{n}$, since we are assuming the eigenvalues are bounded by $K \sqrt{n}$.  Any eigenvector will then be an approximate null vector for some complex number in the ball.  However, the difficulty is that our small-ball probability bound in Theorem \ref{thm:smallballcomplex} depends on the real-imaginary correlation of our shift $\lambda$, which in turn is lower bounded by the imaginary component of $\lambda$.  Therefore, our upper bound on the L\'evy probability of approximate null-vectors degrades significantly as we near the real line.  The first step of our strategy is to control the L\'evy probability of approximate null vectors with corresponding approximate eigenvalues near the real line by comparing them to approximate eigenvectors of real shifts and invoking Theorem \ref{thm:realshift}, which naturally has no dependence on the imaginary component.  Taking a fine enough net of the real line thereby proves our theorem for eigenvalues inside a neighborhood of the real line.  In the next step, we work on the ball with a strip around the real line excluded.  This gives us some control on the imaginary component of the eigenvalues and allows us to use the results from Section \ref{sec:approximatenull}.

	We proceed with the first step.  Let $\beta = \sqrt{n}/D$.  
	There exists a $\beta/2$-net, $\mathcal{N}$ of the real interval $[-K \sqrt{n}, K \sqrt{n}] \in \C$ with $|\mathcal{N}| \leq 10K/\beta$.  At every point in $\mathcal{N}$, we place a ball of radius $\beta$.  The union of these balls necessarily contains a $c \beta$ neighborhood of the real interval $[-K, K]$. On the event that there exists an eigenvalue, $\lambda$, within the strip with eigenvector $v$, there must exist a $\lambda' \in \mathcal{N}$ such that
	$$
	\|(N - \lambda') \Bv \| = \|(\lambda - \lambda') \Bv\| \leq \beta.
	$$
	Therefore, by Theorem \ref{thm:realshift}, 
	\begin{equation}\label{eq:smallballforrealshift}
	\rho(\Bv, t) \leq C_{\ref{thm:realshift}}  \left(t + \frac{1}{D}\right)
	\end{equation}
	 with probability at most 
	 $$|\mathcal{N}|e^{-\bar{c}_{\ref{thm:realshift}} n} \leq  \frac{10 K \sqrt{n}}{\beta} e^{-\bar{c}_{\ref{thm:realshift}} n} \leq e^{- c n}
	 $$ 
	 after reducing $c_{\star}$ if necessary.  It is worth pointing out that any reduction in $c_{\star}$ will alter the constant in the error probability of Theorem \ref{thm:nullvectors}, but there is no circular dependence of constants.      
	
	Now, let $S'$ denote the centered disk of radius $K \sqrt{n}$ after removing the strip of width $\beta$ around the real line.  There exists a $\beta$-net, $\mathcal{N}'$ of $S'$ of size at most $C n/ \beta^2$.  Again, for an eigenvalue $\lambda \in S'$, there exists a $\lambda' \in \mathcal{N}'$ such that
	$$
	\|(N -\lambda') \Bv\| \leq \beta.  
	$$ 
	Note that by our choice of $\beta$, $D$ will satisfy the requirements of $\widetilde{D}$ in Theorem \ref{thm:nullvectors}.
	Thus, by Theorem \ref{thm:nullvectors}, with probability at least $1 - |\mathcal{N}'| e^{-c_{\ref{thm:nullvectors}} n} \geq 1 - e^{-c n}$, 
	any eigenvector, $\Bv \in S'$ will have $D(\Bv) \geq D_0$ and $d(\Bv) \geq c \beta$, since the imaginary component of any element in $S'$ is bounded below by $c \beta$.   
	By applying Theorem \ref{thm:smallballcomplex}, we obtain that for such a vector $\Bv$,
	\begin{equation} \label{eq:smallballforS'}
	\rho(\Bv, t) \leq \frac{C}{\beta} \left( t+ \frac{1}{D} \right)^2.
	\end{equation}
	Combining (\ref{eq:smallballforrealshift}) and (\ref{eq:smallballforS'}) completes the proof.
\end{proof}

As stated, the above theorem applies to a single choice of $D$.  The previous proofs can be restructured to show that in fact the statement holds for the whole range of $D$ \emph{simultaneously}.  However, to preserve clarity, we simply deduce this as a corollary of the previous theorem.  
\begin{corollary} \label{cor:alld}
	There exist constants $C_{\ref{cor:alld}}, c_{\ref{cor:alld}} > 0$ such that
	\begin{align*}
	\P\Bigg(\exists &\text{ an eigenvector } \Bv \text{ of } N, D \in [c_{\ref{thm:realshift}} \sqrt{n}, e^{c_\star n}] \text{ and } t \geq 0 \\
	&\text{ such that } \rho(\Bv, t) \geq C_{\ref{cor:alld}}\left(t + \frac{D}{\sqrt{n}} t^2 + \frac{1}{D} \right) \text{ and } \EE_K \Bigg) \leq e^{-c_{\ref{cor:alld}} n}.
	\end{align*}
\end{corollary}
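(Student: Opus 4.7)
The plan is to obtain the uniform-in-$D$ statement by a standard dyadic union bound over $D$, leveraging the fact that the right-hand side of the bound in Theorem \ref{thm:eigstructure} is essentially monotone in each summand.

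First, I would set $D_k := 2^k c_{\ref{thm:realshift}} \sqrt{n}$ for $k = 0, 1, \ldots, K_{\max}$, where $K_{\max}$ is the largest integer with $D_{K_{\max}} \leq e^{c_\star n}$. Since $D_{K_{\max}} \leq e^{c_\star n}$, we have $K_{\max} = O(n)$. For each such $D_k$, Theorem \ref{thm:eigstructure} gives an event $\mathcal{F}_k$ of probability at most $e^{-c_{\ref{thm:eigstructure}} n}$ off of which every eigenvector $\Bv$ of $N$ satisfies
\[
\rho(\Bv, t) \leq C_{\ref{thm:eigstructure}}\left(t + \frac{D_k}{\sqrt{n}} t^2 + \frac{1}{D_k}\right)
\]
for all $t \geq 0$ (on $\EE_K$). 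Taking the union over the $O(n)$ scales,
\[
\P\left(\bigcup_{k=0}^{K_{\max}} \mathcal{F}_k\right) \leq (K_{\max}+1)\, e^{-c_{\ref{thm:eigstructure}} n} \leq e^{-c n}
\]
after a harmless reduction of the constant.

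Second, on the complement of this union, I would interpolate between the dyadic scales. Fix any $D \in [c_{\ref{thm:realshift}} \sqrt{n}, e^{c_\star n}]$ and choose $k$ so that $D_k \leq D \leq D_{k+1} = 2D_k$. Since $1/D_k \leq 2/D$ and $D_k/\sqrt{n} \leq D/\sqrt{n}$, the bound at scale $D_k$ yields
\[
\rho(\Bv, t) \leq C_{\ref{thm:eigstructure}}\left(t + \frac{D_k}{\sqrt{n}} t^2 + \frac{1}{D_k}\right) \leq 2 C_{\ref{thm:eigstructure}}\left(t + \frac{D}{\sqrt{n}} t^2 + \frac{1}{D}\right).
\]
Setting $C_{\ref{cor:alld}} := 2 C_{\ref{thm:eigstructure}}$ and $c_{\ref{cor:alld}} := c$ completes the argument.

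There is no real obstacle here beyond bookkeeping: the only point requiring care is that the number of dyadic scales is polynomial in $n$ (in fact linear, since $\log(e^{c_\star n}/\sqrt{n}) = O(n)$), so the union-bound loss is only a polynomial factor that is easily absorbed into the exponential tail $e^{-c_{\ref{thm:eigstructure}} n}$. The monotonicity argument relating a general $D$ to the nearest dyadic $D_k$ works precisely because both terms $\frac{D}{\sqrt{n}} t^2$ (increasing in $D$) and $\frac{1}{D}$ (decreasing in $D$) change by at most a factor of two across a single dyadic interval.
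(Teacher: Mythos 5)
Your proposal is correct and follows essentially the same route as the paper's own proof: a dyadic decomposition $d_k = c_{\ref{thm:realshift}}\sqrt{n}\,2^k$, an application of Theorem \ref{thm:eigstructure} at each of the $O(n)$ scales, a monotonicity argument to pass from $d_k$ to any $D \in [d_k, d_{k+1})$ at the cost of a factor $2$ in the constant, and a union bound absorbing the polynomial factor into the exponential tail. There is nothing to add.
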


\begin{proof}
	Let $d_k := c_{\ref{thm:realshift}} \sqrt{n} 2^k$.  By applying Theorem \ref{thm:eigstructure} with $D = d_k$, we have that with probability at least $1 - e^{-c_{\ref{thm:eigstructure}} n}$, any eigenvector $\Bv$ of $N$ is such that 
	$$
	\rho(\Bv, t) \leq C_{\ref{thm:eigstructure}} \left(t + \frac{d_k}{\sqrt{n}} t^2 + \frac{1}{d_k} \right).
	$$
	On this event, for any $d_k \leq D < d_{k+1}$, 
	$$
	\rho(\Bv, t) \leq C_{\ref{thm:eigstructure}} \left(t + \frac{D}{2 \sqrt{n}} t^2 + \frac{2}{D} \right) \leq 2 C_{\ref{thm:eigstructure}} \left(t + \frac{D}{ \sqrt{n}} t^2 + \frac{1}{D} \right)
	$$
	which shows that to extend the event in Theorem \ref{thm:eigstructure} on $D = d_k$ to the entire interval $[d_k, d_{k+1})$ at the cost of a universal constant.  Therefore, to extend the result to the entire range $c_{\ref{thm:realshift}} \sqrt{n} \leq D \leq e^{c_\star n}$, we simply take a union bound over all $k \in \N$ with 
	$$c_{\ref{thm:realshift}} \sqrt{n} \leq D \leq e^{c_\star n}.$$
	The number of such $k$ is clearly bounded by $n$ so by the union bound, our event of interest holds with probability at least $1 - n e^{-c_{\ref{thm:eigstructure}} n}$.
\end{proof}
	For a fixed $D$, the bound 
	$$
	C_{\ref{thm:eigstructure}} \left(t + \frac{D}{ \sqrt{n}} t^2 + \frac{1}{D} \right)
	$$
	only yields a non-trivial bound on the scale
	$$
	\frac{1}{D} \ll t \ll \frac{n^{1/4}}{\sqrt{D}}.
	$$ 
	However, as we have an identical bound for all $D$ simultaneously, we can allow $D$ to vary with $t$ to combine these scales into a single bound.  

\begin{proof}[Proof of Theorem \ref{thm:vecstructure}]
	We let $D = \sqrt{n}/t$.  By Corollary \ref{cor:alld}, with probability at least $1 - e^{-c_{\ref{cor:alld}} n}$, for any eigenvector $\Bv$ of $N$, 
	$$
	\rho(\Bv, t) \leq C_{\ref{cor:alld}}\left(t + \frac{D}{\sqrt{n}} t^2 + \frac{1}{D} \right) \leq 3 C_{\ref{cor:alld}} t,
	$$ 
	where the last inequality follows from our choice of $D$.
	As we can only apply Corollary \ref{cor:alld} when $D \in [c_{\ref{thm:realshift}} \sqrt{n}, e^{c_\star n}]$, the above bound holds when $t \in [\sqrt{n} e^{-c_{\star} n},  c_{\ref{thm:realshift}}^{-1}]$.  The upper bound on $t$ can be ignored after choosing $C_{}$ large enough.
\end{proof}

There are a variety of simpler results depending on our choice of $t$ and $D$.
For example, setting $t = 0$ and $D = e^{c_{\star} n}$ in Theorem \ref{thm:eigstructure} yields the following notable consequence.  
\begin{corollary}\label{cor:rhozero}
	$$\P\left(\exists \text{ eigenvector } \Bv \text{ of } N \text{ such that } \rho(\Bv, 0) \geq e^{- c_{\ref{cor:rhozero}} n} \text{ and } \EE_K \right) \leq e^{-c_{\ref{thm:eigstructure}} n}$$
\end{corollary}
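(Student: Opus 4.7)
The plan is to deduce this essentially for free from Theorem \ref{thm:eigstructure} by making the single parameter choice $t = 0$ and $D = e^{c_\star n}$. First, I would verify admissibility: for $n$ sufficiently large, $D = e^{c_\star n}$ lies at the upper end of the prescribed range $[c_{\ref{thm:realshift}} \sqrt{n}, e^{c_\star n}]$, so Theorem \ref{thm:eigstructure} applies with these values.

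Substituting into the conclusion of that theorem, the $t$ term and the $\tfrac{D}{\sqrt{n}} t^2$ term vanish, and the small-ball bound collapses to
\[ \rho(\Bv, 0) \leq \frac{C_{\ref{thm:eigstructure}}}{D} = C_{\ref{thm:eigstructure}} e^{-c_\star n}, \]
holding simultaneously for every eigenvector $\Bv$ of $N$ on an event of probability at least $1 - e^{-c_{\ref{thm:eigstructure}} n}$ intersected with $\EE_K$.

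To match the form stated in the corollary, I would choose any constant $c_{\ref{cor:rhozero}} \in (0, c_\star)$; then $C_{\ref{thm:eigstructure}} e^{-c_\star n} \leq e^{-c_{\ref{cor:rhozero}} n}$ for all $n$ sufficiently large, so the event $\{\rho(\Bv, 0) \geq e^{-c_{\ref{cor:rhozero}} n}\}$ is contained in the event already controlled by Theorem \ref{thm:eigstructure}. There is no substantive obstacle; the only point worth a sanity check is that the constants $c_{\ref{thm:eigstructure}}$ and $C_{\ref{thm:eigstructure}}$ produced in Theorem \ref{thm:eigstructure} do not themselves depend on the particular choice of $D$ within the admissible range, which is exactly how that theorem is phrased. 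Hence the exponentially small failure probability is inherited verbatim.
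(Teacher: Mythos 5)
Your proof is correct and is exactly the paper's argument: the paper obtains Corollary \ref{cor:rhozero} precisely by setting $t = 0$ and $D = e^{c_\star n}$ in Theorem \ref{thm:eigstructure}, with the harmless absorption of $C_{\ref{thm:eigstructure}}$ into the exponent via $c_{\ref{cor:rhozero}} < c_\star$ that you spell out. No gaps.
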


In fact, the proof of Theorem \ref{thm:eigstructure} yields a slightly more general theorem.  
\begin{theorem}
	For a $c_{\ref{thm:realshift}} \sqrt{n} \leq D \leq e^{c_\star n}$,
	we call $\Bv$ a $D$-\emph{approximate eigenvector} of $N$ if there exists $\lambda \in \C$ such that $\|(N- \lambda) \Bv\|_2 \leq c \sqrt{n}/D$.  Then,
	\begin{align*}
	\P\Bigg(\exists &\text{ a } D \text{-approximate eigenvector } \Bv \text{ of } N \text{ and } t \geq 0 \\
	&\text{ such that } \rho(v, t) \geq C_{\ref{thm:eigstructure}}\left(t + \frac{1}{D} +  \frac{t^2 D}{\sqrt{n}}  \right)  \text{ and } \EE_K \Bigg) \leq e^{-c_{\ref{thm:eigstructure}} n}.
	\end{align*}
\end{theorem}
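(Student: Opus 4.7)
The plan is to mimic the proof of Theorem \ref{thm:eigstructure} essentially verbatim, since the only structural use of $\Bv$ being an eigenvector there is to produce a nearby shift $\lambda\in\C$ such that $(N-\lambda)\Bv$ is small (of order $\beta := \sqrt{n}/D$). The $D$-approximate eigenvector hypothesis already supplies such a $\lambda$ directly, with $\|(N-\lambda)\Bv\|\leq c\sqrt{n}/D = c\beta$, so the same two-region covering scheme on the disk $\{|z|\leq K\sqrt{n}\}$ should go through with only cosmetic changes to the constants.

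First I would handle the strip near the real axis. Fix the same $\beta/2$-net $\mathcal{N}$ of $[-K\sqrt{n},K\sqrt{n}]$ of size $O(\sqrt{n}/\beta)$ as in Theorem \ref{thm:eigstructure}. If $\lambda$ lies in the $\beta$-neighborhood of the real line, choose $\lambda'\in\mathcal{N}$ with $|\lambda-\lambda'|\leq \tfrac{3}{2}\beta$, so that by the triangle inequality and $\|\Bv\|=1$,
$$
\|(N-\lambda')\Bv\|\leq\|(N-\lambda)\Bv\|+|\lambda-\lambda'|\leq (c+\tfrac{3}{2})\beta\ll\sqrt{n}/D.
$$
A union bound of Theorem \ref{thm:realshift} over $\mathcal{N}$ on the event $\EE_K$ then yields $\rho(\Bv,t)\leq C_{\ref{thm:realshift}}(t+1/D)$ with failure probability at most $|\mathcal{N}|e^{-\bar c_{\ref{thm:realshift}} n}\leq e^{-cn}$, after reducing $c_\star$ slightly if needed.

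Next I would handle the complementary region $S'$ equal to the disk of radius $K\sqrt{n}$ minus the strip of width $\beta$. Take the same $\beta$-net $\mathcal{N}'$ of $S'$ of size $O(n/\beta^2)$. For $\lambda\in S'$, the nearest $\lambda'\in\mathcal{N}'$ satisfies $|\lambda-\lambda'|\leq\beta$, hence $\|(N-\lambda')\Bv\|\leq (c+1)\beta$, and crucially $|\im\lambda'|\geq c\beta$. A union bound of Theorem \ref{thm:nullvectors} applied with $\widetilde{D}=D$ and $\delta=\im\lambda'$ then forces $D(\Bv)\geq D$ and, via Lemma \ref{lem:lowercorrelation}, $d(\Bv)\gtrsim \beta$, with failure probability $|\mathcal{N}'|e^{-c_{\ref{thm:nullvectors}} n}\leq e^{-cn}$. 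Feeding these lower bounds into Theorem \ref{thm:smallballcomplex} produces $\rho(\Bv,t)\leq (C/\beta)(t+1/D)^2 = C(D/\sqrt{n})(t+1/D)^2$. Adding the two regional bounds and absorbing cross terms gives the stated estimate $C_{\ref{thm:eigstructure}}(t+\tfrac{1}{D}+\tfrac{t^2 D}{\sqrt{n}})$.

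The only conceivable obstacle is a bookkeeping one: the constant $c$ in the definition of $D$-approximate eigenvector must be small enough that the extra error $c\sqrt{n}/D$ it contributes is absorbed by the net spacing $\beta=\sqrt{n}/D$ without violating the hypothesis $\|M\Bz\|\leq K\mu n/\widetilde{D}$ of Theorem \ref{thm:nullvectors}. Since $\beta$ and $c\beta$ differ only by a constant, one simply chooses the $c$ in the theorem statement small enough (depending on the universal constants $\mu$, $c_{\ref{lem:lowerboundrandi}}$, etc.\ fixed earlier), which is exactly the role played by the ``$\ll\sqrt{n}/D$'' slack in the original argument. No new probabilistic input is required.
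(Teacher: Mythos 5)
Your proposal is correct and is essentially the paper's own argument: the paper dispenses with this statement by noting that the proof of Theorem \ref{thm:eigstructure} applies verbatim, since the eigenvector hypothesis there is only used to produce a shift $\lambda$ with $\|(N-\lambda)\Bv\|\lesssim\sqrt{n}/D$, which the $D$-approximate eigenvector hypothesis supplies directly. Your two-region covering of the disk, the appeal to Theorems \ref{thm:realshift} and \ref{thm:nullvectors}, and the remark about choosing $c$ small enough to respect the hypotheses of Theorem \ref{thm:nullvectors} all match the intended argument.
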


\section{Directed Erd\H{o}s--R\'enyi Random Graphs} \label{sec:directedrandomgraphs}
For a directed graph $G = ([n], E)$ with vertex set $[n]$ and edge set $E$, we recall that the adjacency matrix $A$ is defined by 
$$
A_{i,j} := \begin{cases} 1, & \text{if } (i,j) \in E, \\
0, & \text{otherwise}.
\end{cases}
$$ 
We define the directed Erd\H{o}s--R\'enyi random graph to be the random graph on vertex set $[n]$ such that each edge $(i,j)$ appears independently with probability $p$, for a constant $p \in (0, 1)$.  For this model, the adjacency matrix is a random matrix with expectation $pJ$ or $p(J-I)$ where $J$ is the $n \times n$ matrix of ones depending on whether or not we exclude the possibility of loops. The extra $pI$ factor does not affect our arguments as it simply shifts the spectrum slightly.  The results from the previous section are not immediately relevant as this matrix model has large norm and so $\EE_K$ actually occurs with probability $o(1)$.  However, due to the low rank structure of $pJ$, we can extend the covering arguments to handle this case (cf. \cite{basak2017invertibility, luh2018sparse, lopatto2019tail}).   We let $\EE_K$ denote the event that
$$
\|A - \E A\| \leq K \sqrt{n}.
$$
As $A - \E A$ has centered, subgaussian entries, it is well known that
$$
\P(\EE_K^c) \leq e^{-c_p n}.
$$
Since $A - \E A$ is a mean-zero random matrix, the previous arguments apply to this matrix. The intuition is now to apply a covering argument to the range of $p J$, which is a low-dimensional subspace and therefore will not require many elements to construct an epsilon net.  We demonstrate this argument in its entirety for compressible vectors.  

\begin{lemma} \label{lem:complowrank}
	There exist constants $a, b, c_{\ref{lem:complowrank}} \in  (0, 1)$ and $K > 2$ such that for $\lambda \in \C^n$ with $|\lambda| \leq K \sqrt{n}$,
	$$
	\P\left(\inf_{\Bz \in \Comp(a,b)} \|(A - \lambda) \Bz\| \leq c_{\ref{lem:complowrank}} \sqrt{n} \text{ and } \EE_K \right) \leq e^{-c_{\ref{lem:complowrank}} n}.
	$$ 
\end{lemma}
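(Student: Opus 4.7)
The plan is to adapt the proof of Lemma \ref{lem:comp} while controlling for the rank-one part of $\E A$. Write $\E A = pJ$ (loops allowed) or $p(J-I)$ (no loops), set $B := A - \E A$ (whose entries are independent, centered, subgaussian), and take $\mu \in \{\lambda, \lambda + p\}$ so that
\[
M := A - \lambda I = B + p \1 \1^{\mathrm{T}} - \mu I.
\]
The main obstacle is that $\|M\|$ can be of order $n$ on $\EE_K$, since $\|pJ\| = pn$, so the naive $b$-net of $\Comp(a,b)$ combined with $\|M \Bz\| \geq \|M \Bz_0\| - \|M\| \|\Bz - \Bz_0\|$ would force resolution $\epsilon \ll 1/\sqrt{n}$, destroying the union bound. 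This is the only genuinely new issue compared to Lemma \ref{lem:comp}.

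The resolution is to exploit that $pJ \Bz = p(\1^{\mathrm{T}} \Bz) \1$ depends on $\Bz$ only through a single scalar, and to augment the standard compressible net by this scalar. Concretely, I would construct $\mathcal{N} \subset \Comp(a,b) \times \C$ such that every $\Bz \in \Comp(a,b)$ admits an approximant $(\Bz_0, \alpha) \in \mathcal{N}$ with $\|\Bz - \Bz_0\| \leq b$ and $|\1^{\mathrm{T}} \Bz - \alpha| \leq b$. Since $|\1^{\mathrm{T}} \Bz| \leq \sqrt{n}$, only $O(n/b^2)$ complex values of $\alpha$ suffice, and the standard sparse-vector count produces at most $\binom{n}{an}(C/b)^{2an} \leq (C_b)^{an}$ choices of $\Bz_0$, keeping $|\mathcal{N}|$ sub-exponential in $n$ as long as $a$ is chosen small relative to $b$.

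For each fixed $(\Bz_0, \alpha) \in \mathcal{N}$, each coordinate $(B \Bz_0)_i = \sum_j (A_{ij} - p)(\Bz_0)_j$ is a sum of independent centered bounded variables with variance $p(1-p)$ and bounded fourth moment, so Paley--Zygmund bounds its L\'evy concentration at a constant scale by $1 - c$, uniformly over unit $\Bz_0$. Tensorizing across the $n$ independent rows gives
\[
\Prob \bigl( \| B \Bz_0 + p \alpha \1 - \mu \Bz_0 \| \leq c \sqrt{n} \bigr) \leq e^{-c' n},
\]
and the sub-exponential union bound over $\mathcal{N}$ places us, with probability $1 - e^{-c'' n}$, on the event that $\| B \Bz_0 + p \alpha \1 - \mu \Bz_0 \| > c \sqrt{n}$ for every $(\Bz_0, \alpha) \in \mathcal{N}$.

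Finally, for arbitrary $\Bz \in \Comp(a,b)$ with approximant $(\Bz_0, \alpha)$ in $\mathcal{N}$,
\[
M \Bz = \bigl( B \Bz_0 + p \alpha \1 - \mu \Bz_0 \bigr) + B (\Bz - \Bz_0) + p (\1^{\mathrm{T}} \Bz - \alpha) \1 - \mu (\Bz - \Bz_0).
\]
On $\EE_K$ the three error terms have norms at most $K b \sqrt{n}$, $p b \sqrt{n}$, and $(K \sqrt{n} + p) b$ respectively; choosing $b$ (and then $a$) a sufficiently small constant then yields $\|M \Bz\| \geq c_{\ref{lem:complowrank}} \sqrt{n}$, as required.
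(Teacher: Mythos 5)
Your proof is correct and rests on the same key idea as the paper's: since $pJ\Bz = p(\1^{\mathrm{T}}\Bz)\1$ depends on $\Bz$ only through one scalar, discretizing that scalar adds only polynomially many points to the union bound, so the compressible-vector argument survives the large norm of $\E A$. The only difference is packaging: the paper quotes the fixed-shift compressible estimate $\P(\inf_{\Bz\in\Comp(a,b)}\|(A-\E A-(\lambda+p))\Bz-\Bu\|\leq c\sqrt{n} \text{ and } \EE_K)\leq e^{-cn}$ as a black box and nets only the range of $J$, whereas you re-derive that estimate via an explicit product net over $\Comp(a,b)\times\C$ together with Paley--Zygmund and tensorization.
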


\begin{proof}
	Let $\Bu \in \C^n$.  Since $A - \E A$ is mean-zero, we have that 
	\begin{equation}\label{eq:compshift}
	\P\left(\inf_{\Bz \in \Comp(a,b)} \|(A - \E A - (\lambda + pI)) \Bz - \Bu\| \leq c \sqrt{n} \text{ and } \EE_K \right) \leq e^{-c n}.
	\end{equation}
	Note that we have added a shift by a fixed vector $u$.  This version is well known and can be found in \cite[Proposition 4.2]{vershynin2014symmetric}.
	Now, let $\mathcal{N}$ be an $(c/2)\sqrt{n}$-net of $\{t (1, \dots, 1)^{\mathrm{T}}: t \in [-n, n]\}$ of size at most $C \sqrt{n}$.  On the event that there is a $\Bz \in \Comp(a,b)$ such that $\|(A - \lambda) \Bz\| \leq  (c/2) \sqrt{n}$, we must have that
 	for $\Bz' \in \mathcal{N}$ such that $\|\Bz' - J \Bz\| \leq (c/2) \sqrt{n}$, 
 	\begin{align*}
 	\|(A - \E A - (\lambda + p) \Bz + \Bz'\| &\leq \|(A - \E A - (\lambda + p) \Bz + J\Bz - (J\Bz - \Bz')\| \\
 	&\leq \|(A - \lambda)\Bz \| + \|J\Bz - \Bz'\| \\
 	&\leq c \sqrt{n}.
 	\end{align*}
 	By a union bound, the above event happens with probability at most $|\mathcal{N}'| e^{- c n} \leq e^{c' n}$. 	
\end{proof}

This trick of discretizing the range of $J$ can be applied to all the covering arguments from the previous section.  We leave the details to the reader.  Note that in the analogous covering argument for the complex disk, we still require that the complex shifts to be of norm at most $K \sqrt{n}$.  This allows us to conclude that eigenvectors of $A$ with corresponding eigenvalues in that disk have no arithmetic structure.  
The analogous multi-scale argument then allows us to conclude the following.
\begin{theorem} \label{thm:adjvec}
	There exist constants $C_{\ref{thm:adjvec}}, c_{\ref{thm:adjvec}} > 0 $ depending only on $p$ such that with probability at least $1 - e^{-c_{\ref{thm:adjvec}} n}$, for all eigenvectors $\Bv$ of $A$ corresponding to eigenvalues $\lambda$ such that $|\lambda| \leq 2 \sqrt{n}$, we have for $t \geq 0$,
	\begin{align*}
	\rho(\Bv, t) \leq C_{\ref{thm:adjvec}} t  +  e^{-c_{\ref{thm:adjvec}} n}.
	\end{align*}
\end{theorem}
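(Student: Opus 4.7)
The plan is to mirror the proof of Theorem \ref{thm:vecstructure} line by line, replacing every appeal to the bound $\|N\|\leq K\sqrt{n}$ by the subgaussian concentration bound $\|A-\E A\|\leq K\sqrt{n}$ (which holds with probability $1-e^{-c_p n}$), and replacing every epsilon-net estimate by its rank-1-corrected analogue in the spirit of Lemma \ref{lem:complowrank}. For any eigenvalue $\lambda$ with $|\lambda|\leq 2\sqrt{n}$, write
\[ A-\lambda I \;=\; (A-\E A)\,-\,\lambda I\,+\,pJ \]
(or $p(J-I)$ in the loopless case; the extra identity shift is harmless). For any vector $\Bz$, the term $pJ\Bz=p(\1^\mathrm{T}\Bz)\,\1$ lies in the one-dimensional real subspace $\Span(\1)$ and has norm at most $pn$, so it can be discretized by an $O(\sqrt{n})$-sized net of $\{t\1:t\in[-n,n]\}$. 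Absorbing this $O(\sqrt{n})$ factor into the existing exponential slack in each covering argument of Section \ref{sec:approximatenull} costs nothing.

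With this rank-1 correction in hand, I obtain the directed-graph analogues of Lemmas \ref{lem:comp}, \ref{lem:realcomp}, Propositions \ref{prop:complexnet}, \ref{prop:realnet}, and Theorems \ref{thm:levelcomplex}, \ref{thm:levelreal}. These combine, exactly as in the proof of Theorem \ref{thm:nullvectors}, to give the following directed-graph structure theorem: for any fixed $\lambda$ with $|\lambda|\leq 2\sqrt{n}$ and $\im(\lambda)\geq\sqrt{n}\,e^{-c_\star n}$, and any admissible scale $\widetilde{D}$, every unit vector $\Bz$ with $\|(A-\lambda I)\Bz\|\leq K\mu n/\widetilde{D}$ satisfies $D(\Bz)\geq\widetilde{D}$ and $d(\Bz)\geq c\,\im(\lambda)/\sqrt{n}$, off an event of probability at most $e^{-cn}$ on $\EE_K$. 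A parallel extension of Theorem \ref{thm:realshift} (again by discretizing the one-dimensional range of $pJ$) handles real shifts.

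The remaining argument is the two-step epsilon-net over eigenvalues, exactly as in the proof of Theorem \ref{thm:eigstructure}. Setting $\beta=\sqrt{n}/D$, I cover the real interval $[-2\sqrt{n},2\sqrt{n}]$ by $O(\sqrt{n}/\beta)$ points and invoke the real-shift structure theorem for eigenvectors whose eigenvalues lie within $\beta$ of the real axis, yielding $\rho(\Bv,t)\leq C(t+1/D)$. I then cover the punctured disk $\{|z|\leq 2\sqrt{n},\ |\im z|\geq\beta\}$ by $O(n/\beta^2)$ points and apply the directed-graph analogue of Theorem \ref{thm:nullvectors} at each center; because each covering point has imaginary part at least $\beta$, the bound $d(\Bv)\geq c\beta$ kicks in and Theorem \ref{thm:smallballcomplex} delivers $\rho(\Bv,t)\leq (C/\beta)(t+1/D)^2$. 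Combining both regimes, dyadically extending over $D\in[c\sqrt{n},e^{c_\star n}]$ as in Corollary \ref{cor:alld}, and finally setting $D=\sqrt{n}/t$ yields the asserted bound $\rho(\Bv,t)\leq C t+e^{-cn}$.

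The main obstacle is bookkeeping: one must verify that the rank-1 correction does not degrade the small-ball estimate of Proposition \ref{prop:tensorcomplex}. Because $A-\E A$ has independent centered subgaussian entries, each row-level Rogozin-type estimate underlying Theorem \ref{thm:smallballcomplex} is unchanged; the deterministic shift $pJ\Bz$ merely translates the target in the small-ball probability, which is absorbed into the supremum over $r\in\C$. The restriction $|\lambda|\leq 2\sqrt{n}$ plays the role of $|\lambda|\leq K$ from Theorem \ref{thm:nullvectors} after the rescaling $\lambda\mapsto\lambda/\sqrt{n}$, so all implicit constants depend only on $p$ through the subgaussian moment of $A_{ij}-p$ and the parameters in Assumption \ref{assump:main}.
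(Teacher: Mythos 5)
Your proposal follows the paper's own route essentially verbatim: center the matrix as $A-\E A$, replace $\EE_K$ by $\|A-\E A\|\leq K\sqrt{n}$, absorb the rank-one term $pJ\Bz=p(\1^{\mathrm{T}}\Bz)\1$ via an $O(\sqrt{n})$-point net of its one-dimensional range (exactly the mechanism of Lemma \ref{lem:complowrank}), rerun the covering and multi-scale arguments of Sections \ref{sec:approximatenull}--\ref{sec:eigstruc}, and conclude with the real-strip/punctured-disk net and the choice $D=\sqrt{n}/t$. The only point you gloss over is the loopless model: there the difficulty is not the extra $-pI$ shift but that the deterministic zero diagonal breaks the iid structure of $A-\E A$, which the paper handles with the lemma showing that deleting one coordinate changes the LCD (hence the row-level small-ball bound) only by a constant; this is a minor bookkeeping step, not a flaw in your approach.
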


One small complication that we have glossed over is that since we allow the possibility that the adjacency matrix be defined with zero diagonal, not all the entries are iid.  It is easy to show that this does not alter the argument much.
We show that removing a single coordinate of a vector cannot alter the LCD significantly.  

\begin{lemma}
	Let $\Bz \in \Incomp_{\C}(a,b)$ and $\Bz'$ be the vector $\Bz$ with any coordinate set to zero.  There exists a constant $c > 0$ such that   
	$$
	\rho(\Bz', t) \geq  c \rho(\Bz, t).
	$$
\end{lemma}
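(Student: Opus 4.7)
The plan is to prove the stated bound with $c = 1$ by a direct conditioning argument, exploiting the independence of the coordinates of $\boldsymbol{\xi}$. Let $i$ denote the coordinate that was set to zero in passing from $\Bz$ to $\Bz'$, so that
\[
\boldsymbol{\xi} \cdot \Bz \;=\; \boldsymbol{\xi} \cdot \Bz' \;+\; z_i\, \xi_i.
\]
Since the entries of $\boldsymbol{\xi}$ are independent, $\xi_i$ is independent of $\boldsymbol{\xi}\cdot \Bz'$ (which depends only on $\{\xi_j : j \neq i\}$).

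For any target $r \in \mathbb{C}$, I would condition on $\xi_i$ to write
\[
\P(|\boldsymbol{\xi} \cdot \Bz - r| \leq t) \;=\; \E_{\xi_i}\!\left[\P\bigl(|\boldsymbol{\xi} \cdot \Bz' - (r - z_i \xi_i)| \leq t \,\bigm|\, \xi_i\bigr)\right],
\]
and then bound the inner conditional probability uniformly by the L\'evy concentration $\rho(\Bz', t) = \sup_{r'\in\mathbb{C}} \P(|\boldsymbol{\xi}\cdot \Bz' - r'|\leq t)$, applied with $r' = r - z_i \xi_i$. Integrating over $\xi_i$ and taking the supremum over $r$ on the left yields $\rho(\Bz, t) \leq \rho(\Bz', t)$, which is the claimed inequality with $c = 1$.

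The incompressibility hypothesis does not enter this argument at all; it appears in the statement only because the lemma will be invoked for eigenvectors in the incompressible regime handled earlier in the section. Accordingly there is no real obstacle here: the claim is a special case of the classical monotonicity of the L\'evy concentration function under the addition of an independent summand, namely $Q(X+Y;t) \leq Q(Y;t)$ for $X$ independent of $Y$. In the context of Section~\ref{sec:directedrandomgraphs}, this lets us port the anti-concentration bounds proved for fully iid rows to the directed Erd\H{o}s--R\'enyi adjacency matrix with a deterministic zero diagonal, by identifying the forced-zero diagonal entry with the zeroed coordinate of $\Bz$.
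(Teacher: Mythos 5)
Your conditioning argument is formally valid and does yield the displayed inequality with $c=1$: since $\boldsymbol{\xi}\cdot\Bz=\boldsymbol{\xi}\cdot\Bz'+z_i\xi_i$ with $\xi_i$ independent of $\boldsymbol{\xi}\cdot\Bz'$, the classical monotonicity of the concentration function gives $\rho(\Bz,t)\le\rho(\Bz',t)$. But this is the trivial direction of the comparison, and it is not what the lemma is used for. In Section \ref{sec:directedrandomgraphs} the $j$-th row of $(A-\lambda)\Bz$ involves only the coordinates $z_k$ with $k\ne j$, so the quantity that must be controlled \emph{from above} in the tensorization and net arguments is $\rho(\Bz',t)$. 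Your inequality points the wrong way for that purpose: an upper bound on $\rho(\Bz,t)$ together with $\rho(\Bz',t)\ge\rho(\Bz,t)$ gives no control whatsoever on $\rho(\Bz',t)$. Consequently the last paragraph of your proposal --- that this ``lets us port the anti-concentration bounds'' to the zero-diagonal adjacency matrix --- is incorrect as reasoned.

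The paper's proof establishes the substantive fact your argument misses: deleting a single coordinate of an \emph{incompressible} vector cannot destroy its arithmetic non-structure. Taking a near-optimal witness $\boldsymbol{\theta}$ for $D(\Bz'/\|\Bz'\|;2L,1/4)$, the distance of the deleted coordinate to $\Z$ is at most $1$, and this error is absorbed into the $L\sqrt{\log_+(\cdot)}$ slack precisely because incompressibility guarantees $\|\Bz'\|\ge b$ and $D(\Bz;L,1)\gtrsim\sqrt n$ (so the hypothesis is essential, not decorative as you suggest). This yields $D(\Bz'/\|\Bz'\|;2L,1/4)\ge b\,D(\Bz;L,1)$, and feeding that lower bound into Theorem \ref{thm:smallballcomplex} produces an upper bound on $\rho(\Bz',t)$ of the same form as the one available for $\rho(\Bz,t)$. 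That LCD stability under coordinate deletion is the missing idea in your proposal, and it is the only part of the lemma that the subsequent adjacency-matrix arguments actually rely on.
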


\begin{proof}
	Recall Definition \ref{def:LCD}.  
	By incompressibility, $\|\Bz'\| \geq b$ and $D(\Bz; L, 1) \geq c \sqrt{n}$.  Let $D = D(\Bz'/\|\Bz'\|; 2L, 1/4)$.  Fix $\eps > 0$.  There exists $\boldsymbol{\theta}$ such that $D \leq \|\boldsymbol{\theta}\| \leq D+\eps$ and
	$$
	\dist(U(\Bz'/\|\Bz'\|)^{\mathrm{T}} \boldsymbol{\theta}, \Z^{n-1} ) < \frac{1}{4} (2L) \sqrt{ \log_+ \frac{\|U(\Bz'/\|\Bz'\|)^{\mathrm{T}} \boldsymbol{\theta}\|}{2 L}}.
	$$   
	Note that we must have $\|U(\Bz'/\|\Bz'\| \boldsymbol{\theta}\| > 2L$.   
	We have
	\begin{align*}
	\dist\left(U(\Bz)^{\mathrm{T}} \frac{ \boldsymbol{\theta}}{\|\Bz'\|}, \Z^n\right) &\leq \dist \left(U(\Bz')^{\mathrm{T}} \frac{\boldsymbol{\theta}}{\|\Bz'\|}, \Z^{n-1} \right) + 1 \\
	&= \dist(U(\Bz'/\|\Bz'\|)^{\mathrm{T}}  \boldsymbol{\theta}, \Z^{n-1}) + 1 \\
	&< \frac{1}{4} (2L) \sqrt{ \log_+ \frac{\|U(\Bz'/\|\Bz'\|)^{\mathrm{T}}  \boldsymbol{\theta}\|}{2L}} + 1 \\
	&\leq \frac{1}{4} (2L) \sqrt{ \log_+ \frac{\|U(\Bz'/\|\Bz'\|)^{\mathrm{T}}  \boldsymbol{\theta}\|}{L}} + 1\\
	&\leq  L \sqrt{ \log_+ \frac{\|U(\Bz)^{\mathrm{T}}  \boldsymbol{\theta}/\|\Bz'\|\|}{L}}. \\
	\end{align*}
	As this is true for any $\eps > 0$, we therefore have
	$$
	\frac{D(\Bz'/\|\Bz'\|; 2L, 1/4)}{b} \geq \frac{D(\Bz'/\|\Bz'\|; 2L, 1/4)}{\|\Bz'\|}  > D(\Bz; L, 1) 
	$$ 
	Applying Theorem \ref{thm:smallballcomplex} completes the proof.
\end{proof}

Having established this, we leave it as an exercise to verify that all the structural results follow with only a slight change in the constants. 
\begin{remark} \label{remark:zerodiagonal}
	Using this same technique, all the structural results in Section \ref{sec:eigstruc} can be extended to random matrices with zero diagonal.  We omit the obvious modifications.
\end{remark} 

Although the above structural results only apply to eigenvectors with corresponding eigenvalues in the centered disk of radius $K \sqrt{n}$ in the complex plane, it is known that with high probability, this disk contains all the eigenvalues of $A$ but one.  In other words, our structural results apply to all eigenvectors but one with high probability.  
\begin{theorem}[Follows from Theorem 2.8 in \cite{OR}] \label{thm:outlier}
	Let $N$ be an iid random matrix whose entries are centered and have unit variance and finite fourth moment. Let $\tilde{N}$ be the matrix $N$ with the diagonal entries replaced with zeros.  Then for any $p \in (0, 1)$ and any $\delta > 0$, with probability $1-o(1)$, all the eigenvalues of $\tilde{N} + p J$ and $N + pJ$ are contained in the disk $\{z \in \mathbb{C} : |z| \leq (1 + \delta) \sqrt{n} \}$ with a single exception which takes the value $p n + o(\sqrt{n})$.  
\end{theorem}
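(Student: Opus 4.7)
The plan is to treat $pJ = p\mathbf{1}\mathbf{1}^{\mathrm{T}}$ as a rank-one perturbation of the iid matrix $N$ (respectively $\tilde{N}$), and to combine a bulk confinement estimate with a resolvent-based location of the single possible outlier. The strategy is standard in the low-rank perturbation literature: one can either invoke Theorem~2.8 of \cite{OR} as a black box, or reconstruct its proof as sketched below.

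First, I would bound the operator norm. Under the finite fourth moment hypothesis, the Bai--Yin theorem gives $\|N\| \leq (1+\delta/3)\sqrt{n}$ with probability $1-o(1)$, and since $\max_i |N_{ii}| = o(\sqrt{n})$ by a simple union bound, the same bound holds for $\tilde{N} = N - \diag(N_{11}, \ldots, N_{nn})$. Because $pJ$ has rank one, the interlacing principle for low-rank perturbations (a consequence of the secular equation for rank-one updates) ensures that \emph{all but at most one} eigenvalue of $N + pJ$ lies in the disk of radius $(1+\delta/2)\sqrt{n}$ with probability $1-o(1)$.

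To locate the potential outlier, I would apply the matrix determinant lemma: for $z \notin \operatorname{spec}(N)$,
\begin{equation*}
\det(zI - N - pJ) \;=\; \det(zI - N)\bigl(1 - p\,\mathbf{1}^{\mathrm{T}}(zI - N)^{-1}\mathbf{1}\bigr),
\end{equation*}
so any outlier $z$ with $|z| > (1+\delta)\sqrt{n}$ must satisfy $p\,\mathbf{1}^{\mathrm{T}}(zI - N)^{-1}\mathbf{1} = 1$. For such $z$ the Neumann series $(zI - N)^{-1} = \sum_{k \geq 0} z^{-k-1} N^k$ converges. The $k=0$ contribution is $n/z$, while for $k \geq 1$ the quadratic form $\mathbf{1}^{\mathrm{T}} N^k \mathbf{1}$ is a sum over length-$k$ walks whose mean/variance estimates give fluctuations of order $o(n^{(k+1)/2})$ with high probability. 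Summing these bounds (truncating at $k \asymp \log n$ and controlling the tail by $\|N/z\|^k$) yields
\begin{equation*}
\mathbf{1}^{\mathrm{T}}(zI - N)^{-1}\mathbf{1} \;=\; \frac{n}{z} + o\!\left(\frac{\sqrt{n}}{|z|}\right)
\end{equation*}
uniformly on the annulus $(1+\delta)\sqrt{n} < |z| \leq 2pn$ (the upper cutoff follows from $\|N + pJ\| \leq \|N\| + pn$). The outlier equation then collapses to $z = pn + o(\sqrt{n})$.

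Uniformity in $z$ is achieved by a polynomial-size net on the annulus together with the Lipschitz bound on $z \mapsto \mathbf{1}^{\mathrm{T}}(zI - N)^{-1}\mathbf{1}$ that holds once $|z|$ is bounded away from $\|N\|$. The $\tilde{N}$ case is identical, with the observation that replacing $N$ by $\tilde{N}$ perturbs the resolvent by $O(\|D\|/|z|^2) = o(1/\sqrt{n})$, where $D$ is the diagonal. The hard part is the simultaneous control of the Neumann-series tails in $z$; this is precisely what Theorem~2.8 of \cite{OR} delivers, so invoking it directly is the cleanest route.
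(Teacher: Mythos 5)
The paper does not prove this theorem at all: it is quoted verbatim as a consequence of Theorem~2.8 of \cite{OR}, so your closing suggestion to invoke that result as a black box is exactly what the authors do, and at that level your proposal is fine. However, your sketched reconstruction contains concrete errors that would sink it if written out. First, for a non-Hermitian iid matrix with unit variance the \emph{operator norm} satisfies $\|N\| = (2+o(1))\sqrt{n}$, not $(1+o(1))\sqrt{n}$; the quantity that is $(1+o(1))\sqrt{n}$ under the fourth-moment hypothesis is the \emph{spectral radius} (Bai--Yin/Geman). Since the paper's convention is that $\|\cdot\|$ is the operator norm, your first display is false as stated, and this matters twice later: (i) the Neumann series for $(zI-N)^{-1}$ converges only for $|z| > \|N\| \approx 2\sqrt{n}$, so it is unavailable precisely on the annulus $(1+\delta)\sqrt{n} < |z| \lesssim 2\sqrt{n}$ where you need to rule out spurious outliers --- there one needs the nontrivial resolvent bound $\|(N-zI)^{-1}\| = O(1/\sqrt{n})$ outside the spectral-radius disk, which rests on least-singular-value estimates and is the real content of \cite{OR} (and of Tao's outlier paper); (ii) your upper cutoff argument is unaffected but should be phrased via spectral radius $\le$ operator norm.

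Second, there is no ``interlacing principle'' for eigenvalues of non-normal matrices under rank-one perturbations: a rank-one update of a non-normal matrix can in principle move many eigenvalues by a large amount, so the claim that at most one eigenvalue of $N+pJ$ escapes the disk cannot be obtained this way. The correct route, which also subsumes this step, is the one your determinant-lemma computation sets up: by the argument principle (or Rouch\'e), the number of eigenvalues of $N+pJ$ in $\{|z|>(1+\delta)\sqrt{n}\}$ equals the number of zeros there of $f(z) = 1 - p\,\mathbf{1}^{\mathrm{T}}(zI-N)^{-1}\mathbf{1}$, once one knows $\operatorname{spec}(N)$ itself lies inside the disk; showing $f$ has exactly one zero, located at $pn+o(\sqrt{n})$, then yields both conclusions simultaneously. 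With the interlacing step deleted, the resolvent estimate supplied on the full annulus (not just where the Neumann series converges), and the norm/spectral-radius distinction repaired, your outline becomes the standard proof; as it stands, those three points are genuine gaps.
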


To deduce some structural properties for the eigenvector of the lone eigenvalue outside the disk, we use the Perron--Frobenius theorem.  Recall the following definition.
\begin{definition}
	A square matrix $A$ is \emph{reducible} if there exists exists a permutation matrix $P$ such that 
	$$
	P^{\mathrm{T}} A P = \left( \begin{array}{cc}
	B & 0 \\
	C & D \end{array} \right)
	$$
	where $B$ and $D$ are square matrices.  A matrix is \emph{irreducible} if it is not reducble.
\end{definition}
\begin{remark}
	If $A$ is the adjacency matrix of a directed graph, irreducibility corresponds to strong-connectivity of the graph.  
\end{remark}

\begin{theorem}\cite[Theorem 6.8]{zhan2013matrixtheory} \label{thm:perronfrobenius}
	(Perron--Frobenius Theorem) If $A$ is an irreducible, nonnegative matrix then the eigenvalue with the largest norm is real and simple.  Furthermore, the eigenvector corresponding to this eigenvector has entries that are all strictly positive.
\end{theorem}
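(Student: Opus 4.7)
The plan is to establish the three claims of Perron--Frobenius---existence of a strictly positive principal eigenvector, simplicity, and maximality of its modulus---by combining a fixed-point argument with a comparison principle tailored to nonnegative matrices. First I would produce a nonnegative eigenpair. The continuous map $T(\Bx) = A\Bx/\|A\Bx\|_1$ is well-defined on the standard simplex, since irreducibility forces each row of $A$ to be nonzero (a zero row would give a reducing block decomposition with $B = [0]$). Brouwer's fixed point theorem applied to $T$ yields a vector $\Bv \geq 0$ with $A\Bv = \lambda \Bv$ where $\lambda = \|A\Bv\|_1 \geq 0$. In particular $\lambda$ is real.

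Next I would upgrade $\Bv$ to strict positivity. A well-known reformulation of irreducibility is that the matrix $(I+A)^{n-1}$ has strictly positive entries (one obtains an entrywise positive product because the directed graph of $A$ is strongly connected, so every pair of vertices is joined by a walk of length at most $n-1$). Applying this to $\Bv$ and using $(I+A)^{n-1}\Bv = (1+\lambda)^{n-1}\Bv$ forces $\Bv > 0$ entrywise.

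For the maximality step I would invoke a Collatz--Wielandt type comparison. Given any (possibly complex) eigenvalue $\mu$ with eigenvector $\Bw$, the triangle inequality applied coordinatewise to $\mu w_i = \sum_j A_{ij} w_j$ yields the entrywise bound $|\mu|\,|\Bw| \leq A |\Bw|$. Running the same construction on $A^{\mathrm{T}}$ produces a strictly positive left eigenvector $\Bu$ with $\Bu^{\mathrm{T}} A = \lambda \Bu^{\mathrm{T}}$; pairing the comparison against $\Bu$ yields $|\mu| \langle \Bu, |\Bw|\rangle \leq \langle A^{\mathrm{T}} \Bu, |\Bw|\rangle = \lambda \langle \Bu, |\Bw|\rangle$, and since $\langle \Bu, |\Bw|\rangle > 0$, we conclude $|\mu| \leq \lambda$. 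So $\lambda$ equals the spectral radius.

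The hard part will be ruling out nontrivial Jordan structure at $\lambda$. For geometric simplicity I would argue that if $A\Bw = \lambda \Bw$ with $\Bw$ not parallel to $\Bv$, then after taking real and imaginary parts one can find a real combination $\Bw' = \Bv - t\Bw$ that is nonnegative with at least one coordinate equal to zero; since $\Bw'$ is still a $\lambda$-eigenvector, its support would be a proper nonempty $A$-invariant coordinate subset, contradicting irreducibility. For algebraic simplicity, I would pair any generalized eigenvector $\By$ satisfying $(A-\lambda I)\By = \Bv$ against the strictly positive left eigenvector $\Bu$, obtaining $0 = \Bu^{\mathrm{T}}(A-\lambda I)\By = \Bu^{\mathrm{T}}\Bv > 0$, a contradiction. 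This closes the proof.
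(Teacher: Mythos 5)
The paper offers no proof of this statement: it is a classical result imported verbatim from a matrix-theory textbook, so there is nothing internal to compare against. Your argument is the standard textbook proof (Brouwer fixed point for existence, positivity of $(I+A)^{n-1}$ for strict positivity, a Collatz--Wielandt comparison against a positive left eigenvector for maximality, and the support/pairing arguments for geometric and algebraic simplicity), and its overall structure is correct and complete enough to be fleshed out into a full proof.

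Two small points deserve repair. First, your justification for the well-definedness of $T(\Bx)=A\Bx/\|A\Bx\|_1$ is the wrong one: having no zero \emph{row} does not prevent $A\Bx=0$ for $\Bx$ on the boundary of the simplex (e.g.\ $A=\left(\begin{smallmatrix}0&1\\0&1\end{smallmatrix}\right)$ kills $\Be_1$ despite having nonzero rows). What you need is that no nonnegative nonzero vector lies in $\ker A$, which for $A\geq 0$ is equivalent to $A$ having no zero \emph{column}; this too follows from irreducibility, or one can sidestep the issue entirely by iterating $\Bx\mapsto (I+A)\Bx/\|(I+A)\Bx\|_1$. Second, running your construction on $A^{\mathrm{T}}$ produces a left eigenvector $\Bu>0$ with eigenvalue $\lambda'$ that is a priori different from $\lambda$; you should add the one-line identification $\lambda'\,\Bu^{\mathrm{T}}\Bv=\Bu^{\mathrm{T}}A\Bv=\lambda\,\Bu^{\mathrm{T}}\Bv$ together with $\Bu^{\mathrm{T}}\Bv>0$ before using $\Bu$ in the maximality and algebraic-simplicity steps. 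Finally, note that what you prove --- and what is actually true for irreducible but imprimitive matrices --- is that the spectral radius is attained by a real, simple eigenvalue with strictly positive eigenvector and that every eigenvalue has modulus at most $\lambda$; you correctly do not attempt to show that $\lambda$ is the \emph{unique} eigenvalue of maximal modulus, which would be false (consider a cyclic permutation matrix). This weaker, correct reading is all the paper uses.
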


\begin{theorem}\cite[Theorem 5]{graham2008connectivity} \label{thm:stronglyconnected}
	In the directed random graph model with constant edge probability $ p \in (0,1)$, the graph is strongly connected with probability $1 - o(1)$.  
\end{theorem}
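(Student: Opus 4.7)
The plan is a first-moment (union bound) argument over potential ``trapping'' subsets. First I would observe the combinatorial characterization: a directed graph $G$ on vertex set $[n]$ fails to be strongly connected if and only if there exists a non-empty proper subset $S \subsetneq [n]$ with no directed edges from $S$ to $S^c$. Indeed, if there is no directed path from $u$ to $v$, take $S$ to be the set of vertices reachable from $u$; then $u \in S$, $v \notin S$, and by maximality of reachability no edge leaves $S$.

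Next I would exploit the independence of edges in the model. For any fixed $S \subset [n]$ with $|S| = k$, the $k(n-k)$ potential directed edges from $S$ to $S^c$ are present independently with probability $p$, so
\begin{equation*}
\P(\text{no edges from } S \text{ to } S^c) = (1-p)^{k(n-k)}.
\end{equation*}
Applying the union bound over non-empty proper subsets and grouping by cardinality yields
\begin{equation*}
\P(G \text{ is not strongly connected}) \leq \sum_{k=1}^{n-1} \binom{n}{k} (1-p)^{k(n-k)}.
\end{equation*}

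To finish, I would show this sum is $o(1)$. Using $\binom{n}{k} \leq (en/k)^k$ together with the bound $k(n-k) \geq kn/2$ valid for $k \leq n/2$, each term in that range is at most $\bigl(en \cdot (1-p)^{n/2}\bigr)^k = \bigl(en \cdot e^{-c n/2}\bigr)^k$ with $c := -\log(1-p) > 0$. Since $p$ is a fixed constant in $(0,1)$, we have $en \cdot e^{-c n/2} = o(1)$, and the geometric sum over $k \geq 1$ is therefore $o(1)$. The range $k > n/2$ is handled identically by the $k \leftrightarrow n - k$ symmetry in the combinatorial factor and in the exponent $k(n-k)$. There is no substantive obstacle; the only care required is to split the sum at $k = n/2$ so that the combinatorial factor $\binom{n}{k}$ is dominated by the edge-absence probability, which is what forces $p$ to be bounded away from $0$ (and from $1$, implicitly, so that $c > 0$).
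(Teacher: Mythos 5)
Your argument is correct, but note that the paper does not prove this statement at all: it is imported verbatim as a citation to \cite{graham2008connectivity}, where it is established as part of a sharper threshold result (strong connectivity already at edge probabilities around $\log n/n$, far below the constant-$p$ regime needed here). Your self-contained first-moment proof is the standard one for constant $p$ and is valid: the characterization of failure of strong connectivity via a non-empty proper $S$ with no edges from $S$ to $S^c$ is exactly right, the $k(n-k)$ relevant edges are independent of the loop convention, the union bound $\sum_{k=1}^{n-1}\binom{n}{k}(1-p)^{k(n-k)}$ is correct, and the split at $k=n/2$ together with the $k\leftrightarrow n-k$ symmetry of both $\binom{n}{k}$ and $k(n-k)$ handles the whole range. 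In fact your bound gives failure probability $O\bigl(n e^{-cn/2}\bigr)$, i.e.\ exponentially small, which is stronger than the stated $1-o(1)$ and is all the paper ever uses. One small slip in your closing commentary: the constant $c=-\log(1-p)$ is positive as soon as $p>0$; nothing in the argument requires $p$ to be bounded away from $1$ (larger $p$ only increases $c$), so the parenthetical about needing $p$ away from $1$ should be dropped.
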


\section{Structure of Scaled Eigenvectors} \label{sec:scaledvectors}
\subsection{Eigenvector structure} 

The structure of eigenvectors from the previous section do not immediately apply to the Hadamard product of our eigenvectors with a fixed vector.  There are two issues that need to be overcome in this section.  The first is to deal with the possibly inhomogeneous values of the entries of $\Bb$.  In other words, although we have shown that any eigenvector $\Bu$ has no arithmetic structure, to handle the most general version, we must show that for our fixed vector $\Bb$, $\Bb \odot \Bu$ has no arithmetic structure. Here, we recall that $\Bb \odot \Bu$ denotes the Hadamard product of $\Bb$ and $\Bu$.  The second difficulty is that there is a small set of uncontrolled coordinates in $\Bb$.  In this section, we demonstrate how to deduce our main theorem from the arguments in the eigenvector structure theorem, but this requires repeating most of the steps from the previous section so we only sketch the argument here.

By absorbing the error probabilities of both Lemma \ref{lem:comp} and Lemma \ref{lem:realcomp} into our final error bound, we can assume that our approximate null-vectors are incompressible and have incompressibe real part. 

We recall the following condition on our fixed complex vector $\Bb$.
\begin{definition} \label{def:Bnudelocalized} Let $B \geq 1$ be a constant. 
	We say our vector $\Bb$ is \emph{$(B, m)$-delocalized} if we have:
	\begin{equation} \label{eq:coordinates}
	B^{-1} \leq |b_i| \leq B
	\end{equation}
	for all but $m$ entries of $\Bb$
\end{definition}

This is a more general definition than that used in \cite{OT2} as we do not require the entries to be rational. 

\begin{definition}
	Let $n_0 = n-m$.
	For $\Bu \in \C^n$, we let $\BBu \in \C^{n_0}$ denote the vector formed by the first $n_0$ coordinates of $\Bu$.  
	We define the function $F$ which takes $\BBu \in \C^{n_0}$ to 
	$$
	F(\BBu) = \underline{\Bb} \odot \underline{\Bu} = (u_1 b_1, \dots, u_{n_0} b_{n_0}). 
	$$
	We restrict our attention to those vectors in $\C^n$ that have no zero coordinates.  This poses no difficulty as we can infinitesimally shift any of our net points to avoid this measure-zero set.   
	Therefore, on this slightly restricted domain, our mapping $F$ is one-to-one so we can meaningfully speak of the inverse map $F^{-1}$.
\end{definition}

Without loss of generality, we assume that the first $n_0 = n-m$ entries of $\Bb$ satisfy (\ref{eq:coordinates}).
Therefore, we can assume that 
\begin{equation} \label{eq:normFtransform}
b B^{-1} \leq \|F(\BBu)\| = \sqrt{ \sum_{i=1}^{n_0} |u_i b_i|^2} \leq B
\end{equation}
where the first inequality follows from the incompressibility of $u$ and by assuming that $\nu$ is smaller than $a/2$, say, where we remind the reader that $\nu$ is in the statement of Theorem \ref{thm:maineigvecscaled} and $a$ is the constant from Lemma \ref{lem:comp}.  This assumption also guarantees that 
$$
\sum_{i=1}^{n_0} |u_i|^2 \geq b^2.
$$

Having established the notation, we briefly summarize the proof idea.  We condition on the event that all our potential eigenvectors lie in the unstructured subset of the sphere.
We consider the set $$F(U) = \{F(\Bu): \Bu \text{ a potential eigenvector}\}.$$
The goal is to show that for any eigenvector $\Bu$, $F(\Bu)$ has no arithmetic structure.  This is done with a similar covering argument as in Section \ref{sec:approximatenull}.  In fact, we have already constructed fine nets of the structured vectors on the unit sphere.  We then show that $F^{-1}$ maps this net to a fine net of our potential eigenvectors.  If there exists an eigenvector $\Bu \in U$ such that $F(\BBu)$ is structured, then there exists a vector $\Bv$ in our net such that $\Bv$ is structured (due to its proximity to $F(\Bu)$) and $F^{-1}(\Bv)$ is an approximate eigenvector since $F^{-1}(F(\Bv)) \approx \Bu$. This can be converted into a statement about being an approximate eigenvector by discretizing the possible eigenvalues and tensorizing as we have already seen.  Finally, the probability that $F^{-1}(\Bv)$ is an approximate eigenvector is small enough to survive the union bound over all possible $\Bv$ in our net.

Several subtleties have been overlooked in this description of our proof.  $F(\Bu)$ does not necessarily have norm 1, but typically this only adds a single dimension to our epsilon nets.  Additionally, our notion of structure actually encompasses several parameters (e.g. compressibility, LCD, real-imaginary correlation), so our argument needs to deal with these separately as in Section \ref{sec:approximatenull}.  Fortunately, many of the calculations can be recycled.  Due to the similarities, we will only provide full details for a few representative lemmas.

For now we fix a complex number $\lambda$ with $|\lambda| \leq K \sqrt{n}$.      

\begin{lemma} \label{lem:controlcomp}
	There exist constants $a', b' \in (0, 1)$ such that
	\begin{align*}
	\P\Big(\exists \Bu \in \Incomp_{\C}(a,b) \text{ such that } &\|(N - \lambda \sqrt{n}) \Bu\| \leq K b' \sqrt{n} \\
	&\text{ and } F(\BBu)/\|F(\BBu)\| \in \Comp(a', b')\Big) \leq e^{-c_{\ref{lem:controlcomp}} n}.
	\end{align*}
\end{lemma}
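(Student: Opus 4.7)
The plan is to show that for suitable absolute choices of $a'$ and $b'$, the event in the lemma is in fact deterministically empty once $n$ is large enough, so the probability bound is vacuous. The key idea is that coordinatewise multiplication by $\underline{\Bb}$ is bi-Lipschitz with constant $B$ on the first $n_0$ coordinates (since $B^{-1} \leq |b_i| \leq B$ there), so compressibility of $F(\BBu)/\|F(\BBu)\|$ as a vector in $\C^{n_0}$ transfers directly to near-sparsity of $\BBu$, and the remaining $m \leq \nu\sqrt{n}$ coordinates of $\Bu$ are negligible in the overall sparsity count.

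First I would establish that incompressibility forces $\|F(\BBu)\|$ to be bounded below. Restricting $\Bu$ to its last $m$ coordinates gives an $m$-sparse, hence $an$-sparse, vector (for $m \leq \nu\sqrt{n} \leq an$), so by definition of $\Incomp_{\C}(a,b)$ the complementary residual satisfies $\|\BBu\| > b$, and together with $B^{-1} \leq |b_i| \leq B$ this yields $bB^{-1} \leq \|F(\BBu)\| \leq B$. Writing $\sigma := \|F(\BBu)\|$ and assuming $F(\BBu)/\sigma \in \Comp(a', b')$, pick an $a' n_0$-sparse unit vector $\Bw$ with $\|F(\BBu)/\sigma - \Bw\| \leq b'$. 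Pull $\Bw$ back through the diagonal map: define $\tilde{\Bw} \in \C^{n_0}$ by $\tilde{w}_i := \sigma w_i / b_i$ on $\supp(\Bw)$ and zero elsewhere, so $\tilde{\Bw}$ is $a' n_0$-sparse; the coordinatewise identity $u_i - \tilde{w}_i = (u_i b_i - \sigma w_i)/b_i$ combined with $|b_i| \geq B^{-1}$ gives $\|\BBu - \tilde{\Bw}\| \leq B \|F(\BBu) - \sigma \Bw\| \leq B^2 b'$.

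Extending $\tilde{\Bw}$ to $\hat{\Bw} \in \C^n$ by copying the last $m$ coordinates of $\Bu$, we obtain a vector with support of size at most $a' n_0 + m$ and $\|\Bu - \hat{\Bw}\| \leq B^2 b'$. Fixing $a' := a/2$ and $b' := b/(2B^2)$, for $n$ large enough that $\nu\sqrt{n} \leq an/2$ we have $a' n_0 + m \leq a n$ and $\|\Bu - \hat{\Bw}\| \leq b/2 < b$, so $\Bu \in \Comp_{\C}(a,b)$, contradicting $\Bu \in \Incomp_{\C}(a, b)$. Hence the event is empty for such $n$, and the probability bound $e^{-c_{\ref{lem:controlcomp}} n}$ holds trivially. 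The approximate-null hypothesis $\|(N-\lambda\sqrt{n})\Bu\| \leq Kb'\sqrt{n}$ is not required for this deterministic obstruction; a slightly different proof would invoke Lemma \ref{lem:comp} applied to the near-compressible vector $\hat{\Bw}$, which would use that hypothesis and avoid the asymptotic threshold on $n$. The main point to verify is bookkeeping: that $a'$ and $b'$ can be chosen as absolute constants depending only on $a, b, B$, and $\nu$.
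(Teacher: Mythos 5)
Your argument is correct, and it takes a genuinely different route from the paper's. You show that for $a'=a/2$ and $b'=b/(2B^2)$ the event is \emph{deterministically empty} once $n$ is large enough that $m\leq \nu\sqrt{n}\leq an/2$: the bi-Lipschitz identity $u_i-\tilde w_i=(u_ib_i-\sigma w_i)/b_i$ with $|b_i|\geq B^{-1}$ transfers the sparse approximant of $F(\BBu)/\|F(\BBu)\|$ back to an $a'n_0$-sparse approximant of $\BBu$ within $B^2b'$, and reattaching the last $m$ coordinates of $\Bu$ itself costs only $m=o(n)$ in the sparsity budget, so $\Bu\in\Comp_{\C}(a,b)$, contradicting incompressibility. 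The paper instead runs a probabilistic covering argument: it builds a volumetric net of $\Comp(a',b')$ together with its dyadic scalings, pulls it back through $F^{-1}$ using the very same bi-Lipschitz estimate $\|\BBu-F^{-1}(\Bv)\|\leq B\|F(\BBu)-\Bv\|$, appends a trivial net for the last $m$ coordinates, and kills each net point by tensorization and a union bound on $\EE_K$. Your route is shorter, yields probability zero rather than $e^{-c n}$, and correctly identifies that the hypothesis $\|(N-\lambda\sqrt n)\Bu\|\leq Kb'\sqrt n$ is superfluous for this particular level set; what the paper's heavier scheme buys is that it is the template actually needed for the subsequent LCD level sets $S_{D,d,\alpha}$, where no deterministic transfer of structure from $F(\BBu)$ back to $\Bu$ exists and the net-plus-tensorization machinery is essential (the paper states explicitly that this lemma is a representative illustration of that scheme). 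Two minor remarks: your conclusion is vacuous only above an explicit threshold in $n$ depending on $a$ and $\nu$, which is consistent with the paper's standing $n\to\infty$ convention (its own proof likewise needs $m=o(n)$ and $\nu<a/2$); and the sparse approximant $\Bw$ need not be a unit vector, but nothing in your computation relies on that, since you only use $\|F(\BBu)-\sigma\Bw\|\leq\sigma b'\leq Bb'$.
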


\begin{proof}
	We use $I$ to denote $\Incomp_{\C}(a, b)$.  We consider the event that $F(\BBu)/\|F(\BBu)\|_2 \in \Comp_{\C}(a', b')$.  By the standard volume argument, there exists a $b'/8T^2$-net, $\hat{\mathcal{N}}$ of $\Comp(a', b')$ of size at most 
	$$
	\binom{n_0}{a' n_0} (CB^{2}/b)^{2 a' n_0}  \leq \exp\big(a n_0 \log(e/a') + 2 a' n_0 \log(C B^{2}/ b')\big) . 
	$$
	By (\ref{eq:normFtransform}), it suffices to consider
	$$C = F(I) \cap \{t \cdot \Comp(a', b'): b' B^{-1} \leq t \leq B] \} .
	$$ 
	We use a union of discrete scalings of $\hat{\mathcal{N}}$ to create a net of $C$.
	Let 
	$$
	\mathcal{N} = \left \{t_j \cdot \hat{\mathcal{N}}: j \in [-8B^2/b', 8B^2/b'] \cap \Z \text{ and } t_j = \frac{j b'}{8B}  \right\}
	$$
	To see that this is a $b'/4B$-net of $C$, take $\Bw \in C$ and let $\Bv \in \hat{\mathcal{N}}$ such that $\left \| \frac{\Bw}{\|\Bw\|} - \Bv \right\| \leq b'/2B^2$.  Furthermore, let $t_j$ be such that $|t_j - \|\Bw\| | \leq b'/8B$.  Then
	$$
	\|\Bw - t_j \cdot \Bv\| = \|\Bw\| \left \| \frac{\Bw}{\|\Bw\|} - \Bv \right\| + \big|\|\Bw\| - t_j\big| \cdot \|\Bv\| \leq \frac{b'}{4B}.
	$$
	With a simple trick, we can modify $\mathcal{N}$ so that $\mathcal{N} \in C$ at the cost of changing $\mathcal{N}$ to a $b'/2B$-net.  The procedure is as follows.  For every $\Bv \in \mathcal{N}$, if there is an element of $C$ within a distance of $b'/8$, replace $\Bv$ with that element, choosing one arbitrarily if there are multiple options.  If there is no element of $C$ within $b'/8$, then remove $\Bv$ from $\mathcal{N}$.  It is easy to verify that this modified $\mathcal{N}$ is a $b'/2B$ net of $C$ and is of size at most 
	$$
	CB^2/b' \times \exp\big(a' n_0 \log(e/a') + 2 a n_0 \log(C B^{2}/ b')\big) \leq \exp\big(a' n_0 \log(e/a') + 3 a' n_0 \log(C B^{2}/ b') \big).
	$$
	
	We claim that $F^{-1}(\mathcal{N})$ is a $b'$-net of the set of vectors in $\Incomp_{\C}(a, b)$ such that $F(\BBu)/\|F(\BBu)\| \in \Comp_{\C}(a', b')$.  Consider a $\Bv$ such that $\|F(\BBu) - \Bv\| \leq b'/2B$.  Then,
	\begin{align*}
	\|\BBu - F^{-1}(\Bv)\| &= \sqrt{ \sum_{i=1}^{n_0} \left(u_i - \frac{ v_i}{f_i} \right)^2} \\
	&\leq \sqrt{ B^2 \sum_{i=1}^{n_0} \left(f_i u_i - v_i \right)^2} \\
	&\leq B  \|F(\Bu) - \Bv\| \\
	&\leq b'/2.
	\end{align*}
	We use $F^{-1}(\mathcal{N})$ to approximate the first $n_0$ coordinates.  We combine this with a simple volume net.  There exists a $b'/2$ net, $\mathcal{N}'$, of $K \cdot B_{m}(0)$ (where $B_{m}(0)$ is the unit ball in $\C^{m}$) of size at most $(CK/b')^{2 m}$. We define our final net 
	$$
	\mathcal{N}'' = \left \{ (\Bv, \Bv'): \Bv \in \mathcal{N}, \Bv' \in \mathcal{N"}' \right\}
	$$ 
	which is of size at most 
	\begin{align*}
	\exp\big(a' n_0 \log(e/a') + &3 a' n_0 \log(C B^{2}/ b') \big) \times (C/b')^{2\nu n} \\
	&\leq \exp\big(a' n_0 \log(e/a') + 3 a' n_0 \log(C B^{2}/ b') + 2 m \log(CK/b')\big).
	\end{align*}
	By the triangle inequality, $\mathcal{N}''$ is a $b$-net of the eigenvectors $\Bu$.  Therefore, since $\|(N - \lambda) \Bu\| \leq K b' \sqrt{n}$, 
	$$
	\|(N  - \lambda ) \Bv\| \leq K b' \sqrt{n} + \|N - \lambda\| \|\Bu - \Bv\|  \leq 3 K b \sqrt{n}.
	$$
	On the other hand, by a standard tensorization argument (c.f.	\cite[Lemma 3.2]{rudelson2009smallest}), for any $\Bv \in \mathcal{N}''$, 
	$$
	\P(\|(N - \lambda) \Bv\| \leq 3K b' \sqrt{n}) \leq e^{-c' n}
	$$
	for small enough $b'$.  Thus, by a union bound, 
	\begin{align*}
	\P(\exists \Bv \in \mathcal{N}'' \text{ such that } &\|(N - \lambda ) \Bv\| \leq 3K b' \sqrt{n}) \leq e^{-c' n} \times \\
	& \exp\big(a' n_0 \log(e/a') + 3 a' n_0 \log(C B^{2}/ b') + 2 m \log(CK/b')\big) \\
	&\leq \exp\big(a' n_0 \log(e/a') + 3 a' n_0 \log(C B^{2}/ b') + 2 m \log(CK/b') - c n\big) \\
	&\leq \exp(-c' n)
	\end{align*}
	where the last line follows from choosing $a', b'$ small enough and noting that $m = o(n)$.
\end{proof}

The same approximation procedure yields analogues of all the lemmas in Section \ref{sec:approximatenull}.  We illustrate this with one more example.  
\begin{proposition}
	Let $D \in [c_{\ref{lem:LCDincomp}} \sqrt{n}/ \alpha, D_0]$ be such that $m \leq \nu n/\log D$.  Recall the definition of $ S_{D, d, \alpha}$ in Definition \ref{def:complexreal}. 
	$$
	\P\left(\exists \Bu \in \Incomp_{\C}(a, b) \text{ s.t. } \|M \Bu\| \leq \frac{K \mu n}{2 D} \text{, } F(\BBu)/\|F(\BBu)\|_2 \in S_{D, d, \alpha}  \text{ and }  \EE_K \right) \leq e^{-c_{\ref{thm:levelcomplex}} n}.
	$$
\end{proposition}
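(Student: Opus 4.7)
The plan is to adapt the proof of Theorem \ref{thm:levelcomplex} to the Hadamard-scaled setting. First I will parametrize any admissible $\Bu \in \Incomp_{\C}(a,b)$ with $F(\BBu)/\|F(\BBu)\| \in S_{D,d,\alpha}$ by a triple $(\Bv, r, \overline{\Bu})$, where $\Bv = F(\BBu)/\|F(\BBu)\| \in S_{D,d,\alpha} \subset \C^{n_0}$ is the structured unit vector, $r = \|F(\BBu)\| \in [b B^{-1}, B]$ is its scale (by \eqref{eq:normFtransform}), and $\overline{\Bu} \in \C^m$ with $\|\overline{\Bu}\| \leq 1$ collects the uncontrolled coordinates. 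Recovery is coordinatewise, $u_i = r v_i / b_i$ for $i \leq n_0$, which is Lipschitz in $(r, \Bv)$ with constant $O(B)$ since $|b_i| \geq B^{-1}$.

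Second, I will build a composite net. Let $\mathcal{N}_0$ be the $\mu\sqrt{n}/(2BD)$-net of $S_{D,d,\alpha}$ in ambient dimension $n_0$ from Proposition \ref{prop:complexnet}, let $\mathcal{N}_1$ be a one-dimensional $\mu\sqrt{n}/(2BD)$-net of $[bB^{-1}, B]$, and let $\mathcal{N}_2$ be a volume net of the unit ball in $\C^m$ with mesh $\mu\sqrt{n}/D$. Pulling the first block back through $F^{-1}$ produces
\[
\mathcal{N} = \{(F^{-1}(r\Bv), \overline{\Bw}) : \Bv \in \mathcal{N}_0,\, r \in \mathcal{N}_1,\, \overline{\Bw} \in \mathcal{N}_2\},
\]
which is a $C\mu\sqrt{n}/D$-net of the bad set of cardinality
\[
|\mathcal{N}| \leq \frac{C^{2n_0} D^{2n_0+1} d^{n_0-1}}{\mu^{n_0+1} n_0^{n_0+1/2}} \cdot \frac{C B D}{\mu\sqrt{n}} \cdot \left(\frac{C D}{\mu\sqrt{n}}\right)^{2m}.
\]

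Third, I will bound the single-element small-ball probability. For a net element $\Bw$ the key reformulation is $\underline{M}_j \cdot \underline{\Bw} = \sum_{i \leq n_0} (M_{ji}/b_i)(b_i w_i)$, which recasts the row action on the first block as a linear combination of the independent (though non-identically distributed) scaled variables $M_{ji}/b_i$ against the structured vector $F(\underline{\Bw}) = r\Bv$. Since $|b_i| \in [B^{-1}, B]$, the scaled variables retain uniform Assumption \ref{assump:main}-type anticoncentration with constants depending only on $B$, so Theorem \ref{thm:smallballcomplex} (in its extension to independent summands with uniform anticoncentration) applies with the LCD $D$ and the real-imaginary correlation $d$ supplied by $\Bv$. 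Conditioning on the last $m$ entries of $M_j$ and tensorizing over $j$ yields
\[
\P\!\left(\|M\Bw\| \leq 2K\mu n/D\right) \leq \left(\frac{C_B L^2 \mu^2 n}{d D^2}\right)^{n}.
\]

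Finally, approximation on $\EE_K$ (any bad $\Bu$ lies within $C\mu\sqrt{n}/D$ of some $\Bw \in \mathcal{N}$, so $\|M\Bw\| \leq O(K\mu n/D)$) combined with a union bound gives $|\mathcal{N}| \cdot (C_B L^2 \mu^2 n/(dD^2))^n$. The factor $(D^{2} d/n)^{n_0}$ in $|\mathcal{N}_0|$ cancels against the tensorization exactly as in Theorem \ref{thm:levelcomplex}, while the remainder $|\mathcal{N}_1| \cdot |\mathcal{N}_2| \leq \exp(O(m\log D))$ is absorbed into $\exp(O(\nu n))$ by the hypothesis $m \leq \nu n/\log D$; choosing $\mu$ and $\nu$ small enough closes the bound at $e^{-cn}$. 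The main obstacle is the small-ball step: the structural data live on $F(\underline{\Bw})$ rather than on $\underline{\Bw}$, forcing the passage from iid variables $M_{ji}$ applied to $\underline{\Bw}$ to independent non-iid rescalings $M_{ji}/b_i$ applied to $F(\underline{\Bw}) = r\Bv$. Verifying that Theorem \ref{thm:smallballcomplex} admits this non-iid extension with constants depending only on $B$ is the technical heart of the argument.
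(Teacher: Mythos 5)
Your proposal follows essentially the same route as the paper: the same composite net (the Proposition \ref{prop:complexnet} net of $S_{D,d,\alpha}$, a one-dimensional net of the scales in $[bB^{-1},B]$, and a trivial volume net on the $m$ uncontrolled coordinates, all pulled back through $F^{-1}$), the same approximation-plus-tensorized small-ball union bound, and the same absorption of the $\exp(O(m\log D))$ factor using $m\le \nu n/\log D$ before choosing $\mu$ small. The one place you are more explicit than the paper---transferring the structure of $F(\underline{\Bw})$ to the row anticoncentration by rescaling $M_{ji}\mapsto M_{ji}/b_i$ and invoking a version of Theorem \ref{thm:smallballcomplex} for independent, non-identically distributed entries with constants depending only on $B$---is precisely the step the paper leaves implicit (it cites Proposition \ref{prop:tensorcomplex} directly for the net elements), so this is a refinement of detail rather than a genuinely different argument.
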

\begin{proof}
	The proof follows the same strategy as the previous proof.  We generate a net of the set $F(\BBu)$ such that $\Bu \in \Incomp_{\C}(a, b)$ with $F(\BBu)/\|F(\BBu)\| \in S_{D, d, \alpha}$. By Proposition \ref{prop:complexnet}, there exists a $\mu \sqrt{n}/ D$-net, $\mathcal{N}'$, of $S_{D, d, \alpha}$ of size at most
	$$
	\frac{C_{\ref{prop:complexnet}}^{2n} D^{2n+1} d^{n-1}}{\mu^{n_0+1} n_0^{n_0 + 1/2}}.
	$$
	Therefore, we define a net that is composed of discrete scalings of $\mathcal{N}'$.  Let
	$$
	\mathcal{N} = \left\{t_j \cdot \mathcal{N}': t_j = \frac{j \mu \sqrt{n}}{2 D} \text{ for } j \in \N \text{ such that } t_j \in [bB^{-1}, B]\right \}.
	$$
	Observe that 
	$$
	|\mathcal{N}| \leq \frac{CDB}{\mu \sqrt{n}} \times \frac{C_{\ref{prop:complexnet}}^{2n_0} D^{2n_0+1} d^{n_0-1}}{\mu^{n_0+1} n_0^{n_0 + 1/2}}.
	$$
	We use a trivial net to estimate the remaining $m$ coordinates.  There is a $\mu \sqrt{n}/ 2D$-net of size at most $(CBD/\mu \sqrt{n})^{2 m}$ for $K \cdot B_{m}(0)$.  We combine this with $F^{-1}(\mathcal{N})$ to create a $4 \mu \sqrt{n}/D$-net of those approximate null-vectors with $F(\BBu)/\|F(\BBu)\|_2 \in S_{D, d, \alpha}$.  We call this net $\hat{\mathcal{N}}$.
	For any vector $u$ such that $F(\BBu)/\|F(\BBu)\| \in S_{D, d, \alpha}$ and $\|M \Bu\| \leq K \mu n/2 D$, there exists a $\Bv \in \hat{\mathcal{N}}$ from our net such that 
	$$
	\|M \Bv\| \leq K \mu n/D.  
	$$
	By Theorem \ref{thm:levelcomplex} and the proof therein, 
	\begin{align*}
	\P(\exists \Bv \in \hat{\mathcal{N}} \text{ s.t. } &\|M \Bv\| \leq K \mu n/D) \leq \sum_{\Bv \in \hat{\mathcal{N}}} \P(\|M \Bv\| \leq K \mu n/D) \\
	&\leq \frac{CDB}{\mu \sqrt{n}}  \frac{C_{\ref{prop:complexnet}}^{2n_0} D^{2n_0+1} d^{n_0-1}}{\mu^{n+1} n_0^{n_0 + 1/2}}  (CBD/\mu \sqrt{n})^{2 m} \times \frac{2^n C_{\ref{prop:tensorcomplex}}^{n} L^{2n} (2 K \mu \sqrt{n})^{2 n}}{d^{n} D^{2 n}} \\
	&\leq B^2 C^n \mu^n  d_0^{-m} \\
	&\leq B^2 C^n \mu^n ( D)^{m} \\
	&\leq B^2 C^n \mu^n \exp(\nu n) \\
	&\leq \exp(-c n).
	\end{align*}
	The small-ball probability follows from Proposition \ref{prop:tensorcomplex} and Proposition \ref{thm:levelcomplex}.  The third to last inequality is the crucial line that determines the trade-off between $m$ and $D$.  
\end{proof}

Combining the analogous propositions and lemmas yield the analogous strucutre theorem for approximate null-vectors.  Finally, to conclude the same structure theorem for eigenvectors, we use the approximation argument from Section \ref{sec:eigstruc}.  Ultimately, this leads to the following structural theorem.

\begin{theorem} \label{thm:maincontrol}
	Fix a constant $B \geq 1$.  There exist constants $c_\star, \nu, C_{\ref{thm:maincontrol}}, c_{\ref{thm:maincontrol}}$ possibly depending on $B$ such that the following holds.
	Let $c_{\ref{thm:maincontrol}} \sqrt{n} \leq D \leq e^{c_\star n}$ and $m \in \N$ such that $m \leq \nu n/\log D$.  For a $(B, m)$-delocalized vector $\Bb$, 
	$$
	\P\left(\exists \text{ eigenvector } \Bv \text{ of } N \text{ such that } \rho(\Bb \odot \Bv, t) \geq C_{\ref{thm:maincontrol}}\left(t + \frac{1}{D} + \frac{t^2 D}{\sqrt{n}} \right) \text{ and } \EE_K \right) \leq e^{-c_{\ref{thm:maincontrol}} n}
	$$
\end{theorem}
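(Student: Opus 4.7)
The plan is to follow the blueprint of Section \ref{sec:eigstruc} but to replace the eigenvector $\Bv$ throughout by its scaled truncation $F(\underline{\Bv})=\underline{\Bb}\odot\underline{\Bv}$, and to deduce the final small-ball bound by applying Theorem \ref{thm:smallballcomplex} to this new vector. All the geometric structure results of Section \ref{sec:approximatenull} will be ported to $F$-space via a pullback of $\eps$-nets, and the trade-off between net cardinality and concentration probability will determine the admissible range of $m$.

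First I would fix a complex shift $\lambda$ with $|\lambda|\le K\sqrt n$ and $\delta=\im\lambda\ge e^{-c_\star n}$, and establish the analogue of Theorem \ref{thm:nullvectors} in the form: with probability $1-e^{-cn}$, every unit $\Bu$ with $\|(N-\lambda\sqrt n)\Bu\|\le K\mu n/\widetilde D$ has $F(\underline{\Bu})/\|F(\underline{\Bu})\|$ incompressible with LCD at least $\widetilde D$ and real--imaginary correlation at least $c\delta$. Lemma \ref{lem:controlcomp} already handles the compressible case in $F$-space, and the same net--pullback construction---build a net of the $F$-image, rescale via dyadic $t_j\in[bB^{-1},B]$ to account for $\|F(\underline{\Bu})\|$ not being normalized, apply $F^{-1}$ on the first $n_0$ coordinates, and glue with a trivial $(CKB/\mu\sqrt n)^{2m}$-net on the remaining $m$ coordinates---extends to the analogues of Lemma \ref{lem:realcomp}, Theorem \ref{thm:levelcomplex} and Theorem \ref{thm:levelreal}. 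The representative computation displayed just before Theorem \ref{thm:maincontrol} shows that each level-set bound survives the union bound provided $D^m\le e^{\nu n}$; dyadic summation over $(\alpha,D,d)$ then yields the structural conclusion exactly as in the proof of Theorem \ref{thm:nullvectors}.

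Next I would run the eigenvalue discretization of Theorem \ref{thm:eigstructure} to lift the statement from approximate null vectors to genuine eigenvectors. Off an $O(\sqrt n/D)$-strip around the real line, the structural result combined with Theorem \ref{thm:smallballcomplex} applied to $U=V(F(\underline{\Bv})/\|F(\underline{\Bv})\|)$ gives a bound on $\rho(\Bb\odot\Bv,t)$ of order $(D/\sqrt n)(t+1/D)^2$, since the real--imaginary correlation of $F(\underline{\Bv})/\|F(\underline{\Bv})\|$ is at least $c\sqrt n/D$ on the strip boundary. Inside the strip I would invoke a scaled version of Theorem \ref{thm:realshift}; the proof of that theorem goes through unchanged for vectors whose entries are pinned in $[bB^{-1},B]$ outside an $m$-sparse set, again subject to $m\log D\le\nu n$. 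Combining the two regimes gives $\rho(\Bb\odot\Bv,t)\le C(t+1/D+t^2D/\sqrt n)$ for a single choice of $D$; a multi-scale dyadic union as in Corollary \ref{cor:alld} promotes this to all $D\in[c\sqrt n,e^{c_\star n}]$ simultaneously at the cost of only an extra polynomial factor absorbed by $e^{-cn}$.

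The main obstacle is the trade-off between $m$ and $D$. Every step of the net pullback introduces an extra factor of $(CBD/\mu\sqrt n)^{2m}$ from the trivial net on the $m$ uncontrolled coordinates, and this factor is only absorbed by the tensorization gain $e^{-cn}$ when $m\log D=O(n)$; this is the quantitative content of the hypothesis $m\le\nu n/\log D$ and it is the place where the constant $\nu$ is pinned down. A secondary technical point is that $F(\underline{\Bv})$ has norm in $[bB^{-1},B]$ rather than being a unit vector, but the incompressibility of $\Bv$ combined with the two-sided bound on $\underline{\Bb}$ gives a uniform lower bound $\|F(\underline{\Bv})\|\ge bB^{-1}$, so the one-dimensional scaling net of size $O(BD/\mu\sqrt n)$ handles this at negligible cost and can be absorbed into the constants.
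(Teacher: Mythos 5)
Your proposal follows essentially the same route as the paper: port the covering arguments of Section \ref{sec:approximatenull} to the scaled vectors $F(\BBu)$ by pulling back nets through $F^{-1}$ and gluing on a trivial net of size $(CBD/\mu\sqrt n)^{2m}$ for the $m$ uncontrolled coordinates (exactly as in Lemma \ref{lem:controlcomp} and the proposition preceding the theorem), with the union bound surviving precisely when $D^m\le e^{\nu n}$, and then lift from approximate null vectors to eigenvectors via the eigenvalue discretization of Section \ref{sec:eigstruc} (real strip handled by a scaled Theorem \ref{thm:realshift}, the rest by Theorem \ref{thm:smallballcomplex}). This matches the paper's argument, including the norm-rescaling net for $\|F(\BBu)\|\in[bB^{-1},B]$ and the identification of $m\log D\lesssim n$ as the binding constraint; the final multi-scale union over $D$ you mention is not needed for this statement itself (it is the content of Corollary \ref{cor:alldscaled}) but is harmless.
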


We provide one specific choice of $m$ and $t$ to demonstrate possible consequences of this theorem.
\begin{corollary}
	Fix a constant $B \geq 1$.  Then for any constant $c < 1$ and a fixed $(B, n^{1-c})$-delocalized vector $\Bb$, 
	$$
	\P\left(\exists \text{ eigenvector } \Bv \text{ of } N \text{ such that } \rho(\Bb \odot \Bv, 0) \leq C_{\ref{thm:maincontrol}} \exp(-n^c) \text{ and } \EE_K \right) \leq e^{-c_{\ref{thm:maincontrol}} n}
	$$
\end{corollary}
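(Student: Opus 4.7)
The plan is to apply Theorem \ref{thm:maincontrol} with $t = 0$ and a carefully chosen $D$ of order $\exp(\Theta(n^c))$, exploiting the tradeoff between the strength of the L\'evy bound (proportional to $1/D$) and the number $m$ of uncontrolled coordinates of $\Bb$ permitted by the hypothesis $m \le \nu n/\log D$.

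Specifically, I would set $D := \exp(c' n^c)$, where $c' := \min(1, \nu)$ and $\nu$ is the constant from Theorem \ref{thm:maincontrol}. The three hypotheses of Theorem \ref{thm:maincontrol} then reduce to routine checks, all valid for $n$ sufficiently large. The lower bound $c_{\ref{thm:maincontrol}}\sqrt{n} \le D$ is immediate since $c' n^c$ dominates $\tfrac12 \log n$. The upper bound $D \le e^{c_\star n}$ holds because $c < 1$ yields $c' n^c \le c_\star n$ for large $n$. Finally, the delocalization constraint $m \log D \le \nu n$ becomes
\[ n^{1-c} \cdot c' n^c \;=\; c' n \;\le\; \nu n, \]
which is exactly the inequality $c' \le \nu$ built into the definition of $c'$.

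With these verifications, Theorem \ref{thm:maincontrol} applied at $t = 0$ produces, outside an event of probability at most $e^{-c_{\ref{thm:maincontrol}} n}$ and outside $\EE_K^{c}$, the uniform bound
\[ \rho(\Bb \odot \Bv, 0) \;\le\; \frac{C_{\ref{thm:maincontrol}}}{D} \;=\; C_{\ref{thm:maincontrol}}\exp(-c' n^c) \]
for every eigenvector $\Bv$ of $N$. Since the corollary is quantified over every constant $c < 1$, the factor $c'$ in the exponent can be absorbed by passing from the given $c$ to a slightly smaller exponent (noting $c' n^c \ge n^{c''}$ for large $n$ whenever $c'' < c$), or equivalently by reinterpreting $c$ in the statement; either way one recovers the claimed form $C_{\ref{thm:maincontrol}}\exp(-n^c)$.

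There is no substantive obstacle: the proof is a direct specialization of Theorem \ref{thm:maincontrol}. The only delicate point is the constant bookkeeping at the end, which balances the logarithmic size of $D$ against the number $m = n^{1-c}$ of uncontrolled coordinates; this is precisely the regime at which the product $m \log D$ saturates the threshold $\nu n$, which is why no exponent larger than essentially $n$ itself (i.e., no $c \ge 1$) can be achieved by this method.
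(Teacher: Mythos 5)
Your proposal is correct and is essentially the paper's own (unwritten) argument: the corollary is stated as a direct specialization of Theorem \ref{thm:maincontrol} with $t=0$, $m=n^{1-c}$, and $D=e^{\Theta(n^c)}$ chosen so that $m\log D\leq \nu n$, exactly as you do, and your bookkeeping with $c'=\min(1,\nu)$ makes explicit the constant-in-the-exponent slack that the paper silently absorbs into the arbitrariness of $c<1$. Note also that the ``$\leq$'' inside the probability in the corollary's statement is a typo for ``$\geq$'' (compare Theorem \ref{thm:maincontrol}); you prove the intended direction.
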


An identical series of theorems can be proved for the adjacency matrix case using the approximation techniques of Section \ref{sec:eigstruc}.

Again, we would like to extend the range of effective bounds by combining our bounds at different scales as we did at the end of Section \ref{sec:eigstruc}.  Due to the dependence of $m$ on $D$, we will have an extra complication.  

\begin{corollary} \label{cor:alldscaled}
	We fix a $B \geq 1$, $D' \in [c_{\ref{thm:maincontrol}} \sqrt{n}, e^{c_\star n}]$ and $m \leq \nu n/ \log D'$.  For a $(B, m)$-delocalized  vector $\Bb$,
	\begin{align*}
	\P\Bigg(\exists \text{ eigenvector } \Bv \text{ of } &N, \, D \in [c_{\ref{thm:maincontrol}} \sqrt{n}, D'] \text{ and } t \geq 0\\
	&\text{ such that } \rho(\Bb \odot \Bv, t) \geq C_{\ref{thm:maincontrol}}\left(t + \frac{1}{D} + \frac{t^2 D}{\sqrt{n}} \right) \text{ and } \EE_K \Bigg) \leq e^{-c_{\ref{thm:maincontrol}} n}.
	\end{align*}
\end{corollary}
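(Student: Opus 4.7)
The plan is to mimic the dyadic-scale argument already used in the proof of Corollary \ref{cor:alld}, invoking Theorem \ref{thm:maincontrol} once at each scale. Define $d_k := c_{\ref{thm:maincontrol}} \sqrt{n}\,2^k$ for $k = 0,1,2,\dots,K$, where $K$ is the largest integer with $d_K \leq D'$. Note that $K = O(\log(D'/\sqrt{n})) = O(n)$, so the set of scales is at most linear in $n$.

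The crucial observation that makes this clean is that the delocalization constraint $m \leq \nu n/\log D'$ is uniform in $D$ on the interval $[c_{\ref{thm:maincontrol}}\sqrt{n}, D']$: since $\log D \leq \log D'$ for every $D$ in this range, the hypothesis $m \leq \nu n/\log D'$ automatically implies the hypothesis $m \leq \nu n/\log d_k$ required for applying Theorem \ref{thm:maincontrol} at the scale $D = d_k$. Hence for each $k$, Theorem \ref{thm:maincontrol} gives that with probability at least $1 - e^{-c_{\ref{thm:maincontrol}} n}$, on $\EE_K$, every eigenvector $\Bv$ of $N$ satisfies
\[ \rho(\Bb \odot \Bv, t) \leq C_{\ref{thm:maincontrol}}\left(t + \frac{1}{d_k} + \frac{t^2 d_k}{\sqrt{n}}\right) \qquad \text{for all } t \geq 0. \]

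Now fix an arbitrary $D \in [c_{\ref{thm:maincontrol}} \sqrt{n}, D']$ and let $k$ be the unique index with $d_k \leq D < d_{k+1} = 2 d_k$. On the event above, using $d_k \leq D \leq 2 d_k$,
\[ \rho(\Bb \odot \Bv, t) \leq C_{\ref{thm:maincontrol}}\left(t + \frac{2}{D} + \frac{t^2 D}{\sqrt{n}}\right) \leq 2 C_{\ref{thm:maincontrol}}\left(t + \frac{1}{D} + \frac{t^2 D}{\sqrt{n}}\right), \]
which is the desired bound up to the universal factor $2$ (which can be absorbed into $C_{\ref{thm:maincontrol}}$ by redefining the constant). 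Taking a union bound over the $O(n)$ dyadic scales $k = 0, \dots, K$ costs only a factor of $n$ in the probability estimate, so the overall failure probability is at most $n e^{-c_{\ref{thm:maincontrol}} n} \leq e^{-c' n}$ for a slightly smaller constant $c'$. Absorbing $c'$ back into $c_{\ref{thm:maincontrol}}$ yields the claim.

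I do not anticipate any real obstacle: the argument is a direct transcription of the proof of Corollary \ref{cor:alld}, with the only new input being the monotonicity of the delocalization constraint $m \leq \nu n/\log D$ in $D$, which guarantees that the binding constraint is the one at $D = D'$. The constants $C_{\ref{thm:maincontrol}}$ and $c_{\ref{thm:maincontrol}}$ in the corollary statement will of course differ from those in Theorem \ref{thm:maincontrol} by universal multiplicative factors, but these can be reabsorbed into the same labels by convention.
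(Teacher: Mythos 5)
Your proposal is correct and is essentially identical to the paper's own proof: the same dyadic scales $d_k = c_{\ref{thm:maincontrol}}\sqrt{n}\,2^k$, the same observation that $m \leq \nu n/\log D'$ implies the hypothesis of Theorem \ref{thm:maincontrol} at every smaller scale, the same factor-of-$2$ loss on each dyadic interval, and the same union bound over the $O(n)$ scales. Your write-up is, if anything, slightly more explicit than the paper's about why the delocalization constraint at $D'$ is the binding one.
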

\begin{proof}
	Let $d_k = c_{\ref{thm:maincontrol}} \sqrt{n} 2^k$.  By our choice of $m$, for any $d_k$ with $k \in \N$ such that $d_k \in [c_{\ref{thm:maincontrol}} \sqrt{n}, D']$, we can apply Theorem \ref{thm:maincontrol} to conclude that with probability at least $1 - e^{-c_{\ref{thm:maincontrol}} n}$, for $\Bb$ a $(B,m)$-delocalized vector and for any eigenvector $\Bv$ of $N$ will be such that for $t \geq 0$,
	\begin{align*}
		\rho(\Bb \odot \Bv, t) &\leq C_{\ref{thm:maincontrol}}\left(t + \frac{1}{d_k} + \frac{t^2 d_k}{\sqrt{n}} \right).
	\end{align*}
	On this event, for any $D \in [d_k, d_{k+1})$, 
	\begin{align*}
	\rho(\Bb \odot \Bv, t) &\leq C_{\ref{thm:maincontrol}}\left(t + \frac{1}{d_k} + \frac{t^2 d_k}{\sqrt{n}} \right) \\
	&\leq 2C_{\ref{thm:maincontrol}}\left(t + \frac{1}{D} + \frac{t^2 D}{\sqrt{n}} \right).
	\end{align*}
	Taking a union bound over $k \in \N$ with $d_k \in [c_{\ref{thm:maincontrol}} \sqrt{n}, D']$ concludes the proof.
\end{proof}

Now, we allow $D$ to vary with $t$ to boost our result to all scales.  

\begin{theorem} \label{thm:smallballscaledeigvec}
	We fix a $B \geq 1$.  There exist constants $\nu, \nu', C_{\ref{thm:smallballscaledeigvec}}, c_{\ref{thm:smallballscaledeigvec}} > 0$ possibly depending on $B$ such that for $m \leq \nu \sqrt{n}$ and a $(B, m)$-delocalized  vector $\Bb$,
	\begin{align*}
	\P\Bigg(\exists \text{ eigenvector } \Bv \text{ of } &N  \text{ and } t \geq e^{- \nu' n/m}\\
	&\text{ such that } \rho(\Bb \odot \Bv, t) \geq C_{\ref{thm:maincontrol}}t \text{ and } \EE_K \Bigg) \leq e^{-c_{\ref{thm:maincontrol}} n}.
	\end{align*}
\end{theorem}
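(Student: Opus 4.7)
The plan is to deduce Theorem \ref{thm:smallballscaledeigvec} from Corollary \ref{cor:alldscaled} by optimizing the choice of $D$ as a function of $t$. Corollary \ref{cor:alldscaled} gives, on a single event of probability at least $1-e^{-c_{\ref{thm:maincontrol}}n}$, the bound
\[
\rho(\Bb \odot \Bv, t) \leq C_{\ref{thm:maincontrol}}\Bigl(t + \tfrac{1}{D} + \tfrac{t^2 D}{\sqrt{n}}\Bigr)
\]
simultaneously for every eigenvector $\Bv$ of $N$, every $t \geq 0$, and every $D \in [c_{\ref{thm:maincontrol}}\sqrt{n}, D']$, provided the $(B,m)$-delocalized vector $\Bb$ satisfies $m \leq \nu n/\log D'$.

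First I would fix $D' := e^{\nu n/m}$ (possibly after shrinking the constant $\nu$ from Theorem \ref{thm:maincontrol} so that $D' \leq e^{c_\star n}$ holds for every $m \geq 1$). The hypothesis $m \leq \nu\sqrt{n}$ then guarantees $D' \geq e^{\sqrt{n}} \gg c_{\ref{thm:maincontrol}}\sqrt{n}$, so the admissible range of $D$ is nonempty and wide. Setting $\nu' := \nu$, the target range of scales becomes $t \geq e^{-\nu' n/m} = 1/D'$.

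Next, for each such $t$ I would pick $D = D(t)$ so that the upper bound collapses to $O(t)$, splitting into two regimes. When $t \leq 1/(c_{\ref{thm:maincontrol}}\sqrt{n})$, the choice $D = 1/t$ lies in the admissible range (the upper bound $D \leq D'$ is exactly the constraint $t \geq e^{-\nu n/m}$, and the lower bound is the case assumption), and yields
\[
\rho(\Bb \odot \Bv, t) \leq C_{\ref{thm:maincontrol}}\bigl(t + t + t/\sqrt{n}\bigr) \leq 3C_{\ref{thm:maincontrol}}\,t.
\]
When $1/(c_{\ref{thm:maincontrol}}\sqrt{n}) < t \leq 1$, I set $D = c_{\ref{thm:maincontrol}}\sqrt{n}$; then $1/D < t$ and $t^2 D/\sqrt{n} = c_{\ref{thm:maincontrol}}t^2 \leq c_{\ref{thm:maincontrol}}t$, giving $\rho \leq C' t$. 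For $t > 1$ the trivial bound $\rho \leq 1 \leq t$ finishes the job. Absorbing constants into a single $C_{\ref{thm:maincontrol}}$ (adjusted as necessary) recovers the statement.

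The main work has essentially already been done in Corollary \ref{cor:alldscaled}; the only genuine obstacle here is bookkeeping of constants — namely, choosing $\nu$ small enough that the specific value $D' = e^{\nu n/m}$ satisfies both $D' \leq e^{c_\star n}$ and the delocalization constraint $m \leq \nu n/\log D'$ simultaneously for every admissible $m$, and verifying that the two piecewise choices of $D$ actually fall inside $[c_{\ref{thm:maincontrol}}\sqrt{n}, D']$. Both reduce to elementary inequalities under the hypothesis $m \leq \nu\sqrt{n}$, so no new probabilistic input is needed beyond the single application of Corollary \ref{cor:alldscaled}.
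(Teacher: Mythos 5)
Your proposal is correct and matches the paper's own argument: the paper likewise proves the theorem by a single application of Corollary \ref{cor:alldscaled} with $D' = e^{\nu n/m}$ and $D$ chosen proportional to $1/t$ (the paper takes $D=\sqrt{n}/t$, you take $D=1/t$ with a case split for large $t$, which is an immaterial difference absorbed into the constants). The bookkeeping you carry out — checking $D'\leq e^{c_\star n}$, the delocalization constraint $m\leq \nu n/\log D'$, and that the chosen $D$ lies in $[c_{\ref{thm:maincontrol}}\sqrt{n},D']$ — is exactly what the paper's one-line proof leaves implicit.
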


\begin{proof}
	This results follows from applying Corollary \ref{cor:alldscaled} with $D' = e^{\nu n/m}$, $D = \sqrt{n}/t$ and restricting $t$ so that $m \leq \nu n/ \log D$ as required in Corollary \ref{cor:alldscaled}.
\end{proof}

\section{Completing the Proofs and Deducing Controllability} \label{sec:controllability}

This section is devoted to the proofs of our main results and their corollaries.  The key tool is the following proposition.

\begin{proposition} \label{prop:reduce}
Let $N$ be an iid matrix with symmetric atom variable $\xi$ that satisfies Assumption \ref{assump:main}.  Fix constants $B, K \geq 1$.  Then there exist positive constants $c_\star, \nu, C_{\ref{prop:reduce}}, c_{\ref{prop:reduce}}$ depending on $B, K$, and $\xi$ such that the following holds.
Let $m \leq \nu \sqrt{n}$.  For a $(B, m)$-delocalized vector $\Bb \in \mathbb{C}^n$ and for any $t \geq e^{-\nu' n/m}$,
\begin{align*}
	\P( \exists \text{ a unit eigenvector } \Bv \text{ of } N \text{ such that } | \Bb^{\mathrm{T}} \Bv| \leq t ) \leq C_{\ref{prop:reduce}} n t  + \Prob( \mathcal{E}_K^c). 
\end{align*}
\end{proposition}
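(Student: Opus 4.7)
My plan is to use the auxiliary random sign trick advertised in the paper's overview in order to reduce the proposition to the small-ball estimate Theorem~\ref{thm:smallballscaledeigvec}. Let $\epsilon_1,\dots,\epsilon_n$ be iid Rademacher signs, independent of $N$, and set $D=\diag(\epsilon_1,\dots,\epsilon_n)$. Since $\xi$ is symmetric, $(DND)_{ij}=\epsilon_i\epsilon_j N_{ij}$ is distributed as $N_{ij}$ conditionally on $D$, so $DND\stackrel{d}{=}N$. A direct computation shows that $\Bw$ is a unit eigenvector of $N$ with eigenvalue $\lambda$ if and only if $D\Bw$ is a unit eigenvector of $DND$ with the same eigenvalue. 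Consequently,
\[
\P\bigl(\exists \text{ unit ev } \Bv \text{ of } N:\, |\Bb^{\mathrm{T}}\Bv|\le t\bigr)
=\P\bigl(\exists \text{ unit ev } \Bw \text{ of } N:\, |\Bb^{\mathrm{T}}D\Bw|\le t\bigr),
\]
where the right-hand probability is taken jointly over $N$ and the independent $D$. The payoff is that $\Bb^{\mathrm{T}}D\Bw=\sum_i\epsilon_i b_i w_i = \boldsymbol{\epsilon}\cdot(\Bb\odot\Bw)$, which is exactly the quantity controlled by the L\'evy concentration function $\rho(\Bb\odot\Bw,t)$ computed with $\boldsymbol{\xi}$ taken to be Rademacher (Rademacher satisfies Assumption~\ref{assump:main} with some $q,T$, so it is an admissible choice in the definition of $\rho$).

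Next I would introduce the ``good'' event $\mathcal{G}$ defined as the intersection of $\mathcal{E}_K$, the simple-spectrum event (which holds on $\mathcal{E}_K$ with probability $1-Ce^{-cn}$ by the $s=0$ specialization of Theorem~\ref{thm:eiggaptail}, so that $N$ has exactly $n$ unit eigenvectors up to phase), and the event of Theorem~\ref{thm:smallballscaledeigvec} applied with the underlying random variable of $\rho$ taken to be Rademacher. On $\mathcal{G}$ and for every $t\ge e^{-\nu' n/m}$ and every unit eigenvector $\Bw$ of $N$,
\[
\P_D\bigl(|\Bb^{\mathrm{T}}D\Bw|\le t\mid N\bigr)
\le \rho(\Bb\odot\Bw,t) \le C\,t.
\]
A union bound over the $n$ unit eigenvectors and integration over $N$ yield
\[
\P\bigl(\exists\text{ ev }\Bw:|\Bb^{\mathrm{T}}D\Bw|\le t\bigr)\le Cnt+\P(\mathcal{G}^{c})\le Cnt+C'e^{-cn}+\P(\mathcal{E}_K^{c}).
\]
The constraint $m\le\nu\sqrt{n}$ together with $t\ge e^{-\nu' n/m}$ forces $t\ge e^{-(\nu'/\nu)\sqrt{n}}$; after choosing $\nu'$ sufficiently small relative to the exponent $c$ from $\mathcal{G}^c$, the term $C'e^{-cn}$ is absorbed into $Cnt$, giving the desired $C_{\ref{prop:reduce}}nt+\P(\mathcal{E}_K^{c})$.

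The only genuine obstacle is to justify plugging the Rademacher distribution into $\rho$ in Theorem~\ref{thm:smallballscaledeigvec}: one has to trace through the proof chain (Theorem~\ref{thm:maincontrol}, Corollary~\ref{cor:alldscaled}, and ultimately the LCD small-ball bound Theorem~\ref{thm:smallballcomplex}) to verify that the underlying random variable used in the L\'evy concentration enters only through Assumption~\ref{assump:main}, and is not coupled to the entries of $N$. The other minor point is bookkeeping: producing the simple-spectrum event on $\mathcal{E}_K$ at exponentially small cost (provided by Theorem~\ref{thm:eiggaptail} with $s=0$), and choosing $\nu,\nu'$ so that the lower bound on $t$ consumes the residual $e^{-cn}$ error. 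Everything else is the standard symmetrization identity $DND\stackrel{d}{=}N$ together with a union bound.
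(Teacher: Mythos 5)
Your proposal is correct and follows essentially the same route as the paper: the conjugation $DND\stackrel{d}{=}N$ is exactly the paper's random-sign symmetrization $N'=(\eps_i\eps_j N_{ij})$ with eigenvectors $\Bv_i\odot\boldsymbol{\xi}$, after which both arguments condition on simple spectrum (Theorem~\ref{thm:eiggaptail}) and the high-probability event of Theorem~\ref{thm:smallballscaledeigvec}, bound the conditional probability by $\rho(\Bb\odot\Bv_i,t)\le Ct$, and take a union bound over the $n$ eigenvectors. The two bookkeeping points you flag (Rademacher satisfying Assumption~\ref{assump:main} as the variable in $\rho$, and absorbing the $e^{-cn}$ errors using $t\ge e^{-\nu' n/m}\ge e^{-(\nu'/\nu)\sqrt{n}}$) are handled the same way, implicitly, in the paper.
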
 

\begin{remark} \label{remark:zerodiagonalreduce}
	The above proposition also applies when the matrix $N$ is an iid matrix except with zeros along the diagonal. This is a simple consequence of Remark \ref{remark:zerodiagonal} and the proof of Proposition \ref{prop:reduce}.  We omit the details. 
\end{remark}

We prove Proposition \ref{prop:reduce} in Section \ref{sec:reduce:proof} below.  Theorem \ref{thm:maineigvecscaled} follows immediately from Proposition \ref{prop:reduce}.   
Theorem \ref{thm:maineigvec} is a consequence of Theorem \ref{thm:maineigvecscaled} since the all-ones vector is $(B, 0)$-delocalized for any $B \geq 1$.

\subsection{Controllability} 
While Definition \ref{def:kalman} gives Kalman's rank condition for the pair $(A,\Bb)$ to be controllable, it is not the most useful criteria to check.  Instead, in this section, we will focus on the Popov--Belevitch--Hautus (PBH) test.  This test was introduced independently by Popov \cite{Popov}, Belevitch \cite{Belevitch}, Hautus \cite{Hautus}, Rosenbrock \cite{Rosenbrock}, Hahn \cite[p. 27]{Kalman}, Johnson \cite{FJ1}, Ford and Johnson \cite{FJ2}, and Gilbert \cite{Gilbert}.  The version presented below appears as Theorem 2.4-8 in \cite{Kls}.  

\begin{theorem} [PBH eigenvector test] \label{thm:PBH}
The pair $(A, \Bb)$ is uncontrollable if and only if there exists a left eigenvector $\Bv$ of $A$ such that $\Bv^\mathrm{T} \Bb = 0$.  
\end{theorem}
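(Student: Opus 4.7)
The plan is to translate the PBH condition into the rank condition of Definition \ref{def:kalman}. Recall that $(A,\Bb)$ is controllable iff the controllability matrix $\mathcal{K} := \begin{pmatrix} \Bb & A\Bb & \cdots & A^{n-1}\Bb \end{pmatrix}$ has rank $n$, equivalently iff no nonzero row vector annihilates every column of $\mathcal{K}$. With this reformulation, both directions of the PBH equivalence become statements about left null vectors of $\mathcal{K}$.

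For the easy direction, I would suppose $\Bv$ is a left eigenvector of $A$ with $\Bv^{\mathrm{T}} A = \lambda \Bv^{\mathrm{T}}$ and $\Bv^{\mathrm{T}} \Bb = 0$. A one-line induction gives $\Bv^{\mathrm{T}} A^k \Bb = \lambda^k \Bv^{\mathrm{T}} \Bb = 0$ for every $k \geq 0$, so $\Bv^{\mathrm{T}} \mathcal{K} = 0$. Since $\Bv \neq 0$, $\mathcal{K}$ has rank less than $n$ and $(A,\Bb)$ is uncontrollable.

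For the converse, I would assume $(A,\Bb)$ is uncontrollable and set $K := \Span\{A^k \Bb : k \geq 0\} \subsetneq \mathbb{C}^n$. By the Cayley--Hamilton theorem, $A^n\Bb$ is a linear combination of $\Bb, A\Bb, \dots, A^{n-1}\Bb$, so $K$ equals the column space of $\mathcal{K}$ and $\dim K < n$. The identity $A \cdot A^k\Bb = A^{k+1}\Bb$ makes $K$ an $A$-invariant subspace; hence $K^\perp$ is a nonzero $A^{\mathrm{T}}$-invariant subspace of $\mathbb{C}^n$. Since $\mathbb{C}$ is algebraically closed, the restriction $A^{\mathrm{T}}|_{K^\perp}$ admits an eigenvector $\Bv \in K^\perp$, which is by definition a left eigenvector of $A$; since $\Bb \in K$, we get $\Bv^{\mathrm{T}} \Bb = 0$.

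The only real subtlety is the converse's appeal to $\mathbb{C}$ being algebraically closed: a nontrivial invariant subspace of a real matrix need not contain a real eigenvector, which is why the PBH test is naturally stated for complex left eigenvectors. Otherwise the proof is entirely routine linear algebra and requires no input from the random-matrix machinery developed earlier in the paper.
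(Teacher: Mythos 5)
Your proof is correct. Note that the paper does not prove this statement at all: it is quoted as a classical result from the control theory literature (Theorem 2.4-8 in \cite{Kls}, attributed to Popov, Belevitch, Hautus, and others), so there is no in-paper argument to compare against. Your route --- reducing to Kalman's rank condition, identifying the column space of the controllability matrix with the $A$-invariant Krylov subspace $K = \Span\{A^k\Bb : k \ge 0\}$ via Cayley--Hamilton, and extracting an eigenvector of $A^{\mathrm{T}}$ from the invariant complement over $\mathbb{C}$ --- is the standard textbook proof and is complete. One small precision worth making explicit: in the converse, $K^\perp$ should be taken as the annihilator $\{\Bv : \Bv^{\mathrm{T}}\Bw = 0 \ \forall \Bw \in K\}$ with respect to the bilinear pairing $\Bv^{\mathrm{T}}\Bw$ rather than the Hermitian orthogonal complement, so that the conclusion reads $\Bv^{\mathrm{T}}\Bb = 0$ exactly as in the statement; this annihilator is $A^{\mathrm{T}}$-invariant by the same one-line computation, and for the real $A$ and $\Bb$ appearing in the paper the two notions agree up to complex conjugation, so this is cosmetic rather than a gap.
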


In order to study the probability that $(A, \Bb)$ is controllable, the PBH test allows us to study the probability that a left eigenvector of $A$ is orthogonal to $\Bb$.  In fact, if $A$ is an iid matrix (or the adjacency matrix of a directed Erd\H{o}s--R\'enyi random graph), $A$ and $A^\mathrm{T}$ have the same distribution, and it suffices to study the probability that a (right) eigenvector of $A$ is orthogonal to $\Bb$.  In order to do so, we will apply Proposition \ref{prop:reduce}.

In view of Theorem \ref{thm:PBH}, by taking $t$ as small as possible, Proposition \ref{prop:reduce} allows us to bound the probability that $(A, \Bb)$ is uncontrollable.  Indeed, we immediately obtain the following corollary for an iid matrix.  

\begin{corollary} \label{cor:reduce}
Let $N$ be an iid matrix with symmetric atom variable $\xi$ that satisfies Assumption \ref{assump:main}.  Fix constants $B, K \geq 1$.  Then there exist positive constants $\nu, C_{\ref{cor:reduce}}, c_{\ref{cor:reduce}}$ depending on $B, K$, and $\xi$ such that the following holds.  Let $m \leq \nu \sqrt{n}$.  For a $(B, m)$-delocalized vector $\Bb \in \mathbb{C}^n$, 
\[ \P( (N, \Bb) \text{ is uncontrollable} ) \leq C_{\ref{cor:reduce}}e^{-c_{\ref{cor:reduce}}n} + \P( \mathcal{E}_K^c). \]
\end{corollary}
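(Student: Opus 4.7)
The plan is to combine the PBH eigenvector criterion with Proposition \ref{prop:reduce}, which already contains essentially all of the hard work. By Theorem \ref{thm:PBH}, the pair $(N,\Bb)$ is uncontrollable if and only if some left eigenvector $\Bv$ of $N$ satisfies $\Bv^{\mathrm{T}}\Bb = 0$. Because $N$ has iid entries we have $N \stackrel{d}{=} N^{\mathrm{T}}$, so the collection of left eigenvectors of $N$ has the same joint distribution as the collection of right eigenvectors of $N$. Consequently,
\[
\P\big((N,\Bb) \text{ uncontrollable}\big) \;\leq\; \P\big(\exists \text{ unit eigenvector } \Bv \text{ of } N \text{ with } \Bb^{\mathrm{T}}\Bv = 0\big).
\]

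Next I would apply Proposition \ref{prop:reduce} to the right-hand side. Since $\{\Bb^{\mathrm{T}}\Bv = 0\} \subseteq \{|\Bb^{\mathrm{T}}\Bv| \leq t\}$ for every $t > 0$, choosing $t$ as small as the proposition allows --- namely $t = e^{-\nu' n/m}$ --- yields
\[
\P\big((N,\Bb) \text{ uncontrollable}\big) \;\leq\; C_{\ref{prop:reduce}}\, n\, e^{-\nu' n/m} + \P(\mathcal{E}_K^c).
\]
Taking the constant $\nu$ in the hypothesis $m \leq \nu\sqrt{n}$ sufficiently small (relative to $\nu'$) forces $n/m$ to be at least of order $\sqrt{n}/\nu$, so that the prefactor $n$ is absorbed into the exponential and the right-hand side is dominated by $C_{\ref{cor:reduce}}e^{-c_{\ref{cor:reduce}} n} + \P(\mathcal{E}_K^c)$ for appropriate constants depending on $B$, $K$, and $\xi$.

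I do not anticipate a real technical obstacle: all of the arithmetic-structure analysis and the small-ball machinery has been packaged into Proposition \ref{prop:reduce}. The reduction used here only needs (i) the PBH eigenvector criterion, (ii) the distributional identity $N \stackrel{d}{=} N^{\mathrm{T}}$ coming from the iid assumption on $N$, and (iii) the monotonicity of the event $\{|\Bb^{\mathrm{T}}\Bv| \leq t\}$ in $t$. The symmetry assumption on the atom variable $\xi$ enters only through the hypotheses of Proposition \ref{prop:reduce} and is not invoked a second time in this reduction step.
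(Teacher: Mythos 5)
Your argument is essentially the paper's own proof of Corollary \ref{cor:reduce}: the PBH eigenvector test (Theorem \ref{thm:PBH}), the distributional identity $N \stackrel{d}{=} N^{\mathrm{T}}$ to pass from left to right eigenvectors, and then Proposition \ref{prop:reduce} applied at the smallest admissible $t$. One caveat, which you inherit from the paper's formulation rather than introduce yourself: taking $t = e^{-\nu' n/m}$ gives the bound $C_{\ref{prop:reduce}}\, n\, e^{-\nu' n/m} + \P(\mathcal{E}_K^c)$, and when $m$ is of order $\sqrt{n}$ this is only $e^{-\Omega(\sqrt{n})}$; shrinking $\nu$ changes the constant in front of $\sqrt{n}$ but cannot upgrade the term to $C e^{-cn}$, since $n e^{-\nu' n/m} \leq C e^{-cn}$ would force $m = O(1)$. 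So your final ``absorption'' sentence is not literally valid over the full range $m \leq \nu\sqrt{n}$; the honest output of this reduction is $\P\big((N,\Bb)\text{ uncontrollable}\big) \leq C n e^{-\nu' n/m} + \P(\mathcal{E}_K^c)$, which matches what Proposition \ref{prop:reduce} actually delivers and suffices for the paper's applications (e.g.\ $\Bb = \1$, which is $(B,0)$-delocalized). Since the paper states the corollary with $e^{-c_{\ref{cor:reduce}} n}$ and calls it immediate from the same two ingredients, your proposal is faithful to the paper's route, with the same quantitative imprecision in the last step.
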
 

Corollary \ref{cor:allones} now follows immediately from Corollary \ref{cor:reduce} and \eqref{eq:4momnorm}.  

We finish this subsection with a proof of Corollary \ref{cor:allonesgraph}.  
\begin{proof}[Proof of Corollary \ref{cor:allonesgraph}]
Recall that $\1$ is the all-ones vector.  Let $B := A - \frac{1}{2} J$, where $J = \1 \1^{\mathrm{T}}$ is the all-ones matrix.  If $\Bv$ is an eigenvector of $A$ that is orthogonal to $\1$, then $\Bv$ must also be an eigenvector of $B$ (since $J \Bv = 0$).  

We will work with the matrix $N := 2 (B + \frac{1}{2} I)$, where $I$ is the identity matrix.  The matrix $N$ has the same eigenvectors as $B$ (since shifting by a multiple of the identity matrix and scalar multiplication do not change the eigenvectors), and the entries of $N$ are iid Rademacher random variables, except for the diagonal entries which are identically zero.  

By Proposition \ref{prop:reduce} and Remark \ref{remark:zerodiagonalreduce}, $N$ is uncontrollable with probability at most $Ce^{-c n}$ for some $C,c > 0$ since the entries of $N$ are subgaussian.  Hence, $B$ is uncontrollable with the same probability.  From the controllability of $B$ we can conclude the controllability of $A$ due to following chain of implications: 
\begin{align*}
(A, \boldsymbol{1}) \text{ is uncontrollable} &\Longleftrightarrow \exists \lambda, v \text{ such that } v \neq 0, Av = \lambda v \text{ and } \boldsymbol{1}^{\mathrm{T}} v = 0 \\
&\Longleftrightarrow \exists \lambda, v \text{ such that } v \neq 0, (A - \frac{1}{2} \boldsymbol{1} \boldsymbol{1}^{\mathrm{T}}) v = \lambda v \text{ and } \boldsymbol{1}^{\mathrm{T}} v = 0 \\
&\Longleftrightarrow (B, \boldsymbol{1}) \text{ is uncontrollable},
\end{align*}  
which completes the proof.
\end{proof}

\subsection{Random Vectors: Proofs of Corollaries \ref{cor:random} and \ref{cor:randomA}}

In order to prove Corollaries \ref{cor:random} and \ref{cor:randomA}, we will need the following lemma.  

\begin{lemma} \label{lemma:randomvec}
Let $\xi$ be a real-valued random variable with mean zero, unit variance, and finite fourth moment. Let $N$ be the $n \times n$ iid random matrix with atom variable $\xi$.  Let $\psi$ be a real-valued random variable that satisfies Assumption \ref{assump:main}, and assume $\Bb \in \mathbb{R}^n$ is a random vector with entries that are iid copies of $\psi$.  Then
\[ \P( \exists \text{ a unit eigenvector } \Bv \text{ of } N \text{ such that } \Bb^{\mathrm{T}} \Bv = 0 ) = o(1). \]
\end{lemma}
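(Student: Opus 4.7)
The plan is to condition on the matrix $N$ and, for each eigenvector $\Bv$ of $N$, bound $\P_{\Bb}(\Bb^{\mathrm{T}}\Bv=0)$ via the anti-concentration of $\Bb^{\mathrm{T}}\Bv$ (treating $\Bv$ as a fixed complex vector and $\Bb$ as a random vector with iid entries distributed as $\psi$), and then union-bound over the at most $n$ eigenvectors of $N$. The strategy is to show that the randomness of $N$ alone forces every eigenvector $\Bv$ to be sufficiently ``unstructured'' (large LCD, adequate real-imaginary correlation) that any linear functional $\Bb^{\mathrm{T}}\Bv$ with $\Bb$ drawn from $\psi^{\otimes n}$ is exponentially anti-concentrated, in particular at the point $0$.

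First I will observe that $\xi$ has mean zero and unit variance, so by the remark following Assumption \ref{assump:main} it satisfies that assumption, and by \eqref{eq:4momnorm} there is a constant $K>1$ for which $\P(\mathcal{E}_K^c)=o(1)$, where $\mathcal{E}_K=\{\|N\|\le K\sqrt n\}$. On $\mathcal{E}_K$ I will invoke Corollary \ref{cor:rhozero} (equivalently, the combination of the structure theorem Theorem \ref{thm:nullvectors} with Theorem \ref{thm:smallballcomplex}) with the L\'evy concentration $\rho$ taken with respect to a random vector with iid entries distributed as $\psi$. The conclusion is that, with probability at least $1-e^{-cn}$, every eigenvector $\Bv$ of $N$ satisfies $\rho_{\psi}(\Bv,0)\le e^{-c'n}$.

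Conditioning on $N$ on this good event, for each eigenvector $\Bv$ we then have
\[
\P_{\Bb}\!\left(\Bb^{\mathrm{T}}\Bv=0\right)\le \rho_{\psi}(\Bv,0)\le e^{-c'n},
\]
since $\Bb^{\mathrm{T}}\Bv$ is exactly $\boldsymbol{\xi}^{\mathrm{T}}\Bv$ for $\boldsymbol{\xi}=\Bb$. A union bound over the $n$ eigenvectors of $N$, together with the $o(1)$ bound on $\P(\mathcal{E}_K^c)$ and the $e^{-cn}$ failure probability of the structure theorem, then produces the claimed $o(1)$ probability.

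The main point that requires care, and the only real obstacle, is that the lemma permits $\psi\ne\xi$ and does not assume $\xi$ is symmetric, so it is not automatic that the applied structure/anti-concentration machinery can be run with $\rho$ defined via $\psi$ rather than via the atom distribution of $N$. To handle this I will unpack Corollary \ref{cor:rhozero} and verify explicitly that the structure theorems deliver a purely \emph{deterministic} lower bound $D(\Bv)\ge e^{c_\star n}$ (together with the required lower bound on $d(\Bv)$) that depends only on $\Bv$, and that Theorem \ref{thm:smallballcomplex} then converts this structural data into a small-ball bound for \emph{any} random vector whose iid entries satisfy Assumption \ref{assump:main}. Since $\psi$ satisfies Assumption \ref{assump:main} by hypothesis, this decoupling of the atom distribution of $N$ from the distribution of $\Bb$ is precisely what allows the argument to proceed without symmetry of $\xi$.
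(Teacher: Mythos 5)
Your overall strategy is the same as the paper's: use \eqref{eq:4momnorm} to reduce to the event $\mathcal{E}_K$, invoke Corollary \ref{cor:rhozero} (i.e., the structure theorem plus Theorem \ref{thm:smallballcomplex}) to get $\rho(\Bv,0)\le e^{-cn}$ simultaneously for all eigenvectors, condition on $N$, and union bound. Your observation about decoupling the distributions is also correct and is exactly how the paper uses its machinery: the structural conclusions ($D(\Bv)$ and $d(\Bv)$ bounds) are deterministic properties of the eigenvectors obtained from the randomness of $N$, while Theorem \ref{thm:smallballcomplex} and the definition of $\rho$ apply to any iid vector whose atom variable satisfies Assumption \ref{assump:main}; since $\psi$ does, $\rho$ may be taken with respect to $\Bb$, and no symmetry of $\xi$ is needed here.

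However, there is a genuine gap: your union bound ``over the at most $n$ eigenvectors of $N$'' presupposes that every eigenvalue of $N$ has a one-dimensional eigenspace. Since $\xi$ may be discrete (e.g.\ Rademacher), this is not automatic, and the issue is not merely technical. If some eigenvalue has geometric multiplicity at least $2$, then for \emph{any} nonzero $\Bb$ the eigenspace meets the hyperplane $\{\Bz:\Bb^{\mathrm{T}}\Bz=0\}$ nontrivially, so a unit eigenvector orthogonal to $\Bb$ exists deterministically and your conditional bound fails with probability $1$ on that event; moreover the set of unit eigenvectors is then uncountable, so no small-ball-plus-union-bound argument can be run over it. The paper closes exactly this hole by introducing the event $\mathcal{S}$ that the spectrum of $N$ is simple and invoking Theorem \ref{thm:eiggaptail} (one of the main results of the paper, valid under the fourth moment assumption together with $\P(\mathcal{E}_K^c)=o(1)$) to show $\P(\mathcal{S}^c)=o(1)$; only on $\mathcal{S}\cap\EE^c$ does it enumerate the $n$ eigenvectors $\Bv_1,\ldots,\Bv_n$ (fixed up to sign) and apply the union bound $n\,e^{-c_{\ref{cor:rhozero}}n}=o(1)$. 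Your proof needs this additional ingredient; with it, the rest of your argument goes through as written.
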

\begin{proof}
In view of \eqref{eq:4momnorm}, it follows that there exists a constant $K > 1$ so that $\mathcal{E}_K$ holds with probability $1 - o(1)$.  
We say the eigenvalues of $N$ are simple if $N$ has $n$ distinct eigenvalues (each with multiplicity one).  Let $\mathcal{S}$ denote the event that the eigenvalues of $N$ are simple.  It follows from Theorem \ref{thm:eiggaptail} that $\mathcal{S}$ holds with probability $1 - o(1)$.  

Let $\EE$ denote the event that there exists a unit eigenvector $\Bv$ of $N$ with $\rho(\Bv, 0) > e^{-c_{\ref{cor:rhozero}} n}$.  It follows from Corollary \ref{cor:rhozero} that
\[ \P( \EE) \leq \P (\EE \cap \EE_K) + \P(\EE_K^c) = o(1). \]
Therefore, we conclude that
\begin{align*}
	\P( \exists &\text{ a unit eigenvector } \Bv \text{ of } N \text{ such that } \Bb^{\mathrm{T}} \Bv = 0 ) \\
	&\leq \P( \exists \text{ a unit eigenvector } \Bv \text{ of } N \text{ such that } \Bb^{\mathrm{T}} \Bv = 0 | \EE^c \cap \mathcal{S} ) + o(1).
\end{align*}
On the event $\mathcal{S}$, $N$ has $n$ distinct eigenvectors, determined uniquely up to sign.  Let $\Bv_1, \ldots, \Bv_n$ denote the unit eigenvectors of $N$ on the event $\mathcal{S}$.  Since the choice of sign for each eigenvector does not effect whether $\Bb^{\mathrm{T}} \Bv_i$ is zero or not, we adopt the convention that each eigenvector $\Bv_i$ is multiplied by a random sign, independent of all other sources of randomness.  
We obtain
\begin{align*}
	\P( \exists &\text{ a unit eigenvector } \Bv \text{ of } N \text{ such that } \Bb^{\mathrm{T}} \Bv = 0 | \EE^c \cap \mathcal{S} ) \\
	&\leq \P( \exists i \in [n] \text{ such that } \Bb^{\mathrm{T}} \Bv_i = 0 | \EE^c \cap \mathcal{S}). 
\end{align*}
On the event $\EE^c$, $\rho(v_i, 0) \leq e^{-c_{\ref{cor:rhozero}} n}$ for all $i \in [n]$.  So by the union bound, 
\[ \P( \exists i \in [n] \text{ such that } \Bb^{\mathrm{T}} \Bv_i = 0 | \EE^c \cap \mathcal{S}) \leq n e^{-c_{\ref{cor:rhozero}} n} = o(1). \]
The proof of the lemma is complete.  
\end{proof}

Corollary \ref{cor:random} now follows from Lemma \ref{lemma:randomvec} and Theorem \ref{thm:PBH}.  
Similarly Corollary \ref{cor:randomA} follows from the following lemma.  

\begin{lemma} \label{lemma:randomvecA}
Let $A$ be the $n \times n$ adjacency matrix of an Erd\H{o}s--R\'enyi directed graph with constant edge probability $p \in (0,1)$.  Let $\psi$ be a real-valued random variable that satisfies Assumption \ref{assump:main}, and assume $\Bb \in \mathbb{R}^n$ is a random vector with entries that are iid copies of $\psi$.   Then
\[ \P( \exists \text{ a unit eigenvector } \Bv \text{ of } A \text{ such that } \Bb^{\mathrm{T}} \Bv = 0 ) = o(1). \]
\end{lemma}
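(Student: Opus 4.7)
The plan is to follow the structure of the proof of Lemma \ref{lemma:randomvec}, with the essential new difficulty that the adjacency matrix $A$ has one eigenvalue (the Perron eigenvalue $\lambda_1 \approx pn$) outside the disk of radius $2\sqrt{n}$, to which Theorem \ref{thm:adjvec} does not apply.  First I would intersect several high-probability events: simple spectrum of $A$ (Theorem \ref{thm:adjeiggaptail}), the norm bound $\|A - \mathbb{E} A\| \leq K\sqrt{n}$ (standard subgaussian concentration, giving $\mathcal{E}_K$), the outlier structure from Theorem \ref{thm:outlier}, strong connectivity of the digraph (Theorem \ref{thm:stronglyconnected}), and the bound $\rho(\Bv, 0) \leq e^{-c n}$ for every eigenvector of $A$ whose eigenvalue has modulus at most $2\sqrt{n}$ (Theorem \ref{thm:adjvec}).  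Denote the intersection of these events by $\mathcal{G}$; then $\mathbb{P}(\mathcal{G}^c) = o(1)$.

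On $\mathcal{G}$ the eigenvectors $\Bv_1, \ldots, \Bv_n$ of $A$ are well-defined; let $\Bv_1$ denote the Perron eigenvector, and for $i \geq 2$ multiply each $\Bv_i$ by an independent random phase, as in the proof of Lemma \ref{lemma:randomvec}.  Since $\Bb$ is independent of $A$, conditioning on $A$ gives $\mathbb{P}(\Bb^{\mathrm{T}} \Bv_i = 0 \mid A) \leq \rho(\Bv_i, 0) \leq e^{-c n}$ for each $i \geq 2$, and a union bound yields
\[
\mathbb{P}\bigl(\exists\, i \geq 2 :\Bb^{\mathrm{T}} \Bv_i = 0 \;\big|\; \mathcal{G}\bigr) \leq n\, e^{-c n} = o(1).
\]
This handles every eigenvector except the outlier $\Bv_1$.

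The main remaining task, and the principal obstacle, is to bound $\mathbb{P}(\Bb^{\mathrm{T}} \Bv_1 = 0 \mid A)$ on $\mathcal{G}$.  By Theorem \ref{thm:perronfrobenius} together with strong connectivity, $\Bv_1$ has strictly positive entries; the key is to show quantitatively that $\Bv_1$ is close to $\1/\sqrt{n}$ in $\ell^2$.  Writing $A = pJ + E$ with $\|E\| = O(\sqrt{n})$ on $\mathcal{G}$ and rearranging $A\Bv_1 = \lambda_1 \Bv_1$ yields
\[
\Bv_1 - \alpha\, \1 \;=\; \lambda_1^{-1} E\Bv_1, \qquad \alpha := p\, \lambda_1^{-1}\, \1^{\mathrm{T}} \Bv_1.
\]
Since $\lambda_1 = pn + o(\sqrt{n})$ and $\|E\Bv_1\| = O(\sqrt{n})$, this gives $\|\Bv_1 - \alpha\, \1\|_2 = O(1/\sqrt{n})$; combined with $\|\Bv_1\|_2 = 1$, the normalization forces $|\alpha| = (1 + o(1))/\sqrt{n}$.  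A Markov-type argument then shows that all but $O(1)$ coordinates of $\Bv_1$ have magnitude at least $c/\sqrt{n}$ for some $c > 0$.  Using that each $b_i$ satisfies $\sup_u \mathbb{P}(b_i = u) \leq 1 - q$ by Assumption \ref{assump:main}, the Kolmogorov--Rogozin inequality applied to these $\Omega(n)$ large coordinates gives $\rho(\Bv_1, 0) = O(1/\sqrt{n}) = o(1)$, hence $\mathbb{P}(\Bb^{\mathrm{T}} \Bv_1 = 0 \mid A) = o(1)$ on $\mathcal{G}$.  Combining with the previous step completes the proof.
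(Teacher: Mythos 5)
Your proof is correct and follows the same overall architecture as the paper's: intersect the high-probability events (simple spectrum via Theorem \ref{thm:adjeiggaptail}, the norm bound $\mathcal{E}_K$, the outlier location from Theorem \ref{thm:outlier}, and the small-ball bound $\rho(\Bv,0)\leq e^{-cn}$ from Theorem \ref{thm:adjvec} for eigenvectors with eigenvalue in the disk), take a union bound over those $n-1$ eigenvectors, and then treat the Perron eigenvector separately with the L\'evy--Kolmogorov--Rogozin inequality. Where you diverge is in how you set up the Rogozin application for $\Bv_1$. The paper takes the shorter, scale-invariant route: by Perron--Frobenius and strong connectivity every entry $v_i$ of $\Bv_1$ is strictly positive, and choosing $r=r_i=\min_i v_i$ in Lemma \ref{lem:rogozin} makes the ratio $r/\bigl(\sum_i(1-\rho(b_iv_i,r_i))r_i^2\bigr)^{1/2}$ scale-free, yielding $O(1/\sqrt n)$ with no quantitative lower bound on the coordinates. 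You instead extract $\ell^2$-delocalization, $\|\Bv_1-\alpha\1\|=O(1/\sqrt n)$ with $|\alpha|=(1+o(1))/\sqrt n$, from the eigenvalue equation and the decomposition $A=pJ+E$, and apply Rogozin to the $\Omega(n)$ coordinates of magnitude $\gtrsim 1/\sqrt n$. This costs a few extra lines but buys something real: your argument for the outlier does not actually need Perron--Frobenius or strong connectivity at all (you invoke them, but your quantitative step supersedes them), and it gives explicit information about $\Bv_1$ rather than merely nonvanishing of its entries. Both routes land on the same $O(1/\sqrt n)$ bound, which suffices for the $o(1)$ conclusion. One cosmetic point: when you write $\sup_u\P(b_i=u)\leq 1-q$, make sure you apply Rogozin with $r_i\leq v_i$ so that the rescaled radius $r_i/v_i$ is at most $1$ and condition \eqref{assump:1} applies; your choice $r_i=c/\sqrt n\leq v_i$ on the large coordinates handles this correctly.
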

\begin{proof}
The argument is identical to the proof of Lemma \ref{lemma:randomvec} except for the following changes:
\begin{itemize}
\item One must use Theorem \ref{thm:adjeiggaptail} instead of Theorem \ref{thm:eiggaptail}.  
\item Instead of Corollary \ref{cor:rhozero}  one needs to apply Theorem \ref{thm:adjvec}.  
\item It only remains to address the eigenvector, $\Bv$, associated to the largest eigenvalue.  By Theorems \ref{thm:perronfrobenius} and \ref{thm:stronglyconnected}, the eigenvector is entirely positive, which in particular implies that each entry is non-zero.  We now appeal to an anti-concentration inequality which is a generalization of the classical result of Erd\H{o}s-Littlewood-Offord. 
\begin{lemma}[L\'evy-Kolmogorov-Rogozin, \cite{rogozin1961smallball}] \label{lem:rogozin}
	Let $\xi_i$ be independent real-valued random variables.  Then for any non-negative real numbers $r_1, \dots, r_m$ and $r \geq \max_i(r_i)$,
	$$
	\rho\left(\sum_{i=1}^m \xi, r\right) \leq \frac{C_{\ref{lem:rogozin}} r}{\sqrt{\sum_{i=1}^m (1 - \rho(\xi_i, r_i)) r_i^2}}
	$$
	for a universal constant $C_{\ref{lem:rogozin}} > 0$.  
\end{lemma}   
Therefore, applying the lemma with $r = r_i = \min_i v_i > 0$, 
$$
\P(\Bb^{\mathrm{T}} \Bv = 0) \leq \frac{C}{\sqrt{n}}, 
$$ 
where $C$ only depends on $p$.
\end{itemize}
\end{proof}

\subsection{Minimal Controllability}
Our eigenvector structure results can quickly lead to a result on minimal controllability.  
\begin{proof}[Proof of Corollary \ref{cor:basis}]
By symmetry it suffices to bound the probability that $(N, \Be_1)$ is controllable.  
In view of \eqref{eq:4momnorm} it suffices to upper bound
\[ \P ( (N, \Be_1) \text{ is uncontrollable and } \mathcal{E}_K ) \]
for some sufficiently large constant $K > 1$.  

Let us decompose our matrix $N$ as
$$
N = \left( \begin{array}{cc}
 N_{11} & \BX^{\mathrm{T}} \\
 \BY & N'
\end{array} \right)
$$
where $N_{11}$ denotes the $(1,1)$-entry of $N$, $\BX, \BY \in \R^{n}$ and $N'$ is an $(n-1) \times (n-1)$ matrix.  Moreover, $N_{11}$, $\BX$, $\BY$, and $N'$ are jointly independent.  
Using Theorem \ref{thm:PBH}, we need to upper bound the probability that a unit eigenvector of $N$ is orthogonal to $\Be_1$.  The key observation is that if there exists a unit eigenvector 
$$
\Bv = \left(\begin{array}{c} v_{1} \\
\Bv' \end{array} \right)
$$
 that is orthogonal to $\Be_1$  then $\Bv'$ is a unit eigenvector of $N'$ and $\BX^{\mathrm{T}} \Bv' = 0$.  Thus, it suffices to show that
 \begin{equation} \label{eq:randomvec}
	\P ( \exists \text{ a unit eigenvector } \Bv \text{ of } N' \text{ such that } \BX^{\mathrm{T}} \Bv = 0 ) = o (1). 
\end{equation}
Since the entries of $\BX$ are iid random variables, independent of $N'$, and satisfy Assumption \ref{assump:main}, the bound in \eqref{eq:randomvec} follows from Lemma \ref{lemma:randomvec}; the proof is complete.   
\end{proof}

The proof of Corollary \ref{cor:basisgraph} follows from a nearly identical argument.
\begin{proof}[Proof of Corollary \ref{cor:basisgraph}]
	The proof is identical to that of Corollary \ref{cor:basis} except for the following points.
	\begin{itemize}
		\item We use Lemma \ref{lemma:randomvecA} instead of Lemma \ref{lemma:randomvec} to handle all eigenvectors with eigenvalues within a disk of radius $2 \sqrt{n}$.
		\item By Theorem \ref{thm:outlier}, it only remains to address the eigenvector, $\Bv$, associated to the largest eigenvalue.  By Theorems \ref{thm:perronfrobenius} and \ref{thm:stronglyconnected}, the eigenvector is entirely positive with high probability which, in particular, implies that $\Be_i^{\mathrm{T}}  \Bv \neq 0$.  
	\end{itemize}
\end{proof}

\subsection{Proof of Proposition \ref{prop:reduce}} \label{sec:reduce:proof}
This section is devoted to the proof of Proposition \ref{prop:reduce}.  The main idea is to utilize the symmetry of the atom distribution of $N$ to rewrite the dot product $\Bb^{\mathrm{T}} \Bv$ as a small ball probability (in particular, conditioned on the matrix $N$, we rewrite the dot product as a sum of independent random variables).  The same idea was exploited in \cite{OT2} to study the controllability of real symmetric random matrices.  

\begin{proof}[Proof of Proposition \ref{prop:reduce}]
Let $\boldsymbol{\xi} = (\eps_1, \ldots, \eps_n)$, where $\eps_1, \ldots, \eps_n$ are iid Rademacher random variables, independent of $N$, i.e., each $\eps_i$ takes the values $\pm 1$ with probability $1/2$.  
We say the eigenvalues of $N$ are simple if $N$ has $n$ distinct eigenvalues (each with multiplicity one).   Let $\mathcal{S}$ be the event that the eigenvalues of $N$ are simple and that $\mathcal{E}_K$ holds.  We have
\begin{align*} 
	\P( &\exists \text{ a unit eigenvector } \Bv \text{ of } N \text{ such that } | \Bb^{\mathrm{T}} \Bv| \leq t ) \\
	&\leq \P( \exists \text{ a unit eigenvector } \Bv \text{ of } N \text{ such that } | \Bb^{\mathrm{T}} \Bv| \leq t \text{ and } \mathcal{S} ) + \P(\mathcal{S}^c),
\end{align*}
and by Theorem \ref{thm:eiggaptail}
\[ \P(\mathcal{S}^c) \leq Ce^{-cn} + \Prob( \mathcal{E}_K^c). \]

We now turn our attention to bounding
\[ \P( \exists \text{ a unit eigenvector } \Bv \text{ of } N \text{ such that } | \Bb^{\mathrm{T}} \Bv| \leq t \text{ and } \mathcal{S} ). \]
On the event $\mathcal{S}$, $N$ has $n$ distinct unit eigenvectors $\Bv_1, \ldots, \Bv_n$, which are determined uniquely up to sign.  As the choice of sign does not change the value of $| \Bb^{\mathrm{T}} \Bv_i|$, we will simply assume that each eigenvector is multiplied by a random sign, independent of all other sources of randomness.  Then
\begin{align}
	\P( &\exists \text{ a unit eigenvector } \Bv \text{ of } N \text{ such that } | \Bb^{\mathrm{T}} \Bv| \leq t \text{ and } \mathcal{S} ) \nonumber
	\\&\leq \P( \exists i \in [n] \text{ such that } | \Bb^{\mathrm{T}} \Bv_i| \leq t \text{ and } \mathcal{S} ). \label{eq:unionbnde} 
\end{align}
We can now exploit the fact that the entries of $N = (N_{ij})_{i,j=1}^n$ are symmetric random variables.  Indeed, let $N' = (\eps_i \eps_j N_{ij})_{i,j=1}^n$.  A simple calculation shows that the eigenvalues of $N'$ are the same as the eigenvalues of $N$.  In addition, when $\Bv_1, \ldots, \Bv_n$ are the eigenvectors of $N$, then $\Bv_1 \odot \boldsymbol{\xi}, \ldots, \Bv_n \odot \boldsymbol{\xi}$ are the eigenvectors of $N'$.  Here, $\Bu \odot \Bv$ denotes the Hadamard product of the vectors $\Bu = (u_i)$ and $\Bv = (v_i)$ defined by $\Bu \odot \Bv = ( u_i v_i)$.  
Since the atom variable of $N$ is symmetric, it follows that $N'$ is an iid matrix and that $N'$ has the same distribution as $N$.  This implies that the eigenvectors $\Bv_1 \odot \boldsymbol{\xi}, \ldots, \Bv_n \odot \boldsymbol{\xi}$ have the same distribution as $\Bv_1, \ldots, \Bv_n$.  Hence, we conclude that
\[ \P(\exists i \in [n] \text{ such that } | \Bb^{\mathrm{T}} \Bv_i| \leq t \text{ and } \mathcal{S} ) = \P( \exists i \in [n] \text{ such that }| \Bb^{\mathrm{T}} (\Bv_i \odot \boldsymbol{\xi}) | \leq t \text{ and } \mathcal{S} ). \]
The probability that $| \Bb^{\mathrm{T}} (\Bv_i \odot \boldsymbol{\xi}) | \leq t$ can be bounded above by the small ball probability $\rho( \Bb \odot \Bv_i, t)$, and so we can now apply Theorem \ref{thm:smallballscaledeigvec}.  Indeed, Theorem \ref{thm:smallballscaledeigvec} guarantees the existence of an event $\EE$, which holds with probability at least $1 - O(e^{-c_{\ref{thm:smallballscaledeigvec}} n})$, so that conditioned on this event the eigenvectors $\Bv_1, \ldots, \Bv_n$ are such that
\begin{equation} \label{eq:sbpvec}
	\sup_{1 \leq i \leq n } \rho( \Bb \odot \Bv_i, t) \leq C_{\ref{thm:smallballscaledeigvec}} t. 
\end{equation} 
Returning to \eqref{eq:unionbnde}, it suffices to bound
\[ 
	\P( \exists i \in [n] \text{ such that }| \Bb^{\mathrm{T}} (\Bv_i \odot \boldsymbol{\xi}) | \leq t | \mathcal{S} \cap \EE).
\]
 Applying the union bound and \eqref{eq:sbpvec} yields the desired conclusion.   
\end{proof}

\bibliographystyle{abbrv}
\bibliography{bib}

\appendix
\section{Tail Bounds on Eigenvalue Gaps} \label{appendix:tailtwoeigs}
In this section we prove Theorem \ref{thm:eiggaps}.  We follow the approach in \cite{ge2017eigenvalue} and include some details for the reader's convenience.  Additionally, we fix several oversights in \cite{ge2017eigenvalue} along the way.  

\subsection{Reduction from Eigenvalues to Singular Values}
The following lemma is the first step in converting the eigenvalue problem into one of singular values, which are more stable and amenable to approximation arguments.
\begin{lemma}
	Let $N \in \C^{n \times n}$ with $\|N\| \leq K \sqrt{n}$, $z \in \C$ with $|z| \leq K $ and denote $M = N - z \sqrt{n}$.  
	Suppose there exist $i,j \in [n]$ such that the eigenvalues of $N$, $\lambda_i, \lambda_j \in B(z \sqrt{n}, s)$ for $0< s \leq K \sqrt{n}$.  Then there exist orthogonal vectors $\Bv, \Bw \in S_{\C}^{n-1}$ and a real number $\alpha$ with $|\alpha| \leq 2K \sqrt{n}$ such that
	\begin{equation}\label{eq:orthdecomp}
	M \Bv = (\lambda_i - z\sqrt{n}) \Bv \text{ and } M\Bw = (\lambda_j -z \sqrt{n}) w + \alpha \Bv.
	\end{equation}
	As a consequence, 
	\begin{equation} \label{eq:normbound}
	\|M \Bv\| \leq s \text{ and } \|M \Bw - \alpha \Bv\| \leq s
	\end{equation}  
\end{lemma}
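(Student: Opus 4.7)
The natural approach is to invoke the Schur decomposition, extract a two-dimensional orthonormal pair corresponding to $\lambda_i$ and $\lambda_j$, and then normalize a complex phase to make the off-diagonal coefficient real.

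More concretely, I would first apply Schur's theorem to $N$, choosing the order of the eigenvalues along the diagonal so that $\lambda_i$ and $\lambda_j$ (counted with algebraic multiplicity) occupy the first two diagonal positions. Letting $\Bv$ and $\Bw$ denote the first two vectors of the resulting orthonormal basis, the upper-triangular structure gives
\begin{equation*}
N\Bv = \lambda_i \Bv, \qquad N\Bw = \gamma \Bv + \lambda_j \Bw
\end{equation*}
for some $\gamma \in \C$. Subtracting $z\sqrt{n}$ times the identity from $N$ yields
\begin{equation*}
M\Bv = (\lambda_i - z\sqrt{n})\Bv, \qquad M\Bw = \gamma \Bv + (\lambda_j - z\sqrt{n})\Bw.
\end{equation*}

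The next step is to deal with the requirement that $\alpha$ be \emph{real}. Replacing $\Bv$ by $e^{-i\phi}\Bv$ for a suitable phase $\phi$ leaves the first eigenvector equation invariant (it is still a unit eigenvector of $M$ with the same eigenvalue) and still leaves $\Bv \perp \Bw$, but it rotates the coefficient $\gamma$ by $e^{i\phi}$; choosing $\phi$ so that $\gamma e^{i\phi} = |\gamma|$ makes the off-diagonal entry equal to the nonnegative real number $\alpha := |\gamma|$. This establishes the decomposition \eqref{eq:orthdecomp}.

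To bound $|\alpha|$, I would use orthonormality of $\{\Bv,\Bw\}$ to extract the coefficient via an inner product:
\begin{equation*}
|\alpha| = |\langle M\Bw, \Bv\rangle - (\lambda_j - z\sqrt{n})\langle \Bw, \Bv\rangle| = |\langle M\Bw, \Bv\rangle| \le \|M\|,
\end{equation*}
and $\|M\| \le \|N\| + |z|\sqrt{n} \le 2K\sqrt{n}$ from the hypotheses. Finally, \eqref{eq:normbound} is immediate: $\|M\Bv\| = |\lambda_i - z\sqrt{n}| \le s$ and $\|M\Bw - \alpha\Bv\| = |\lambda_j - z\sqrt{n}|\,\|\Bw\| \le s$ by the assumption $\lambda_i, \lambda_j \in B(z\sqrt{n}, s)$. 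I do not anticipate any real obstacle here; the only mildly delicate point is the phase adjustment that makes $\alpha$ real, and the trick of reading off $\alpha$ as an inner product (rather than attempting to control $\gamma$ directly through $\lambda_j$, which need not be small) to get the clean bound $|\alpha| \le \|M\|$.
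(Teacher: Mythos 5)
Your proof is correct, and it takes a genuinely different route from the paper. The paper argues inside the two-dimensional invariant subspace directly: it splits into three cases (distinct eigenvalues; equal eigenvalues with geometric multiplicity at least two; a single Jordan block), in the first two taking $\Bv = \Bv_i$ and $\Bw$ an orthogonal combination $a_i\Bv_i + a_j\Bv_j$ so that $\alpha = a_i(\lambda_i - \lambda_j)$, and in the third invoking the Jordan canonical form to produce a generalized eigenvector. Your use of a Schur decomposition with $\lambda_i,\lambda_j$ placed in the first two diagonal positions collapses all three cases into one, since the ordered Schur form exists regardless of multiplicities; this is the main thing your approach buys. Two further points in your favor: the statement asks for a \emph{real} $\alpha$, which the paper's $\alpha = a_i(\lambda_i-\lambda_j)$ does not deliver without the phase rotation you perform explicitly (harmless for the paper's later use, where $\alpha$ is allowed to be complex and only $|\alpha|$ matters, but your version matches the statement as written); and your extraction $|\alpha| = |\langle M\Bw,\Bv\rangle| \leq \|M\| \leq 2K\sqrt{n}$ gives the claimed constant cleanly, whereas the paper's route via $\alpha\Bv = \bigl(M - (\lambda_j - z\sqrt{n})\bigr)\Bw$ strictly yields $\|M\| + s \leq 3K\sqrt{n}$ as written. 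The only hypothesis you should state explicitly is the standard fact that the Schur form can be arranged with any prescribed ordering of the eigenvalues counted with algebraic multiplicity (so that the two eigenvalues indexed by $i \neq j$, possibly equal in value, occupy the first two positions); with that noted, the argument is complete.
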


\begin{proof}
	We begin with the assumption that $\lambda_i \neq \lambda_j$ and let $\Bv_i$ and $\Bv_j$ be two corresponding eigenvectors.  Then we can choose $\Bv = \Bv_i$.  We will choose $\Bw$ to be orthogonal to $\Bv$ and also in the span of $\Bv_i$ and $\Bv_j$.  Let us write $\Bw = a_i \Bv_i + a_j \Bv_j$.  Therefore,
	\begin{equation} 
	M w = a_i (\lambda_i - z\sqrt{n}) v_i + a_j (\lambda_j - z \sqrt{n}) v_j = (\lambda_j - z\sqrt{n}) w + a_i(\lambda_i - \lambda_j) v_i = (\lambda_j - z \sqrt{n}) w + \alpha v
	\end{equation}
	where $ \alpha = a_i (\lambda_i - \lambda_j)$.  Since $\alpha v = (M - (\lambda_j - z \sqrt{n})) w$, 
	$$
	|\alpha| = \|\alpha v\| \leq 2 K \sqrt{n}.
	$$
	Furthermore, 
	$$
	\|Mw - \alpha v\| = \|(\lambda_j - z \sqrt{n}) w\| \leq s.
	$$
	
	If $\lambda_i = \lambda_j$, but the geometric multiplicity is greater than or equal to two, then the above argument still applies since we can find distinct eigenvectors $v_i \neq v_j$.  Thus, the only remaining case is when $\lambda = \lambda_i = \lambda_j$ and the geometric multiplicity of $\lambda$ is one.  By the Jordan canonical form, there exist $v_i \neq v_j$ such that
	$$
	M v_i = (\lambda - z \sqrt{n}) v_i \text{ and } M v_j = (\lambda v_j + v_i) - z \sqrt{n} v_j = (\lambda - z \sqrt{n}) v_j + v_i.
	$$  
	Using the notation $w = a_i v_i + a_j v_j$ for a vector orthogonal to $v_i$, we have
	$$
	M w = (\lambda - z \sqrt{n}) w + a_j v_i
	$$
	so we can use $\alpha = a_j$ and complete the proof as above.
\end{proof}

The next lemma allows us to consider bounding the tails of the least singular value and and the second smallest singular value.  

\begin{lemma} \label{lem:eigstosings}
	Let $N \in \C^{n \times n}$, $z \in \C$ with $|z| \leq K $ and denote $M = N - z \sqrt{n}$.  
	Suppose there exist $i,j \in [n]$ such that the eigenvalues of $N$, $\lambda_i, \lambda_j \in B(z \sqrt{n}, s)$ for $s > 0$. 
	\begin{enumerate}
		\item If $\alpha \in \C$ with $|\alpha| \leq s$ that satisfies (\ref{eq:orthdecomp}), then
		$$
		s_n(M) \leq s \text{ and } s_{n-1}(M) \leq 2s.
		$$   
		\item If $\alpha \in \C$ with $|\alpha| > s$ that satisfies (\ref{eq:orthdecomp}), then
		$$
		s_n(M) \leq \frac{s^2}{|\alpha|} \text{ and } s_{n-1}(M) \leq |\alpha|.
		$$   
	\end{enumerate}   
	
\end{lemma}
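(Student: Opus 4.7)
The key observation is that (\ref{eq:orthdecomp}) exhibits an $M$-invariant $2$-dimensional subspace $V := \Span(\Bv, \Bw)$. In the orthonormal basis $\{\Bv, \Bw\}$, writing $\mu_i := \lambda_i - z\sqrt{n}$ and $\mu_j := \lambda_j - z\sqrt{n}$ (both of modulus at most $s$ by hypothesis), the matrix of $M|_V$ is the upper-triangular $2 \times 2$ block
$$ M|_V \;=\; \begin{pmatrix} \mu_i & \alpha \\ 0 & \mu_j \end{pmatrix}.$$
Both conclusions will follow from this structure together with the min--max characterization of singular values; no probabilistic input enters.

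I will bound $s_n(M)$ first. For Case $1$, testing with the unit vector $\Bv$ gives $s_n(M) \leq \|M \Bv\| = |\mu_i| \leq s$. For Case $2$, I will construct a unit vector inside $V$ on which $Mu$ has no $\Bv$-component: writing $u = c_1 \Bv + c_2 \Bw$ and solving $c_1 \mu_i + c_2 \alpha = 0$, then normalizing, yields
$$\|M u\| \;=\; \frac{|\mu_i \mu_j|}{\sqrt{|\mu_i|^2 + |\alpha|^2}} \;\leq\; \frac{s^2}{|\alpha|},$$
which is exactly the desired bound on $s_n(M)$. (The degenerate case $\mu_i = 0$ is handled by taking $u = \Bv$, giving $\|Mu\|=0$.)

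For $s_{n-1}(M)$ I will invoke the min--max identity
$$ s_{n-1}(M) \;=\; \min_{\dim W = 2} \ \max_{u \in W,\,\|u\|=1} \|M u\| $$
applied with the test subspace $W = V$. Since $V$ is $M$-invariant, $\max_{u\in V,\,\|u\|=1}\|Mu\|$ equals the largest singular value $\sigma_+$ of the $2\times 2$ block $M|_V$, so it suffices to estimate $\sigma_+$. The two identities
$$\sigma_+^2 + \sigma_-^2 \;=\; |\mu_i|^2 + |\mu_j|^2 + |\alpha|^2, \qquad \sigma_+ \sigma_- \;=\; |\mu_i \mu_j|,$$
immediately give $\sigma_+^2 \leq 2s^2 + |\alpha|^2$. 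In Case $1$ this is bounded by $3s^2$, producing $s_{n-1}(M) \leq \sqrt{3}\,s \leq 2s$; in Case $2$, where $|\alpha| > s$, the right-hand side is dominated by $|\alpha|^2$ (up to a universal constant), giving the claimed order-$|\alpha|$ bound.

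The only mildly delicate point is the initial recognition that the relations (\ref{eq:orthdecomp}) really do exhibit a genuine $M$-invariant $2$-plane on which $M$ is represented by an upper-triangular matrix with controlled diagonal (the shifted eigenvalues) and controlled off-diagonal entry ($\alpha$). Once this is in place, the ``main obstacle'' is purely organizational: keeping the two regimes cleanly separated so that the sharp trade-off $\sigma_+ \sigma_- = |\mu_i \mu_j|$ in the large-$|\alpha|$ case, which is precisely what drives the improvement from $s$ to $s^2/|\alpha|$ for $s_n(M)$, is transparent.
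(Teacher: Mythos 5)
Your route is essentially the paper's: work inside the two-dimensional subspace $\Span(\Bv,\Bw)$, where \eqref{eq:orthdecomp} says $M$ acts as the upper-triangular block with diagonal $\mu_i=\lambda_i-z\sqrt{n}$, $\mu_j=\lambda_j-z\sqrt{n}$ and off-diagonal entry $\alpha$; bound $s_n(M)$ by the smallest singular value of that block (your explicit annihilating test vector computes the same quantity as the paper's $\dist(M\Bv,\Span(M\Bw))\leq |\mu_i\mu_j|/|\alpha|$), and bound $s_{n-1}(M)$ by its largest singular value via Courant--Fischer. Your bounds for $s_n(M)$ are correct in both cases, and in case (1) your Frobenius-norm estimate $s_{n-1}(M)\leq\sqrt{|\mu_i|^2+|\mu_j|^2+|\alpha|^2}\leq\sqrt{3}\,s\leq 2s$ is actually more careful than the paper's justification, which appeals only to the orthogonality of $\Bv,\Bw$ together with $\|M\Bv\|\leq s$ and $\|M\Bw\|\leq 2s$ (by themselves these only give $\sqrt{5}\,s$; it is the triangular structure that rescues the constant $2$).

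The one place you do not deliver the statement as written is case (2): you obtain $s_{n-1}(M)\leq\sqrt{2s^2+|\alpha|^2}\leq\sqrt{3}\,|\alpha|$ and explicitly settle for ``up to a universal constant,'' while the lemma asserts $s_{n-1}(M)\leq|\alpha|$. Be aware that this is a defect of the statement rather than of your argument: the hypotheses constrain $M$ only on $\Span(\Bv,\Bw)$, and if, say, $|\mu_i|=|\mu_j|=0.9\,s$ and $|\alpha|=1.05\,s$, the top singular value of the $2\times 2$ block is about $1.57\,s>|\alpha|$; taking $M$ to be this block padded by a well-conditioned complement then gives $s_{n-1}(M)>|\alpha|$, so the constant-one bound cannot be proved from \eqref{eq:orthdecomp}. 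The paper's own treatment of this half is likewise faulty: it asserts $\|M|_{\Span(\Bv,\Bw)}\|\leq 2s$, which fails once $|\alpha|$ exceeds a constant multiple of $s$, and it then concludes $s_{n-1}(M)\leq 2s$ rather than the stated $|\alpha|$. Fortunately the only downstream use, Proposition \ref{prop:doubleeig}, runs a dyadic union bound over $|\alpha|$ and is insensitive to absolute constant factors, so your case (2) serves exactly the same purpose; just state it honestly as $s_{n-1}(M)\leq s+|\alpha|\leq 2|\alpha|$ (or $\sqrt{3}\,|\alpha|$) instead of leaving the constant implicit.
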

\begin{proof}
	We begin with the first case.  
	We have $\|M \Bv\| \leq s$ and 
	$$
	\|M \Bw\| \leq \|M \Bw - \alpha \Bv\| + \| \alpha \Bv\| \leq 2s
	$$
	As $\Bv$ and $\Bw$  are orthogonal, we have $s_n(M) \leq s$ and $s_{n-1}(M) \leq 2s$.  
	
	Now we assume that $|\alpha| > s$.  Since
	$$
	\|M|_{\text{span}(\Bv, \Bw)} \| \leq 2s,
	$$
	we have $s_{n-1}(N) \leq 2s$.  Also,
	$$
	s_n(M) \leq s_2(M|_{\text{span}(\Bv, \Bw)} ) \leq \dist(M \Bv, \text{span}(M \Bw)).
	$$
	To evaluate the right-hand side of this inequality, we recall that 
	$$
	M\Bv = (\lambda_i - z \sqrt{n}) \Bv \text{ and } M\Bw = (\lambda_j -z \sqrt{n}) w + \alpha \Bv
	$$
	which implies 
	$$
	\dist(M \Bv, \text{span}(M \Bw)) \leq \frac{|\lambda_i - z\sqrt{n}||\lambda_j - z \sqrt{n}|}{|\alpha|} \leq \frac{s^2}{|\alpha|}.
	$$
\end{proof}

\subsection{Two Smallest Singular Values}
In this section, we consider a fixed complex shift $\lambda$ with imaginary part $ \delta \geq e^{-c_{\star}n}$ and we prove the next proposition.
\begin{proposition}
	For any $t' \geq t \geq 0$, there exists constants $C, c > 0$ such that
	$$
	\P\left(s_n(N - \lambda \sqrt{n}) \leq \frac{t}{\sqrt{n}},\, s_{n-1}(N - \lambda \sqrt{n}) \leq \frac{t'}{\sqrt{n}} \text{ and } \EE_K\right) \leq C \frac{t^2 t'^2}{\delta^2} + e^{-c n}
	$$
\end{proposition}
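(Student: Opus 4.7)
The plan is to mirror the approach of \cite{ge2017eigenvalue}: combine Theorem~\ref{thm:nullvectors} with a two-row conditioning that reduces the target event to a joint small-ball probability for two independent rows of $M$. First, set $\widetilde{D} := c\,n^{3/2}/t'$ with $c$ small enough that $K\mu n/\widetilde{D} \geq t'/\sqrt{n}$; we may restrict to the admissible range $\widetilde{D} \in [c'\sqrt{n}/\delta,\, e^{c''n}]$ since the complementary regime renders the claimed bound either trivial or absorbable into the $e^{-cn}$ term. By Theorem~\ref{thm:nullvectors}, there is an event $\mathcal{G}$ with $\P(\mathcal{G}^c) \leq e^{-cn}$ on which every unit $z \in S_{\C}^{n-1}$ with $\|Mz\| \leq t'/\sqrt{n}$ satisfies $D(z) \geq \widetilde{D}$ and $d(z) \geq c\delta$. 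Crucially, on $\mathcal{G} \cap \EE_K \cap \{s_n(M) \leq t/\sqrt{n},\, s_{n-1}(M) \leq t'/\sqrt{n}\}$ this applies to \emph{every} unit vector in the $2$-dimensional subspace $W := \mathrm{span}(v_n, v_{n-1})$ spanned by the two smallest right singular vectors, since the restriction of $M$ to $W$ has operator norm at most $t'/\sqrt{n}$.

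I would then condition on the first $n-2$ rows of $M$. A companion lemma, provable by the same covering machinery of Section~\ref{sec:approximatenull} applied to the $(n-2)\times n$ submatrix $M_{[n-2]}$, gives $s_{n-2}(M_{[n-2]}) \gtrsim \sqrt{n}$ with probability $1 - e^{-cn}$; together with $\|Mv_\bullet\| \leq t'/\sqrt{n}$ this places $W$ within Hausdorff distance $O(t'/n)$ of the $2$-dimensional kernel $W_0$ of $M_{[n-2]}$. Fix an orthonormal basis $w_1, w_2$ of $W_0$; by continuity of $d(\cdot)$ and $D(\cdot)$ under small perturbations, every unit vector in $W_0$ still satisfies $d \geq c\delta$ and $D \geq \widetilde{D}$. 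The target event then reduces (up to absorbed error) to the event that the $2 \times 2$ random matrix $T := (m_{n-1+a}\cdot w_b)_{a,b \in \{0,1\}}$ satisfies $s_1(T) \leq Ct'/\sqrt{n}$ and $s_2(T) \leq Ct/\sqrt{n}$, so in particular $|\det T| \leq Ctt'/n$.

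Writing $\det T = m_n\cdot u$ with $u(m_{n-1}) := (m_{n-1}\cdot w_1)w_2 - (m_{n-1}\cdot w_2)w_1$, and using the independence of $m_{n-1}, m_n$ and the conditioned data, I would bound the probability in two stages. Stage (i): by Theorem~\ref{thm:smallballcomplex} applied to the $4\times n$ matrix $U$ of real and imaginary parts of $w_1, w_2$, the event $\{|m_{n-1}\cdot w_b| \lesssim t'/\sqrt{n}, b=1,2\}$ has probability $\lesssim (t')^4/(n^2 \delta^2)$; the determinant lower bound $\det(UU^{\mathrm{T}})^{1/2} \gtrsim \delta^2$ follows from orthonormality of $w_1, w_2$ and $d(w_j)\geq c\delta$, and the LCD lower bound $D(U) \geq \widetilde{D}$ follows from the key observation that any $\theta \in \R^4$ encodes the real part of a complex linear combination $\bar{\zeta}_1 w_1 + \bar{\zeta}_2 w_2$, which is itself a unit vector in $W_0$ and hence already known to be unstructured. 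Stage (ii): conditional on stage (i), one has $\|u\| \lesssim t'/\sqrt{n}$, so the constraint $|\det T| \leq Ctt'/n$ translates to $|m_n \cdot u/\|u\|| \lesssim t/\sqrt{n}$; a further application of Theorem~\ref{thm:smallballcomplex} to $m_n$ yields a factor $\lesssim t^2/(n\delta)$. Multiplying gives a bound $\lesssim t^2(t')^4/(n^3 \delta^3)$, which in the non-trivial regime where $tt' \lesssim \delta$ is dominated by the target $Ct^2(t')^2/\delta^2$.

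The main obstacle is stage (ii): verifying that the random coefficient vector $u/\|u\|$ is itself arithmetically unstructured (incompressible with $d \gtrsim \delta$ and $D \gtrsim \widetilde{D}$) with probability $1 - e^{-cn}$. This requires a covering argument mirroring the one in Section~\ref{sec:approximatenull}, now adapted to the explicit linear form $u$ in $m_{n-1}$, and exploiting the already-established unstructuredness of vectors in $W_0$. Once this is done, combining the small-ball factors with the $e^{-cn}$ error from the good event completes the proof.
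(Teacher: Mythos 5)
Your route is genuinely different from the paper's and, as written, it does not close. The paper reduces the joint event on $s_n$ and $s_{n-1}$ to a distance problem via the invertibility-via-distance lemma (Lemma A.1.4 of \cite{ge2017eigenvalue}): after discarding compressible vectors with Lemma \ref{lem:comp}, the probability is controlled by $\P(\dist(X_n,H_n)<t,\ \dist(X_{n-1},H_{n,n-1})<t')$, where $X_k$ is the $k$-th row and $H_n$, $H_{n,n-1}$ are spans of the remaining rows. Each distance is bounded below by the inner product of an independent row with a unit normal to the corresponding row span; Theorem \ref{thm:nullvectors} shows these normals are unstructured, Theorem \ref{thm:smallballcomplex} with $m=2$ gives $\frac{C}{\delta}(t+e^{-cn})^2$ and $\frac{C}{\delta}(t'+e^{-cn})^2$, and independence of $X_n$, $X_{n-1}$, and $H_{n,n-1}$ multiplies these to give exactly $Ct^2t'^2/\delta^2$. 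No two-dimensional kernel, no $2\times 2$ determinant, and no rectangular least-singular-value input is needed.

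The concrete gaps in your argument are the following. First, the companion lemma $s_{n-2}(M_{[n-2]})\gtrsim\sqrt{n}$ is false: for an $(n-2)\times n$ matrix the smallest singular value is of order $n^{-1/2}$, not $n^{1/2}$ (compare $\sqrt{n}-\sqrt{n-2}\approx n^{-1/2}$ in the Gaussian case), and a lower bound of the form $cn^{-1/2}$ fails with probability polynomial in $c$ (a codimension-three hard edge), not $e^{-cn}$. Consequently the Hausdorff distance between $W$ and $W_0$ is $O(t')$ rather than $O(t'/n)$, which is useless once $t'\gtrsim 1$ (in the downstream application, Proposition \ref{prop:doubleeig}, $t'$ can be as large as $Kn$), and the transfer of $d\geq c\delta$ and $D\geq\widetilde{D}$ from $W$ to $W_0$ breaks. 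Second, stage (ii) has the inequality backwards: $|m_n\cdot u/\|u\||=|\det T|/\|u\|$, so an \emph{upper} bound $\|u\|\lesssim t'/\sqrt{n}$ makes this quantity large, not small; you need a matching \emph{lower} bound $\|u\|\gtrsim t'/\sqrt{n}$, which is an anti-concentration statement about $m_{n-1}$ that your stage (i) event does not supply, and the complementary event that $\|u\|$ is small must be handled separately. Third, even granting all of this, your final bound $t^2(t')^4/(n^3\delta^3)$ is dominated by $Ct^2(t')^2/\delta^2$ only when $(t')^2\lesssim n^3\delta$; since $t\leq t'$ and $tt'\lesssim\delta$ impose no upper bound on $t'$ alone, the proposition is not established in the stated generality.
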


\begin{definition}
	We say a subspace $W$ is \emph{incompressible} if all $v \in S_{n-1} \cap W$ are in $\Incomp_{\C}(a,b)$.
\end{definition}
	
	Let $v$ and $v'$ be the right singular vectors corresponding to $s_{n}(N - \lambda \sqrt{n})$ and $s_{n-1}(N - \lambda \sqrt{n})$.  Let $W$ denote the subspace spanned by $v$ and $v'$, then on the event $s_{n-1}(N - \lambda \sqrt{n}) \leq \frac{t'}{\sqrt{n}}$, 
	$$
	\|(A - \lambda \sqrt{n})|_W \| \leq \frac{t'}{\sqrt{n}}.
	$$
	Therefore, 
	\begin{align*}
	\P\Big(s_n(A - &\lambda \sqrt{n}) \leq \frac{t}{\sqrt{n}},\, s_{n-1}(N - \lambda \sqrt{n}) \leq \frac{t'}{\sqrt{n}} \text{ and } \EE_K\Big) \\ & \leq \P\left(\inf_{z \in \Incomp} \|(N - \lambda \sqrt{n}) z\|\leq \frac{t}{\sqrt{n}}, \, \inf_{W \in \Incomp} \|(N - \lambda \sqrt{n})|_W\| \leq \frac{t'}{\sqrt{n}} \text{ and } \EE_K \right) \\
	&\qquad + \P\Big(\inf_{z \in \Comp} \|(N - \lambda \sqrt{n}) z\| \leq \frac{t'}{\sqrt{n}} \text{ and } \EE_K\Big)
	\end{align*}
	The last term is exponentially small by Lemma \ref{lem:comp}.  The next lemma converts the remaining probability into a distance problem.

\begin{lemma}[Lemma A.1.4, \cite{ge2017eigenvalue}]
	Let $M = N - \lambda \sqrt{n}$.  
	\begin{align*}
	\P(\inf_{z \in \Incomp(a,b)} \|M z\|< \frac{b t}{\sqrt{n}}, \, &\inf_{W \in \Incomp} \|M|_{W}\| < \frac{b t'}{\sqrt{n}} \text{ and } \EE) \\
	&\leq \frac{1}{a^2 n^2} \sum_{k=1}^n \sum_{j \neq k} \P(\dist(X_k, H_k) < t, \dist(X_j, H_{jk}) < t' \text{ and } \EE)
	\end{align*}
	where $X_k$ denotes the $k$-th row of $M$, $H_k$ is the span of all the rows except the $k$-th and $H_{jk}$ is the span of all the rows except the $j$-th and $k$-th.  
\end{lemma}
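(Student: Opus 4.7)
My plan follows the standard Rudelson--Vershynin template: translate the ``small operator'' information on a $1$-dimensional and a $2$-dimensional incompressible object into pointwise statements about distances of columns of $M$ to spans of the remaining columns, then double-average over the index set. Since the infima on the left are strict, on that event we can pick a unit vector $z\in\Incomp_{\C}(a,b)$ with $\|Mz\|<bt/\sqrt{n}$ and a $2$-dimensional incompressible subspace $W$ with $\|M|_W\|<bt'/\sqrt{n}$. Writing $X_1,\ldots,X_n$ for the columns of $M$, every unit $u\in H_k^{\perp}$ annihilates $X_\ell$ for $\ell\neq k$, so for any $w\in\C^n$ one has $\langle Mw,u\rangle=w_k\langle X_k,u\rangle$. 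Applied to $w=z$, this gives $\dist(X_k,H_k)\leq \|Mz\|/|z_k|<t$ whenever $|z_k|\geq b/\sqrt n$; incompressibility of $z$ produces at least $an$ such indices $k$, which we call $I$.

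For the second distance we make a slightly cleverer choice. For each fixed $k\in I$, the intersection $W\cap e_k^{\perp}$ has dimension at least $1$, so pick a unit vector $v'=v'^{(k)}\in W$ with $v'(k)=0$. Since every unit vector of $W$ is incompressible, $v'$ is incompressible, so $J_k:=\{j:|v'(j)|\geq b/\sqrt n\}$ has size at least $an$, and automatically $k\notin J_k$. For any $j\in J_k$ and any unit $u\in H_{jk}^{\perp}$, $u$ annihilates $X_\ell$ for $\ell\notin\{j,k\}$, so
\[\langle Mv',u\rangle \;=\; v'(j)\langle X_j,u\rangle + v'(k)\langle X_k,u\rangle \;=\; v'(j)\langle X_j,u\rangle,\]
whence $\dist(X_j,H_{jk})\leq \|Mv'\|/|v'(j)|\leq \|M|_W\|/|v'(j)|<t'$.

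Averaging is then immediate: on the LHS event there are at least $|I|\cdot\min_{k\in I}|J_k|\geq a^2n^2$ ordered pairs $(k,j)$ with $j\neq k$ satisfying both distance bounds simultaneously, so
\[a^2n^2\cdot\mathbf{1}_{\text{LHS}}\;\leq\; \sum_{k=1}^{n}\sum_{j\neq k}\mathbf{1}_{\{\dist(X_k,H_k)<t\}\cap\{\dist(X_j,H_{jk})<t'\}\cap\EE},\]
and taking expectations and dividing by $a^2n^2$ is the desired bound. The one genuinely non-routine ingredient is the choice $v'^{(k)}\perp e_k$: it makes the cross term $v'(k)\langle X_k,u\rangle$ vanish and upgrades a codimension-$1$ distance bound (to $H_j$, which is useless downstream because $X_j$ is not sufficiently independent of the remaining columns once $X_k$ is included in the reference span) to the genuinely useful codimension-$2$ distance to $H_{jk}$. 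Everything else is unpacking incompressibility.
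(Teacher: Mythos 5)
Your argument is, in substance, the standard proof of this lemma (the paper does not reprove it; it only cites Ge's thesis, where the proof runs along exactly these lines), and both halves are sound: the pointwise bound $\dist(X_k,H_k)\leq \|Mz\|/|z_k|$ on the at least $an$ coordinates with $|z_k|\geq b/\sqrt{n}$, and the key choice of a unit $v'\in W\cap e_k^{\perp}$ (which exists because $\dim W\geq 2$, the intended reading of the infimum over incompressible subspaces here) so that the cross term vanishes and $\dist(X_j,H_{jk})\leq \|M|_W\|/|v'(j)|<t'$ on the at least $an$ large coordinates of $v'$, none of which equals $k$. The double count over the resulting $\geq a^2n^2$ ordered pairs and taking expectations is exactly how the $\frac{1}{a^2n^2}$ prefactor arises.

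The one genuine mismatch is that you silently switch to \emph{columns}, whereas the lemma (and its later use in the appendix, where $X_n$, $X_{n-1}$, $H_{n,n-1}$ must be built from rows so that the structure theorem applies to vectors orthogonal to row spans) defines $X_k$ as the $k$-th \emph{row} of $M$ and $H_k$, $H_{jk}$ as spans of rows. With rows your key inequality is false pointwise: for $M=\begin{pmatrix}\eps & 0\\ 1 & 1\end{pmatrix}$ and $z=(1,-1)/\sqrt{2}$ one has $\|Mz\|=\eps/\sqrt{2}$ while $\dist(X_2,H_2)=1$, so a right near-null vector controls distances of columns to the spans of the remaining columns, not of rows. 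The stated row version is nevertheless correct, but only as a probabilistic statement: apply your column argument to $M^{\mathrm{T}}=N^{\mathrm{T}}-\lambda\sqrt{n}\,I$, whose columns are the rows of $M$, and then use that $N^{\mathrm{T}}$ has the same distribution as $N$ and that the event $\EE_K$ is transpose-invariant, so the left-hand event for $M^{\mathrm{T}}$ (intersected with $\EE_K$) has the same probability as for $M$. Adding that one sentence closes the gap; everything else in your write-up is fine.
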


The next two propositions yield tail bounds on the distance problems depending on whether the complex shift is real or not. 
\begin{proposition} \label{prop:twosingvalues}
	There exist constants $C, c > 0$ such that
	for $\lambda \in \C$ with $|\lambda| \leq K \sqrt{n}$ and $\delta = \im(\lambda) \geq e^{c_* n}$.  
	$$
	\P(s_n(N - \lambda \sqrt{n}) \leq \frac{t}{\sqrt{n}}, \, s_{n-1}(N - \lambda \sqrt{n}) \leq \frac{t'}{\sqrt{n}} \text{ and } \EE_K) \leq C \frac{t^2 t'^2}{\delta^2} + e^{-c n}.
	$$
\end{proposition}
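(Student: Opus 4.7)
The plan is to reduce the joint two-singular-value event to a sum of pairwise distance-from-row probabilities and then to control each term using the arithmetic structure of the unit normals supplied by Theorem \ref{thm:nullvectors}.

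First, combining Lemma \ref{lem:comp} (to rule out any compressible vector or 2-dimensional subspace on which $M$ is nearly annihilating, at a cost of $e^{-cn}$) with the reformulation given by Lemma A.1.4 above, I obtain
\[
\P\!\big(s_n(M) \leq \tfrac{ct}{\sqrt n},\, s_{n-1}(M)\leq \tfrac{ct'}{\sqrt n},\,\EE_K\big)
\leq \frac{C}{n^2}\sum_{j\neq k}\P_{jk} + e^{-cn},
\]
where $\P_{jk}:=\P(\dist(X_k,H_k)<t,\ \dist(X_j,H_{jk})<t',\ \EE_K)$ and $X_i, H_k, H_{jk}$ are as in Lemma A.1.4. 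It therefore suffices to establish $\P_{jk}\leq CL^4 t^2 t'^2/\delta^2 + e^{-cn}$ uniformly in $(j,k)$.

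Fix $j\neq k$ and let $u_k\in S^{n-1}_{\C}$ be the unit normal of $H_k$. Because $u_k$ lies in $H_{jk}^\perp$ and is orthogonal to $X_j$, every row of $M$ except $X_k$ is orthogonal to $u_k$, and so
\[
\|M u_k\|^2 = |\langle X_k,u_k\rangle|^2 = \dist(X_k,H_k)^2.
\]
Writing $u_j$ for the unit normal of $H_j$ one analogously has $\|Mu_j\|=\dist(X_j,H_j)\leq \dist(X_j,H_{jk})$ since $H_{jk}\subset H_j$. Hence the event $E_2:=\{\dist(X_k,H_k)<t\}$ is exactly $\{\|Mu_k\|<t\}$, and $E_1:=\{\dist(X_j,H_{jk})<t'\}$ forces $\|Mu_j\|<t'$.

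Next, choose $\widetilde D := K\mu n/\max(t,t')$; this falls in the admissible range of Theorem \ref{thm:nullvectors} as long as $\max(t,t')\lesssim \delta\sqrt n$ (outside that regime the target estimate is trivial). That theorem produces an event $G$ with $\P(G^c)\leq e^{-cn}$ on which every $z\in S^{n-1}_{\C}$ with $\|Mz\|\leq K\mu n/\widetilde D$ satisfies $D(z)\geq \widetilde D$ and $d(z)\geq c\delta$. On $G\cap E_2$ the vector $u_k$ is therefore structured, and on $G\cap E_1$ the vector $u_j$ is structured. For any such structured $v$, Theorem \ref{thm:smallballcomplex} with $m=2$ gives
\[
\rho(v,s)\leq \frac{CL^2}{d(v)}\Big(s+\frac{1}{D(v)}\Big)^{2}\leq \frac{CL^2 s^2}{\delta}\qquad (s\gtrsim 1/\widetilde D).
\]
I then condition on $(X_i)_{i\neq k}$, which freezes $u_k$; the inclusion $G\cap E_2\subseteq\{u_k\ \text{structured}\}$ lets me integrate $X_k$ last to get
\[
\P(E_1\cap E_2\cap G\cap \EE_K) \leq \frac{CL^2 t^2}{\delta}\,\P(E_1\cap \EE_K).
\]
The symmetric argument, conditioning on $(X_i)_{i\neq j}$ and using $u_j$, yields $\P(E_1\cap \EE_K)\leq CL^2 t'^2/\delta + e^{-cn}$. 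Multiplying the two bounds, summing over the $n^2$ pairs, dividing by $n^2$, and absorbing $\P(G^c)$ into the exponential error produces the claimed $Ct^2 t'^2/\delta^2 + e^{-cn}$.

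The main obstacle is the apparent circularity: the structure of $u_k$ is itself a consequence of $E_2$, yet I want to exploit it to bound $\P(E_2)$. The resolution is to use Theorem \ref{thm:nullvectors} to upgrade the structural guarantee to a single deterministic, high-probability event $G$ that applies uniformly to all approximate null vectors, and then invoke the inclusion $G\cap E_2\subseteq\{u_k\ \text{structured}\}$ \emph{inside} the decoupling step rather than conditioning on $E_2$. The secondary technical point is the calibration of $\widetilde D=K\mu n/\max(t,t')$ so that it simultaneously obeys the lower bound $\widetilde D\geq c'\sqrt n/\delta$ from Theorem \ref{thm:nullvectors} and the threshold inequality that makes both $u_j$ and $u_k$ qualify as approximate null vectors on the events $E_1$ and $E_2$.
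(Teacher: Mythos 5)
Your overall architecture coincides with the paper's: reduce via Lemma A.1.4 to pairwise distance events, decouple by conditioning on the rows that determine each unit normal, and control the resulting inner products through Theorem \ref{thm:nullvectors} combined with Theorem \ref{thm:smallballcomplex}. Your resolution of the circularity --- replacing ``condition on $E_2$'' by the inclusion $G\cap E_2\subseteq\{u_k \text{ structured}\}$, the latter event being measurable with respect to $(X_i)_{i\neq k}$ --- is sound. The genuine gap lies in how you invoke the structure theorem. Because you apply Theorem \ref{thm:nullvectors} to the full matrix $M$ with the single threshold $\widetilde{D}=K\mu n/\max(t,t')$, the conclusion available on $G\cap E_2$ is only $D(u_k)\geq K\mu n/t'$ (taking $t'\geq t$), so Theorem \ref{thm:smallballcomplex} yields
\[
\rho(u_k,t)\ \leq\ \frac{CL^2}{\delta}\Bigl(t+\frac{t'}{K\mu n}\Bigr)^2,
\]
which is \emph{not} $O(t^2/\delta)$ once $t'\gg n t$; your own parenthetical ``$s\gtrsim 1/\widetilde{D}$'' excludes exactly this regime. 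The product of your two factors then degrades to $C t'^4/(n^2\delta^2)$ rather than $C t^2 t'^2/\delta^2$. This is not an idle edge case: in the application (Proposition \ref{prop:doubleeig}) the present proposition is invoked with $t\asymp s^2\sqrt{n}/|\alpha|$ and $t'\asymp|\alpha|\sqrt{n}$, so $t'/t\asymp|\alpha|^2/s^2$ is typically enormous. Likewise, dismissing the range $\max(t,t')\gtrsim\delta\sqrt{n}$ as ``trivial'' is incorrect: with $t$ exponentially small and $t'$ of order $\delta\sqrt{n}$ the target bound $C t^2t'^2/\delta^2$ is far below $1$, so the claim still has content there and your argument does not cover it.

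The fix is the route the paper takes: apply the structure theorem to $u_k$ not as an approximate null vector of $M$ but as an \emph{exact} null vector of the rectangular matrix obtained by deleting row $k$ (and similarly for the normals of $H_{jk}$). Since $\|M'u_k\|=0\leq K\mu n/\widetilde{D}$ for \emph{every} admissible $\widetilde{D}$, one may take $\widetilde{D}=D_0=e^{c_\star n}$ and conclude $D(u_k)\geq e^{c_\star n}$ and $d(u_k)\geq c\delta$ on an event depending only on the other rows. This gives $\rho(u_k,t)\leq C\delta^{-1}(t+e^{-cn})^2$ uniformly in $t$, completely decouples the two scales $t$ and $t'$, and covers all $t'\geq t\geq 0$ (tiny $t$ or $t'$ being absorbed into the $e^{-cn}$ error using $\delta\geq e^{-c_\star n}$). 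The price is having to check that the proof of Theorem \ref{thm:nullvectors} goes through for the row-deleted matrices, which the paper asserts; in exchange one removes both the $\max(t,t')$ coupling and all of your range restrictions.
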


\begin{proof}
	By Proposition \ref{prop:twosingvalues} and symmetry, we can focus on 
	$$
	\P(\dist(X_n, H_n)< t, \dist(X_{n-1}, H_{n,n-1}) < t' \text{ and } \EE_K).
	$$
	The event that $\dist(X_n, H_n)$ implies that there exists a unit vector $X$ orthogonal to $H_n$ such that $|\langle X, X_n \rangle| < t$.  Similarly, the event that $\dist(X_{n-1}, H_{n, n-1}) < t'$ implies that for all vectors $X'$ orthogonal to $H_{n,n-1}$ with $|\langle X', X_{n-1} \rangle| < t'$.  By Theorem \ref{thm:nullvectors} and Theorem \ref{thm:smallballcomplex}, with probability at least $1 - e^{-c n}$, for any vector, $\Bv$, orthogonal to $H_n$, 
	$$
	\rho(\Bv, t) \leq \frac{C}{\delta} \left(t + e^{-cn} \right)^2.
	$$
	We denote this event by $\EE$.
	One can easily check that the proof of Theorem \ref{thm:nullvectors} applies equally well to vectors orthogonal to $H_{n,n-1}$ so we also have that any vector $\Bx$ orthogonal to $H_{n, n-1}$, with probability at least $1 - e^{-c n}$, 
	$$
	\rho(\Bx, t) \leq \frac{C}{\delta} (t + e^{-c n})^2.
	$$
	We call this event $\EE'$. 
	Therefore, 
	\begin{align*}
	\P(&\dist(X_n, H_n) < t, \dist(X_{n-1}, H_{n,n-1}) < t' \text{ and } \EE_K)  \\
	&\leq \P(|\langle X, X_n \rangle| < t, \EE_K \text{ and } \EE) \P(|\langle X', X_{n-1} \rangle| < t, \EE_K \text{ and } \EE') + \P(\EE^c) + \P(\EE'^c) \\
	&\leq  \left(\frac{C}{\delta} (t + e^{-c n})^2 \right) \left(\frac{C}{\delta} (t' + e^{-c n})^2 \right) + 2 e^{-cn}
	\end{align*}
	where the last line follows from the independence of $X_n$, $X_{n-1}$ and $H_{n, n-1}$.  The result follows after reducing  $c_{\star}$ if necessary.  
\end{proof}

Finally, we recall a tail bound for \emph{real} shifts.  
\begin{proposition}[Theorem 3.2.5 \cite{ge2017eigenvalue}] \label{prop:twosingvaulesreal}
	There exist constants $C, c > 0$ such that for any $t' \geq t \geq 0$, and \emph{real} $\lambda$ with $|\lambda| \leq K \sqrt{n}$ then
	$$
	\P(s_n(N - \lambda \sqrt{n}) \leq \frac{t}{\sqrt{n}}, \, s_{n-1}(N - \lambda \sqrt{n}) \leq \frac{t'}{\sqrt{n}} \text{ and } \EE_K) \leq C (t t') + e^{-c n}.
	$$
\end{proposition}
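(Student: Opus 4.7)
The plan is to mirror the proof of Proposition \ref{prop:twosingvalues} sketched just above, substituting Theorem \ref{thm:realshift} for Theorem \ref{thm:nullvectors} so that the per-row small-ball input becomes linear rather than quadratic in $t$. This substitution is precisely what turns the complex-shift bound $t^2 t'^2/\delta^2$ into the weaker but $\delta$-independent $tt'$ that is appropriate when $\lambda \in \R$: there is no real-imaginary correlation $d(\Bv)\gtrsim\delta$ to exploit in a two-row small-ball estimate, only the classical single-row Erd\H{o}s--Littlewood--Offord type bound for incompressible vectors.

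First I apply the reduction lemma preceding Proposition \ref{prop:twosingvalues} (the analogue of \cite[Lemma A.1.4]{ge2017eigenvalue}) to convert the joint singular-value event into
$$
\frac{1}{a^2 n^2}\sum_{k\neq j}\P\bigl(\dist(X_k,H_k)<t,\,\dist(X_j,H_{jk})<t',\,\EE_K\bigr),
$$
where $X_k$ is the $k$-th row of $M = N - \lambda\sqrt n\,I$; the compressible remainder is already of probability at most $e^{-cn}$ by Lemma \ref{lem:comp}. By row exchangeability it suffices to bound one summand with $(k,j)=(n,n-1)$. I pick a unit vector $X' \in H_{n,n-1}^\perp$ chosen measurably in $X_1,\dots,X_{n-2}$, and let $X \in H_n^\perp$ be a unit normal (this complement is almost surely one-dimensional). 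Then $X'$ is independent of both $X_{n-1}$ and $X_n$, and $X$ (being determined by $X_1,\dots,X_{n-1}$) is independent of $X_n$.

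Next I verify that the covering argument behind Theorem \ref{thm:realshift} (as in \cite{rudelson2008LO, luh2018eigenvector}) extends, with the same probability $1-e^{-cn}$ and up to harmless constants, to vectors orthogonal to the $(n-1)$- or $(n-2)$-row submatrices of $M$. This is the real-shift analogue of the parenthetical assertion ``the proof of Theorem \ref{thm:nullvectors} applies equally well to vectors orthogonal to $H_{n,n-1}$'' used in the proof of Proposition \ref{prop:twosingvalues}: the union bound over level sets of small-LCD vectors is driven by tensorized single-row small-ball probabilities, which are unaffected by the one or two missing rows. On the two resulting good events, $X$ and $X'$ both have LCD in $[c\sqrt n, e^{cn}]$, so $\rho(\,\cdot\,, s)\le C(s+e^{-cn})$ for all $s\ge 0$. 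Combining this with the independence noted above yields
$$
\P\bigl(|\langle X,X_n\rangle|<t,\,|\langle X',X_{n-1}\rangle|<t'\bigr) \le C(t+e^{-cn})(t'+e^{-cn}) \le C'\,t t' + e^{-c'n}
$$
(trivially for large $t,t'$, and by direct expansion for $t,t'\le 1$). Summing the $O(n^2)$ pairs cancels the $1/(a^2 n^2)$ prefactor and delivers the target bound $C(tt')+e^{-cn}$.

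The main obstacle is this structural input for the row-deleted submatrices of $M$. Once one accepts that the covering proof of Theorem \ref{thm:realshift} adapts to normals of $H_n$ and $H_{n,n-1}$, the remaining steps are standard independence bookkeeping, and the difference from the complex-shift proposition is purely the replacement of the two-row quadratic small-ball bound $\frac{C}{\delta}(s+1/D)^2$ by the one-row linear bound $C(s+1/D)$. The absence of the $1/\delta$ factor in the final answer and the product form $tt'$ then follow automatically.
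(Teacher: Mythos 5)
Your proposal is correct and follows the intended route: the paper gives no proof of this proposition (it is imported as Theorem 3.2.5 of \cite{ge2017eigenvalue}), and your sketch reproduces exactly the template the paper itself uses for the complex-shift analogue, Proposition \ref{prop:twosingvalues} --- the reduction to the distance problem via the analogue of \cite[Lemma A.1.4]{ge2017eigenvalue}, the extension of the structural input (here Theorem \ref{thm:realshift} in place of Theorem \ref{thm:nullvectors}) to unit normals of $H_n$ and $H_{n,n-1}$, and the conditioning/independence bookkeeping that turns the one-row linear small-ball bound into the product $C(t+e^{-cn})(t'+e^{-cn})$. The only point deserving a passing remark is the corner regime $t' \gtrsim n$, where Lemma \ref{lem:comp} no longer absorbs the compressible vectors; there the claimed bound $Ctt'$ already follows from the standard least-singular-value estimate for $s_n(N-\lambda\sqrt{n})$ alone, so nothing is lost.
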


\subsection{Tail Bounds on Gaps}
\begin{proposition} \label{prop:doubleeig}
	
$$
\P(\exists \lambda_i, \lambda_j \in B(z, \delta) \text{ and } \EE_K) \leq n^2 \delta^2 + e^{-cn}
$$	

$$
\P(\exists \lambda_i, \lambda_j \in B(z, s) \text{ and } \EE_K) \leq C \log(n/s)\frac{s^4 n^4}{\delta^2} + e^{-c n}
$$
\end{proposition}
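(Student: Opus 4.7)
The plan is to combine Lemma~\ref{lem:eigstosings} with Proposition~\ref{prop:twosingvalues}. Lemma~\ref{lem:eigstosings} reduces the event that two eigenvalues $\lambda_i,\lambda_j$ lie in a common ball of radius $r$ around $z$ to the simultaneous smallness of the two smallest singular values of $M = N - zI$, and splits into two cases according to an auxiliary parameter $\alpha\in\C$ with $|\alpha|\leq 2K\sqrt{n}$: Case~1 ($|\alpha|\leq r$) yields $s_n(M),s_{n-1}(M) = O(r)$, while Case~2 ($|\alpha|>r$) gives the see-saw $s_n(M)\leq r^2/|\alpha|$ and $s_{n-1}(M)\leq|\alpha|$. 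Proposition~\ref{prop:twosingvalues} then delivers the needed tail bound once the thresholds $t,t'$ are chosen accordingly.

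For the first bound (take $r=\delta$), Case~1 feeds into Proposition~\ref{prop:twosingvalues} with $t\asymp t'\asymp \delta\sqrt{n}$ and returns the claimed $n^2\delta^2$ term. Case~2 is handled by a dyadic decomposition of $|\alpha|$ across $[\delta,2K\sqrt{n}]$. The key algebraic observation is that the product $t\cdot t'$ entering Proposition~\ref{prop:twosingvalues} is scale-invariant across the dyadic shells in Case~2 (every shell gives $t\cdot t'\asymp \delta^2 n$), so each shell contributes a term of the same order as Case~1, and the resulting logarithmic loss is absorbed into the implicit constants.

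For the second bound (take $r=s$), one applies the same strategy. Case~1 produces a single term of the announced order, and Case~2 produces $O(\log(n/s))$ dyadic shells across $|\alpha|\in[s,2K\sqrt{n}]$. By the same scale-invariance $t\cdot t'\asymp s^2 n$, every shell contributes the same order, and summing yields the explicit $\log(n/s)$ prefactor appearing in the statement. The main technical obstacle is the careful bookkeeping of $\sqrt{n}$ factors needed to convert the ball radii and shift imaginary parts into the normalized thresholds of Proposition~\ref{prop:twosingvalues}, the verification that the hypothesis $t'\geq t$ of that proposition holds in each dyadic shell (immediate from $|\alpha|\geq s$), and the aggregation of the polynomially many $e^{-cn}$ terms into a single $e^{-c'n}$ tail. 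The structural reason the argument closes cleanly is the compatibility between the see-saw in Lemma~\ref{lem:eigstosings} and the bilinear form $t^2(t')^2/\delta^2$ of Proposition~\ref{prop:twosingvalues}: the see-saw is precisely the trade-off that leaves $t\cdot t'$ invariant under rescaling $|\alpha|$.
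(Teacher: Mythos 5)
Your treatment of the second estimate is essentially the paper's own proof: reduce via Lemma \ref{lem:eigstosings}, decompose $|\alpha|\in[s,2K\sqrt{n}]$ into dyadic shells, observe that $t\,t'\asymp s^2 n$ is the same in every shell, and apply Proposition \ref{prop:twosingvalues} shell by shell, which produces the $\log(n/s)\,s^4/\delta^2$ bound (your bookkeeping even gives $n^2$ in place of the paper's $n^4$, since the paper additionally takes a union bound over the pair $i,j\in[n]$; that discrepancy is harmless because your bound is stronger). The genuine gap is in the first estimate. That inequality is precisely the one used in the proof of Theorem \ref{thm:eiggaptail} for balls $B(z,\delta)$ centered at \emph{real} points $z$, i.e.\ in the strip around the real axis where the complex-shift machinery fails. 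For such $z$, Proposition \ref{prop:twosingvalues} is not applicable: its hypotheses require the normalized shift to have imaginary part $\delta\geq e^{-c_\star n}$, and its conclusion $C\,t^2t'^2/\delta^2$ refers to that imaginary part and degenerates as it tends to zero. Your computation ``$t\asymp t'\asymp\delta\sqrt{n}$ returns $n^2\delta^2$'' silently identifies the ball radius $\delta$ with the imaginary part of the shift; under that reading the first estimate would merely be a redundant special case of the second and would be useless for its actual purpose. The paper instead derives the first estimate from the real-shift tail bound, Proposition \ref{prop:twosingvaulesreal}, which gives $\P\bigl(s_n(M)\leq t/\sqrt{n},\ s_{n-1}(M)\leq t'/\sqrt{n}\text{ and }\EE_K\bigr)\leq C\,t\,t'+e^{-cn}$ for real shifts; there Case 1 and every dyadic shell contribute $t\,t'\asymp\delta^2 n$, and the stated bound follows. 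Since you never invoke that proposition, the first half of the statement is unproven as written.

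A smaller point: the factor $\log(n/\delta)$ coming from the number of dyadic shells in your Case 2 for the first bound cannot be ``absorbed into the implicit constants''---it is not a constant. You must either carry it explicitly (noting that the first displayed bound contains no logarithm) or organize the real-shift argument so that it does not appear.
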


\begin{proof}
	We define $\alpha_1 = s$ and recursively, $\alpha_k = 2^k s$.  For any $\alpha \in \C$ with $\alpha_{k} \leq |\alpha| < \alpha_{k+1}$ and $s < |\alpha|$.  Then the event that $s_n(M) \leq \frac{(s/\sqrt{2})^2}{|\alpha_{k}|}$ and $s_{n-1}(M) \leq |\alpha_{k}|$ implies that
	$$
	s_n(M) \leq \frac{s^2}{|\alpha|} \text{ and } s_{n-1}(M) \leq |\alpha|.
	$$    
	Thus, by Lemma \ref{lem:eigstosings},
	\begin{align*}
	\P( \lambda_i, \lambda_j \in B(z, s) \text{ and } \EE_K) &\leq \P(s_n(M) \leq s_{n-1}(N) \leq 2s) \\
	&\qquad + \sum_{k =1}^{C \log(n/s)} \P(s_n(M) \leq (s/\sqrt{2})^2/|\alpha_{k}| \text{ and } s_{n-1}(M) \leq |\alpha_{k}|)
	\end{align*}
	where the range of the sum is determined by the condition that $|\alpha| \leq 2K \sqrt{n}$.   For every summand,
	Proposition \ref{prop:twosingvalues} provides an upper bound of $C (s/\sqrt{2})^4 n^2/\delta^2 + e^{-c n}$.  Taking a union bound over the choice of $i,j \in [n]$ concludes the proof of the first statement.  
	
	An analogous argument using Proposition \ref{prop:twosingvaulesreal} instead of Proposition \ref{prop:twosingvalues} yields the second result.  
\end{proof}

\subsection{Proof of Theorem \ref{thm:eiggaptail}}

\begin{proof}
Let $D_K$ denote the disk of radius $K \sqrt{n}$ in the complex plane.  We begin with a $\delta/10$-net, $\mathcal{N}$, of the intersection of the real line with $D_K$.  Such a net can be constructed to be of size less than $2K \sqrt{n} \delta^{-1}$.  We center a ball of radius $\delta$ on each point in the net.  The union of these balls contains a strip of size $\delta/2$ around the section of the real line in $D_K$.  Let $D'_K$ be $D_K$ after removing a strip of width $\delta/100$ around the real line.  We can construct an $s/10$-net, $\mathcal{N'}$, of $D'_K$ of size at most $C K^2 s^{-2} n$.  If $s < \delta/2$, then on the event that there exist $\lambda_i, \lambda_j$ such that $|\lambda_i - \lambda_j| \leq s$, we must have either for some $z_k \in \mathcal{N}$  
$$
\lambda_i, \lambda_j \in B(z_k, \delta)
$$     
or for some $z'_k \in \mathcal{N}'$
$$
\lambda_i, \lambda_j \in B(z'_k, s).
$$
Both these events are controlled in Proposition \ref{prop:doubleeig}.  Thus, by a union bound,
\begin{align*}
\P(\Delta \leq s \text{ and } \EE_K) &\leq \sum_{z_k \in \mathcal{N}} \P(\exists \lambda_i, \lambda_j \in B(z_k, \delta) \text{ and } \EE_K) \\
&\qquad + \sum_{z_k' \in \mathcal{N}'} \P(\exists \lambda_i, \lambda_j \in B(z_k', s) \text{ and } \EE_K) \\
&\leq C K \sqrt{n} \delta^{-1} (n^2 \delta^2) + C K^2 s^{-2} n (\log(n/s) \frac{s^4 n^4}{\delta^2}) + 2 e^{-cn}\\
&\leq C K n^{5/2} \delta + C K^2 \frac{s^2 n^5}{\delta^2} \log(n/s) + 2 e^{-c n}
\end{align*}
Observe that this bound is only effective in the range $\delta \leq n^{-5/2}$.  In this range, if we set $\delta = c s^{2/3} n^{5/6}$ we have $s < \delta/2$ for a small enough constant $c$.  Then we can conclude that
$$
\P(\Delta \leq s \text{ and } \EE_K) \leq (C K^2 c^{-2}) s^{2/3} n^{10/3} \log(n/s)  + e^{-c n}
$$ 
Finally, to simplify the result, we generously bound $\log(n/s)$ by $n$ using the fact that $\delta \geq e^{-c^* n}$.
\end{proof}

\section{Tail bounds for Eigenvalue Gaps of Adjacency Matrices} \label{appendix:adjtailtwoeigs}
In this section we sketch the necessary modifications to handle the gap probability for adjacency matrices.  We recall the basic structure of the spectrum.

\begin{theorem}[Follows from Theorem 2.8 in \cite{OR}] 
Let $N_n$ be an iid random matrix whose entries are centered and have unit variance and finite fourth moment. Let $\tilde{N}_n$ be the matrix $N_n$ with the diagonal entries replaced with zeros.  Then for any $p \in (0, 1)$ and any $\delta > 0$, almost surely, for $n$ sufficiently large, all the eigenvalues of $\tilde{N}_n + p J$ are contained in the disk $\{z \in \mathbb{C} : |z| \leq (1 + \delta) \sqrt{n} \}$ with a single exception which takes the value $p n + o(\sqrt{n})$.  
\end{theorem}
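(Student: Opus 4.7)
The plan is to reduce to Theorem 2.8 of \cite{OR}, which characterizes the outlier eigenvalues of an iid random matrix under a fixed finite-rank deterministic additive perturbation. Since $pJ = p\1\1^{\mathrm{T}}$ is rank one with a unique nonzero eigenvalue equal to $pn$, and since under the finite fourth moment assumption the spectral radius of $N_n$ is $(1+o(1))\sqrt{n}$ almost surely (a non-Hermitian Bai--Yin type result), the cited theorem applied to $N_n + pJ$ immediately yields that, almost surely and for all $n$ sufficiently large, every eigenvalue of $N_n + pJ$ lies in $\{|z| \leq (1+\delta)\sqrt{n}\}$ except for a single outlier located at $pn + o(\sqrt{n})$.

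To transfer the conclusion to $\tilde{N}_n + pJ$, I would write $\tilde{N}_n = N_n - D_n$ with $D_n = \diag(N_{11},\ldots,N_{nn})$. The finite fourth moment hypothesis, together with a standard Borel--Cantelli argument applied to the maximum of $n$ iid random variables, gives $\|D_n\| = \max_i |N_{ii}| = o(\sqrt{n})$ a.s. For the bulk portion of the spectrum, both the circular law for $\tilde{N}_n/\sqrt{n}$ and the spectral radius bound $\|\tilde{N}_n\| \leq (1+o(1))\sqrt{n}$ follow from the $N_n$ analogues by standard truncation/replacement arguments, which are insensitive to modifying the $n$ diagonal entries. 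Hence all but at most one eigenvalue of $\tilde{N}_n + pJ$ remains in the disk of radius $(1+\delta)\sqrt{n}$ a.s.

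For the outlier, I would invoke the matrix determinant lemma to write
$$
\det(zI - \tilde{N}_n - pJ) = \det(zI - \tilde{N}_n) \bigl(1 - p\1^{\mathrm{T}}(zI - \tilde{N}_n)^{-1}\1\bigr),
$$
so that any eigenvalue $z$ of $\tilde{N}_n + pJ$ outside the spectrum of $\tilde{N}_n$ must satisfy $p\1^{\mathrm{T}}(zI - \tilde{N}_n)^{-1}\1 = 1$. In the region $|z| \gg \sqrt{n}$ the Neumann expansion gives
$$
p\1^{\mathrm{T}}(zI - \tilde{N}_n)^{-1}\1 = \frac{pn}{z} + \sum_{k \geq 1} \frac{p\,\1^{\mathrm{T}} \tilde{N}_n^{k}\1}{z^{k+1}},
$$
and concentration estimates on the quadratic forms $\1^{\mathrm{T}}\tilde{N}_n^{k}\1$ (which are sums of mean-zero terms of size $O(n^{k/2})$ with variance $o(n^{k+1})$) show that these corrections are of lower order than the leading $pn/z$. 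Solving gives exactly one outlier at $z = pn + o(\sqrt{n})$ a.s.

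The main technical point that requires care is the uniform control of the resolvent $(zI - \tilde{N}_n)^{-1}$ on the annulus $|z| \asymp n$, which is needed both to rule out additional outliers and to pin down the location of the single outlier up to $o(\sqrt{n})$; this is precisely the quantitative content of \cite{OR}'s Theorem 2.8, and the zero-diagonal modification only introduces an $o(\sqrt{n})$ perturbation in operator norm, which does not affect the resolvent bounds in this regime.
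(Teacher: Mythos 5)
The paper offers no argument here at all: Theorem 2.8 of \cite{OR} is quoted as applying directly to the zero-diagonal matrix $\tilde{N}_n + pJ$ (this is the same statement as Theorem \ref{thm:outlier}), so no transfer from $N_n+pJ$ is needed. Your detour through $N_n+pJ$ is where the trouble lies. Writing $\tilde{N}_n+pJ=(N_n+pJ)-D_n$ with $\|D_n\|=o(\sqrt{n})$ a.s.\ does not by itself control the spectrum of the zero-diagonal model: $N_n+pJ$ is highly non-normal, and eigenvalues of non-normal matrices are not stable under small-norm perturbations (general spectral-variation bounds degrade like $\|D_n\|^{1/n}$). To conclude anything you need resolvent bounds of the form $\|(z-N_n-pJ)^{-1}\|\le \|D_n\|^{-1}$ uniformly on the exterior region, and the \emph{statement} of Theorem 2.8 in \cite{OR} only locates eigenvalues; it supplies no such bounds. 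Your closing remark that the resolvent control ``is precisely the quantitative content'' of that theorem is an appeal to its proof rather than its statement, at which point the reduction to $N_n+pJ$ buys nothing and you are re-proving the outlier theorem for $\tilde{N}_n$ from scratch.

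In that re-proof, the steps you defer are exactly the hard ones. First, the assertion that all but one eigenvalue of $\tilde{N}_n+pJ$ stays in the disk ``follows from the circular law and the spectral radius bound'' for $\tilde{N}_n$ is not correct as stated: spectral information about $\tilde{N}_n$ alone says nothing about where the eigenvalues of $\tilde{N}_n+pJ$ lie, since $\|pJ\|=pn$; ruling out spurious outliers requires showing $p\,\1^{\mathrm{T}}(z-\tilde{N}_n)^{-1}\1\neq 1$ uniformly on the whole region $(1+\delta)\sqrt{n}\le |z|\lesssim n$, not merely for $|z|\gg\sqrt{n}$ or $|z|\asymp n$. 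Second, in the range $(1+\delta)\sqrt{n}\le |z|\le (2-\eps)\cdot\sqrt{n}$ the operator norm of $\tilde{N}_n$ is $(2+o(1))\sqrt{n}>|z|$, so the Neumann series cannot be controlled term-by-term by norm bounds, and pointwise concentration of $\1^{\mathrm{T}}\tilde{N}_n^{k}\1$ for fixed $k$ does not suffice: one needs estimates uniform in $k$ up to logarithmic order together with a separate argument for the tail of the series (this is the technical core of \cite{OR} and of the related work of Tao on outliers). Finally, a small repair: under a fourth moment the union bound at each fixed $n$ gives only $\P(\max_i|N_{ii}|>\eps\sqrt{n})=O(1/n)$, which is not summable; the a.s.\ bound $\max_{i\le n}|N_{ii}|=o(\sqrt{n})$ should instead come from Borel--Cantelli applied to the events $\{|N_{ii}|>\eps\sqrt{i}\}$, using the finite second moment and the nested array structure.
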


Due to the previous result, as the outlier eigenvalue is significantly separated from the others, it suffices to consider those eigenvalues within a radius of $K \sqrt{n}$ of the origin.  

\begin{lemma} \label{lem:amatrixtwoeigs}
	Let $A \in \C^{n \times n}$ with $\|A\| \leq 2n$, $z \in \C$ with $|z| \leq K \sqrt{n} $ and denote $M = A - z $.  
	Suppose there exist $i,j \in [n]$ such that the eigenvalues of $A$, $\lambda_i, \lambda_j \in B(z, s)$ for $0< s \leq 2n$.  Then there exist orthogonal vectors $v, w \in S_{\C}^{n-1}$ and a real number $\alpha$ with $|\alpha| \leq 2n$ such that
	\begin{equation}
	M \Bv = (\lambda_i - z) \Bv \text{ and } M\Bw = (\lambda_j -z ) \Bw + \alpha \Bv.
	\end{equation}
	As a consequence, 
	\begin{equation}
	\|M \Bv\| \leq s \text{ and } \|M \Bw - \alpha \Bv\| \leq s
	\end{equation}  
\end{lemma}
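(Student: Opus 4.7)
The plan is to follow verbatim the three-case decomposition used in the proof of the first lemma of Appendix~\ref{appendix:tailtwoeigs} (the iid analogue): that argument is purely linear-algebraic, and only uses the operator-norm control $\|M\| \leq \|A\| + |z| = O(n)$ together with the hypothesis that two eigenvalues lie in a common ball $B(z,s)$. The only change in the present setting is the scale — the relevant eigenvalues sit at distance $O(n)$ from the origin rather than $O(\sqrt{n})$ — but this plays no role beyond tracking constants.

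Concretely, I would split into cases by the multiplicity structure of $\lambda_i, \lambda_j$. If $\lambda_i \neq \lambda_j$, pick unit eigenvectors $\Bv_i, \Bv_j$ of $A$, set $\Bv := \Bv_i$, and choose a unit vector $\Bw = a_i \Bv_i + a_j \Bv_j$ in $\text{span}(\Bv_i, \Bv_j)$ orthogonal to $\Bv$. A direct computation gives
$$
M\Bw = a_i(\lambda_i - z)\Bv_i + a_j(\lambda_j - z)\Bv_j = (\lambda_j - z)\Bw + \alpha \Bv,
$$
with $\alpha = a_i(\lambda_i - \lambda_j)$, and rearranging yields $|\alpha| = \|\alpha \Bv\| = \|M\Bw - (\lambda_j - z)\Bw\| \leq \|M\| + |\lambda_j - z| = O(n)$. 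If $\lambda_i = \lambda_j$ but the geometric multiplicity is at least two, the same argument applies verbatim after selecting two linearly independent eigenvectors for the common eigenvalue. In both subcases, the consequences $\|M\Bv\| = |\lambda_i - z| \leq s$ and $\|M\Bw - \alpha \Bv\| = |\lambda_j - z| \|\Bw\| \leq s$ are immediate from the assumption $\lambda_i, \lambda_j \in B(z,s)$.

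The only genuinely distinct case is $\lambda = \lambda_i = \lambda_j$ with geometric multiplicity one, where I would invoke the Jordan canonical form to produce linearly independent $\Bv_i, \Bv_j$ satisfying $A\Bv_i = \lambda \Bv_i$ and $A\Bv_j = \lambda \Bv_j + \Bv_i$. Taking $\Bv := \Bv_i$ and a unit vector $\Bw = a_i \Bv_i + a_j \Bv_j$ orthogonal to $\Bv$, a short calculation gives $M\Bw = (\lambda - z)\Bw + a_j \Bv$, so $\alpha = a_j$ with $|\alpha| \leq 1$, well within the claimed bound. I do not anticipate any genuine obstacle: the lemma is a direct transcription of the iid version, and the only bookkeeping care is to ensure the operator-norm constants are aggregated so that $|\alpha|$ stays $O(n)$ — consistent with the $\|A\| \leq 2n$ hypothesis that in turn feeds the downstream distance-to-subspace argument for the two smallest singular values of $M$.
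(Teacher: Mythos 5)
Your proposal is correct and follows essentially the same three-case argument as the paper's proof (distinct eigenvalues; a repeated eigenvalue with geometric multiplicity at least two; a Jordan block), including the key identity $\alpha\Bv = M\Bw - (\lambda_j - z)\Bw$ used to bound $|\alpha| = O(n)$. The one quibble is in the Jordan-block case: your claim $|\alpha| = |a_j| \leq 1$ is not justified, since $\Bv_i$ and $\Bv_j$ need not be orthogonal and the Jordan relation fixes the scaling of $\Bv_j$ relative to $\Bv_i$, so $|a_j|$ could a priori be large; but the bound $|\alpha| \leq \|M\| + |\lambda - z| = O(n)$ from your first case applies verbatim there, which is all the lemma requires.
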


\begin{proof}
	We begin with the assumption that $\lambda_i \neq \lambda_j$ and let $\Bv_i$ and $\Bv_j$ be two corresponding eigenvectors.  Then we can choose $\Bv = \Bv_i$.  We will choose $\Bw$ to be orthogonal to $\Bv$ and also in the span of $\Bv_i$ and $\Bv_j$.  Let us write $\Bw = a_i \Bv_i + a_j \Bv_j$.  Therefore,
	\begin{equation} 
	M \Bw = a_i (\lambda_i - z) \Bv_i + a_j (\lambda_j - z) \Bv_j = (\lambda_j - z) \Bw + a_i(\lambda_i - \lambda_j) \Bv_i = (\lambda_j - z ) \Bw + \alpha \Bv
	\end{equation}
	where $ \alpha = a_i (\lambda_i - \lambda_j)$.  Since $\alpha \Bv = (M - (\lambda_j - z)) \Bw$, 
	$$
	|\alpha| = \|\alpha v\| \leq 4 n.
	$$
	Furthermore, 
	$$
	\|M\Bw - \alpha \Bv\| = \|(\lambda_j - z) \Bw\| \leq s.
	$$
	
	If $\lambda_i = \lambda_j$, but the geometric multiplicity is greater than or equal to two, then the above argument still applies since we can find distinct eigenvectors $\Bv_i \neq \Bv_j$.  Thus, the only remaining case is when $\lambda = \lambda_i = \lambda_j$ and the geometric multiplicity of $\lambda$ is one.  By the Jordan canonical form, there exist $\Bv_i \neq \Bv_j$ such that
	$$
	M \Bv_i = (\lambda - z ) \Bv_i \text{ and } M \Bv_j = (\lambda \Bv_j + \Bv_i) - z \Bv_j = (\lambda - z ) \Bv_j + \Bv_i.
	$$  
	Using the notation $\Bw = a_i \Bv_i + a_j \Bv_j$ for a vector orthogonal to $\Bv_i$, we have
	$$
	M \Bw = (\lambda - z ) \Bw + a_j \Bv_i
	$$
	so we can use $\alpha = a_j$ and complete the proof as above.
\end{proof}

The remainder of the argument is identical to that in Appendix \ref{appendix:tailtwoeigs}.  Finally, to control the distance problem, we utilize Theorem \ref{thm:adjvec} instead of Theorem \ref{thm:nullvectors}.  The reader can easily check that the norm of $A$ or $A - z$ does not appear in the argument outside of Lemma \ref{lem:amatrixtwoeigs} and Theorem \ref{thm:adjvec}.  It is in the proof of Theorem \ref{thm:adjvec} that we have overcome the majority of the large norm issues.

\end{document}